\newcommand{\showcomments}{yes}
\newsavebox{\commentbox}
\newtheorem{prop}{Proposition}[section]
\newtheorem{thm}[prop]{Theorem}
\newtheorem*{thm*}{Theorem}
\newtheorem*{addendum*}{Addendum}
\newtheorem{cor}[prop]{Corollary}
\newtheorem{lem}[prop]{Lemma}
\newtheorem{thmintro}{Theorem}
\newtheorem{corintro}[thmintro]{Corollary}
\newtheorem{claim}{Claim}
\newtheorem*{claim*}{Claim}
\newtheorem*{convention*}{Convention}
\newtheorem*{RkRigThm}{Rank Rigidity Theorem}
\newtheorem*{RkRigConj}{Rank Rigidity Conjecture}
\newtheorem*{Double}{Double Skewering Lemma}
\newtheorem*{Flipping}{Flipping Lemma}
\newtheorem*{IrredCrit}{Irreducibility Criterion}
\theoremstyle{definition}
\newtheorem*{defn*}{Definition}
\newtheorem{remark}[prop]{Remark}
\newtheorem*{scholium*}{Scholium}
\theoremstyle{remark}
\newtheorem*{example*}{Example}
\numberwithin{equation}{section}
\newcommand{\vareps}{\varepsilon}
\newcommand{\NN}{\mathbf{N}}
\newcommand{\RR}{\mathbf{R}}
\newcommand{\ZZ}{\mathbf{Z}}
\newcommand{\la}{\langle}
\newcommand{\ra}{\rangle}
\newcommand{\inv}{^{-1}}
\newcommand{\g}{\mathfrak{g}}
\newcommand{\centra}{\mathscr{Z}}
\newcommand{\QH}{\widetilde{\mathrm{QH}}}
\newcommand{\hH}{\hat{\mathcal{H}}}
\renewcommand{\H}{{\mathcal H}}
\newcommand{\K}{{\mathcal K}}
\newcommand{\h}{{\mathfrak h}}
\newcommand{\hh}{\hat{\mathfrak h}}
\renewcommand{\k}{{\mathfrak d}}
\newcommand{\hk}{\hat{\mathfrak d}}
\newcommand{\cat}{{\upshape CAT(0)}\xspace}  
\newcommand{\tangle}[2]
{\angle_\mathrm{T}(#1,#2)}
\newcommand{\aangle}[3]
{\angle_{#1}(#2,#3)}
\newcommand{\cangle}[3]
{\overline{\angle}_{#1}(#2,#3)}
\DeclareMathOperator{\Stab}{Stab}
\DeclareMathOperator{\Isom}{Is}
\newcommand{\bd}{\partial_\infty} 
\def\Aut{\mathop{\mathrm{Aut}}\nolimits}
\def\Ess{\mathop{\mathrm{Ess}}\nolimits}
\def\Hess{\mathop{\mathrm{Hess}}\nolimits}
\def\Triv{\mathop{\mathrm{Triv}}\nolimits}
\begin{document}

\title[Rank rigidity of \cat cube complexes]{Rank rigidity for \cat cube complexes}
\author[Pierre-Emmanuel Caprace]{Pierre-Emmanuel Caprace*}
\address{UCLouvain -- IRMP, Chemin du Cyclotron 2, 1348 Louvain-la-Neuve, Belgium}
\email{pe.caprace@uclouvain.be}
\thanks{* F.R.S.-FNRS research associate. Supported in part by FNRS grant F.4520.11}
\author[Michah Sageev]{Michah Sageev$^\ddagger$}
\address{Dept. of Math., Technion, Haifa 32000, Israel}
\email{sageevm@techunix.technion.ac.il}
\thanks{$^\ddagger$Supported in part by ISF grant \#580/07.}
\date{First draft: March 2010; revised: February 2011}%
\keywords{Rank rigidity, rank one isometry, cube complex, \cat space, Tits alternative}

\begin{abstract}
We prove that any group acting essentially without a fixed point at infinity on an irreducible finite-dimensional \cat cube complex contains a rank one isometry. This implies that the Rank Rigidity Conjecture holds for \cat cube complexes. We derive a number of other consequences for \cat cube complexes, including a purely geometric proof of the Tits Alternative, an existence result for regular elements in (possibly non-uniform) lattices acting on cube complexes, and a characterization of products of trees in terms of bounded cohomology.
\end{abstract}

\maketitle

\tableofcontents

\section{Introduction}\label{sec:intro}
%

\subsection{Rank one and contracting isometries}

Let $X$ be a complete \cat space. A \textbf{rank one isometry} is a hyperbolic $g \in \Isom(X)$ none of whose axes bounds a flat half-plane. 
{A \textbf{contracting isometry} is a hyperbolic $g \in \Isom(X)$ having some axis $\lambda$ such that  the diameter of the orthogonal projection to $\lambda$ of any ball of $X$ disjoint from $\lambda$ is bounded above. A contracting isometry is always a rank one isometry. The converse is false in general, but it holds if the ambient space $X$ is proper, see \cite[Theorem~5.4]{BestvinaFujiwara}. 
}
If $X$ is Gromov hyperbolic, then every hyperbolic isometry {is contracting}. Even if $X$ is not hyperbolic, a contracting isometry $g$ acts on the visual boundary $\bd X$ with a \emph{North-South dynamics}: $g$ has exactly two fixed points in $\bd X$, respectively called \textbf{attracting} and \textbf{repelling}, and the positive powers of $g$ contract the whole boundary minus the repelling fixed point to the attracting one. This {feature justifies the choice of terminology, and} can be exploited to derive a large number of consequences {from the very existence of contracting isometries}; some of these  will be reviewed below. {To put it short, one can say that, in the presence of a contracting isometry, the space $X$ presents some kind of hyperbolic behaviour. }

The notion of rank one and contracting isometries originates in the celebrated Rank Rigidity Theorem for Hadamard manifolds, due to W.~Ballmann, M.~Brin, K.~Burns, P.~Eberlein, R.~Spatzier  (see \cite{Ballmann} for more information and detailed references). A possible formulation of this result, but not the most general one, is the following. Recall that a Hadamard manifold is a simply connected, complete Riemannian manifold of non-positive curvature, and that such a manifold is said to be irreducible if it does not admit a nontrivial decomposition as a metric product of two manifolds. 

\begin{RkRigThm}[\cite{Ballmann85} and \cite{BurnsSpatzier}]
Let $M$ be a Hadamard manifold and $\Gamma$  be a discrete group acting properly and cocompactly on $M$. If $M$ is irreducible, then either $M$ is a higher rank symmetric space or $\Gamma$ contains a rank one isometry. 
\end{RkRigThm}

It is expected that a similar phenomenon holds for much wider classes of \cat spaces. In particular, W.~Ballmann and S.~Buyalo \cite{BallmannBuyalo} formulate the following.

\begin{RkRigConj}
Let $X$ be a locally compact geodesically complete \cat space  and $\Gamma$  be an infinite discrete group acting properly and cocompactly on $X$. If $X$ is irreducible, then $X$ is a higher rank symmetric space or a Euclidean building of dimension~$\geq 2$, or $\Gamma$ contains a rank one isometry. 
\end{RkRigConj}

Recall that a \cat space is called \textbf{geodesically complete} if every geodesic segment can be prolonged to some (not necessarily unique) bi-infinite geodesic line. Besides the manifold case, the Rank Rigidity Conjecture has been confirmed for piecewise Euclidean cell complexes of dimension~$2$ by W.~Ballmann and M.~Brin (see \cite{BallmannBrin1}), {who also obtained relevant partial results in dimension~$3$ (see \cite{BallmannBrin2})}. It also holds within the class of buildings and Coxeter groups (see \cite{CapraceFujiwara}) as well as for right-angled Artin groups (see \cite{BehrstockCharney}). 

The main result of this paper is the following.

\begin{thmintro}[Rank Rigidity for finite-dimensional \cat cube complexes]\label{thmintro:RankRigidity}
Let $X$ be a finite-dimensional \cat cube complex and $\Gamma \leq \Aut(X)$ be a group acting without fixed point in  $X \cup \bd X$. Then there is a convex $\Gamma$-invariant subcomplex $Y \subseteq X$ such that either $Y$ is a product of two unbounded cube subcomplexes or $\Gamma$ contains an element acting on $Y$ as a contracting isometry. 
\end{thmintro}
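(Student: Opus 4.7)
The plan is to first extract from $X$ a convex $\Gamma$-invariant subcomplex $Y$ on which $\Gamma$ acts essentially, meaning that every halfspace of $Y$ contains points of a $\Gamma$-orbit arbitrarily far from its bounding hyperplane. Starting from an arbitrary $\Gamma$-invariant convex subcomplex, one iteratively discards the inessential halfspaces (those whose complement remains at bounded distance from the $\Gamma$-orbit), intersecting what remains; finite dimensionality and convexity of halfspaces ensure that the result is still a nonempty convex subcomplex. The absence of fixed points in $\bd X$ is exactly what prevents this procedure from collapsing $Y$, since any persistent inessential halfspace would furnish a $\Gamma$-fixed point at infinity. On this essential core the Flipping Lemma becomes available for every hyperplane.

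Now the dichotomy corresponds to whether $Y$ splits as a product. If $Y \cong Y_1 \times Y_2$ with both factors unbounded cube subcomplexes, the first alternative of the theorem holds. Otherwise, one wants to apply the Irreducibility Criterion to produce a pair of \emph{strongly separated} hyperplanes $\hat h_1, \hat h_2 \subseteq Y$, by which I mean two disjoint hyperplanes with the property that no third hyperplane crosses both. Strong separation is the right combinatorial analogue, in the cube complex setting, of ``an axis bounded away from flat half-planes''.

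Given such a pair, the contracting isometry is produced by the Double Skewering Lemma: it yields $g \in \Gamma$ that skewers $(\hat h_1, \hat h_2)$, so that $g\hat h_1$ lies strictly on the side of $\hat h_2$ opposite to $\hat h_1$. Iterating, the family $(g^n \hat h_1)_{n \in \ZZ}$ is a bi-infinite chain of pairwise strongly separated hyperplanes, and a combinatorial axis for $g$ crosses each of them. In particular $g$ is hyperbolic on $Y$. The strong separation propagates to uniform bounds on the combinatorial projection of any point to this chain, and a standard comparison between the combinatorial and CAT(0) metrics on a cube complex converts this into a uniform bound on the orthogonal projection of any disjoint ball to a CAT(0) axis $\lambda$ of $g$. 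This is exactly the defining property of a contracting isometry.

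The principal technical obstacle is clearly the middle step: one must show that in an essential irreducible action on a finite-dimensional \cat cube complex, strongly separated hyperplanes always exist. This is a genuinely combinatorial statement about hyperplane interaction patterns, and it is the place where the hypothesis of no fixed point at infinity, essentiality, irreducibility, and finite dimensionality must be combined. One plausible route is to argue by contradiction: if every pair of disjoint hyperplanes were simultaneously crossed by some third hyperplane, then the ``crosses'' relation would organize the hyperplane set into at least two commuting collections, each stabilized by $\Gamma$, and one could recover from such a partition a nontrivial product decomposition of $Y$ contradicting irreducibility. Making this heuristic precise, and ruling out pathologies coming from infinite chains of crossings, is where the bulk of the real work lies.
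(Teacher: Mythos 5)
Your proposal follows essentially the same route as the paper: prune to the $\Gamma$-essential core using the absence of fixed points at infinity, then in the irreducible case combine the Irreducibility Criterion (existence of a strongly separated pair of hyperplanes) with the Double Skewering Lemma, and verify that an element double-skewering a strongly separated pair is contracting via combinatorial projection bounds. You correctly locate the real difficulty in proving that irreducible essential actions admit strongly separated hyperplanes; be aware that the paper's argument there is more involved than your product-decomposition heuristic suggests---the final contradiction is obtained by building an infinite family of pairwise crossing hyperplanes, violating finite dimensionality, with the product-decomposition argument appearing only in an auxiliary lemma.
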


A slightly more precise version will be obtained in Theorem~\ref{thm:RankOne} below. In particular, we shall see that if $X$ is locally finite and if the $\Gamma$-action is cocompact (but not necessarily proper), then the conclusions of Theorem~\ref{thmintro:RankRigidity} still hold even if there are $\Gamma$-fixed points at infinity. 

Notice that there is no assumption on geodesic completeness or locally finiteness of $X$ in Theorem~\ref{thmintro:RankRigidity}. Moreover, the $\Gamma$-action need not be proper or cocompact. However, the result does not hold if one does not allow to pass to a convex subcomplex in the conclusion. In order to address this issue, we shall say that a group $\Gamma \leq \Aut(X)$  acts \textbf{essentially} on $X$ if no $\Gamma$-orbit remains in a bounded neighbourhood of a half-space of $X$.  Starting with any group $\Gamma \leq \Aut(X)$, there is a way to construct a $\Gamma$-invariant cube subcomplex $Y \subseteq X$ on which $\Gamma$ acts essentially provided any of the following conditions are satisfied (see Propositions~\ref{prop:pruning} and~\ref{prop:fg}  below):
\begin{itemize}
\item $\Gamma$ has no fixed point at infinity.

\item $\Gamma$ has finitely many orbits of hyperplanes.

\item $\Gamma$ is finitely generated and acts properly discontinuously.
\end{itemize}

In particular, if one assumes that the $\Gamma$-action on $X$ is essential in Theorem~\ref{thmintro:RankRigidity}, then the conclusion holds with $Y=X$. In the precise setting of the Rank Rigidity Conjecture, namely for geodesically complete spaces with a discrete cocompact group action, it turns out that the hypotheses of Theorem~\ref{thmintro:RankRigidity} become redundant: 

\begin{corintro}[Rank Rigidity for geodesically complete \cat cube complexes]\label{cor:GeodComplete}
Let $X$ be a locally compact geodesically complete \cat cube complex  and $\Gamma$  be an infinite discrete group acting properly and cocompactly on $X$. Then $X$ is a product of two geodesically complete unbounded convex subcomplexes or $\Gamma$ contains a rank one isometry. 
\end{corintro}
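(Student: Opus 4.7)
The plan is to derive this from Theorem~\ref{thmintro:RankRigidity}, in the strengthened form (alluded to in the discussion after that theorem, and made precise as Theorem~\ref{thm:RankOne} below) which tolerates fixed points at infinity in the locally finite cocompact setting. Since $\Gamma$ is infinite and acts properly on $X$, it can have no global fixed point in $X$; moreover, local compactness together with cocompactness forces $X$ to be finite-dimensional and $\Gamma$ to have only finitely many orbits of hyperplanes. Applying the refined theorem thus yields a convex $\Gamma$-invariant subcomplex $Y \subseteq X$ which either splits as a product $Y_1 \times Y_2$ of two unbounded cube subcomplexes, or admits an element of $\Gamma$ acting on it as a contracting, hence rank one, isometry.

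The main task is to show that geodesic completeness forces $Y = X$. I would establish this by arguing that $\Gamma$ acts \emph{essentially} on $X$, which by the remark following Theorem~\ref{thmintro:RankRigidity} then allows one to take $Y = X$. Suppose for contradiction that some orbit $\Gamma \cdot x_0$ lies in a bounded neighbourhood $N_R(H)$ of a half-space $H$ with bounding hyperplane $\hat H$. By cocompactness this propagates to $X \subseteq N_{R'}(H)$ for some $R'$, i.e.\ the opposite half-space $H^c$ sits inside an $R'$-neighbourhood of $\hat H$. Geodesic completeness, however, allows any combinatorial edge dual to $\hat H$ and pointing into $H^c$ to be extended to a bi-infinite combinatorial geodesic, which forces $H^c$ to contain points arbitrarily far from $\hat H$---contradiction. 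This essentiality step is the one genuinely nontrivial ingredient of the argument.

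Once $Y = X$ is in hand, the two alternatives of the corollary fall out directly. If $X = X_1 \times X_2$ splits nontrivially as a product of two unbounded convex cube subcomplexes, then each factor inherits geodesic completeness from the ambient product, since a geodesic segment in $X_i$ extends to a bi-infinite geodesic precisely when the corresponding constant-coordinate segment in $X$ does. If instead $\Gamma$ contains a contracting isometry of $X$, this isometry is a rank one isometry by definition, as noted in Section~\ref{sec:intro}. I expect the main obstacle to lie precisely in the essentiality argument: it is there that one must connect the combinatorial completeness hypothesis on $X$ to the dynamical condition ruling out shallow half-spaces, and separate from that step every other transition is a direct unpacking of Theorem~\ref{thmintro:RankRigidity} together with its accompanying essentialization discussion.
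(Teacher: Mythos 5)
Your overall strategy is sound and reaches the stated conclusion, but it diverges from the paper at the decisive step. The paper does not re-derive essentiality by hand: it quotes \cite[Lemma~3.13]{CapraceMonod1} (a cocompact action on a geodesically complete \cat space is minimal), feeds this into Lemma~\ref{lem:HalfEssential} and Remark~\ref{rem:HessEmpty} to conclude that every hyperplane is $\Aut(X)$-essential, and then invokes Corollary~\ref{cor:RkRigid} (the lattice version) rather than Theorem~\ref{thm:RankOne} directly. Your route --- proving directly that the $\Gamma$-action itself is essential and then applying the cocompact case of Theorem~\ref{thm:RankOne} --- is a legitimate and arguably more self-contained alternative in the uniform setting, since it bypasses the Borel-density argument hidden in Corollary~\ref{cor:RkRigid}; and your verification that the factors of a product splitting inherit geodesic completeness is correct.

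The step you flag as the crux is, however, exactly the one that is under-justified. First, geodesic completeness as defined in the paper concerns \cat geodesics, not combinatorial ones, so you must extend the dual edge $e$ to a bi-infinite \emph{\cat} geodesic $\ell$; for a combinatorial geodesic the escape claim would require a separate argument. Second, even for the \cat extension, the assertion that the ray entering $H^*$ ``forces $H^*$ to contain points arbitrarily far from $\hat H$'' needs a reason: a geodesic ray contained in $H^*$ can in general remain within bounded distance of $\hat H$ (it crosses each hyperplane at most once, but every hyperplane it crosses may itself meet $\hat H$). What rescues the argument is that $e$ is perpendicular to $\hat H$: the function $t \mapsto d(\ell(t), \hat H)$ is convex along the geodesic, vanishes at the midpoint of $e$ and equals $1/2$ at its endpoint in $H^*$, hence satisfies $d(\ell(t), \hat H) \geq t$ for $t \geq 1/2$ and tends to infinity. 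With that convexity observation inserted, your essentiality argument is complete and the remainder of your deduction goes through.
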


Without the assumption of geodesic completeness, Corollary~\ref{cor:GeodComplete} stills holds provided one passes to a $\Gamma$-invariant subcomplex on which $\Gamma$ acts essentially; it also holds for non-uniform lattices (see Corollary~\ref{cor:RkRigid} below). 

\subsection{Proof ingredients}\label{sec:outline}

An important elementary fact on which our proof of Rank Rigidity for \cat cube complexes relies is the following tool allowing to \textbf{flip} any essential half-space $\h$, {\emph{i.e.} to map $\h$ to a half-space $\k$ containing properly  the complementary half-space $\h^*$ of $\h$. 

\begin{Flipping}
Let $X$ be a finite-dimensional \cat cube complex and $\Gamma \leq \Aut(X)$ be a group acting essentially without a fixed point at infinity. Then for any half-space $\h$, there is some $\gamma \in \Gamma$ such that $\h^* \subsetneq  \gamma.\h $.
\end{Flipping}

A slightly more precise  version of the Flipping Lemma will be proved in Theorem~\ref{thm:Flipping} below. 

{An isometry $g \in \Isom(X)$ is said to \textbf{skewer} a hyperplane $\hh$ bounding some half-space $\h$ if $g.\h \subsetneq \h$ or $\h \subsetneq g.\h$. If $X$ is finite-dimensional, then, given $\Gamma \leq \Aut(X)$, the $\Gamma$-action is essential if and only if every half-space is skewered by some isometry (see {Proposition \ref{prop:essential} } below). This should be compared to the following} consequence of the Flipping Lemma,  which is an essential and flexible tool to construct `many' hyperbolic elements in $\Gamma$. 

\begin{Double}
Let $X$ be a finite-dimensional \cat cube complex and $\Gamma \leq \Aut(X)$ be a group acting essentially without fixed point at infinity. 

Then for any two half-spaces $\k\subset\h$, there exists $\gamma \in\Gamma$ such that $\gamma \h\subsetneq \k\subset\h$.
\end{Double}

\begin{proof}
By the Flipping Lemma, we can find an element $g$ flipping  $\k$ to obtain $g\h^*\subset g\k*\subset \k\subset\h$. Invoking the Flipping Lemma again, we then find an element $a$ flipping $g\h^*$ and, setting $\gamma = ag$, we obtain $\gamma\h\subset g\h^*\subset\k$, as required. 
\end{proof}

We remark that the statement of the Double Skewering fails if one allows fixed points at infinity. A good example illustrating this is the fixator of a point $\xi $ at infinity in the full automorphism group of a regular tree $T$. This group acts edge-transitively, hence essentially. Every hyperplane is  skewered by a hyperbolic isometry. However, {a pair $\k \subset \h$ of half-spaces such that $\h$ contains $\xi$ but $\k$ does not, is not skewered by a common hyperbolic isometry.} 

{
A final ingredient in the proof of Rank Rigidity for \cat cube complexes is a criterion allowing us to recognize when a \cat cube complex $X$ is \textbf{irreducible}, \emph{i.e.} when it does not split as a product of non-trivial convex subcomplexes. A pair of hyperplanes $\hh_1, \hh_2$ in $X$ is called \textbf{strongly separated} if no hyperplane crosses both  $\hh_1$ and $\hh_2$; in particular  $\hh_1$, $ \hh_2$ must be disjoint. This notion is due to J.~Behrstock and R.~Charney~\cite{BehrstockCharney}.

\begin{IrredCrit}
Let $X$ be a finite-dimensional unbounded \cat cube complex {such that  $\Aut(X)$ acts essentially without a fixed point at infinity. }

Then $X$ is irreducible if and only if there is a pair of strongly separated hyperplanes.
\end{IrredCrit}

A slightly more precise version will be established in Proposition~\ref{prop:IrredCriterion} below. 

With these tools at hand, Theorem~\ref{thmintro:RankRigidity} will be deduced from the simple observation that a hyperbolic isometry which is obtained by applying the Double Skewering Lemma to a strongly separated pair of hyperplanes must necessarily be a contracting isometry (see Lemma~\ref{lem:Contracting} below). 
}

\bigskip
We now proceed to describe various applications of Theorem~\ref{thmintro:RankRigidity}.

\subsection{Lattices and regular elements}

{{When it exists, a contracting}} isometry of a \cat space $X$ can be viewed as an analogue of a \textbf{regular semi-simple element} in the classical case of symmetric spaces, \emph{i.e.} a semi-simple element whose centralizer has minimal possible dimension in the ambient Lie group $\Isom(X)$. Indeed, if $X$ is a proper \cat space and $g \in \Isom(X)$ has rank one, then the centralizer $\centra_{\Isom(X)}(g)$ is of dimension~$\leq 1$ in the sense that it is isomorphic to a closed subgroup of the isometry group of the real line $\RR$.

An important property of regular semi-simple elements in the classical setting is that they form a Zariski open subset of the full isometry group. In particular, combining this with the Borel Density Theorem, it follows that any lattice in a symmetric space contains a regular semi-simple element. The following result is an analogue of that fact. 

\begin{thmintro}[Existence of regular elements]\label{thm:Regular}
Let $X = X_1 \times \dots \times X_n$ be a product of $n$ irreducible unbounded locally compact \cat cube complexes such that $\Aut(X_i)$ acts cocompactly and essentially on $X_i$ for all $i$. 

Then any (possibly non-uniform) lattice $\Gamma \leq \Aut(X)$ contains an element $\gamma \in \Gamma$ which acts as a rank one isometry on each irreducible factor $X_i$. 
\end{thmintro}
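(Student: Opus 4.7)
My plan is to produce the desired $\gamma \in \Gamma$ in two stages. First, I construct inside the ambient locally compact group $G := \Aut(X)$ an element that is rank one on every factor $X_i$; this is straightforward using the machinery already developed. Second, I exploit that $\Gamma$ has finite covolume in $G$ in order to replace this ambient witness by an element actually lying in $\Gamma$, preserving enough of its skewering behavior to conclude rank one on each factor. After passing to a finite-index subgroup of $G$ (and the corresponding finite-index sublattice of $\Gamma$), I may assume $G = \prod_{i=1}^n \Aut(X_i)$ and $\Gamma \leq G$.

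Stage one proceeds factor by factor. Since $\Aut(X_i)$ acts essentially and cocompactly on the irreducible unbounded $X_i$, a standard argument shows there is no global fixed point at infinity, so the Irreducibility Criterion furnishes a strongly separated pair of hyperplanes $\hh_i \neq \hh_i'$ in $X_i$ bounding half-spaces $\h_i' \subsetneq \h_i$. The Double Skewering Lemma applied inside $\Aut(X_i)$ yields $g_i \in \Aut(X_i)$ with $g_i\h_i \subsetneq \h_i'$; by Lemma~\ref{lem:Contracting} each $g_i$ is contracting, hence rank one, on $X_i$. Setting $g := (g_1,\dots,g_n) \in G$, I obtain an element with $g^k \h_i \subsetneq \h_i'$ in every factor for all $k \geq 1$.

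For stage two, fix a cube $c_i$ lying on $\hh_i$ for each $i$ and set $V := \{\phi \in G \,:\, \phi c_i \subseteq \h_i' \text{ for every } i\}$. This is an open subset of $G$ in the compact-open topology, contains every positive iterate of $g$, and every element of $V$ skewers each pair $(\hh_i,\hh_i')$ and is therefore rank one on each $X_i$ by Lemma~\ref{lem:Contracting}. It suffices to show $\Gamma \cap V \neq \emptyset$. For this I would apply Poincar\'e recurrence on the finite-measure space $\Gamma \backslash G$: measure-preserving right-translation by $g$ returns the image of any open subgroup $\Omega \leq G$ to itself for infinitely many iterates, producing $\gamma = h g^k {h'}^{-1} \in \Gamma$ with $h, h' \in \Omega$ and $k \geq 1$.

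The main obstacle lies in making this approximation quantitatively precise: the identity $\gamma c_i = g^k c_i$ holds exactly when $h'$ fixes each $c_i$ and $h$ fixes the translate $g^k c_i$, but the translates $g^k c_i$ drift arbitrarily far from the basepoint as $k$ grows, whereas a fixed open subgroup fixes only a bounded subset of $X$. To overcome this, I would use a descending family of open subgroups $\Omega_R := \Fix_G(B_R)$ (with $B_R$ the combinatorial $R$-ball about a basepoint), apply Poincar\'e recurrence at each level $R$, and employ a diagonal compatibility argument matching the ball radius $R$ to the return time $k_R$ via the linear drift of $g^k c_i$ along the rank one axis of $g$, so that $\{c_i\} \cup \{g^{k_R} c_i\} \subset B_R$: both $h, h' \in \Omega_R$ then fix these cubes, and the resulting $\gamma_R$ lies in $V$. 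This construction is the cube-complex analogue of the Borel density proof of existence of regular semisimple elements in lattices of semisimple Lie groups, with Poincar\'e recurrence substituting for Zariski density.
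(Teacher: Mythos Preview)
Your approach via Poincar\'e recurrence is genuinely different from the paper's and can be made to work, but as written it contains a real gap, and the ``obstacle'' you spend the second half wrestling with is an artefact of a wrong choice of $V$, not a genuine difficulty.

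The gap is in the claim that every $\phi \in V$ skewers the pair $(\hh_i,\hh_i')$. Your $V$ only asks that $\phi c_i \subseteq \h_i'$ for a single cube $c_i$ on $\hh_i$; but the hyperplane $\phi\hh_i$ extends far beyond $\phi c_i$ and may well cross $\hh_i'$, so $\phi$ need not skewer anything and Lemma~\ref{lem:Contracting} does not apply. This is why you then face the problem of pinning down $g^k c_i$: you are trying to control the wrong object.

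The fix is to work with half-spaces rather than cubes. Let $\Omega \leq G$ be a compact open subgroup preserving each half-space $\h_i$ and $\h_i'$ (take the pointwise stabiliser of one edge dual to $\hh_i$ and one dual to $\hh_i'$ for each $i$; this is open, compact, and does the job). Poincar\'e recurrence on $\Gamma\backslash G$ gives $h,h'\in\Omega$ and $k\geq 1$ with $\gamma := h g^k (h')^{-1}\in\Gamma$. Then
\[
\gamma\h_i = h\,g^k\big((h')^{-1}\h_i\big) = h(g^k\h_i) \subsetneq h(\h_i') = \h_i',
\]
since $h'$ preserves $\h_i$, $g^k\h_i\subsetneq\h_i'$, and $h$ preserves $\h_i'$. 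Now Lemma~\ref{lem:Contracting} (applied to $\gamma^{-1}$ and the strongly separated pair $\h_i'\subsetneq\h_i$) gives that $\gamma$ is contracting on each $X_i$. No diagonal matching of radii to return times is needed; indeed your proposed matching cannot work, because shrinking $\Omega_R$ makes expected return times grow, not shrink.

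For comparison, the paper avoids measure theory entirely. It argues by induction on $n$: the stabiliser in $\Gamma$ of a point $x\in X_n$ projects to a lattice in $\Aut(X_1)\times\cdots\times\Aut(X_{n-1})$ (because $\Aut(X)_x$ is open), so induction produces $a\in\Gamma_x$ skewering the chosen pairs in the first $n-1$ factors while stabilising $\k_n$; a symmetric step produces $b$ handling the last factor while preserving what $a$ achieved; and $\gamma = ba$ works. Your recurrence argument is shorter once repaired, and closer in spirit to the Lie-group analogue you mention; the paper's argument is more elementary and constructive, and makes the inductive lattice structure of cube complexes explicit.
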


As a consequence, one deduces the following result related to the Flat Closing Conjecture. 

\begin{corintro}\label{cor:Zn}
Let $X $ be a locally compact \cat cube complex and $\Gamma$ be a discrete group acting cocompactly on $X$. If $X$ is a product of $n$ {unbounded} cube subcomplexes, then $\Gamma$ contains a subgroup isomorphic to $\ZZ^n$. 
\end{corintro}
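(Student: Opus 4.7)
The plan is to invoke the existence of regular elements (Theorem~\ref{thm:Regular}) to pin down an $n$-dimensional flat stabilized by a large subgroup of $\Gamma$, and then read $\ZZ^n$ off the induced action on that flat.

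I would first refine the given product decomposition $X = X_1 \times \cdots \times X_n$ into its (essentially unique) irreducible decomposition $X = Y_1 \times \cdots \times Y_m$ with $m \geq n$, each $Y_j$ being irreducible and unbounded. Since a group containing $\ZZ^m$ automatically contains $\ZZ^n$, it suffices to exhibit a $\ZZ^m$ subgroup of $\Gamma$. Passing to a finite-index subgroup, I may assume that $\Gamma$ preserves this decomposition, so that $\Gamma \leq \prod_j \Aut(Y_j)$ and each projection $\pi_j(\Gamma)$ acts cocompactly on $Y_j$; in particular $\Aut(Y_j)$ itself acts cocompactly on $Y_j$. To activate Theorem~\ref{thm:Regular}, I would replace each $Y_j$ by its essential core, using Propositions~\ref{prop:pruning} or~\ref{prop:fg} applied to $\Aut(Y_j)$. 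This yields $\Aut(Y_j)$-invariant irreducible unbounded subcomplexes $Y_j' \subseteq Y_j$ on which $\Aut(Y_j)$ acts essentially, and one must check that $\Gamma$ retains a discrete cocompact action on $X' = \prod_j Y_j'$. Then $\Gamma$ is a uniform lattice in $\Aut(X')$, and Theorem~\ref{thm:Regular} produces $\gamma \in \Gamma$ acting as a rank one isometry on each factor $Y_j'$.

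Writing $\gamma = (g_1, \ldots, g_m)$ in $\prod_j \Aut(Y_j')$, the rank one property of each $g_j$ implies that its min-set splits as $M(g_j) = \RR \times K_j$ with $K_j$ bounded: an unbounded transverse factor would produce a flat half-plane through an axis, contradicting rank one. Hence $M(\gamma) = \RR^m \times \prod_j K_j$ is quasi-isometric to $\RR^m$. By the standard centralizer result for proper cocompact \cat actions, $\centra_\Gamma(\gamma)$ acts properly and cocompactly on $M(\gamma)$. Since $\prod_j K_j$ is compact, the induced action of $\centra_\Gamma(\gamma)$ on the Euclidean factor $\RR^m$ is properly discontinuous and cocompact, so by Bieberbach's theorem $\centra_\Gamma(\gamma)$ is virtually $\ZZ^m$. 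Any $\ZZ^m$ finite-index subgroup of $\centra_\Gamma(\gamma)$ provides the desired $\ZZ^m \geq \ZZ^n$ inside $\Gamma$.

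The main obstacle I foresee is the essentialization step: verifying that the essential cores of the individual factors combine compatibly, so that the product structure, the irreducibility of each factor, and the cocompactness of the $\Gamma$-action all survive the passage to $X'$ simultaneously. Once these technicalities are in place, the argument reduces to a clean application of Theorem~\ref{thm:Regular} followed by classical centralizer theory for \cat groups.
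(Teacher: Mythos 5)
Your argument is correct and follows essentially the same route as the paper: reduce to the essential core and the irreducible factors, apply Theorem~\ref{thm:Regular} to obtain a regular element $\gamma$, observe that its centraliser acts cocompactly on a flat (the parallel set of the product of axes, which is $\RR^m$ up to a bounded factor precisely because each component of $\gamma$ is rank one), and conclude via Bieberbach. The only cosmetic difference is that the paper obtains cocompactness of $\centra_\Gamma(\gamma)$ on that flat via Selberg's lemma applied to the centraliser in the ambient locally compact group $\Aut(X)$, whereas you invoke the standard min-set/centraliser result for proper cocompact \cat actions directly.
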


\subsection{Euclidean alternative}

\sloppy
A simple-minded application of the Rank Rigidity of \cat cube complexes is the following characterisation of those \cat cube complexes whose full automorphism group stabilises some isometrically embedded Euclidean flat. 
By a \textbf{facing triple} of hyperplanes, we mean a triple of hyperplanes associated to a triple of pairwise disjoint half-spaces.

\begin{thmintro}\label{thmintro:PseudoEucl}
Let $X$ be a finite-dimensional \cat cube complex such that $\Aut(X)$ acts essentially without fixed point at infinity. Then either $\Aut(X)$ stabilises a Euclidean flat (possibly reduced to a single point)  or there is a facing triple of hyperplanes. 
\end{thmintro}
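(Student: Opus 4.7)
The plan is to argue by contrapositive: assume $X$ contains no facing triple of hyperplanes, and show that $\Aut(X)$ stabilises a Euclidean flat. If $\Aut(X)$ fixes a point $x_0 \in X$, we are done since $\{x_0\}$ is itself a zero-dimensional Euclidean flat. Otherwise $\Aut(X)$ acts without fixed point in $X \cup \bd X$, and by essentiality (together with the discussion following Theorem~\ref{thmintro:RankRigidity}) we may take $Y = X$ in that theorem. The resulting dichotomy reads: either (a) $X$ splits as a product $X_1 \times X_2$ with both factors unbounded, or (b) $\Aut(X)$ contains a contracting isometry. I would then proceed by induction on $\dim X$.

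The base case $\dim X = 1$ is straightforward: $X$ is a tree, the absence of a facing triple forces every vertex to have valence at most two, and essentiality combined with no fixed point at infinity rules out rays and finite segments, leaving $X$ a line, which is itself a Euclidean flat stabilised by $\Aut(X)$. In the inductive step, case (a) is handled as follows. A finite-index (in fact normal) subgroup $\Aut^*(X) \leq \Aut(X)$ preserves the factorisation; since half-spaces of $X$ pull back from a single factor, the induced action of $\Aut^*(X)$ on each $X_i$ remains essential and fixed-point-free at infinity, and since a facing triple in $X_i$ would lift to one in $X$, each $X_i$ inherits the no-facing-triple property. The inductive hypothesis then supplies a Euclidean flat $F_i \subseteq X_i$ stabilised by $\Aut^*(X)$, so $F_1 \times F_2$ is a Euclidean flat stabilised by $\Aut^*(X)$; taking the convex hull of its finite $\Aut(X)$-orbit (and appealing to the canonical choice of minimal invariant flat in each irreducible factor, to control the permutation of factors by $\Aut(X)/\Aut^*(X)$) yields an $\Aut(X)$-invariant Euclidean flat.

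The heart of the argument is case (b): I claim that the existence of a contracting isometry $g \in \Aut(X)$ forces a facing triple whenever $\dim X \geq 2$, contradicting our standing assumption. Since $g$ is contracting, the hyperplanes $\hh$ and $g^n \hh$ are strongly separated for $n$ large enough and any $\hh$ skewered by $g$. Moreover, since $\Aut(X)$ has no fixed point at infinity, the pair of fixed points $\xi^\pm \in \bd X$ of $g$ is moved by some $h \in \Aut(X)$, so that $g' = h g h^{-1}$ is a contracting isometry with a pair of fixed points disjoint from $\{\xi^\pm\}$. A Klein-style ping-pong argument exploiting the North--South dynamics of $g$ and $g'$---combined with the Flipping Lemma to orient half-spaces suitably---should then yield three well-chosen translates of $\hh$ under words in $g$ and $g'$ whose associated half-spaces are pairwise disjoint, producing the desired facing triple.

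The main obstacle is precisely this last step of case (b): carrying out the ping-pong construction in the combinatorial setting of hyperplanes and half-spaces, rather than merely on the visual boundary. One must convert the boundary-dynamical fact that two contracting isometries with disjoint fixed-point pairs generate a free-like action on $\bd X$ into the concrete statement that three half-spaces of $X$ are pairwise disjoint. The technical ingredients that will make this work are the strong-separation property of hyperplanes on the axis of a contracting isometry---which ensures that $g^n \hh$ and $g^m \hh$ are far apart combinatorially for $|n-m|$ large---and the Flipping Lemma---which controls how $\Aut(X)$ moves half-spaces around---combining to realise the three desired facing half-spaces as translates of $\h^*$ (for instance, of the form $\h^*, \; g^N.\h, \; (g')^N.\h$) for $N$ sufficiently large.
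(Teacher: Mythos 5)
The heart of your argument---case (b), the claim that a contracting isometry forces a facing triple once $\dim X \geq 2$---is false, and the way it fails is exactly the alternative the theorem is designed to capture. Consider the diagonal strip $\{(x,y)\in\RR^2 : |x-y|\leq 10\}$ with its induced squaring. This is a $2$-dimensional, irreducible \cat cube complex on which $\Aut(X)$ acts essentially and without fixed point at infinity (the visual boundary consists of two points, swapped by the point reflection), and the diagonal translation is a contracting isometry (every ball disjoint from its axis has uniformly bounded radius, so the contraction condition is vacuous). Yet there is no facing triple: every hyperplane separates the two ends, so among any three pairwise disjoint half-spaces two would contain the same end and hence meet. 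The precise step that breaks is your assertion that, since $\Aut(X)$ has no fixed \emph{point} at infinity, some $h$ moves the fixed-point \emph{pair} $\{\xi^+,\xi^-\}$ of $g$ off itself: the absence of a fixed point does not preclude an invariant pair at infinity (equivalently, $X$ being $\RR$-like), and in that case no such $h$ exists and the ping-pong cannot start. This quasi-line situation is exactly the ``invariant flat'' branch of the conclusion, and since essential $\RR$-like complexes exist in every dimension, it cannot be absorbed into the base case $\dim X=1$; it must be split off \emph{before} one argues for a facing triple.

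For comparison, the paper's proof runs as follows. An invariant flat excludes facing triples because every (essential) hyperplane meets the flat in a Euclidean hyperplane, and Euclidean hyperplanes admit no facing triple. Conversely, the de Rham decomposition together with Lemma~\ref{lem:pseudoEucl} reduces the problem to $X$ irreducible and \emph{not} $\RR$-like. If all hyperplanes are compact the facing triple is elementary; otherwise one picks a non-compact hyperplane $\hh$, uses the Irreducibility Criterion and the Double Skewering Lemma to produce $g$ with $\la g \ra\hh$ pairwise strongly separated, and observes that of the infinitely many hyperplanes crossing $\hh$, none crosses $g^{-1}\hh$ or $g\hh$ and only finitely many separate them; hence some $\hk$ crossing $\hh$ has $g^{-1}\hh\cup g\hh$ entirely on one side, and $\hk$, $g^{-1}\hh$, $g\hh$ form the facing triple. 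No boundary dynamics or ping-pong is needed. Your contrapositive framing and the reduction in the product case are fine in spirit, but case (b) must be replaced by an argument of this combinatorial kind, carried out only after the $\RR$-like case has been excluded.
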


{
In the special case when $X$ is locally compact and $\Aut(X)$ acts cocompactly, the same holds even if $\Aut(X)$ has some fixed points at infinity, see Theorem~\ref{thm:PseudoEucl} below. 
}

\subsection{Tits alternative}

One of the most natural uses of the strong dynamical properties of contracting isometries is to produce Schottky pairs generating (discrete) free subgroups via the Ping Pong Lemma. In particular, there is a relation between the Rank Rigidity Conjecture for proper \cat spaces and the Tits Alternative, which constitutes another major open problem in this area. 

In the case of finite-dimensional \cat cube complexes, a version of the Tits Alternative was already obtained by M.~Sageev and D.~Wise \cite{SageevWise}, not using contracting elements but relying instead on the Algebraic Torus Theorem. 

Here we present two other versions of the Tits Alternative relying on Rank Rigidity. We emphasize that our approach is here purely geometric; {most arguments use only the combinatorics of hyperplanes in \cat cube complexes}.

\begin{thmintro}[Tits Alternative, first version]\label{thm:TitsAlt}
Let $X$ be a finite-dimensional \cat cube complex and let $\Gamma \leq \Aut(X)$. Then $\Gamma$ has a finite index subgroup fixing a point in $X \cup \bd X$ or $\Gamma$ contains a non-Abelian free subgroup. 
\end{thmintro}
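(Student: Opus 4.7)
The plan is to argue by induction on $\dim X$. The base case $\dim X = 0$ is trivial: a connected cube complex of dimension zero is a single vertex, fixed by every automorphism. For the inductive step, assume the conclusion holds in dimensions $< \dim X$, and suppose $\Gamma \leq \Aut(X)$ has no finite-index subgroup fixing any point of $X \cup \bd X$; the goal is to produce a non-abelian free subgroup. In particular $\Gamma$ itself has no fixed point in $X \cup \bd X$, so Theorem~\ref{thmintro:RankRigidity} provides a $\Gamma$-invariant convex subcomplex $Y \subseteq X$ such that either $\Gamma$ contains an element $g$ acting on $Y$ as a contracting isometry, or $Y = Y_1 \times Y_2$ splits as a product of two unbounded cube subcomplexes.

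In the contracting case, $g$ acts on $\bd Y$ with North--South dynamics, fixing exactly two points $\xi_+, \xi_- \in \bd Y \subseteq \bd X$. The $\Gamma$-orbit of the unordered pair $\{\xi_+, \xi_-\}$ must be infinite: otherwise its set-stabiliser would have finite index in $\Gamma$, and a further subgroup of index at most two would fix both $\xi_\pm$ pointwise, contrary to hypothesis. Hence some $h \in \Gamma$ moves $\{\xi_+, \xi_-\}$ to a pair disjoint from itself. The conjugate $hgh^{-1}$ is then contracting on $Y$ with fixed points $h \xi_\pm$ disjoint from $\xi_\pm$, and the Ping Pong Lemma applied to sufficiently large powers of $g$ and $hgh^{-1}$ on suitably small disjoint neighbourhoods of their fixed points in $\bd Y$ yields a rank-two non-abelian free subgroup of $\Gamma$.

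In the product case $Y = Y_1 \times Y_2$, a subgroup $\Gamma' \leq \Gamma$ of index at most two preserves the factorisation, giving coordinate projections $\pi_i \colon \Gamma' \to \Aut(Y_i)$. Since both factors are unbounded we have $\dim Y_i \geq 1$, so $\dim Y_i \leq \dim Y - 1 \leq \dim X - 1 < \dim X$, and the inductive hypothesis applies to each $\pi_i(\Gamma') \curvearrowright Y_i$. If some $\pi_i(\Gamma')$ contains a non-abelian free subgroup, lifting a pair of free generators to $\Gamma'$ produces a non-abelian free subgroup of $\Gamma$: any relation among the lifts would project to a relation among the original generators, which must then be trivial. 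Otherwise each $\pi_i(\Gamma')$ admits a finite-index subgroup $H_i$ fixing a point $p_i \in Y_i \cup \bd Y_i$, and $\pi_1^{-1}(H_1) \cap \pi_2^{-1}(H_2)$ then has finite index in $\Gamma$ and fixes the point $(p_1, p_2) \in Y \cup \bd Y \subseteq X \cup \bd X$, via the join description of the boundary of a product. This contradicts the standing assumption, so this sub-case cannot occur.

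The main delicate point, I expect, lies in the contracting case: translating the \emph{no finite-index subgroup fixes a point} hypothesis into the non-preservation of the unordered pair $\{\xi_+, \xi_-\}$, and then locating an $h \in \Gamma$ that moves this pair \emph{entirely} off itself (not merely one of its elements), so that the Ping Pong input is genuinely available. The product case, by contrast, is essentially formal once the induction is set up, modulo the standard lifting argument for free subgroups through the projections $\pi_i$ and the identification $\bd(Y_1 \times Y_2) = \bd Y_1 \ast \bd Y_2$.
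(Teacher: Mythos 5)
Your route is genuinely different from the paper's, in both branches. The paper does not induct on dimension and never touches the visual boundary in this proof: after pruning to an essential action with no fixed point at infinity (Proposition~\ref{prop:pruning}) and passing to the de~Rham decomposition (Proposition~\ref{prop:deRham}), each irreducible factor either carries an invariant flat (which irreducibility forces to be a line, so an index-two subgroup fixes a point at infinity, a contradiction) or, by Theorem~\ref{thmintro:PseudoEucl}, contains a facing triple of hyperplanes; the Flipping Lemma upgrades this to a facing \emph{quadruple}, the Double Skewering Lemma applied to the two pairs yields two hyperbolic elements, and ping pong is played combinatorially with four pairwise disjoint half-spaces. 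This keeps the whole argument inside the hyperplane combinatorics, which is deliberate: $X$ is not assumed proper. Your version instead invokes Theorem~\ref{thmintro:RankRigidity} as a black box and plays ping pong on $\bd Y$, which requires North--South dynamics of a contracting isometry on the visual boundary of a possibly non-proper, non-locally-finite complex. The paper asserts this property in the introduction but never relies on it; if you take this route you must justify the uniform contraction statement (large powers of $g$ carry the complement of any neighbourhood of $\xi_-$ into any prescribed neighbourhood of $\xi_+$) directly from the contracting property of the axis, without compactness of $\bd Y$. Your product case (induction, the lifting argument for free subgroups, the join description of $\bd(Y_1\times Y_2)$) is sound, modulo the cosmetic point that the subgroup preserving a two-factor decomposition may have index larger than two once one regroups irreducible de~Rham factors; finite index is all you need.

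The one genuine gap is the step you flag yourself: from ``the orbit of $P=\{\xi_+,\xi_-\}$ is infinite'' you write ``hence some $h$ moves $P$ to a pair disjoint from itself.'' As stated this is a non sequitur: an infinite orbit of pairs could a priori consist entirely of pairs containing $\xi_+$. The claim is nevertheless true, and the missing argument is short. If $hP\cap P\neq\varnothing$ for every $h\in\Gamma$, then by equivariance any two pairs in the orbit intersect. An infinite family of pairwise-intersecting two-element sets must share a common element: if $\{a,b\}$, $\{a,c\}$, $\{b,c\}$ with $a,b,c$ distinct all occur, any further member meeting all three must equal one of them, contradicting infiniteness. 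The common element is unique (two common elements would force all pairs to coincide), hence $\Gamma$-invariant, contradicting the standing assumption that $\Gamma$ fixes no point of $\bd X$. With this supplied, and with North--South dynamics justified as above, your contracting case goes through.
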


\begin{proof}
Assume that $\Gamma$ does not virtually fix any point in  $X \cup \bd X$. It follows that there is a $\Gamma$-invariant {convex} subcomplex $Y \subset X$ on which $\Gamma$ acts essentially and without a fixed point at infinity (see Proposition~\ref{prop:pruning} below). 

Every finite-dimensional \cat cube complex $Y$ has a canonical decomposition $Y = Y_1 \times \dots \times Y_n$ as a finite product of irreducible subcomplexes, which is preserved by the full automorphism group $\Aut(Y)$ up to permutations of possibly isomorphic factors (see Proposition~\ref{prop:deRham} below). After replacing $\Gamma$ by a finite index subgroup, we obtain in particular an essential action of $\Gamma$ on each irreducible factor $Y_i$ which does not fix any point at infinity. 

Assume that for some $i$, the cube complex $Y_i$ does not contain any Euclidean flat which is invariant under $\Aut(Y_i)$. Then Theorem~\ref{thmintro:PseudoEucl} ensures the existence of a facing triple of hyperplanes in $Y_i$. We then invoke the Flipping Lemma, which shows that there actually exists a facing \emph{quadruple} of hyperplanes. We group these four hyperplanes into two pairs and apply the Double Skewering Lemma to each of them. This provides a Schottky pair of hyperbolic isometries, from which the existence of a non-Abelian free subgroup in $\Gamma$ follows via the Ping Pong Lemma. 

Assume now that for all $i$, the factor $Y_i$ contains some $\Aut(Y_i)$-invariant flat $F_i \subset Y_i$. Since $Y_i$ is irreducible, it follows that $F_i$ is one-dimensional (see Lemma~\ref{lem:pseudoEucl} below). Thus $\Aut(Y_i)$ has an index two subgroup which fixes a point at infinity, contradicting our assumption on $\Gamma$. 
\end{proof}

The first alternative in Theorem~\ref{thm:TitsAlt} might look unsatisfactory in the sense that it does not provide any algebraic information on $\Gamma$. Notice however that no assumption on local compactness of $X$ is made. Therefore, the above statement is optimal because \emph{any} group admits an action on a tree $X$ with a global fixed point (take $X$ to be an infinite star on which $\Gamma$ acts by fixing the root and permuting the branches). It is nevertheless possible to combine Theorem~\ref{thm:TitsAlt} with some results from \cite{CapraceLytchak} to obtain a more precise description of $\Gamma$ in the case of a proper action. 

\begin{corintro}[Tits Alternative, second version]\label{cor:TitsAlt}
Let $X$ be a finite-dimensional \cat cube complex and $\Gamma$ be a discrete group acting properly on $X$. Then either $\Gamma$ is \{locally finite\}-by-\{virtually Abelian\} or $\Gamma$ contains a non-Abelian free subgroup.
\end{corintro}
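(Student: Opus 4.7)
The plan is to derive this corollary by combining the first version of the Tits alternative (Theorem~\ref{thm:TitsAlt}) with the structural results of \cite{CapraceLytchak} concerning proper isometric actions of discrete groups on finite-dimensional \cat spaces that admit a fixed point at infinity.

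First I would apply Theorem~\ref{thm:TitsAlt} to $\Gamma \leq \Aut(X)$. Either $\Gamma$ contains a non-abelian free subgroup, in which case the second alternative of the corollary holds and the argument stops, or $\Gamma$ admits a finite-index subgroup $\Gamma_0$ fixing some point $p \in X \cup \bd X$. Up to replacing $\Gamma_0$ by its normal core in $\Gamma$, I may furthermore assume that $\Gamma_0 \triangleleft \Gamma$ with $[\Gamma : \Gamma_0] < \infty$; the desired structure on $\Gamma$ will then be inherited from the corresponding structure on $\Gamma_0$, since the class of locally finite-by-virtually abelian groups is stable under finite extensions (one passes to the locally finite radical of the extension).

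If $p \in X$, the conclusion is immediate: since the $\Gamma$-action is proper, the stabilizer $\Stab_\Gamma(p)$ is finite, hence $\Gamma_0$ and therefore $\Gamma$ itself are finite. In the substantial case $p \in \bd X$, the group $\Gamma_0$ acts properly on the finite-dimensional \cat space $X$ and fixes a point at infinity. The main input is then a structural theorem from \cite{CapraceLytchak} which provides, for any such group, a locally finite normal subgroup $N \triangleleft \Gamma_0$ with $\Gamma_0/N$ virtually abelian: informally, the Busemann cocycle based at $p$ supplies a free abelian quotient of $\Gamma_0$ of finite rank, while the horospherical kernel is forced to be locally finite by the combination of properness and the finite-dimensionality of the horospheres at $p$.

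The main obstacle is precisely this appeal to \cite{CapraceLytchak}: our geometric Tits alternative (Theorem~\ref{thm:TitsAlt}) yields only a dynamical dichotomy for the action on $X$, and promoting a fixed point at infinity into an algebraic description of the stabilizing subgroup relies on metric information that is not specific to cube complexes. Once the structural statement from \cite{CapraceLytchak} is accepted, the remainder of the argument---case analysis on whether $p$ lies in $X$ or at infinity, reduction to a normal subgroup of finite index, and the verification that locally finite-by-virtually abelian is preserved under finite extensions---is essentially routine.
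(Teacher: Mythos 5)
There is a genuine gap at the key step. Your argument rests on the claim that a discrete group acting properly on a finite-dimensional \cat space and fixing a point at infinity must be \{locally finite\}-by-\{virtually Abelian\}, with the Busemann homomorphism providing the Abelian quotient and the ``horospherical kernel'' being locally finite. This is false, and no such theorem appears in \cite{CapraceLytchak}. Take $X = T \times \RR$, where $T$ is the Cayley tree of $F_2$ and $\RR$ is cubulated as a line, and let $\Gamma = F_2 \times \ZZ$ act properly (indeed freely and cocompactly) in the obvious way. Then $\Gamma$ fixes the two points of $\bd X$ determined by the $\RR$-factor, yet the kernel of the Busemann homomorphism at such a point is $F_2$, acting properly on the horospheres $T \times \{s\}$; it is neither locally finite nor virtually Abelian. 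The underlying issue is that the two alternatives of Theorem~\ref{thm:TitsAlt} are not exclusive: a group can both virtually fix a point in $X \cup \bd X$ and contain a non-Abelian free subgroup, so landing in the first alternative of Theorem~\ref{thm:TitsAlt} gives you essentially no algebraic information about $\Gamma$.

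The paper's proof is organised differently precisely to avoid this. It first quotes \cite[Theorem~1.7]{CapraceLytchak}, which states that for a discrete group acting properly on a finite-dimensional \cat space, \emph{amenability} is equivalent to being \{locally finite\}-by-\{virtually Abelian\}; so one only has to produce a free subgroup when $\Gamma$ is non-amenable. For a non-amenable $\Gamma$, \cite[Theorem~A.5]{CapraceLecureux} shows that $\Gamma$ fixes no point in the \emph{ultrafilter bordification} of $X$, and this yields a $\Gamma$-invariant \emph{restriction quotient} $Y$ of $X$ on which $\Gamma$ has no fixed point in $Y \cup \bd Y$ --- even though $\Gamma$ may well fix points of $\bd X$ itself (in the example above, $Y$ would be the tree $T$). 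Only then is Theorem~\ref{thm:TitsAlt} applied, to $Y$ rather than to $X$. If you want to salvage your approach, you would need to replace your purported structural theorem by this two-step reduction: characterise the first alternative via amenability, and pass to a suitable restriction quotient to eliminate the fixed points at infinity before invoking Theorem~\ref{thm:TitsAlt}. The reductions you do carry out correctly (finiteness of point stabilisers in $X$ by properness, stability of the class under finite extensions) are fine but peripheral.
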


This is a slight improvement of the statement from \cite{SageevWise}. 

\subsection{Dynamics on the boundary}

It is a well known fact that if a group $\Gamma$ acts on a proper \cat space $X$ and contains a rank one isometry, then the $\Gamma$-action on its limit set is \textbf{topologically minimal} (\emph{i.e.} every orbit is dense) and the limit set is the unique $\Gamma$-minimal subset of the boundary $\bd X$ (provided the $\Gamma$-action is \textbf{non-elementary}, \emph{i.e.} there is no global fixed point or fixed pair at infinity). We refer to \cite{BallmannBuyalo} and \cite{Hamenstadt} for more information.

In fact, according to recent results by Gabriele Link~\cite{Link}, even if $X$ is a product space, it is possible to obtain very precise dynamical information on the $\Gamma$-action on its limit set if one knows that $\Gamma$ contains an element which acts as a rank one isometry on each irreducible factor of $X$. In particular, when $X$ is a \cat cube complex, all the results recently obtained in the aforementioned paper by Gabriele Link apply to \cat cube complexes under the hypotheses of Theorem~\ref{thm:Regular}. {We shall not repeat these statements here; the interested reader should consult \cite{Link} and references therein. }

\subsection{Quasi-morphisms and products of trees}

Another way to exploit the peculiar dynamical properties of contracting isometries has recently been elaborated by M.~Bestvina and K.~Fujiwara \cite{BestvinaFujiwara} in order to construct quasi-morphisms. In particular, combining the Bestvina--Fujiwara construction with Theorem~\ref{thmintro:RankRigidity}, it follows that \emph{a discrete group acting properly, essentially and without fixed point at infinity on a locally compact finite-dimensional \cat cube complex is either virtually Abelian or has an infinite-dimensional space of non-trivial quasi-morphisms}. Finer results applying also to non-discrete actions on non-proper spaces can be further obtained; we shall not give more details in this direction here. Instead, we present the following ``\emph{bounded cohomological characterisation}'' of product of trees, which is in the same spirit as the main result from  \cite{BestvinaFujiwara}.

\begin{thmintro}\label{thm:ProductTrees}
Let $X = X_1 \times \dots \times X_n$ be a product of $n$ irreducible locally compact geodesically complete \cat cube complexes  such that $\Aut(X_i)$ acts cocompactly on $X_i$ for all $i$. 
For any (possibly non-uniform) lattice $\Gamma \leq \Aut(X_1)\times \dots \times \Aut(X_n)$, the following conditions are equivalent. 
\begin{enumerate}[(i)]
\item $\QH(\Gamma) $ is finite-dimensional. 

\item $\QH(\Gamma) = 0$. 

\item For all $i$, the space $X_i$ is a semi-regular tree, and if $X_i$ is not isometric to the real line,  then the closure $G_i$ of the projection of $\Gamma$ to $\Aut(X_i)$ is doubly transitive on $\bd X_i$. 
\end{enumerate}
\end{thmintro}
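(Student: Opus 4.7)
The plan is to observe that (ii) $\Rightarrow$ (i) is immediate, and to prove the remaining implications (iii) $\Rightarrow$ (ii) and (i) $\Rightarrow$ (iii) separately. Both substantial directions pivot on the Bestvina--Fujiwara construction: a pair of contracting isometries satisfying a WPD-type independence produces an infinite-dimensional subspace of $\QH$, while the absence of such pairs -- together with strong boundary dynamics -- forces vanishing of $\QH$.

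For (iii) $\Rightarrow$ (ii), I would first show that $\QH$ vanishes for the ambient group $G := G_1 \times \cdots \times G_n$. Each $G_i$ is a closed cocompact subgroup of $\Aut(X_i)$ acting on a semi-regular tree $X_i$ doubly transitively on $\bd X_i$; for such a group, classical results on continuous bounded cohomology (Burger--Monod, Monod--Shalom) yield $\QH(G_i) = 0$, and a product argument gives $\QH(G) = 0$. Transporting the vanishing to the lattice $\Gamma$ is done via the integration formula of Burger--Monod, which identifies $\QH(\Gamma)$ with a $\Gamma$-invariant subspace of a continuous version of $\QH$ on $G$; for non-uniform $\Gamma$, the needed integrability follows from the cocompactness of each $G_i$ and standard control of the Haar volume by combinatorial distance in the tree.

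For (i) $\Rightarrow$ (iii), I would argue by contrapositive: assuming (iii) fails, I would produce an infinite-dimensional subspace in $\QH(\Gamma)$. First, suppose some $X_i$ has cubical dimension $\geq 2$. Since $G_i = \overline{\pr_i(\Gamma)}$ acts cocompactly on $X_i$ without fixed point at infinity, the Irreducibility Criterion applied to the irreducible $X_i$ produces a strongly separated pair of hyperplanes; the Flipping Lemma then upgrades this, as in the proof of Theorem~\ref{thm:TitsAlt}, to a facing quadruple, and the Double Skewering Lemma combined with Theorem~\ref{thm:Regular} yields a Schottky pair of contracting isometries $\gamma_1, \gamma_2 \in \Gamma$ acting on $X_i$ with four distinct fixed points in $\bd X_i$. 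The WPD hypothesis of Bestvina--Fujiwara is then verified from the properness of the $\Gamma$-action on the full product $X$ together with the North--South dynamics on $X_i$; Bestvina--Fujiwara produces infinitely many independent quasi-morphisms of $\Gamma$. Second, suppose each $X_i$ is a tree but some $X_i \neq \RR$ has $G_i$ not doubly transitive on $\bd X_i$. Then two axes of hyperbolic elements in $X_i$ can be found whose pairs of endpoints lie in distinct $G_i$-orbits; approximating these by axes of hyperbolic elements in $\pr_i(\Gamma)$ and lifting to $\Gamma$, one obtains two rank one elements whose axes are inequivalent in the Bestvina--Fujiwara sense, and another application of Bestvina--Fujiwara again yields infinite-dimensional $\QH(\Gamma)$. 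Semi-regularity of each tree factor $X_i$ follows from the cocompactness of $G_i$ combined with the edge-transitivity implied by doubly transitive action on $\bd X_i$.

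The main obstacle is the verification of the WPD condition in the first step of (i) $\Rightarrow$ (iii): $\Gamma$ acts properly on the product $X$ but not on $X_i$ alone, so stabilizers of long arcs of axes inside $X_i$ need not be finite a priori. One must instead argue that any element of $\Gamma$ which coarsely stabilizes a long arc of the $\gamma_j$-axis in $X_i$ is severely constrained also in the complementary factors, and exploit the properness of the action on the entire product $X$ to control the coarse stabilizers. A secondary difficulty is justifying the integration step that transports $\QH$-vanishing from $G$ to the non-uniform lattice $\Gamma$, which requires verifying suitable integrability for the cocycles involved; here the tree geometry of each factor is what makes the estimate tractable.
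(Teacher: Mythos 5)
Your overall architecture (the trivial implication, Burger--Monod induction for (iii)$\Rightarrow$(ii), Bestvina--Fujiwara for $\neg$(iii)$\Rightarrow\neg$(i)) matches the paper's, and the (iii)$\Rightarrow$(ii) sketch is essentially what the paper does by citing \cite[Corollary~26]{BurgerMonod}. The paper, however, first reduces everything to a general \cat statement (Theorem~\ref{thm:QH}): Corollary~\ref{cor:GeodComplete} gives a rank one isometry in $\Aut(X_i)$, Lemma~\ref{lem:Ballmann:bis} transfers it to the projection $\Gamma_i$ (which has full limit set), and then the trichotomy of Proposition~\ref{prop:QH:trichotomy} together with Proposition~\ref{prop:DoublyTrans} finishes the argument.

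The genuine gap is in your (i)$\Rightarrow$(iii). First, the obstacle you flag -- verifying WPD for the $\Gamma$-action on a single factor $X_i$ -- is not merely hard, it is false in exactly the situations of interest: for an irreducible lattice the projection to $\Aut(X_i)$ is non-discrete (typically dense in $G_i$), and the stabilizer in $\Gamma$ of a point of another factor is already an infinite group acting on $X_i$ with compact "shadow" there, so coarse stabilizers of long arcs in $X_i$ meet $\Gamma$ in infinite sets; properness on the product cannot repair this. The correct tool is the rank-one version of Bestvina--Fujiwara for proper \cat spaces, packaged here as Proposition~\ref{prop:QH:trichotomy}: it applies to an arbitrary subgroup $G\leq\Isom(X_i)$ containing a rank one isometry, with no properness or WPD hypothesis, because the contraction property of rank one axes replaces WPD. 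Second, in your case of $\dim X_i\geq 2$, a Schottky pair with four distinct fixed points at infinity is not sufficient input for that machinery: one needs two rank one elements whose axes are \emph{inequivalent} under $G_i$, which is precisely the statement that $G_i$ is not doubly transitive on $\bd X_i$. You never establish this; it is here that Proposition~\ref{prop:DoublyTrans} (a doubly transitive boundary action of a cocompact group on a geodesically complete space forces $X_i$ to be a rank one symmetric space or a semi-regular tree, hence never a cube complex of dimension $\geq 2$) is indispensable and absent from your proposal. With that proposition and the trichotomy in place of your WPD argument, both of your cases collapse into one and the proof goes through.
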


Recall that $\QH(\Gamma)$ denotes the real vector space of quasi-morphisms of $\Gamma$ modulo the subspace of trivial quasi-morphisms, which is the direct sum of the space of bounded functions on $\Gamma$ and the space of genuine morphisms $\Gamma \to \RR$. The space $\QH(\Gamma)$ coincides with the kernel of the canonical map from the bounded cohomology to the usual cohomology of $\Gamma$ in degree two with trivial coefficients. 

We point out that there is a version of Theorem~\ref{thm:ProductTrees} which does not require the assumption of geodesic completeness. In that case the assertion (iii) must be replaced by the fact that $X_i$ is equivariantly quasi-isometric to some tree on which $G_i$ acts naturally by automorphisms (see Remark~\ref{rem:QH} below). 

As we shall see below, the details of the proof of Theorem~\ref{thm:ProductTrees}  have little to do with \cat cube complexes, but rather depend on a combination of Rank Rigidity with general arguments on quasi-morphisms and \cat spaces with rank one isometries (see notably Theorem~\ref{thm:QH} below).

\subsection{Asymptotic cones}

Another typical consequence of the existence of rank one isometries concerns the aqf the corresponding groups and spaces. This phenomenon was studied in detail by C.~Drutu, Sh.~Mozes and M.~Sapir \cite{DMS}. Relying on their work, it is straightforward to deduce the following consequence of Rank Rigidity.

\begin{corintro}\label{cor:AsymCones}
Let $X$ be a locally compact \cat cube complex such that $\Aut(X)$ acts cocompact, essentially and without a fixed point at infinity. Then
\begin{enumerate}[(i)]
\item $X$ is irreducible if and only if every asymptotic cone of $X$ has a cut-point. 

\item If $X$ is irreducible and   $\Gamma \leq \Aut(X)$ is a finitely generated group acting essentially  without a fixed point at infinity, then every asymptotic cone of $\Gamma$ has a cut-point.
\end{enumerate}
\end{corintro}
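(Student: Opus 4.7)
The plan is to feed the contracting isometries produced by the Rank Rigidity Theorem~\ref{thmintro:RankRigidity} into the theorem of Drutu--Mozes--Sapir~\cite{DMS}, according to which a \cat space, respectively a finitely generated group acting on such a space, admits a cut-point in every asymptotic cone as soon as it contains a contracting isometry, respectively a contracting element.

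For the forward direction of part (i), suppose $X$ is irreducible. The standing essentiality hypothesis on $\Aut(X)$ together with the absence of fixed points at infinity allow us to apply Theorem~\ref{thmintro:RankRigidity} with $Y = X$; irreducibility rules out the product alternative, so $\Aut(X)$ must contain an isometry acting on $X$ as a contracting isometry. Because $X$ is locally compact and $\Aut(X)$ acts cocompactly, this situation fits the framework of \cite{DMS}, and we conclude that every asymptotic cone of $X$ has a cut-point. Conversely, if $X$ is reducible then essentiality forbids bounded factors, and we may write $X = X_1 \times X_2$ with both $X_i$ unbounded. Any asymptotic cone of $X$ then decomposes as the corresponding product $X_1^\omega \times X_2^\omega$ of unbounded geodesic spaces, and no such product admits a cut-point: any candidate separating point can be bypassed by a path that first moves in the other factor.

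For part (ii), the essentiality hypothesis on $\Gamma$ rules out bounded $\Gamma$-orbits, hence fixed points in $X$, while fixed points at infinity are excluded by hypothesis; Theorem~\ref{thmintro:RankRigidity} thus applies with $Y = X$. Irreducibility of $X$ again excludes the product alternative, producing an element $\gamma \in \Gamma$ that acts as a contracting isometry on $X$. Appealing to \cite{DMS} a second time then yields a cut-point in every asymptotic cone of $\Gamma$.

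The main subtlety I expect is in part (ii), namely verifying that the contracting behaviour of $\gamma$ on $X$ can be transferred to the Cayley graph of the finitely generated group $\Gamma$ in the absence of any \emph{a priori} properness of the $\Gamma$-action on $X$. This should come down to comparing the orbit pseudometric of $\gamma$ on its axis with the word metric on the cyclic subgroup $\langle \gamma \rangle$, and checking that the axis descends to a Morse (contracting) quasi-geodesic in the Cayley graph of $\Gamma$, as required to invoke the relevant statement of \cite{DMS}.
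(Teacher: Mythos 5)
Your proposal follows essentially the same route as the paper: Rank Rigidity (Theorem~\ref{thmintro:RankRigidity}) supplies a contracting element, and the Drutu--Mozes--Sapir machinery (their Propositions~3.24 and~3.26) converts this into cut-points in asymptotic cones of $X$ and of $\Gamma$ respectively; in particular, the subtlety you flag in part (ii) --- transferring the contracting behaviour to the word metric of $\Gamma$ without properness --- is exactly what the paper delegates to \cite[Proposition~3.26]{DMS}, which yields Morse elements of $\Gamma$ relative to its own word metric even when $\Gamma$ is distorted in $X$. The only cosmetic difference is in the converse of (i), where the paper deduces the absence of cut-points from the linear divergence of a non-trivial product (using essentiality to rule out bounded factors) rather than arguing directly on product asymptotic cones.
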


{
\bigskip 
The paper is organized as follows. Sections~\ref{sec:prel} and~\ref{sec:essential} are of preliminary nature. They collect a number of useful general facts on groups acting on \cat cube complexes, most of which are well-known to the experts. The new material is exposed in the following sections. The proof of the Flipping Lemma is given in Section~\ref{sec:Flipping} and we recommend to readers who have some familiarity with \cat cube complexes to start reading this paper from that point on. In fact Section~\ref{sec:Flipping} contains two distinct (and conceptually different) proofs of the Flipping Lemma, the first one shorter and more conceptual, the second more pedestrian but entirely self-contained. Section~\ref{sec:StronglySeparated} is devoted to a criterion allowing one to recognise irreducible \cat cube complexes. With this  criterion at hand, we complete the proof of the Rank Rigidity theorem in Section~\ref{sec:RkOne} and present the applications in the final section. 

Let us finally point out that part of the technical difficulties in this paper come from the fact that we have tried to let our arguments work in the most general setting possible, rather than focusing on the special case of a locally compact \cat cube complex endowed with a proper cocompact action of some discrete group. The reader who is primarily interested in the latter situation will realise that  many of our discussions can be simplified to a large extent in that specific case; we nevertheless decided to include discussions of finite-dimensional spaces that are possibly non-proper, or of lattices that are possibly non-uniform. {This level of generality is useful even in the study of proper actions. For example, proper actions on products may descend to improper actions on their factors.}
}

\subsection*{Acknowledgement} The first-named author would like to thank the Moshav Shorashim Research Institute for its hospitality while a large part of the present work was accomplished. {We express our gratitude to Frédéric Haglund and the referee for long lists of detailed comments on an earlier version of this manuscript, which helped much in improving the presentation.}

\section{Preliminaries}\label{sec:prel}

We first recall some basic facts about cube complexes and their connection to the hyperplanes and half-plane systems. For more details see \cite{Roller}, \cite{Nica}, \cite{ChatterjiNiblo}, \cite{Guralnik}
 or \cite{Sageev95}.

{
\subsection{\cat cube complexes}
Let $X$ be a \cat cube complex, \emph{i.e.} a simply connected  cell complex all of whose cells are Euclidean cubes with edge length one, and such that the link of each vertex is a flag complex. A theorem of M.~Gromov (see \emph{e.g.} Theorem~II.5.20 from \cite{Bridson-Haefliger} for the finite-dimensional case and Theorem~40 from \cite{Leary} for the general case) ensures that $X$, endowed with the induced length metric, is a \cat space. Moreover  $X$ is complete if and only if $X$ does not contain an ascending chain of cells (see \cite[Theorem~31]{Leary}); in particular, this is the case if $X$ is \textbf{locally finite-dimensional}, in the sense that the supremum of the dimensions of cubes containing any given vertex is finite. We shall later focus on \cat cube complexes which are \textbf{finite-dimensional}, which means that the supremum of the dimensions of all cubes in $X$ is finite. The space $X$ is locally compact if and only if it is locally finite, in the sense that every vertex has finitely many neighbours. 
 
Unless specified otherwise, we shall view $X$ as a metric space with respect to its natural \cat metric (rather than focusing on the $1$-skeleton of $X$ endowed with the combinatorial metric). In some cases, the convexity properties of the \cat metric constitute an advantage compared to the combinatorial metric (this is for example the case in the proof of Theorem~\ref{thm:Flipping} below), while in other cases, the combinatorial metric is easier to deal with by its very combinatorial nature (as in {Lemma~\ref{lem:KeyContracting}} below). However, both viewpoints are very often equivalent; we refer to Lemma~\ref{lem:cat_versus_combinatorial} below for a more precise statement.

A subcomplex $Y$ of $X$ is called \textbf{convex} if it is convex as a \cat subspace. In that case $Y$ is itself a \cat cube complex. 

If $Y$ and $Z$ are two \cat cube complexes, then the Cartesian product $X = Y \times Z$ is naturally endowed with the structure of a \cat cube complex: the $1$-skeleton of $X$ is the graph theoretical Cartesian product of the $1$-skeletons of $Y$ and $Z$, and the metric on $X$ coincides with $\sqrt{d_Y^2 + d_Z^2}$, where $d_Y$ and $d_Z$ are the respective metrics on $Y$ and $Z$. For all vertices $y \in Y $ and $z \in Z$, the subcomplexes $Y \times \{z\}$ and $\{y\} \times Z$ are convex in $X$. 
 }

{By a \textbf{cubical map} between cube complexes, we mean a cellular map so that the restriction $\sigma\to\tau$ between cubes factors as $\sigma\to\nu\to\tau$, where the first map $\sigma\to\nu$ is a natural projection onto a face of $\sigma$ and the second map $\nu\to\tau$ is an isometry.}

\subsection{Hyperplanes} 

{A key feature of \cat cube complexes is the existence of \textbf{hyperplanes}. In fact, the midpoint of each edge in the $1$-skeleton of $X$ belongs to a unique hyperplane. A hyperplane $\hh$ is a closed convex subspace which has the property that its complement $X  \setminus \hh$ has exactly two connected components, both of which are convex. The closure of each of these two components is  called a \textbf{half-space}.} 
The set of hyperplanes of $X$ is denoted
by $\hH(X)$ and the set of half-spaces by $\H(X)$. The hyperplane associated to a half-space $\h$ is denoted by $\hh$ and the half-space complementary to $\h$ by $\h^*$. {By convention, we shall always use gothic characters to denote half-spaces. 
}

We will mostly make the assumption that $X$ is finite-dimensional. One way this assumption can be used is through the fact that any collection of pairwise crossing hyperplanes has a non-empty intersection (see \cite[Theorem~4.14]{Sageev95}). Therefore, the cardinality of this set is bounded above by $\dim(X)$. {This fact shall often be used through  the following. 

\begin{lem}\label{lem:dim}
Let $X$ be a finite-dimensional \cat cube complex. For each $k>0$, there is some $R(k)>0$ such that if a geodesic path $\alpha$ in $X$ crosses at least $R(k)$ hyperplanes, then $\alpha$ crosses  a  \textbf{pencil of  hyperplanes}  of cardinality at least~$k$, \emph{i.e.} a collection of at least $k$ disjoint hyperplanes which are associated to a collection of $k$ nested half-spaces.
\end{lem}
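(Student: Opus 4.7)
The strategy is a Ramsey-theoretic argument exploiting the assumption that $X$ is finite dimensional. Let $d = \dim(X)$, let $\alpha$ be a geodesic, and let $\mathcal{H}(\alpha)$ denote the set of hyperplanes crossed by $\alpha$ (each being crossed exactly once since $\alpha$ is a CAT(0) geodesic in a cube complex). I would 2-color the pairs of distinct elements of $\mathcal{H}(\alpha)$ by declaring a pair \emph{crossing} if the two hyperplanes meet, and \emph{disjoint} otherwise. The key combinatorial input is the fact recalled just before the statement: any collection of pairwise crossing hyperplanes has a common intersection, so such a collection has cardinality at most $d$.

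Next, I would set $R(k) := R_{\mathrm{Ram}}(d+1,\,k)$, the classical Ramsey number, and suppose $|\mathcal{H}(\alpha)| \geq R(k)$. By Ramsey's theorem, $\mathcal{H}(\alpha)$ contains either a pairwise crossing subfamily of size $d+1$ or a pairwise disjoint subfamily of size $k$. The first alternative is ruled out by the Helly-type bound, so we obtain $k$ pairwise disjoint hyperplanes $\hh_1,\dots,\hh_k$ all crossed by $\alpha$, say in that order at times $t_1 < t_2 < \dots < t_k$.

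The final step is to promote this pairwise disjoint family to a pencil, \emph{i.e.} to a chain of nested half-spaces. For each $i$, let $\h_i$ be the closed half-space bounded by $\hh_i$ satisfying $\alpha([t_i,1]) \subseteq \h_i$. For $i<j$, the point $\alpha(t_j) \in \hh_j$ lies in $\h_i$ (since $t_j > t_i$), while $\alpha(t_i) \in \hh_i$ lies outside $\h_j$ (since $t_i < t_j$). Because $\hh_i$ and $\hh_j$ are disjoint, each lies entirely in one side of the other; these two incidences force $\hh_j \subseteq \h_i$ and $\hh_i \subseteq \h_j^*$. A short connectedness argument applied to $\h_j \setminus \hh_j$ (which is disjoint from $\hh_i$ and meets $\h_i$ at $\alpha(1)$) then upgrades this to $\h_j \subseteq \h_i$, and the inclusion is strict since $\hh_i \subseteq \h_i \setminus \h_j$. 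Thus $\h_1 \supsetneq \h_2 \supsetneq \dots \supsetneq \h_k$ is the desired pencil.

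The only genuine obstacle is the last step: two disjoint hyperplanes can be either nested or \emph{facing}, and the statement of a pencil requires nestedness. The crucial point is that being crossed by a common geodesic $\alpha$ (with a consistent direction of travel from $\alpha(0)$ to $\alpha(1)$) coherently orients all the half-spaces and thereby rules out the facing configuration.
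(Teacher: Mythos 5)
Your proposal is correct and follows exactly the paper's argument, which consists of the single sentence ``define $R(k)$ as the Ramsey number $R(k,\dim(X)+1)$'': you two-colour pairs of hyperplanes crossed by $\alpha$ by crossing/disjoint, exclude a pairwise-crossing family of size $\dim(X)+1$ via the Helly property, and extract a pairwise-disjoint family of size $k$. Your final step upgrading ``pairwise disjoint and crossed by a common geodesic'' to ``nested'' is precisely the detail the paper leaves implicit, and your connectedness argument for it is sound.
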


\begin{proof}
In view of Ramsey's theorem, it suffices to define $R(k)$ as the Ramsey number $R(k, \dim(X)+1)$. 
\end{proof}

This implies the following basic observation. 
\begin{lem}\label{lem:cat_versus_combinatorial}
Let $X$ be a finite-dimensional \cat cube complex. Then $X$ is quasi-isometric to its $1$-skeleton endowed with the combinatorial metric. 
\end{lem}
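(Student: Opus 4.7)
My plan is to verify that the inclusion of the $0$-skeleton $X^{(0)}$ (endowed with the combinatorial metric $d_c$) into $X$ (endowed with its CAT(0) metric $d$) is a quasi-isometry. Setting $n := \dim X$, every point of $X$ lies in a cube of dimension at most $n$ and hence within CAT(0) distance $\sqrt{n}/2$ of a vertex, so $X^{(0)}$ is quasi-dense in $X$. For vertices $v, w$, an edge-path realising $d_c(v,w)$ is a CAT(0) path of the same length, giving $d(v,w) \leq d_c(v,w)$; the only remaining issue is to bound $d_c$ by a linear function of $d$.

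The core step is a local bound: there is a constant $C_n$ depending only on $n$ such that $d(v,w) \leq 1 + \sqrt{n}$ implies $d_c(v,w) \leq C_n$ for any pair of vertices $v, w$. To see this, let $\alpha$ be the CAT(0) geodesic from $v$ to $w$. By convexity of half-spaces, $\alpha$ crosses each of the $d_c(v,w)$ hyperplanes separating $v$ from $w$ exactly once. Using the classical fact that two disjoint hyperplanes in a CAT(0) cube complex with unit edge lengths are at CAT(0) distance at least $1$, a pencil of $k$ hyperplanes crossed by $\alpha$ forces its length to be at least $k-1$. Since $\alpha$ has length at most $1+\sqrt n$, no such pencil has more than $\lceil 2+\sqrt n \rceil$ hyperplanes; Lemma~\ref{lem:dim} then yields $d_c(v,w) < R(\lceil 2+\sqrt n \rceil + 1) =: C_n$.

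To pass from local to global, given vertices $v, w$ with $L := d(v,w)$, I would subdivide the CAT(0) geodesic between them into $m \leq L+1$ pieces of length at most $1$, and approximate each subdivision point by a nearest vertex (keeping $v_0 = v$ and $v_m = w$). Consecutive vertices $v_i, v_{i+1}$ then satisfy $d(v_i, v_{i+1}) \leq 1+\sqrt n$, so the local bound yields $d_c(v_i, v_{i+1}) \leq C_n$. Summing via the triangle inequality gives
\[
d_c(v,w) \leq \sum_{i=0}^{m-1} d_c(v_i, v_{i+1}) \leq m C_n \leq C_n\bigl(d(v,w)+1\bigr),
\]
which combined with the converse inequality and the quasi-density above yields the desired quasi-isometry. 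The main subtle ingredient is the asserted uniform lower bound on CAT(0) distance between disjoint hyperplanes; this is classical but requires careful use of the cube structure (essentially, a minimising geodesic between two disjoint hyperplanes must traverse an entire edge-length in the direction orthogonal to one of them).
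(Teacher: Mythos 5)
Your proof is correct and takes essentially the same route as the paper: both arguments rest on Lemma~\ref{lem:dim} combined with the fact that two disjoint hyperplanes in a \cat cube complex are at distance at least~$1$, which is exactly the paper's reasoning for the inequality $d_{\ell^1} \leq R\, d$. The only difference is organizational — the paper applies the Ramsey bound $R(2)$ directly to the full set of separating hyperplanes, while you first subdivide the geodesic into unit pieces and apply $R(\lceil 2+\sqrt{n}\rceil+1)$ locally; both yield the same affine bound.
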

\begin{proof}
Let $d$ denote the \cat metric on $X$ and $d_{\ell^1}$ denote the combinatorial metric on the vertex set $X^{(0)}$. 
Let also $n = \dim(X)$. Then every points of $X$ is at distance at most~$\sqrt n/2$ from a vertex of $X$. Thus it suffices to show that $(X^{(0)}, d)$ is quasi-isometric to $(X^{(0)},d_{\ell^1})$. 

The definition of $d$ implies that $d(x, y) \leq d_{\ell^1}(x, y)$ for all $x, y \in X^{(0)}$. Moreover $d_{\ell^1}(x, y)$ coincides with the number of hyperplanes separating $x$ from $y$. By Lemma~\ref{lem:dim}, there is a constant $R = R(2)$ such that any collection of $R$ hyperplanes contains a pair of non-crossing hyperplanes. The minimal distance separating a pair of non-crossing hyperplanes in a \cat cube complex is~$1$. This implies that for all vertices $x, y \in X^{(0)}$, we have $ d(x, y) \geq  d_{\ell^1}(x, y)/ R$. Thus $d(x, y) \leq d_{\ell^1}(x, y) \leq Rd(x, y)$, as desired. 
\end{proof}
}

\subsection{Pocsets}\label{Pocsets}

{As before, we assume for this discussion that $X$ is a finite-dimensional \cat cube complex, although some of what is discussed below can be generalized to a somewhat broader setting.} The set of half-spaces is a poset under inclusion 
and comes equipped with a natural order reversing involution {$\h\to\h^*$} (hence is called a \textbf{pocset}, see \cite{Roller}). This pocset also satisfies the \textbf{finite interval condition}, meaning that if $\h_1\subset\h_2$ are half-spaces, there are only finitely many half-spaces $\h$ satisfying $\h_1\subset \h\subset\h_2$. {The finite dimensionality of $X$ is reflected in the fact that the pocset has \textbf{finite width}, meaning that there is a bound on the size of a collection of \textbf{transverse} elements: halfspaces associated to intersecting hyperplanes.  }

Now there is also a dual construction which shows that $X$ can be completely reconstructed from the pocset $\H(X)$. Given a pocset $\Sigma$ satisfying the finite interval condition {and having finite width}, one can construct a CAT(0) cube complex $X=X(\Sigma)$ whose half-space system is naturally isomorphic to $\Sigma$.  {It is then an easy exercise to check that} the vertices of a $X$ are in $1$-$1$ correspondence with ultrafilters on $\Sigma$ which satisfy the descending chain condition. Namely a vertex $v$ in $X^0$ may be viewed as a subset of $\Sigma$ which satisfies 
\begin{enumerate}
\item For every involutary invariant pair of elements $\{A,A^*\}$, exactly one of them is in $v$.
\item If $A< B$ and $A\in v$ then $B\in v$.
\item Every descending chain of elements of $v$ terminates. 
\end{enumerate}

One joins two such vertices by an edge when they differ on a single involutary pair. That is, $v$ and $w$ are joined by an edge if there exists a $\h$ such that $w=(v-{\h})\cup\h^*$. One then attaches a cube whenever the $1$-skeleton of one appears in $X^{1}$. It is then a theorem that the resulting cube complex $X$ is \cat.  {(By convention, if the pocset if empty then $X$ is reduced to a single vertex.)} In fact, a theorem of Roller~\cite{Roller} 
then tells us that \cat cube complexes and pocsets are actually dual to one another. If one starts with a {finite dimensional} \cat cube complex $X$, then $X(\H(X))=X$. That is the cube complex constructed from the pocset of half-spaces associated to the hyperplanes of $X$ is $X$. Conversely, for any pocset $\Sigma$, we have $\H(X(\Sigma))=\Sigma$. That is, if one starts with a pocset and builds a cube complex, the pocset of half-spaces associated to the cube complex is again the original pocset. 

Now let  $X$ be a \cat cube complex, let $\hH=\hH(X)$ be the collection of hyperplanes of $X$, and let  
$\hat\K\subset\hH$ be some subcollection of hyperplanes. One can carry out the above construction to build a new cube complex $X(\hat\K)$ whose half-space system is isomorphic to $\K$, the {collection of half-spaces bounded by some $\hh  \in \hat\K$}. One then obtains a natural quotient map $X\to X(\hat\K)$. Namely a vertex in $X$ corresponds to a choice of half-spaces in $\H$ and thus gives rise to a choice of half-spaces on $\hat \K$, and it is immediate  to see that the resulting ultrafilter $\hat \K$ is a vertex of $X(\hat\K)$. This map on vertices is easily seen to extend to a  {cubical map $X\to X(\hat\K)$}. We call this map the \textbf{restriction quotient} 
arising from the subset $\hat\K\subset\hH$.  If $X$ furthermore comes equipped with a group action $\Gamma$ and $\hat\K$ is a $\Gamma$-invariant subset of $\hH$, then the restriction quotient is naturally equipped with a $\Gamma$-action and the quotient map is equivariant. For example, if $\hat \K$ is the set of all hyperplanes that cross some given hyperplane $\hat\h$, then $X(\hat \K)$ is isometric to the hyperplane $\hat\h$ endowed with its canonical structure of a cube complex inherited from $X$. 

It is important to remark that the dimension of the cube complex $X(\hat\K)$ is bounded above by $\dim(X)$, but that  $X(\hat\K)$ need not be proper even if $X$ is so. 

We now consider two other applications of this construction.

\subsection{Application: Orbit quotients and skewers} 

Let $\Gamma \leq \Aut(X)$ be any group of automorphisms of a \cat cube complex $X$. We consider the orbit $\Gamma\hh$ of a single hyperplane $\hh\in\hH$. We may then form the restriction quotient $X(\Gamma\hh)$, which we call the \textbf{orbit quotient} of $\hh$.  

In order to present a useful application of this construction, we first introduce an important definition. 
Given a hyperplane $\hh \in \hH(X)$ and an isometry $\gamma \in \Aut(X)$, we say that $\gamma$ \textbf{skewers} $\hh$ (or that $\hh$ is \textbf{skewered} by $\gamma$) if  there exist a non-zero integer $n$ {and a half-space $\h$ bounded by $\hh$ such that $\gamma^n \h \subsetneq \h$ (the proper inclusion is essential).}

Notice that an element $\gamma$ which skewers some hyperplane necessarily has positive translation length. In particular, if $X$ is finite-dimensional, then $\gamma$ acts as a hyperbolic isometry {(in the usual sense of \cat geometry)}. In that case, we have the following criterion. 

\begin{lem}\label{lem:def:Skewer}
Let $X$ be a finite-dimensional \cat cube complex, let $\gamma \in \Aut(X)$ be a hyperbolic isometry and $\hh \in \hH(X)$. The following conditions are equivalent.
\begin{enumerate}[(i)]
\item $\gamma$ skewers $\hh$. 

\item Every $\gamma$-axis meets the hyperplane $\hh$ in a single point.

\item Some $\gamma$-axis meets the hyperplane $\hh$ in a single point.
\end{enumerate}
\end{lem}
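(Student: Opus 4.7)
The plan is to close a cycle (ii) $\Rightarrow$ (iii) $\Rightarrow$ (i) $\Rightarrow$ (ii). The first implication is immediate because $\gamma$ being hyperbolic guarantees that some axis exists.

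For (iii) $\Rightarrow$ (i), we fix an axis $\alpha$ with $\alpha \cap \hh = \{p\}$, parametrize it so that $\gamma$ acts by $t \mapsto t + \tau(\gamma)$, and let $\h$ be the half-space containing the positive-$t$ tail of $\alpha$. For each $k \neq 0$, the iterate $\gamma^k p \in \alpha$ is distinct from $p$ and so cannot lie on $\hh$; hence $\gamma^k \hh \neq \hh$. Since $X$ is finite-dimensional, any family of pairwise crossing hyperplanes has cardinality at most $\dim X$, so for some $k > 0$ the hyperplanes $\hh$ and $\gamma^k \hh$ are disjoint. The axis $\alpha$ then visits three of the four intersections $\h \cap \gamma^k \h$, $\h \cap \gamma^k \h^*$, and $\h^* \cap \gamma^k \h^*$ at parameters $t > t_p + k\tau(\gamma)$, $t_p < t < t_p + k\tau(\gamma)$, and $t < t_p$ respectively, whence the fourth intersection $\h^* \cap \gamma^k \h$ is forced to be empty; this is exactly the relation $\gamma^k \h \subsetneq \h$.

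For (i) $\Rightarrow$ (ii), the skewering condition produces a strictly nested bi-infinite chain $\cdots \subsetneq \gamma^{2n}\h \subsetneq \gamma^n \h \subsetneq \h \subsetneq \gamma^{-n}\h \subsetneq \cdots$. For any axis $\alpha$ and any $x_0 \in \alpha \cap \h$, the forward iterates $\gamma^{kn}(x_0) \in \gamma^{kn} \h \subseteq \h$ converge to the attracting endpoint $\xi^+$, giving $\xi^+ \in \overline \h$; symmetrically $\xi^- \in \overline{\h^*}$. Hence $\alpha$ has endpoints on opposite sides of $\hh$ and must cross it; moreover, $\alpha \not\subseteq \hh$, for otherwise $\gamma^n \alpha = \alpha \subseteq \gamma^n \hh$ would force $\alpha \subseteq \hh \cap \gamma^n \hh = \emptyset$.

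The remaining step, which I expect to be the main obstacle, is to rule out a proper non-degenerate intersection $\alpha \cap \hh = [p_1, p_2]$ with $p_1 \neq p_2$. Applying $\gamma^n$ would produce a second arc $[\gamma^n p_1, \gamma^n p_2] \subseteq \alpha \cap \gamma^n \hh$, both arcs lying on the single geodesic $\alpha$ but on disjoint hyperplanes. The tangent direction of $\alpha$ at the exit point $p_2$ would then be forced to be simultaneously tangent to $\hh$ (since $\alpha$ travels along $\hh$ up to $p_2$) and to carry a component transverse to $\hh$ (since $\alpha$ subsequently enters $\h \setminus \hh$ to reach $\gamma^n \hh$). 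We plan to rule this out using the flag condition on the vertex link at $p_2$---the only type of point where a \cat geodesic can bend---together with the observation that an antipodal pair of directions in such a link cannot exhibit the requisite tangent-and-transverse structure. If this link-based argument proves cumbersome, an alternative is to pass to the combinatorial $\ell^1$-structure via Lemma~\ref{lem:cat_versus_combinatorial} and argue directly with the combinatorial axis, where the corresponding single-crossing statement is more transparent.
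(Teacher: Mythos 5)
Your overall architecture is sound, and your (iii)$\Rightarrow$(i) is essentially the paper's argument: finite dimensionality forces some $\gamma^{k}\hh$ to be disjoint from $\hh$, and the position of the axis then certifies which nesting holds. Two points deserve attention. First, the fact you isolate as ``the main obstacle'' --- that a geodesic meets a hyperplane in at most one point unless it is contained in it, and in particular cannot be tangent to it --- is exactly what the paper asserts in one sentence at the start of its proof. You need it not only in (i)$\Rightarrow$(ii) but already in (iii)$\Rightarrow$(i): your sector bookkeeping assumes the two open rays of $\alpha$ issuing from $p$ lie in \emph{opposite} open half-spaces, which is the ``no tangency'' half of the same statement. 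The clean justification is not the link condition at a vertex (the exit point need not be a vertex), but the product structure of the carrier: $\hh$ has a convex neighbourhood isometric to $\hh\times[-1/2,1/2]$, and a geodesic in a product is a pair of affinely reparametrised geodesics, so its coordinate in the interval factor is an affine function of arclength --- either identically zero or with exactly one zero. This disposes of tangency and of a nondegenerate segment $[p_1,p_2]$ simultaneously; the detour through the combinatorial axis would not, since the quasi-isometry of Lemma~\ref{lem:cat_versus_combinatorial} does not transfer pointwise intersection statements.

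Second, in (i)$\Rightarrow$(ii) the inference ``$\xi^+\in\overline{\h}$ and $\xi^-\in\overline{\h^*}$, hence $\alpha$ has endpoints on opposite sides and must cross $\hh$'' is not valid as stated: $\overline{\h}\cap\overline{\h^*}$ contains $\bd\hh$, which may be non-empty, so both endpoints could a priori be boundary points of $\hh$ while $\alpha$ stays in one open half-space (also note that your chosen $x_0\in\alpha\cap\h$ need not exist until this is settled). The repair is immediate from the chain you already built: if $\alpha\subseteq\h$, then $\alpha=\gamma^{kn}\alpha\subseteq\gamma^{kn}\h$ for all $k\geq 0$, so a point of $\alpha$ is separated from $\h^*$ by the infinitely many pairwise distinct hyperplanes $\gamma^{kn}\hh$, contradicting the fact that two points of a finite-dimensional cube complex are separated by only finitely many hyperplanes. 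The paper avoids boundary points altogether: it picks $p$ on the axis off $\hh$, observes that a geodesic arc from $p$ to $\hh$ meets only finitely many hyperplanes and hence misses $\gamma^{nm}\hh$ for large $m$, and concludes that $\gamma^{-nm}p\in\h^*$. Either route works once your inference is repaired as above.
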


\begin{proof}
First, note that an axis for $\gamma$ and a hyperplane can intersect in at most one point, unless the axis is contained in the hyperplane. 

The equivalence of (ii) and (iii) is clear as any two axes for a given hyperbolic isometry are a bounded distance apart. To see (i)$\Rightarrow$(iii) suppose that $\h$ is such that 
$\gamma^n\h\subsetneq \h$. Let $\ell$ be an axis for $\gamma$ and let $p$ be a point on $\ell$ which is not in $\hh$. Without loss of generality let us assume that $p\in\h$. Let $A$ be some geodesic arc joining $p$ and $\hh$. Note that $A$ crosses only finitely many hyperplanes so that for some $m>0$, we have that $\gamma^{nm}\hh\cap A=\varnothing$. It follows that $A\subset\gamma^{mn}(h^*)$. Thus $p\in\gamma^{nm} \h^*$ which implies that $\gamma^{-nm}(p)\in \h^*$. Since $p$ was chosen on $\ell$ and $\ell$ is $\gamma$-invariant, it follows that $\ell$ meets both $\h$ and $\h^*$, as required. 

Conversely, assume that some axis $\ell$ of $\gamma$ meets $\hh$ in a single point $p$. Consider the collection of positive powers $\{\gamma^n\hh\vert n>0\}$. This is an infinite collection of hyperplanes since the collection $\{\gamma^n p\vert n>0\}$ is infinite. By finite dimensionality, there exists some $n,m>0$ such that $\gamma^n\hh\cap\gamma^m\hh=\varnothing$. Thus, there exists some positive power $l$ such that $\gamma^l\hh\cap\hh=\varnothing$. If $\h$ is the halfspace bounded by $\hh$ containing $\gamma^l(p)$, then we have that $\gamma^l\h\subset \h$, as required. 
\end{proof}

 We move on with the following useful description of orbit quotients.

\begin{lem}\label{lem:WallOrbit}
Let $X$ be a finite-dimensional \cat cube complex, let $\Gamma \leq
\Aut(X)$ and $\hh \in \hH(X)$. We have the following.
\begin{enumerate}[(i)]
\item $X(\Gamma\hh)$ is finite if and only if the orbit $\Gamma\hh$
is finite.

\item $X(\Gamma\hh)$ is bounded if and only if $\Gamma$ has a fixed
point in $X(\Gamma\hh)$.

\item $X(\Gamma\hh)$ is unbounded if and only if $\hh$ is skewered by some hyperbolic isometry $\gamma \in \Gamma$. 
\end{enumerate}
\end{lem}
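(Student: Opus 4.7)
The plan is to work throughout with the pocset-ultrafilter description of $Y := X(\Gamma\hh)$: the hyperplanes of $Y$ correspond to $\Gamma\hh$, the half-spaces to $\Gamma\h \cup \Gamma\h^*$, and the vertices to DCC-ultrafilters on this half-space set. Part~(i) follows immediately: a finite complex has only finitely many hyperplanes, and conversely if $\Gamma\hh$ is finite then $Y$ has at most $2^{|\Gamma\hh|}$ vertices; together with $\dim Y \leq \dim X < \infty$, this forces $Y$ to be a finite complex.

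For~(iii), the easy direction is to note that if $\gamma^n\h \subsetneq \h$, then $\h \supsetneq \gamma^n\h \supsetneq \gamma^{2n}\h \supsetneq \dots$ is an infinite nested chain of hyperplanes in $Y$, so the projections in $Y$ of the $\gamma^{kn}$-translates of any vertex of $X$ are at combinatorial distance at least $k$ from each other; hence $Y$ is unbounded by Lemma~\ref{lem:cat_versus_combinatorial}. Conversely, if $Y$ is unbounded, Lemmas~\ref{lem:cat_versus_combinatorial} and~\ref{lem:dim} yield arbitrarily long chains $\hat\h_1 \supsetneq \dots \supsetneq \hat\h_r$ of half-spaces in $Y$. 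Each $\hat\h_i$ is a $\Gamma$-translate of either $\h$ or $\h^*$, so for $r \geq 3$ the pigeonhole principle provides indices $i < j$ with $\hat\h_i = \gamma_i\h$ and $\hat\h_j = \gamma_j\h$ of the same sign, and then $\delta := \gamma_i^{-1}\gamma_j$ satisfies $\delta\h \subsetneq \h$. This $\delta$ skewers $\hh$ and is hyperbolic by the remark preceding Lemma~\ref{lem:def:Skewer}.

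The heart of the argument is~(ii). For the forward direction, $Y$ is complete (finite dimensionality forbids ascending chains of cells) and has bounded diameter, so the Bruhat--Tits fixed point theorem applied to the circumcenter of any $\Gamma$-orbit produces a $\Gamma$-fixed point in $Y$. For the converse, by~(iii) it suffices to show that a $\Gamma$-fixed point $p \in Y$ prevents any $\gamma \in \Gamma$ from skewering $\hh$. The closed cell $\sigma$ whose interior contains $p$ is $\gamma$-invariant (open cells are determined by any of their interior points), so $\gamma$ permutes the finite vertex set of $\sigma$ and some positive power $\gamma^m$ fixes a vertex $v \in \sigma$. This $\gamma^m$ still skewers $\hh$, while the ultrafilter $u_v$ at $v$ is $\gamma^m$-invariant; whichever of $\h$ or $\h^*$ belongs to $u_v$, iterating the skewering power forces an infinite descending chain inside $u_v$, contradicting DCC.

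I expect the main subtlety to lie precisely in this last step: bridging from a $\Gamma$-fixed point $p$, which may only sit in the interior of a cell and be a priori distant from any vertex, to a vertex of $Y$ fixed by some power of the putative skewering element, so as to unlock the DCC obstruction encoded in the vertex ultrafilter. The other arguments (the Bruhat--Tits application, the pigeonhole step in~(iii), and the combinatorics of~(i)) are comparatively routine.
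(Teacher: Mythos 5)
Your proof is correct, and for parts (i), (iii) and the forward direction of (ii) it coincides with the paper's: (i) is immediate, boundedness gives a fixed point via the circumcentre (the paper treats this as clear), and for (iii) the paper likewise builds the infinite descending chain $\gamma^{kn}\h$ in one direction and, in the other, uses Lemma~\ref{lem:dim} to extract three pairwise disjoint hyperplanes from the single orbit and then the same pigeonhole-on-signs observation to produce $\delta$ with $\delta\h\subsetneq\h$. The one place you genuinely diverge is the converse of (ii). The paper argues directly and structurally: if the fixed point lies on a hyperplane of $X(\Gamma\hh)$ then, by transitivity of the orbit, it lies on all of them, so $X(\Gamma\hh)$ is a single (finite-dimensional, hence bounded) cube; otherwise the fixed point may be taken to be a vertex $v$, every hyperplane is then dual to an edge at $v$, and the whole $1$-skeleton lies in a bounded ball around $v$. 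You instead pass to a vertex fixed by a power of the putative skewering element and contradict the descending chain condition of its ultrafilter; this is equally valid (and you correctly handle the two cases $\h\in u_v$ and $\h^*\in u_v$, which require positive and negative powers respectively), and it has the appeal of reusing (iii). What it does not yield is the structural by-product of the paper's argument --- that a bounded orbit quotient is essentially a single cube around the fixed vertex --- which the paper exploits again in the proof of Lemma~\ref{lem:HalfEssential}.
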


\begin{proof}
The point (i) is clear, and so is the `only if' of (ii). Suppose
that $\Gamma$ fixes a point $v$ in $X(\Gamma\hh)$. If $v$ belong to
some  hyperplane of $X(\Gamma\hh)$, then every  hyperplane of $X(\Gamma\hh)$
contains $v$. This implies that $X(\Gamma\hh)$ consists of a single
cube, which is bounded since $\dim(X(\Gamma\hh)) \leq \dim(X) < \infty$.
Otherwise we may assume that $v$ is a vertex of $X(\Gamma\hh)$. Then
every half-space of $X(\Gamma\hh)$ separates $v$ from a neighbouring
vertex. This implies that any minimal path in the $1$-skeleton of
$X(\Gamma\hh)$ emanating from $v$ remains in a single cube containing
$v$. Since the dimension of such a cube is bounded, we deduce that
the $1$-skeleton of $X(\Gamma\hh)$ is contained in a finite
ball centred at $v$. Hence $X(\Gamma\hh)$ is bounded as desired.

It remains to prove (iii). If $\gamma$ skewers $\hh$, then {$\la \gamma \ra.\h$ contains an infinite chain of half-spaces by Lemma~\ref{lem:dim} and, hence the cube complex  $X(\la \gamma \ra.\hh)$ is unbounded.} Consequently, so is $X(\Gamma \hh)$. Assume conversely that $X(\Gamma \hh)$ is unbounded. Let $d$ denote its dimension. By Lemma~\ref{lem:dim},  a geodesic edge path in $X(\Gamma \hh)$ whose length is greater than the constant   $R(3)$ necessarily crosses three pairwise disjoint hyperplanes  $\hh_1$, $\hh_2$ and $\hh_3$. Since there is a single orbit of hyperplanes in $X(\Gamma\hh)$, we infer that 
{two of the half-spaces bounding the $\hh_i$'s are nested,} 
giving rise to a hyperbolic element in $\Gamma$ skewering $\hh$, as required. 
\end{proof}

\subsection{Application: Products} 
Suppose that $X$ is a \cat cube complex which factors as a product of two \cat cube complexes $X=X_1\times X_2$. Then we have two natural projection maps $p_i:X\to X_i$ and the hyperplanes of $X$ is partitioned as a disjoint union $\hH=\hH_1\cup\hH_2$ where $\hH_i$ is composed of the set of hyperplanes of $X_i$ pulled by back to $X$ by the projection map $p_i$. {Notice that $\hH_i$ is empty if and only if $X_i$ is reduced to a single vertex.} We now observe that $\hH_1$ and $\hH_2$ are transverse, meaning that every hyperplane in $\hH_1$ crosses every hyperplane in $\hH_2$. Thus, the pocset $\H=\H(X)$ admits an involution invariant decomposition $\H=\H_1\cup\H_2$ such that the each half-space in $\H_1$ is incomparable to any half-space in $\H_2$.  The projection map $p_i : X \to X_i$ is nothing but the restriction quotient arising from the subset $\hH_i \subset \hH$. In other words we have $X_i \cong X(\hH_i)$.

Conversely, we could start with two pocsets $\Sigma_1$ and $\Sigma_2$  satisfying the finite interval condition and take their disjoint union to obtain a new such pocset 
$\Sigma=\Sigma_1\cup\Sigma_2$, where each element of $\Sigma_1$ is incomparable to an each element in $\Sigma_2$. Let $X$ denote the cube complex associated to $\Sigma$ and $X_i$ denote the cube complex associated to $\Sigma_i$. Then we have the pocset of half-spaces associated to $X$ is isomorphic to the pocset of half-spaces associated to $X_1\times X_2$. Since the construction of the \cat cube complex from the pocset is canonical, it follows that $X=X_1\times X_2$. To summarize, we have the following.

\begin{lem}\label{lem:Prod}
A decomposition of a \cat cube complex as a product of cube complexes corresponds to a partition of the collection of hyperplanes of $X$,  $\hH=\hH_1\cup\hH_2$  such that every hyperplane in $\hH_1$ meets every hyperplane in $\hH_2$. 
\end{lem}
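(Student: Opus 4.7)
The plan is to invoke the Roller duality $X \cong X(\H(X))$ from Section~\ref{Pocsets} in both directions, translating the product structure on cube complexes into a disjoint-union decomposition of the half-space pocset into two mutually incomparable parts.

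For the forward direction, assuming $X = X_1 \times X_2$, I would set $\hH_i \subseteq \hH(X)$ to be the preimage of $\hH(X_i)$ under the projection $p_i$. These plainly partition $\hH(X)$. Moreover, since any $\hh \in \hH_1$ has the form $\hh' \times X_2$ while any $\hk \in \hH_2$ has the form $X_1 \times \hk'$, their intersection $\hh' \times \hk'$ is non-empty and each meets both half-spaces bounded by the other; hence they cross.

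For the converse, suppose $\hH(X) = \hH_1 \sqcup \hH_2$ is a partition with every element of $\hH_1$ crossing every element of $\hH_2$. Let $\H_i \subseteq \H(X)$ be the set of half-spaces bounded by hyperplanes in $\hH_i$, so $\H(X) = \H_1 \sqcup \H_2$ is an involution-invariant decomposition. The first substantive step is to verify that any $\h \in \H_1$ is pocset-incomparable to any $\k \in \H_2$: since $\hh$ and $\hk$ cross, the four half-spaces $\h, \h^*, \k, \k^*$ are pairwise incomparable. Thus $\H(X)$ decomposes as a pocset into two mutually incomparable parts, each of which inherits the finite interval condition and finite width.

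I would finish by observing that the dual construction from Section~\ref{Pocsets} applied to a disjoint union of mutually incomparable pocsets automatically returns a product of cube complexes: a descending-chain-condition ultrafilter on $\H_1 \sqcup \H_2$ is precisely a pair of such ultrafilters on each factor; two such ultrafilters differ on a single involutary pair if and only if exactly one of their coordinates does; and cubes are determined by the flag condition on links. This yields $X(\H(X)) \cong X(\H_1) \times X(\H_2)$, and combining with Roller duality and setting $X_i := X(\H_i)$ produces the desired decomposition $X = X_1 \times X_2$. The only point that requires a bit of care is the coherent matching of cube structures under this identification; everything else is essentially a bookkeeping exercise.
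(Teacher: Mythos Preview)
Your proposal is correct and follows essentially the same approach as the paper's own argument (given in the discussion immediately preceding the lemma): both directions rely on the Roller duality from Section~\ref{Pocsets}, translating product decompositions of $X$ into decompositions of the half-space pocset $\H(X)$ into two involution-invariant, mutually incomparable pieces. Your treatment is in fact slightly more explicit than the paper's about why DCC ultrafilters on the disjoint union correspond to pairs of DCC ultrafilters on the pieces, but the underlying idea is the same.
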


A cube complex which cannot be decomposed as above is called \textbf{irreducible}. 
An easy consequence is the following basic analogue of the de Rham decomposition theorem. 

\begin{prop}\label{prop:deRham}
A finite-dimensional \cat cube complex $X$ admits a canonical decomposition 
$$X = X_1 \times \cdots \times X_p$$
into a product of irreducible cube complexes $X_i$. Every automorphism of $X$ preserves that decomposition, up to a permutation of possibly isomorphic factors.  In particular, the image of the canonical embedding 
$$\Aut(X_1) \times \cdots \times \Aut(X_p) \hookrightarrow \Aut(X)$$
has finite index in $\Aut(X)$.
\end{prop}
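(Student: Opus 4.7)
The plan is to base everything on Lemma~\ref{lem:Prod}, which reduces product decompositions to partitions of the hyperplane set $\hH = \hH(X)$ into mutually transverse classes. I would first define an equivalence relation on $\hH$ and then show that its classes induce the desired canonical decomposition.

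More precisely, let $\sim$ be the equivalence relation on $\hH$ generated by: $\hh \sim \hh'$ whenever $\hh$ and $\hh'$ do not cross (i.e.\ the transitive closure of the non-crossing relation, together with the diagonal). Let $\hH_1, \dots, \hH_p$ be the equivalence classes. If $i \neq j$, then every $\hh \in \hH_i$ must cross every $\hh' \in \hH_j$, for otherwise they would lie in the same class. In particular, picking one representative in each class yields a family of $p$ pairwise crossing hyperplanes, so finite-dimensionality of $X$ forces $p \leq \dim(X) < \infty$. Iterating Lemma~\ref{lem:Prod} (or directly invoking the pocset-to-cube-complex construction from Section~\ref{Pocsets} applied to the partition $\H = \H_1 \sqcup \dots \sqcup \H_p$ of the half-space pocset) yields a canonical isometric decomposition
\[
X \;\cong\; X(\hH_1) \times \dots \times X(\hH_p),
\]
where $X_i := X(\hH_i)$ is the restriction quotient associated to the class $\hH_i$.

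Next I would verify that each factor $X_i$ is irreducible. Suppose for contradiction that some $X_i$ splits as a product. By Lemma~\ref{lem:Prod} this corresponds to a partition $\hH_i = \hH_i' \sqcup \hH_i''$ with every hyperplane in $\hH_i'$ crossing every hyperplane in $\hH_i''$. Pick any $\hh' \in \hH_i'$ and $\hh'' \in \hH_i''$. Since $\hh' \sim \hh''$, there exists a finite chain $\hh' = \hk_0, \hk_1, \dots, \hk_n = \hh''$ with consecutive members disjoint. The chain must jump from $\hH_i'$ to $\hH_i''$ at some step, producing two consecutive non-crossing hyperplanes that nevertheless lie in opposite halves of the assumed partition, contradiction.

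For canonicity and the finite-index statement, observe that the relation $\sim$ is defined purely in terms of the crossing structure on $\hH$, which is preserved by any automorphism of $X$. Therefore $\Aut(X)$ permutes the equivalence classes $\hH_1, \dots, \hH_p$, and hence permutes the factors $X_i$ up to isomorphism. This defines a homomorphism $\sigma : \Aut(X) \to \Sym(p)$. An automorphism lying in $\ker \sigma$ preserves each $\hH_i$, and hence descends through the restriction quotient to an automorphism of each $X_i$; conversely, an element of $\prod_i \Aut(X_i)$ acts on $X$ through the product decomposition. Since an automorphism of $X$ that preserves the decomposition is determined by its action on each factor (because vertices correspond to ultrafilters on $\H = \bigsqcup_i \H_i$), the kernel $\ker\sigma$ coincides with the image of the canonical embedding $\Aut(X_1) \times \dots \times \Aut(X_p) \hookrightarrow \Aut(X)$. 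Thus this image has index dividing $p!$, which is finite.

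The main potential obstacle is to be rigorous about two points: (a) that the pocset construction of Section~\ref{Pocsets} genuinely produces a $p$-fold product (not merely iterated binary splittings whose associativity needs to be checked), and (b) that an automorphism of $X$ preserving the partition into classes really does act as a product of automorphisms on the factors. Both reduce to routine verifications using the duality between a finite-dimensional \cat cube complex and its half-space pocset: once one identifies $X$-vertices with DCC-ultrafilters on $\H$, the partition $\H = \bigsqcup_i \H_i$ with pairwise transverse blocks identifies such ultrafilters with tuples of DCC-ultrafilters on the $\H_i$, and automorphisms preserving the partition act coordinatewise.
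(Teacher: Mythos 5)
Your argument is correct, and it organizes the proof differently from the paper. The paper obtains existence by refining an arbitrary product decomposition (the process terminates because the number of pairwise transverse classes of hyperplanes, hence of factors, is bounded by $\dim X$), and then proves canonicity separately by taking two maximal decompositions, intersecting the corresponding partitions of $\hH(X)$, and using irreducibility of the factors via Lemma~\ref{lem:Prod} to see that each partition refines the other. You instead define the finest admissible partition intrinsically, as the classes of the equivalence relation generated by non-crossing, and check directly that (a) distinct classes are mutually transverse, so Lemma~\ref{lem:Prod} applies, (b) each class is ``connected'' under non-crossing, which is exactly irreducibility of the corresponding factor, and (c) the relation is $\Aut(X)$-invariant, so canonicity is automatic. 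Both routes rest on the same two pillars --- Lemma~\ref{lem:Prod} and the dimension bound on pairwise crossing hyperplanes --- but your version makes the uniqueness and the $\Aut(X)$-equivariance manifest by construction, at the cost of having to verify explicitly that the class partition really yields a $p$-fold product and that a partition-preserving automorphism acts coordinatewise; you correctly flag both points and they are indeed routine in the ultrafilter picture. The only caveats worth recording are degenerate ones: if $\hH(X)=\varnothing$ the complex is a single vertex (the paper's stated convention), and singleton classes simply produce interval factors, which are irreducible.
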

\begin{proof}
Since $X$ is finite-dimensional, any product decomposition can be refined into a \emph{finite} product of \emph{irreducible} factors. Therefore it suffices to show that if $X $ admits two such decompositions $X = X_1 \times \cdots \times X_p$ and $X = X'_1 \times \cdots \times X'_q$ then $p = q$ and $X_i = X'_{\sigma(i)}$ for some permutation $\sigma$ of $\{1, \dots, p\}$. Consider the partitions of $\hH$ corresponding respectively to these product decompositions: $\hH = \hH_1 \cup \dots \cup \hH_p$ and $\hH = \hH'_1 \cup \dots \cup \hH'_q$, where $\hH_i  = \hH(X_i)$ and $\hH'_j = \hH(X'_j)$. The second partition of $\hH$ induces a partition of each individual subset $\hH_i$. Lemma~\ref{lem:Prod} ensures that this must be the trivial partition since $X_i$ is irreducible. In particular we infer that $p \leq q$. By symmetry we have $p \geq q$ and the desired result follows easily.
\end{proof}

\section{Essential things}\label{sec:essential}

\subsection{Essential hyperplanes}

Let $X$ be a \cat cube complex. A half-space $\h$ is called \textbf{deep} if it properly contains arbitrarily large balls of $X$. Otherwise we say it is \textbf{shallow}. This allows us to break up the set $\hH$ into three types of hyperplanes: \textbf{essential}, \textbf{half-essential} and \textbf{trivial}, as follows. A hyperplane $\hh$ is called \textbf{essential} if the half-spaces $\h$ and $\h^*$ are both deep. If $\h$ and $\h^*$ are both shallow, then $\hh$ is called \textbf{trivial}. If $\hh$ is neither essential nor trivial, we call it \textbf{half-essential}. We let $\Ess(X)$ (resp. $\Hess(X)$, $\Triv(X)$) denote the collection of essential (resp. half-essential, trivial) hyperplanes.

\subsection{Essential cube complexes}

A \cat cube complex $X$ is called \textbf{essential} if all its hyperplanes are essential or, in other words, if $\hH(X) = \Ess(X)$. We define the \textbf{core} of $X$ as the restriction quotient $X(\Triv(X) \cup \Ess(X))$ and the \textbf{essential core} of $X$  as the restriction quotient $X(\Ess(X))$. Clearly the essential core is always essential; it is endowed with a canonical $\Aut(X)$-action. Notice however that the core of $X$ might be reduced to a single point in general, even if $X$ is unbounded.  For example, consider the standard squaring of the Euclidean quarter-plane, in which all hyperplanes are half-essential.  In order to deal with that issue, we shall analyze the notions introduced thus far relatively to the action of a subgroup $\Gamma \leq \Aut(X)$.

\subsection{Essential hyperplanes relative to a group action}

Let $\Gamma \leq \Aut(X)$. 
Choose a vertex $v\in X$. A half-space $\h$ is called \textbf{$\Gamma$-deep} if it contains orbit points of $v$ arbitrarily far from $\hh$ (note that this definition is independent of the choice of $v$).  Otherwise $\hh$ is called \textbf{$\Gamma$-shallow}. Mimicking the above definitions, we again break up the set $\hH$ into three types of hyperplanes:   \textbf{$\Gamma$-essential}, \textbf{$\Gamma$-half-essential} and \textbf{$\Gamma$-trivial}.  We also define the symbols $\Ess(X,\Gamma)$,  $\Hess(X,\Gamma)$ and $\Triv(X,\Gamma)$ accordingly, as well as the \textbf{$\Gamma$-core} of $X$ and the \textbf{$\Gamma$-essential core}. 

{The following statement, which we shall use repeatedly and often implicitly in the sequel, clarifies how these notions behave with respect to invariant convex subcomplexes. The proof is straightforward and will be omitted. 

\begin{lem}\label{lem:Haglund}
Let $X$ be a \cat cube complex, let $\Gamma \leq \Aut(X)$ and  $Y \subseteq X$ be a $\Gamma$-invariant convex subcomplex. 
\begin{enumerate}[(i)]
\item  Each hyperplane of $Y$ extends to a unique hyperplane of $X$, so that there is a natural inclusion $\hH(Y) \subseteq \hH(X)$. 

\item Each half-space $\h$ disjoint from $Y$ is $\Gamma$-shallow. In particular we have $\Ess(Y,\Gamma)= \Ess(X,\Gamma)$, and the $\Gamma$-essential core of $Y$ identifies with the $\Gamma$-essential core of $X$. 
\end{enumerate}
\end{lem}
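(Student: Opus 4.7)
For (i), the plan is to recall that a hyperplane in a \cat cube complex is an equivalence class of edges under the relation generated by ``being opposite edges of a $2$-cube'', together with its geometric midplane. Given two edges of $Y$, I would show that they are parallel in $Y$ if and only if they are parallel in $X$. The forward direction is tautological since every square in $Y$ is a square in $X$. For the converse, I would use convexity of $Y$: a chain of squares connecting two edges of $Y$ in $X$ spans a subcomplex whose vertices all lie in the convex hull (in $X$) of the two original edges, and this convex hull is contained in $Y$ by convexity. One can then send a hyperplane of $Y$ to the unique hyperplane of $X$ containing any of its edges; the previous observation guarantees that this map is well-defined and injective.

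For (ii), I would first fix a base vertex $v \in Y$ and recall that the $\Gamma$-deepness of a half-space is independent of $v$. If $\h \in \H(X)$ is disjoint from $Y$, then $Y \subseteq \h^*$ by the convex partition $X = \h \cup \h^*$, and hence $\Gamma.v \subseteq Y \subseteq \h^*$. Thus $\h$ contains no $\Gamma$-orbit point at all, in particular $\h$ is $\Gamma$-shallow, which is the first assertion of (ii).

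To deduce $\Ess(X,\Gamma) = \Ess(Y,\Gamma)$ it remains to compare the two notions for hyperplanes $\hh$ that meet $Y$. Here I would invoke the fact that the combinatorial distance on the $1$-skeleton of $Y$ coincides with the restriction of that of $X$: this follows from part (i), since for vertices $p,q \in Y$ both distances equal the number of hyperplanes of $X$ (equivalently of $Y$) separating $p$ from $q$. Combined with Lemma~\ref{lem:cat_versus_combinatorial}, this shows that $\Gamma$-deepness of a half-space of $X$ meeting $Y$ can be tested on orbit points in $Y$ with respect to distances measured in $Y$. Thus for such a hyperplane $\hh$, the half-space $\h$ is $\Gamma$-deep in $X$ if and only if $\h \cap Y$ is $\Gamma$-deep in $Y$; applying this to both $\h$ and $\h^*$ gives $\Ess(X,\Gamma) = \Ess(Y,\Gamma)$.

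Finally, identifying the two essential cores amounts to noting that the cube complex $X(\Ess(X,\Gamma))$ depends only on the pocset structure of the chosen collection of half-spaces together with the transversality relation; since the inclusion $\hH(Y) \hookrightarrow \hH(X)$ from (i) preserves inclusion and transversality of half-spaces, the canonical restriction quotients of $Y$ and of $X$ associated with the common collection $\Ess(X,\Gamma) = \Ess(Y,\Gamma)$ are $\Aut$-equivariantly isomorphic. The only step requiring a little care is the back-and-forth comparison of $\Gamma$-deepness in the middle paragraph; all other ingredients are essentially bookkeeping.
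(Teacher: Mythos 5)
The paper itself omits the proof of this lemma (``straightforward''), so there is no written argument to compare against; your route is the natural one and is essentially correct, but two steps are asserted in a form that does not literally hold and should be tightened. In part (i), an \emph{arbitrary} chain of squares in $X$ witnessing that two edges of $Y$ are parallel need not lie in the convex hull of those edges: square-opposition generates an equivalence relation, and a witnessing chain may wander anywhere in $X$. What you actually need is that \emph{some} witnessing chain lies in $Y$. This follows from the standard description of the carrier of the common hyperplane $\hh$ as $\hh \times [0,1]$: a combinatorial geodesic in the carrier joining the two dual edges yields a minimal ladder of squares whose vertices lie in the interval (hence the convex hull) between appropriate endpoints of the two edges, and convexity of $Y$ then places this ladder inside $Y$.

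In part (ii), the fact that $Y^{(1)}\hookrightarrow X^{(1)}$ is an isometric embedding compares vertex-to-vertex distances, whereas $\Gamma$-deepness involves the distance from an orbit point to a \emph{hyperplane}, an infimum that need not be realized inside $Y$. The inequality $d_X(\gamma v,\hh)\le d_X(\gamma v,\hh\cap Y)\le d_Y(\gamma v,\hh\cap Y)$ is immediate and gives one inclusion, but the reverse comparison (deep in $Y$ implies deep in $X$) needs an extra observation: if a hyperplane $\hk$ crosses $Y$ and separates $\gamma v$ from $\hh\cap Y$ inside $Y$, then $\hk$ cannot cross $\hh$ in $X$ --- otherwise, since $\hh$, $\hk$ and $Y$ pairwise intersect, Gerasimov's Helly property would force all four sectors to meet $Y$ and $\hk$ would cross $\hh\cap Y$ in $Y$ --- so $\hk$ separates $\gamma v$ from all of $\hh$ in $X$. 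Hence the count of hyperplanes separating $\gamma v$ from $\hh$ in $X$ dominates the corresponding count in $Y$, and Lemma~\ref{lem:cat_versus_combinatorial} converts counts into \cat distances. (One could instead argue via the combinatorial gate map $X\to Y$, which is $1$-Lipschitz and carries the carrier of $\hh$ into the carrier of $\hh\cap Y$.) Note also that this last step, like your appeal to Lemma~\ref{lem:cat_versus_combinatorial}, implicitly uses finite-dimensionality, which the lemma as stated does not assume but which holds in every application made of it in the paper. The remaining points --- the first assertion of (ii) and the identification of the essential cores via the isomorphism of pocsets --- are handled correctly.
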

}

The property of being $\Gamma$-essential, half-essential or trivial,  can be recognized in the orbit quotient, as follows.

\begin{prop}\label{prop:essential}
Let $X$ be a finite-dimensional  \cat cube complex, let $\Gamma \leq \Aut(X)$ and  $\hh \in \hH(X)$. Then the following conditions are equivalent:
\begin{enumerate}[(i)]
\item $\hh \in \Ess(X, \Gamma)$,

\item $\hh$ is skewered by some element of $\Gamma$,

\item $X(\Gamma\hh)$ is unbounded.
\end{enumerate}
If in addition $X$ is locally compact, then:
\begin{enumerate}[(i)]
\setcounter{enumi}{3}
\item $\hh\in \Triv(X, \Gamma)$ if and only if $X(\Gamma\hh)$ is finite.

\item $\hh \in \Hess(X, \Gamma)$  if and only if $X(\Gamma\hh)$ is infinite and 
bounded.
\end{enumerate}
\end{prop}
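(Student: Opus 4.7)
My strategy is to take (ii) $\Leftrightarrow$ (iii) directly from Lemma~\ref{lem:WallOrbit}(iii), so the remaining task for the first part is to prove (i) $\Leftrightarrow$ (ii); statements (iv) and (v) will then follow under local compactness by combining (iii) with a bounded-orbit argument and the tripartition $\hH = \Ess \sqcup \Hess \sqcup \Triv$.

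For (ii) $\Rightarrow$ (i) I would start from $\gamma^n \h \subsetneq \h$ and iterate to obtain a descending chain of nested half-spaces, then invoke Lemma~\ref{lem:def:Skewer} to pick an axis $\ell$ of $\gamma$ meeting $\hh$ in a single point. Since the orbit $\{\gamma^{kn}v\}$ stays within bounded distance of $\ell$, and the pairwise disjoint hyperplanes $\gamma^{jn}\hh$ for $1 \leq j \leq k-1$ all separate $\gamma^{kn}v$ from $\hh$, I would conclude that $d(\gamma^{kn}v, \hh) \to \infty$ as $k \to \pm\infty$, with the orbit points eventually deep inside $\h$ on one side and deep inside $\h^*$ on the other. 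Both half-spaces are thus $\Gamma$-deep, so $\hh$ is $\Gamma$-essential.

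For (i) $\Rightarrow$ (ii) I would argue the contrapositive, using the key fact that if $\gamma\hh$ is disjoint from $\hh$ then $\gamma$ skewers $\hh$ (since one of the half-spaces bounded by $\gamma\hh$ is then strictly contained in $\h$ or in $\h^*$). Contrapositively, if no element of $\Gamma$ skewers $\hh$, then $\beta^{-1}\alpha \cdot \hh$ meets $\hh$ for every $\alpha, \beta \in \Gamma$, so $\alpha\hh$ meets $\beta\hh$; thus $\Gamma\hh$ is a pairwise-meeting family of hyperplanes and the finite-dimensionality of $X$ yields $|\Gamma\hh| \leq \dim X$, say $\Gamma\hh = \{\hh_1, \dots, \hh_m\}$. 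Since $\gamma^{-1}\hh$ lies in this finite set for every $\gamma \in \Gamma$, I would obtain $d(\gamma v, \hh) = d(v, \gamma^{-1}\hh) \leq \max_i d(v, \hh_i)$, showing that $\Gamma v$ remains in a bounded neighborhood of $\hh$; hence $\hh$ is $\Gamma$-trivial and in particular not essential. The chief subtlety will be bootstrapping the absence of skewering elements into finiteness of $\Gamma\hh$: the decisive maneuver is converting a disjoint pair $(\alpha\hh, \beta\hh)$ into a single skewering element $\beta^{-1}\alpha$, and finite-dimensionality of $X$ is what ultimately forces the collapse.

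For (iv) and (v) I would invoke local compactness, which ensures that only finitely many hyperplanes meet any fixed bounded subset of $X$. If $\hh$ is $\Gamma$-trivial then $\Gamma\hh = \Gamma^{-1}\hh$ consists of hyperplanes within bounded distance of $v$, hence is finite; therefore $X(\Gamma\hh)$, having finitely many hyperplanes and dimension bounded by $\dim X$, is a finite cube complex. Conversely, $X(\Gamma\hh)$ finite forces $\Gamma\hh$ to be finite, and the estimate from the previous paragraph again bounds $d(\Gamma v, \hh)$, so $\hh$ is trivial. This settles (iv); statement (v) then follows at once from (iii), (iv), and the tripartition of $\hH$.
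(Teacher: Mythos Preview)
Your proofs of (ii)$\Leftrightarrow$(iii), (ii)$\Rightarrow$(i), and (iv)--(v) are fine. The gap is in your contrapositive argument for (i)$\Rightarrow$(ii).

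The assertion ``if $\gamma\hh$ is disjoint from $\hh$ then $\gamma$ skewers $\hh$'' is false. When $\gamma\hh \cap \hh = \varnothing$, you correctly observe that one of $\gamma\h, \gamma\h^*$ is properly contained in one of $\h, \h^*$; but among the four possibilities, the cases $\gamma\h^* \subsetneq \h$ and $\gamma\h \subsetneq \h^*$ (the ``flipping'' configurations) do \emph{not} yield $\gamma^n\h \subsetneq \h$ for any $n$. A concrete counterexample: let $X$ be a $3$-regular tree, $p$ a vertex, $\gamma$ a rotation of order $3$ at $p$, and $\hh$ the midpoint of an edge at $p$. Then $\gamma\hh \cap \hh = \varnothing$, yet $\gamma$ is elliptic and skewers nothing. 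Consequently your deduction that $\Gamma\hh$ is a pairwise-crossing family collapses, and with it the bound $|\Gamma\hh| \leq \dim X$.

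Worse, the conclusion you are aiming for --- that $\hh$ is $\Gamma$-\emph{trivial} whenever no element skewers it --- is too strong and is itself false. Take $X$ a regular tree with an end $\xi$, and let $\Gamma$ be the horospherical stabiliser (elements fixing $\xi$ and preserving each horosphere). Every element of $\Gamma$ is elliptic, so nothing is skewered; but for a hyperplane $\hh$ with $\h$ the side containing $\xi$, the orbit $\Gamma v$ of a vertex $v \in \h^*$ meets $\h$ arbitrarily far from $\hh$, so $\h$ is $\Gamma$-deep and $\hh$ is $\Gamma$-half-essential, not $\Gamma$-trivial. The proposition only demands that $\hh$ fail to be $\Gamma$-essential, and this is what the paper proves: using that $X(\Gamma\hh)$ is bounded (from (iii)), one finds a $\Gamma$-fixed point there, and Lemma~\ref{lem:HalfEssential} then shows that at least one of $\h, \h^*$ is $\Gamma$-shallow. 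This correctly accommodates the half-essential case that your argument overlooks.
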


We shall make use of the following basic fact. 

\begin{lem}\label{lem:HalfEssential}
Let $X$ be a finite-dimensional  \cat cube complex, let $\Gamma \leq \Aut(X)$. Let $\hh$ be a hyperplane such that $X(\Gamma.\hh)$ is bounded. 

Then one of the following assertions holds true.
\begin{enumerate}[(i)]
\item $\bigcap_{\gamma \in \Gamma} \gamma.\hh$ is non-empty and $\hh$ is $\Gamma$-trivial.

\item $\bigcap_{\gamma \in \Gamma} \gamma.\h$ contains some vertex of $X$ and $\h^*$ is $\Gamma$-shallow.

\item $\bigcap_{\gamma \in \Gamma} \gamma.\h^*$ contains some vertex of $X$ and $\h$ is $\Gamma$-shallow.
\end{enumerate}
\end{lem}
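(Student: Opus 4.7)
The plan is to apply Lemma~\ref{lem:WallOrbit}(ii) to produce a $\Gamma$-fixed point $p$ in $X(\Gamma.\hh)$ and to analyse two cases according to the position of $p$. First, if $p$ lies on some hyperplane of $X(\Gamma.\hh)$, then the transitive $\Gamma$-action on $\Gamma.\hh$ places $p$ on every hyperplane of $X(\Gamma.\hh)$. Two hyperplanes of a \cat cube complex meeting at a point must cross, and since crossing is a pocset condition, this lifts back to $X$: the family $\{\gamma.\hh : \gamma \in \Gamma\}$ is pairwise crossing in $X$, so finite-dimensionality yields a point $q \in \bigcap_{\gamma} \gamma.\hh$. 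Then for any vertex $v \in X$, $d(\gamma^{-1} v, \hh) = d(v, \gamma.\hh) \leq d(v, q)$ uniformly in $\gamma$, so both $\h$ and $\h^*$ are $\Gamma$-shallow, $\hh$ is $\Gamma$-trivial, and case~(i) holds.

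Otherwise $p$ lies in the open interior of a unique minimal cube $C$ of $X(\Gamma.\hh)$ and on no hyperplane of $C$. An isometry of a Euclidean cube fixing such a generic interior point is necessarily the identity, so $\Gamma$ acts trivially on $C$ and fixes each of its vertices. Pick one such vertex $v_0$; the $\Gamma$-invariant ultrafilter on $\Gamma.\{\h, \h^*\}$ it represents must, up to interchanging $\h$ and $\h^*$, coincide with $\{\gamma.\h : \gamma \in \Gamma\}$, and in particular no element of $\Gamma$ sends $\h$ to $\h^*$.

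To produce a vertex of $X$ inside $\bigcap_\gamma \gamma.\h$, I would introduce, for each vertex $v \in X$, the finite set $B_v = \{\gamma.\hh \in \Gamma.\hh : v \in \gamma.\h^*\}$; its size equals the combinatorial distance from the image of $v$ in $X(\Gamma.\hh)$ to $v_0$, which is at most the (finite) combinatorial diameter of the bounded complex $X(\Gamma.\hh)$, finite by Lemma~\ref{lem:cat_versus_combinatorial}. Now pick a vertex $v^*$ minimising $|B_{v^*}|$. If $|B_{v^*}|>0$, choose $\gamma_0.\hh \in B_{v^*}$ and let $u^*$ be the combinatorial gate projection of $v^*$ onto the half-space $\gamma_0.\h$. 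The hyperplanes of $X$ crossed by the edge-geodesic from $v^*$ to $u^*$ are exactly those separating $v^*$ from $\gamma_0.\h$; if such a hyperplane lies in $\Gamma.\hh$, say $\gamma.\hh$, then the compatibility of $\gamma_0.\h$ and $\gamma.\h$ inside the ultrafilter $v_0$ rules out $\gamma_0.\h \subseteq \gamma.\h^*$, so the separation must take the form $v^* \in \gamma.\h^*$, i.e.\ $\gamma.\hh \in B_{v^*}$. Consequently $B_{u^*} \subsetneq B_{v^*}$, contradicting minimality.

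Hence $B_{v^*} = \emptyset$, which gives the vertex in $\bigcap_\gamma \gamma.\h$ required by~(ii). To close case~(ii), observe that for any vertex $v \in X$ and any $\delta \in \Gamma$ with $\delta.v \in \h^*$, the hyperplane $\hh$ separates $\delta.v$ from $\delta.v^* \in \h$, so $d(\delta.v, \hh) \leq d(\delta.v, \delta.v^*) = d(v, v^*)$, proving that $\h^*$ is $\Gamma$-shallow. The symmetric analysis, starting instead from $\h^* \in v_0$, yields case~(iii). I expect the main obstacle to be the gate-projection step of the third paragraph: strictly reducing $|B_v|$ at each iteration hinges on using ultrafilter consistency to rule out the ``wrong'' orientation of every separating hyperplane in $\Gamma.\hh$.
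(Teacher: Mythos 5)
Your overall strategy coincides with the paper's: take a $\Gamma$-fixed point in the orbit quotient $X(\Gamma\hh)$ (via Lemma~\ref{lem:WallOrbit}) and split into cases according to whether it lies on a hyperplane of $X(\Gamma\hh)$. Your case (i) is essentially the paper's argument (pairwise crossing lifts to $X$, Helly plus finite dimensionality gives a point of $\bigcap_\gamma \gamma\hh$, hence $\Gamma$-triviality). Where you genuinely diverge is in producing a vertex of $X$ inside $\bigcap_\gamma \gamma\h$: the paper simply ``lifts'' the fixed vertex through the quotient map $X\to X(\Gamma\hh)$, i.e.\ it appeals to surjectivity of the restriction quotient on vertices, and then quotes Lemma~\ref{lem:Haglund} for shallowness; you instead construct the lift explicitly, minimising $|B_v|$ (finite by boundedness of $X(\Gamma\hh)$ and Lemma~\ref{lem:cat_versus_combinatorial}) and pushing a putative minimiser across the gate projection onto $\gamma_0.\h$, using consistency of the invariant ultrafilter $v_0$ to rule out $\gamma_0.\h\subseteq\gamma.\h^*$ and force the count to drop strictly. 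That step is correct and gives a self-contained proof of the lifting assertion that the paper leaves implicit; your direct verification that $\h^*$ is $\Gamma$-shallow is also fine.

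One intermediate claim is false as stated: an isometry of a Euclidean cube fixing an interior point lying on no midcube need \emph{not} be the identity --- the coordinate swap of $[0,1]^2$ fixes $(0.3,0.3)$ --- so you cannot conclude that $\Gamma$ acts trivially on $C$ and fixes \emph{every} vertex of $C$. (The paper's own phrase ``the cube supporting $v$ must be pointwise fixed'' suffers from the same imprecision.) Fortunately you only need one $\Gamma$-fixed vertex: since $p$ avoids every midcube of $C$, the corner of $C$ obtained by choosing, for each hyperplane crossing $C$, the side containing $p$ is canonically determined by $p$ and hence $\Gamma$-fixed; take that corner as $v_0$. With this one-line repair the argument is complete.
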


\begin{proof}
Since $X(\Gamma\hh)$ is bounded, the group $\Gamma$ has a
fixed point $v$ in $X(\Gamma\hh)$. If this belongs to $\hh$, then it is
contained in every element of $\Gamma\hh$. Therefore $X(\Gamma\hh)$ consists of a single cube, {and the orbit $\Gamma\hh$ must therefore be finite. 
Since the hyperplanes in the orbit $\Gamma\hh$  cross pairwise in $X(\Gamma\hh)$, they also cross pairwise in $X$. Therefore, Theorem~4.14 from \cite{Sageev95} ensures that  $\bigcap_{\gamma \in \Gamma} \gamma.\hh$ is a non-empty $\Gamma$-invariant convex subset.} In particular some $\Gamma$-orbit lies entirely in the hyperplane $\hh$, which implies that $\hh$ is $\Gamma$-trivial. We are thus in case (i).  

Assume now that $v$ lies strictly in one side of $\hh$, say in the
half-space $\h$ determined by $\hh$. {In that case, the cube supporting $v$ must be pointwise fixed by $\Gamma$, and we may therefore assume that $v$ is a vertex. } Lifting $v$ up to $X$ via the quotient map $X \to X(\Gamma.\hh)$ (see the construction of the orbit quotient in \S\ref{sec:prel}), we infer that the intersection $Z = \bigcap_{\gamma \in \Gamma} \gamma.\h$ is non-empty
in $X$. Thus $Z$ is a $\Gamma$-invariant convex subset of $X$. In particular, some $\Gamma$-orbit is entirely contained in $\h$. Therefore {Lemma~\ref{lem:Haglund} ensures that} $\h^*$ is $\Gamma$-shallow and we are in case (ii). 

Finally if $v$ lies in $\h^*$, then the same argument shows that the situation (iii) occurs.
\end{proof}

\begin{proof}[Proof of Proposition~\ref{prop:essential}]
If $X(\Gamma\hh)$ is unbounded, then some hyperbolic
element of $\Gamma$ skewers $\hh$ by
Lemma~\ref{lem:WallOrbit} and hence $\hh$ is $\Gamma$-essential. 
Conversely, if $X(\Gamma\hh)$ is bounded, then $\h$ or $\h^*$ is $\Gamma$-shallow by Lemma~\ref{lem:HalfEssential} and, thus,  $\hh$ is $\Gamma$-inessential. 

In view of Lemma~\ref{lem:WallOrbit}, this proves the equivalence between (i), (ii) and (iii). Assume now in addition that $X$ is locally compact. 

\medskip 
By Lemma~\ref{lem:WallOrbit}, the cube complex $X(\Gamma\hh)$
is finite if and only if the orbit $\Gamma\hh$ is so. In that case $\Gamma$ has
a finite index which stabilises $\hh$. Thus $\hh$ is $\Gamma$-trivial. 

Conversely if $\hh$ is $\Gamma$-trivial, so is every element in $\Gamma\hh$ and there is some
$r>0$ such that $X \subset \mathcal N_r(\gamma.\hh)$ for each $\gamma \in \Gamma$. In
particular each wall of $\Gamma\hh$ meets the $r$-ball around every
vertex of $X$. Thus $\Gamma\hh$ is finite since $X$ is locally compact. This shows Assertion (iv). The remaining assertion is an immediate consequence of the others.
\end{proof}

It is easy to deduce from the definition that any $\Gamma$-essential hyperplane intersects every $\Gamma$-trivial hyperplane. In view of Lemma~\ref{lem:Prod}, this yields the following. 

\begin{remark}\label{rem:HessEmpty}
Assume that $\Hess(X, \Gamma)$ is empty. Then $X$ splits as a product $Z \times C$ where $\hH(Z) = \Ess(X, \Gamma)$ and $\hH(C) = \Triv(X, \Gamma)$. {It is good to keep in mind that if $\Ess(X, \Gamma) = \varnothing$, then $Z$ is reduced to a single vertex.}
\end{remark}

\subsection{Essential actions and pruning}

The $\Gamma$-action on $X$ is called \textbf{essential} if every hyperplane is $\Gamma$-essential or, equivalently, if $\hH(X) = \Ess(X) = \Ess(X, \Gamma)$. In particular, if $X$ admits some essential group action, then it is essential. {Lemma~\ref{lem:Haglund} shows that if $X$ contains some $\Gamma$-invariant convex subcomplex $Y \subsetneq X$, then the $\Gamma$-action on $X$ cannot be essential. It is natural to address the question whether $X$ contains some non-empty $\Gamma$-invariant convex subcomplex on which the $\Gamma$-action is essential. }
The $\Gamma$-action on the $\Gamma$-essential core of $X$ is always essential, but this core might be reduced to a singleton. {In case it is not, it is not clear \emph{a priori} that it embeds as a convex subcomplex of $X$. The following result ensures that these possible pathologies do not occur under some natural conditions on the $\Gamma$-action. }

\begin{prop}\label{prop:pruning}
Let $X$ be a finite-dimensional  CAT(0) cube complex and let $\Gamma \leq \Aut(X)$. Assume that at least one of the following two conditions is satisfied:
\begin{enumerate}[(a)]
\item $\Gamma$ has  finitely many orbits of hyperplanes.

\item $\Gamma$ has no fixed point at infinity. 
\end{enumerate}
Then the $\Gamma$-essential core of $X$ is unbounded if and only if $\Gamma$ has no fixed point. In that case  the $\Gamma$-essential core embeds as  a {$\Gamma$-invariant} convex subcomplex $Y$ of $X$. 
\end{prop}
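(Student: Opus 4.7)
The plan is to establish the equivalence by constructing a $\Gamma$-invariant convex subcomplex $Y \se X$ on which $\Gamma$ acts essentially, and then applying the CAT(0) circumcenter argument. First, the direction requiring no special hypothesis on $\Gamma$: if $\Gamma$ fixes a point $p \in X$, then any vertex of a cube containing $p$ has a bounded $\Gamma$-orbit, so every half-space is $\Gamma$-shallow, every hyperplane is $\Gamma$-trivial, $\Ess(X,\Gamma) = \varnothing$, and the $\Gamma$-essential core reduces to a single vertex. The substantive direction to prove is: under (a) or (b), if $\Gamma$ has no fixed point in $X$, then the essential core is unbounded and embeds in $X$ as a $\Gamma$-invariant convex subcomplex.

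I would construct $Y$ by iterated pruning. Set $X_0 = X$. At each stage $\alpha$, given a $\Gamma$-invariant convex subcomplex $X_\alpha \se X$: if every hyperplane of $X_\alpha$ is $\Gamma$-essential, stop and set $Y = X_\alpha$; otherwise pick a $\Gamma$-inessential $\hh \in \hH(X_\alpha)$. By Lemma~\ref{lem:Haglund}(ii), this $\hh$ is $\Gamma$-inessential in $X$ as well, so Lemma~\ref{lem:HalfEssential} applied inside $X_\alpha$ yields a non-empty $\Gamma$-invariant convex subcomplex $X_{\alpha+1} \subsetneq X_\alpha$, namely one of $\bigcap_{\gamma \in \Gamma} \gamma\hh$, $\bigcap_{\gamma \in \Gamma} \gamma\h$, or $\bigcap_{\gamma \in \Gamma} \gamma\h^*$. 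This removes the entire orbit $\Gamma\hh$ from the hyperplane set, while Lemma~\ref{lem:Haglund}(ii) ensures that $\Ess(X,\Gamma) \se \hH(X_{\alpha+1})$. At a limit ordinal $\lambda$, set $X_\lambda = \bigcap_{\alpha < \lambda} X_\alpha$. Under hypothesis (a), only finitely many $\Gamma$-orbits of inessential hyperplanes exist, so the procedure terminates after finitely many steps with $\hH(Y) = \Ess(X,\Gamma)$; Roller duality then identifies $Y$ with the essential core $X(\Ess(X,\Gamma))$.

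Under hypothesis (b), the main obstacle is to show that $X_\lambda$ is non-empty at every limit ordinal. I would argue by contradiction: if $X_\lambda = \varnothing$, fix a vertex $v \in X$ and consider its nearest-point projections $p_\alpha = \pi_{X_\alpha}(v) \in X_\alpha$. Then $d(v, p_\alpha) \to \infty$ and, by the $1$-Lipschitz property of projections onto nested closed convex subsets of a complete CAT(0) space, $(p_\alpha)$ converges in the visual compactification $X \cup \bd X$ to a point $\xi \in \bd X$; moreover $\xi$ is independent of $v$ because projections from any two base vertices stay within bounded distance. Since each $X_\alpha$ is $\Gamma$-invariant, $\pi_{X_\alpha}$ is $\Gamma$-equivariant, forcing $\xi$ to be fixed by $\Gamma$, in contradiction with (b). Hence the pruning passes through every limit, and it terminates at some ordinal $\beta$ with $Y = X_\beta$ satisfying $\hH(Y) = \Ess(X,\Gamma)$, exactly as in case (a).

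Finally, since $\Gamma$ acts essentially on $Y$ and has no fixed point in $X \supseteq Y$, the subcomplex $Y$ cannot be bounded: otherwise the CAT(0) circumcenter of $Y$ would be a $\Gamma$-fixed point of $X$, contradicting the standing assumption. This simultaneously establishes that the $\Gamma$-essential core $Y_0 \cong Y$ is unbounded and that it embeds in $X$ as a $\Gamma$-invariant convex subcomplex, completing the proof.
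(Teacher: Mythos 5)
Your overall architecture is the same as the paper's: prune inessential hyperplane orbits one $\Gamma$-orbit at a time using Lemma~\ref{lem:HalfEssential}, observe that hypothesis (a) forces termination in finitely many steps, and under hypothesis (b) use the absence of fixed points at infinity to get through the limit stages of a transfinite induction. The initial reduction (a fixed point in $X$ forces every half-space to be $\Gamma$-shallow) and the final circumcentre argument showing $Y$ cannot be bounded are both fine.

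The genuine gap is in your treatment of the limit ordinal under hypothesis (b). You claim that if $X_\lambda=\bigcap_{\alpha<\lambda}X_\alpha=\varnothing$, then the nearest-point projections $p_\alpha=\pi_{X_\alpha}(v)$ converge in the visual compactification to a single point $\xi\in\bd X$, ``by the $1$-Lipschitz property of projections onto nested closed convex subsets''. This step fails. The $1$-Lipschitz property compares projections of two basepoints onto the \emph{same} convex set; it gives no control on $\pi_{X_\alpha}(v)$ as $\alpha$ varies, and indeed the projections need not converge in the cone topology. A concrete counterexample already lives in $\RR^2$ with its standard squaring: take the nested convex subcomplexes $C_1=\{y\geq 1\}$, $C_2=C_1\cap\{x\geq 10^2\}$, $C_3=C_2\cap\{y\geq 10^4\}$, $C_4=C_3\cap\{x\geq 10^6\}$, and so on. Their intersection is empty, but the projections of the origin oscillate between nearly vertical and nearly horizontal directions, so they accumulate on a whole arc of $\bd \RR^2$ rather than a single point. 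What is true --- and what the paper uses --- is that $\bigcap_\alpha \bd X_\alpha$ is a non-empty convex subset of $\bd X$ of intrinsic radius at most $\pi/2$, which therefore admits a \emph{canonical circumcentre} fixed by $\Gamma$; this is Proposition~\ref{prop:CL}, resting on the main theorem of \cite{CapraceLytchak} (finite telescopic dimension) together with \cite[Proposition~1.4]{BalserLytchak}. In the non-proper setting this input is unavoidable: it is not even clear \emph{a priori} that an unbounded sequence in a non-proper finite-dimensional \cat cube complex accumulates anywhere in $\bd X$. Replacing your convergence claim by an appeal to Proposition~\ref{prop:CL} applied to the family $\{X_\alpha\}$ repairs the argument and brings it in line with the paper's proof.
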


A key criterion allowing us to detect the existence of global fixed points at infinity is provided by the following result, which holds in arbitrary finite-dimensional \cat spaces. {One difficulty for finite-dimensional \cat cube complexes which are not proper is that the visual boundary $\bd X$ has no reason to be non-empty \emph{a priori}. In particular an unbounded sequence from $X$ does not accumulate in $\bd X$ in general. This difficulty will be handled by referring to the main result of \cite{CapraceLytchak}; the latter paper is however not necessary in the case of locally compact \cat cube complexes. }

\begin{prop}\label{prop:CL}
Let $X$ be a finite-dimensional complete \cat space (or more generally, a complete \cat space of finite telescopic dimension, see \cite{CapraceLytchak}) and let $\Gamma \leq \Isom(X)$. Let also $\{Y_\alpha\}_{\alpha \in A}$ be a $\Gamma$-invariant collection of closed convex subsets of $X$. 

If for any finite subset $B \subseteq A$, the intersection $\bigcap_{\alpha \in B} Y_\alpha$ is non-empty, then either  $\bigcap_{\alpha \in A} Y_\alpha$ is a non-empty $\Gamma$-invariant subspace or $\bigcap_{\alpha \in A} \bd Y_\alpha \subset \bd X$ is non-empty and contains a canonical circumcentre which is fixed by $\Gamma$. 
\end{prop}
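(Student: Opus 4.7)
The plan is to apply the main result of~\cite{CapraceLytchak} on filtering families of closed convex subsets in \cat spaces of finite telescopic dimension. First, I would reindex the family: let $\mathcal F$ denote the collection of all intersections $Y_B = \bigcap_{\alpha \in B} Y_\alpha$ taken over finite subsets $B \subseteq A$. By hypothesis, each $Y_B$ is a non-empty closed convex subset of $X$, and the collection $\mathcal F$ is filtering with respect to reverse inclusion (the intersection of $Y_B$ and $Y_{B'}$ is $Y_{B \cup B'} \in \mathcal F$). Since $\Gamma$ permutes the $Y_\alpha$, it permutes $\mathcal F$ as well, so any canonically defined object attached to $\mathcal F$ will be $\Gamma$-invariant.

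If the filtering family $\mathcal F$ has non-empty total intersection, then this intersection coincides with $\bigcap_{\alpha \in A} Y_\alpha$; it is closed, convex and $\Gamma$-invariant, placing us in the first alternative.

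Otherwise, I would invoke the dichotomy established in~\cite{CapraceLytchak}: for a filtering family of non-empty closed convex subsets in a complete \cat space of finite telescopic dimension, either the total intersection is non-empty, or the intersection $\bigcap_{Y \in \mathcal F} \bd Y$ is a non-empty subset of $\bd X$ whose circumradius with respect to the Tits metric is strictly less than $\pi/2$. In the second situation, this intersection is $\Gamma$-invariant by construction, and, being a subset of the \cat space $(\bd X, \angle_{\mathrm T})$ of Tits radius $< \pi/2$, it admits a unique Tits circumcentre. Uniqueness makes this circumcentre canonical, and hence $\Gamma$-fixed. Moreover, every $Y_\alpha$ belongs to $\mathcal F$ (take $B = \{\alpha\}$), so the filtering structure forces $\bigcap_{Y \in \mathcal F} \bd Y \subseteq \bd Y_\alpha$ for every $\alpha \in A$, which yields the required non-emptiness of $\bigcap_{\alpha \in A} \bd Y_\alpha$ and places the circumcentre inside it.

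The main obstacle will be invoking the precise statement from~\cite{CapraceLytchak} that guarantees the dichotomy between non-empty total intersection and non-empty boundary intersection with bounded Tits radius in finite telescopic dimension; this is the essential point where the hypothesis on $X$ is used, and in the general (not locally compact) case one cannot appeal to compactness of the visual boundary to extract accumulation points. Once this dichotomy is granted, the classical Bruhat--Tits-style argument that a subset of a \cat(1) space of circumradius $<\pi/2$ has a canonical circumcentre handles the $\Gamma$-equivariance automatically.
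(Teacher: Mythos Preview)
Your approach is essentially the paper's: reduce to a filtering family, invoke the main dichotomy of \cite{CapraceLytchak}, and extract a canonical circumcentre from the resulting subset of $\bd X$.

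One point deserves care. Theorem~1.1 of \cite{CapraceLytchak} only yields that $\bigcap_\alpha \bd Y_\alpha$ has intrinsic (Tits) radius at most $\pi/2$, not strictly less. At radius exactly $\pi/2$ the classical Bruhat--Tits argument does not guarantee a \emph{unique} circumcentre in a general complete \cat(1) space, so canonicity (and hence $\Gamma$-invariance) is not automatic. The paper closes this gap by appealing to \cite[Proposition~1.4]{BalserLytchak}, which provides a canonical circumcentre for convex subsets of radius $\leq \pi/2$ in \emph{finite-dimensional} \cat(1) spaces; this is where the finite-dimensionality of $\bd X$ (ensured by \cite[Proposition~2.1]{CapraceLytchak}) enters a second time. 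Your final paragraph correctly flags the invocation of \cite{CapraceLytchak} as the crux, but you should replace ``strictly less than $\pi/2$'' by ``at most $\pi/2$'' and cite Balser--Lytchak rather than the bare Bruhat--Tits lemma for the circumcentre.
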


\begin{proof}
Assume that the intersection $\bigcap_{\alpha \in B} Y_\alpha$ is empty. Then Theorem~1.1 from \cite{CapraceLytchak} ensures that $\bigcap_{\alpha \in A} \bd Y_\alpha \subset \bd X$ is a non-empty $\Gamma$-invariant convex subset of $\bd X$ of radius~$\leq \pi/2$. 
{Notice that if $X$ is proper, then this can be established by an elementary and direct argument without referring to \cite{CapraceLytchak}, but simply using the fact that the cone topology makes $\bd X$ into a compact space (see  
\cite[Proposition~3.2]{CapraceMonod1}).} 
The existence of a fixed point in $\bigcap_{\alpha \in A} \bd Y_\alpha \subset \bd X$  is thus a consequence of \cite[Proposition~1.4]{BalserLytchak}, {which can be applied since $\bd X$ is finite-dimensional (see \cite[Proposition~2.1]{CapraceLytchak}).}
\end{proof}

With this criterion at hand, we are able to provide the following. 

\begin{proof}[Proof of Proposition~\ref{prop:pruning}]
Let us consider the half-essential hyperplanes. We want to describe a pruning process by which we retract the complex down to a complex with no half-essential hyperplanes (in analogy with the standard pruning of a tree, which consists in removing valence-1 vertices). 

Assume thus that $\Hess(X, \Gamma)$ is non-empty. Then there is some $\Gamma$-deep half-space $\h$ such that  $\hh$ is $\Gamma$-half-essential. By Proposition~\ref{prop:essential}  and Lemma~\ref{lem:HalfEssential}, the set $\bigcap_{\gamma \in \Gamma} \gamma.\h$ is a non-empty $\Gamma$-invariant convex subset of $X$ which contains some vertex. Therefore, the collection of all vertices contained in  $\bigcap_{\gamma \in \Gamma} \gamma.\h$ spans a $\Gamma$-invariant convex subcomplex, which we shall denote by $Y_1$. 

Since $Y_1$ is a $\Gamma$-invariant convex subcomplex, it follows that there is a canonical embedding 
$$\Hess(Y_1, \Gamma) \subsetneq \Hess(X, \Gamma).$$

\medskip
We now {repeat the above arguments, this time applied to} the cube complex $Y_1$. Hence if there is some $\Gamma$-deep half-space $\h_1$ such that $\hh_1\in \Hess(Y_1,\Gamma)$,  then there is a non-empty $\Gamma$-invariant convex subcomplex $Y_2 \subsetneq Y_1$, and we have $\Hess(Y_2, \Gamma) \subsetneq \Hess(Y_1, \Gamma)$. We can then proceed inductively: provided the set $\Hess(Y_n, \Gamma)$ is non-empty, we find a  $\Gamma$-invariant convex subcomplex $Y_{n+1} \subsetneq Y_n$. There are two cases to consider. 

If the process terminates after finitely many steps, say at step $n$, it means that $\Hess(Y_n, \Gamma)$ is empty. {By Lemma~\ref{lem:Haglund},}
we deduce that $\Ess(Y_n, \Gamma) = \Ess(X, \Gamma)$. By Remark~\ref{rem:HessEmpty}, the subcomplex $Y_n$ splits as a product of a bounded subcomplex and a subcomplex which is isomorphic to that $\Gamma$-essential core $Z$ of $X$. {Either $Z$ is reduced to a single vertex, in which case $\Gamma$ has a fixed point in $Y_n$, or $Z$ contains a hyperplane, hence an essential one, in which case all $\Gamma$-orbits are unbounded.} Thus we are done in this case. 

Assume now that the process never terminates. Since at each step, we remove at least one $\Gamma$-orbit of half-essential hyperplanes, we deduce that the hypothesis (a) cannot be satisfied. Thus (b) holds and $\Gamma$ has no fixed point at infinity. In particular, it follows from Proposition~\ref{prop:CL} that the intersection $Y_{\infty}=\bigcap_{n>0} Y_n$ is non-empty. As an intersection of convex subcomplexes, $Y_{\infty}$ is itself a $\Gamma$-invariant convex subcomplex. At this point, a transfinite induction argument finishes the proof. 
\end{proof}

\subsection{Finitely many orbits of hyperplanes}

A natural question is how to compare the absolute qualifications of  a hyperplane to the corresponding qualifications {relative} to the $\Gamma$-action. For example, it is clear that if the $\Gamma$-action on $X$ is cocompact, then $\Ess(X,\Gamma) = \Ess(X)$,  $\Hess(X,\Gamma) = \Hess(X)$ and $\Triv(X,\Gamma) = \Triv(X)$. This fact can be generalised as follows; for another related statement, see Corollary~\ref{cor:HereditarilyEss} below. 

\begin{prop}\label{prop:essential:bis}
Let $X$ be a finite-dimensional  locally compact \cat cube complex and $\Gamma \leq \Aut(X)$ be a
group acting with finitely many orbits of hyperplanes. 

Then we have $\Triv(X) = \Triv(X, \Gamma)$, $\Hess(X) = \Hess(X, \Gamma)$ and $\Ess(X) = \Ess(X, \Gamma)$, and the set $\Triv(X)$ is finite. 
\end{prop}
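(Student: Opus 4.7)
The strategy is to reduce the three set equalities $\Ess(X)=\Ess(X,\Gamma)$, $\Hess(X)=\Hess(X,\Gamma)$ and $\Triv(X)=\Triv(X,\Gamma)$ to the single \emph{key equivalence}: a half-space $\h$ of $X$ is deep if and only if it is $\Gamma$-deep. Taking the conjunction of this equivalence across the two sides of a hyperplane gives the equalities for $\Ess$ and $\Triv$, and the equality for $\Hess$ follows from the partition $\hH=\Ess\cup\Hess\cup\Triv$. The finiteness of $\Triv(X)$ is then immediate: being equal to $\Triv(X,\Gamma)$, it is a union of $\Gamma$-orbits of hyperplanes, of which there are only finitely many by hypothesis, and each orbit in $\Triv(X,\Gamma)$ is itself finite by Proposition~\ref{prop:essential}(iv).

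One direction of the key equivalence---``$\Gamma$-deep implies deep''---is immediate, since orbit points are points of $X$. For the converse, suppose $\h$ is deep. Using local compactness together with Lemmas~\ref{lem:cat_versus_combinatorial} and~\ref{lem:dim}, one extracts an infinite pencil $\hh'_1,\hh'_2,\ldots$ of pairwise disjoint hyperplanes contained in $\h$, with nested half-spaces $\k'_1\supsetneq\k'_2\supsetneq\cdots$ all contained in $\h$ and with $s_k:=d(\hh,\hh'_k)\to\infty$. The finite number of $\Gamma$-orbits of hyperplanes then yields, by pigeonhole, a subsequence along which all $\hh'_k$ lie in a common orbit; choose $\gamma_k\in\Gamma$ with $\gamma_k\hh'_1=\hh'_k$. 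Each $\gamma_k$ sends $\k'_1$ to one of $\k'_k$ or $(\k'_k)^*$. The crucial observation is that isometries preserve the property of being a deep half-space, and $\k'_1$ is deep (containing the tail of the pencil). If $\h^*$ is shallow, then $(\k'_k)^*=\h^*\cup(\h\setminus\k'_k)$ is contained in a bounded neighborhood of $\hh'_k$, hence shallow, forcing $\gamma_k\k'_1=\k'_k$; if $\h^*$ is also deep, a further pigeonhole step makes this choice uniform among $\{\k'_k,(\k'_k)^*\}$.

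The conclusion follows by choosing an appropriate base vertex. If $\gamma_k\k'_1=\k'_k$, pick any vertex $v\in\k'_1$ near $\hh'_1$; then $\gamma_k v\in\k'_k\subset\h$ with $d(\gamma_k v,\hh)=d(v,\hh'_1)+s_k\to\infty$. If instead $\gamma_k\k'_1=(\k'_k)^*$ (which requires $\h^*$ deep), pick a vertex $w\in\h^*$ at large distance from $\hh$: then $w\in(\k'_1)^*$, $\gamma_k w\in\gamma_k(\k'_1)^*=\k'_k\subset\h$, and $d(\gamma_k w,\hh)\to\infty$ by the same triangle-inequality computation. Since the definition of $\Gamma$-deep is independent of the base vertex, this exhibits, via the orbit of $v$ (or of $w$), points of $\h$ arbitrarily far from $\hh$, so that $\h$ is $\Gamma$-deep. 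The main technical obstacle I expect to face is ensuring that $\k'_k\subset\h$ in the pencil extraction rather than merely that the pencil hyperplanes are crossed by a geodesic in $\overline{\h}$: this demands that the geodesic used in the extraction lie strictly inside the open half-space $\h$, which can be arranged since $\h$ is deep and convex; alternatively, a large number of pencil hyperplanes crossing $\hh$ would induce a pencil of comparable size inside the lower-dimensional cube complex $\hh$, permitting an inductive reduction on dimension.
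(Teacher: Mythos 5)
Your reduction to the single equivalence ``deep $\Leftrightarrow$ $\Gamma$-deep'' is valid (for a half-space, being deep is the same as containing points arbitrarily far from its hyperplane), and your route is genuinely different from the paper's. The paper works through the orbit quotient $X(\Gamma\hh)$: it first proves $\Triv(X)=\Triv(X,\Gamma)$ by noting that a $\Gamma$-trivial hyperplane is virtually $\Gamma$-fixed, so that the $\Gamma$-invariant distance-to-$\hh$ function is bounded on the finitely many orbits of hyperplanes, forcing all of $X$ into a bounded neighbourhood of $\hh$; it then handles the remaining case via Lemma~\ref{lem:HalfEssential}. Your argument is more hands-on: find hyperplanes of a single $\Gamma$-orbit arbitrarily deep inside $\h$ and use the elements carrying one to another to push a base vertex deep into $\h$. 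This is sound in outline, and it in fact supplies explicitly the point the paper treats most tersely (why the $\Gamma$-shallow side of a $\Gamma$-half-essential hyperplane is genuinely shallow). Your deduction of the finiteness of $\Triv(X)$ from Proposition~\ref{prop:essential}(iv) together with the finiteness of the orbit set is also fine.

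Two points need repair. First, the claim that $(\k'_k)^*=\h^*\cup(\h\setminus\k'_k)$ is shallow whenever $\h^*$ is shallow is false: the slab $\h\cap(\k'_k)^*$ between $\hh$ and $\hh'_k$ need not lie in a bounded neighbourhood of $\hh'_k$ (think of a tree with long branches hanging off the segment joining the two hyperplanes). Fortunately you do not need it: after a pigeonhole making $\gamma_k^{-1}\k'_k$ constant in $\{\k'_1,(\k'_1)^*\}$, take the base vertex inside that constant half-space; then $\gamma_k$ sends it into $\k'_k$, and every point of $\k'_k$ is at distance at least $s_k$ from $\hh$ (any geodesic to $\hh$ must first cross $\hh'_k$), so $\h$ is $\Gamma$-deep with no case distinction and no appeal to $\h^*$ being deep. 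Second, the ``technical obstacle'' you flag is real, and neither of your proposed fixes quite delivers: a geodesic lying strictly inside $\h$ may cross only hyperplanes that all cross $\hh$ (the horizontal hyperplanes of the cubulated plane, say), and passing to the cube complex $\hh$ does not produce hyperplanes far from $\hh$ inside $\h$. The clean fix is the combinatorial projection (gate) of a vertex $v\in\h$ with $d(v,\hh)\geq R$ onto the convex subcomplex spanned by the vertices of $\h^*$: the at least $R$ hyperplanes separating $v$ from its gate all separate $v$ from the whole of $\h^*$, hence are disjoint from $\hh$ and contained in $\h$, and their half-spaces on the side of $v$ are automatically pairwise nested or transverse; Lemma~\ref{lem:dim} then extracts the chain, and since consecutive disjoint hyperplanes are at distance at least~$1$ one gets $d(\hh,\hh'_j)\geq j-1$. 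With these two repairs the proof is complete.
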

\begin{proof}
First, we remark that the inclusion $\Triv(X) \subseteq \Triv(X, \Gamma)$ is obvious. Conversely, let $\hh \in \Triv(X, \Gamma)$. Thus $X(\Gamma \hh)$ is finite by Proposition~\ref{prop:essential} and hence $\Gamma$ has
a finite index subgroup which stabilises $\hh$. Since $\Gamma$ has finitely many orbits on $\hH(X)$, so
does any finite index subgroup. {Upon replacing $\Gamma$ by a finite index subgroup, we may thus assume that $\hh$ is $\Gamma$-fixed. The distance to $\hh$ is thus a $\Gamma$-invariant function defined on the collection $\hH(X)$ of all hyperplanes of $X$. Since $\Gamma$ has finitely many orbits on $\hH(X)$, it follows that this function is uniformly bounded from above. In other words, every hyperplane of $X$ is close to $\hh$. By Lemma~\ref{lem:dim}, this implies that the entire space $X$ is contained in some bounded neighbourhood of $\hh$.} In other words $\hh$ is trivial. Thus  $\Triv(X) = \Triv(X, \Gamma)$. 

{We just showed that every large ball of $X$ intersects every trivial hyperplane. Since $X$ is locally compact, this implies that $\Triv(X)$ is indeed finite.}

{We next point out that the inclusion  $\Ess(X) \supseteq \Ess(X, \Gamma)$ is obvious.}

In order to finish the proof, it now suffices to show that a hyperplane $\hh$ such that $X(\Gamma \hh)$ is infinite and bounded, is necessarily half-essential.  {Since $X(\Gamma \hh)$ is infinite, the hyperplane $\hh$ cannot be $\Gamma$-trivial. Therefore Lemma~\ref{lem:HalfEssential} implies that one of the two half-spaces associated with $\hh$, say $\h$, is $\Gamma$-shallow. We conclude that $\hh$ is half-essential, as desired.} 
%
\end{proof}

The following observation is sometimes useful. 

\begin{remark}\label{rem:RecognizingEssential}
Let $X$ be a CAT(0) cube complex such that $\Aut(X)$ has finitely many orbits of hyperplanes. Then there exists $N>0$ such that a hyperplane $\hh$ of $X$ is essential if and only if both $\h$ and $\h^*$ properly contain
pencils of disjoint hyperplanes of size $N$.
\end{remark}

Propositions~\ref{prop:pruning} and~\ref{prop:essential:bis} show that a few things become simpler in the presence of a  group action which is cofinite on the hyperplanes. The most obvious example of such an action is when $\Gamma \leq \Aut(X)$ is cocompact. More generally, we have the following. 

\begin{lem}\label{lem:fg}
Let $\Gamma$ be a finitely generated group acting properly discontinuously on a locally compact \cat cube complex $X$. Assume there exists a vertex $v$ such that, for each half-space $\h$, the orbit $\Gamma.v$ meets both $\h$ and $\h^*$. 

Then $\Gamma$ has finitely many orbits of hyperplanes. 
\end{lem}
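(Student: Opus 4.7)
The plan is to bound every $\Gamma$-orbit of hyperplanes by a translate close to $v$, and then use local compactness to count the hyperplanes near $v$. Fix a finite symmetric generating set $S$ of $\Gamma$, and set $R = \max_{s \in S} d(v, sv) < \infty$. Given any hyperplane $\hh$ with associated half-spaces $\h, \h^*$, the orbit hypothesis yields $\alpha, \beta \in \Gamma$ with $\alpha v \in \h$ and $\beta v \in \h^*$. Writing $\alpha\inv \beta = s_1 s_2 \cdots s_k$ as a word in $S$, the sequence $x_i = \alpha s_1 \cdots s_i v$, for $i = 0, \dots, k$, interpolates from $x_0 = \alpha v$ to $x_k = \beta v$, and satisfies $d(x_{i-1}, x_i) = d(v, s_i v) \leq R$ because $\Gamma$ acts by isometries.

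Because $x_0 \in \h$ and $x_k \in \h^*$, some consecutive pair $x_i, x_{i+1}$ lies on opposite sides of $\hh$. The geodesic between them crosses $\hh$, hence $d(x_i, \hh) \leq R$. Setting $\gamma = \alpha s_1 \cdots s_i$, this becomes $d(v, \gamma\inv \hh) \leq R$, so every $\Gamma$-orbit of hyperplanes contains a representative at distance at most $R$ from $v$.

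It remains to observe that only finitely many hyperplanes of $X$ can satisfy this bound. Since $X$ is complete and locally compact (every vertex has finitely many neighbours), the Hopf--Rinow theorem for length spaces ensures $X$ is proper, so the closed ball $\overline{B}(v, R)$ is compact. By local finiteness it is covered by finitely many closed cubes, and each cube contains finitely many edges, each dual to a unique hyperplane. Since hyperplanes are closed convex subsets of $X$, any hyperplane within distance $R$ of $v$ meets $\overline{B}(v, R)$, so the set of such hyperplanes is finite. This forces $\Gamma$ to have finitely many orbits of hyperplanes.

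The main obstacle is a minor bookkeeping point in the last paragraph: converting the \cat distance bound $d(v, \hh') \leq R$ into a finiteness statement requires combining properness of $X$ (so the closed $R$-ball is compact) with local finiteness of the cube structure (so the ball is crossed by only finitely many hyperplanes). The rest is a straightforward path-connectedness argument driven by the finite generating set; in particular, the proper-discontinuity assumption on the $\Gamma$-action does not seem to play a direct role in this sketch.
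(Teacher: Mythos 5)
Your proof is correct and is exactly the argument the paper intends: its one-line proof ("a finitely generated group has orbits with path-connected tubular neighbourhoods; apply this to $\Gamma.v$") is precisely your chain of orbit points $x_0,\dots,x_k$ with gaps at most $R$, followed by the same local-finiteness count of hyperplanes near $v$. Your closing observation is also accurate — the paper's proof likewise uses only finite generation and local compactness, not proper discontinuity.
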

\begin{proof}
The group $\Gamma$ is finitely generated if and only if every orbit has some  tubular neighbourhood which is pathwise connected. The result follows by applying this to $\Gamma.v$.  
\end{proof}

\begin{remark}
The condition on the vertex $v$ appearing in the statement of Lemma~\ref{lem:fg} is a \textbf{minimality condition} on the $\Gamma$-action. The Lemma could be {generalised in the following way: \emph{Let $\Gamma$ be a finitely generated group acting properly discontinuously on a locally compact \cat cube complex $X$. Then for any given vertex $v$, the group $\Gamma$ has finitely many orbits of hyperplanes $\hh$ such that the half-spaces $\h$ and $\h^*$ both contain orbit points from $\Gamma.v$}. }
\end{remark}

\begin{remark}\label{rem:WiseExample}
{An example, which was communicated to us by Dani Wise, shows that  the converse of Lemma~\ref{lem:fg} does not hold. Indeed, a countably based free group $F_\infty$ can act properly discontinuously on a \cat cube complex with only three orbits of hyperplanes. }

Here is a description of this example (see Figure~\ref{fig:Wise}). One starts with a graph consisting of a chain of bigons  --- take two copies $\RR_1, \RR_2$ of the real line, glued together along the integers.
One now attaches two infinite rectangular strips. The first is attached at top and bottom to $\RR_1, \RR_2$ in a straightforward manner.
The second is attached with a unit translation on one side. In this way one obtains a locally \cat cube complex with exactly three hyperplanes and whose fundamental group is $F_\infty$. 

\begin{center}
\begin{figure}
\includegraphics{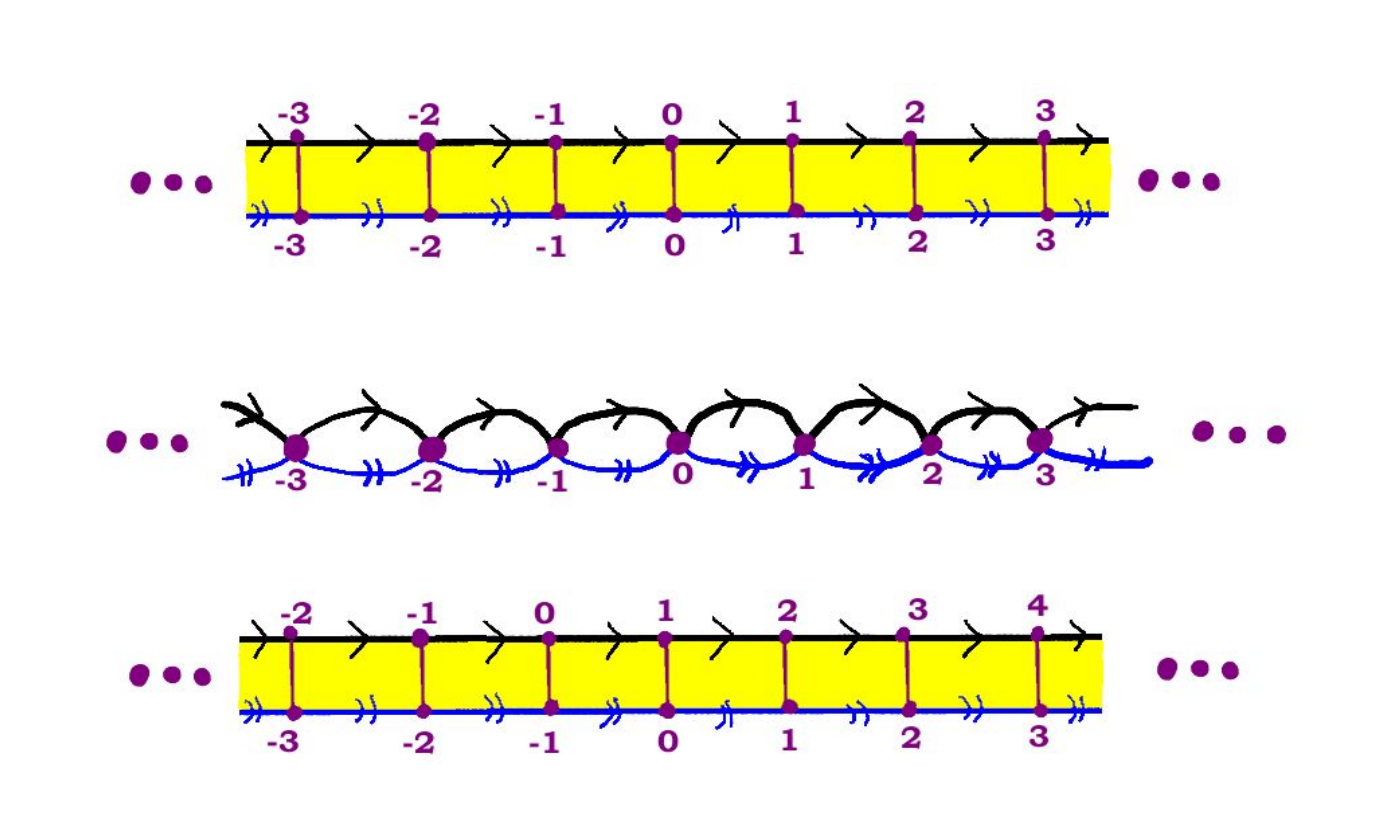}
\caption{A proper action of $F_\infty$ with three orbits of hyperplanes}\label{fig:Wise}
\end{figure}
\end{center}

\end{remark}

The following summarizes some of the results obtained thus far in the case of a finitely generated discrete group acting properly.

\begin{prop}\label{prop:fg}
Let $\Gamma$ be a finitely generated group acting properly discontinuously on a finite-dimensional locally finite \cat cube complex $X$. Let $Y$ denote the $\Gamma$-essential core of $X$. Then
\begin{enumerate}[(i)]
\item $\Gamma$ has finitely many orbits on $\hH(Y) = \Ess(X, \Gamma)$.

\item $Y$ is unbounded if and only if $\Gamma$ has no global fixed point in $X$. 

\item $Y$ embeds as a convex $\Gamma$-invariant subcomplex of $X$.

\item Every hyperplane of $Y$ is skewered by some element of $\Gamma$. 
\end{enumerate}
\end{prop}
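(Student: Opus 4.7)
The plan is to reduce the statement to Proposition~\ref{prop:pruning} by first passing from $X$ to the combinatorial convex hull of a single $\Gamma$-orbit. This intermediate convex subcomplex will be seen to carry only finitely many $\Gamma$-orbits of hyperplanes, so that condition~(a) of Proposition~\ref{prop:pruning} becomes available.

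If $\Gamma$ fixes some vertex $v_{0} \in X$, then every orbit $\Gamma.v$ is bounded, so no half-space of $X$ is $\Gamma$-deep, forcing $\Ess(X,\Gamma) = \varnothing$ and reducing $Y$ to a single vertex by the conventions of \S\ref{Pocsets}; assertions (i)--(iv) are then trivial, the embedding in~(iii) being realised by $\{v_{0}\}$. Assume henceforth that $\Gamma$ has no fixed vertex. Pick any vertex $v \in X$ and let $C$ denote the combinatorial convex hull of $\Gamma.v$ in $X$, \emph{i.e.} the intersection of all half-spaces of $X$ containing $\Gamma.v$. Since this family of half-spaces is $\Gamma$-stable, $C$ is a $\Gamma$-invariant convex subcomplex of $X$, and the standard characterisation of combinatorial convex hulls ensures that a hyperplane $\hh$ of $X$ belongs to $\hH(C)$ precisely when $\Gamma.v$ has vertices on both sides of $\hh$.

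Now $C$ is locally compact (as a convex subcomplex of the locally finite complex $X$) and the $\Gamma$-action on $C$ is properly discontinuous. The characterisation above is exactly the minimality hypothesis in Lemma~\ref{lem:fg}, which therefore applies to the action of $\Gamma$ on $C$ with the vertex $v$ and yields that $\Gamma \backslash \hH(C)$ is finite. In particular condition~(a) of Proposition~\ref{prop:pruning} holds for the $\Gamma$-action on $C$, and since $\Gamma$ has no fixed point in $C \subseteq X$, Proposition~\ref{prop:pruning} produces the $\Gamma$-essential core of $C$ as an unbounded $\Gamma$-invariant convex subcomplex of $C$, hence of $X$. By Lemma~\ref{lem:Haglund}(ii) we have $\Ess(C,\Gamma) = \Ess(X,\Gamma)$, so this essential core is exactly $Y$; this establishes (ii) and (iii) simultaneously.

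Finally, $\hH(Y) = \Ess(X,\Gamma)$ is a $\Gamma$-invariant subset of $\hH(C)$; the finiteness of $\Gamma \backslash \hH(C)$ then forces the finiteness of $\Gamma \backslash \hH(Y)$, giving~(i). Assertion~(iv) is immediate from the equivalence (i)$\Leftrightarrow$(ii) in Proposition~\ref{prop:essential}, since every hyperplane of $Y$ lies in $\Ess(X,\Gamma)$. The main technical input is the verification of the minimality hypothesis of Lemma~\ref{lem:fg} for the action on $C$: this is precisely what the combinatorial convex hull construction delivers, and it is the only place where the finite generation of $\Gamma$ enters the argument.
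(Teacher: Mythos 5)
Your proposal is correct and follows essentially the same route as the paper: the paper also takes the combinatorial convex hull $Z$ of a single orbit $\Gamma.v$, applies Lemma~\ref{lem:fg} to get finitely many orbits of hyperplanes, deduces (i) via $\Ess(X,\Gamma)\subseteq\hH(Z)$ (Lemma~\ref{lem:Haglund}), runs the pruning of Proposition~\ref{prop:pruning} for (ii) and (iii), and gets (iv) from Proposition~\ref{prop:essential}. The only nitpick is your opening case split on a fixed \emph{vertex} rather than a fixed \emph{point} of $X$ (a finite group may fix only the barycentre of a cube), but this is harmless since the ``if and only if'' built into Proposition~\ref{prop:pruning} covers that case anyway.
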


\begin{proof}
Pick any vertex $v$. Let $Z$ denote the convex subcomplex of $X$ whose $0$-skeleton is the set of all vertices contained in every half-space containing entirely the orbit $\Gamma.v$. Thus $Z$ is non-empty, $\Gamma$-invariant, and for each hyperplane $\hh \in \hH(Z)$, the orbit $\Gamma.v$ meets both $\h$ and $\h^*$. By Lemma~\ref{lem:fg}, this implies that $\Gamma$ has finitely many orbits on $\hH(Z)$. By Lemma~\ref{lem:Haglund},  we have $\Ess(X, \Gamma) \subseteq \hH(Z)$. The assertion (i) follows. 

We can now run the pruning process described in Proposition~\ref{prop:pruning}. This proves the assertions (ii) and (iii). Assertion~(iv) follows from Lemma~\ref{lem:WallOrbit} and Proposition~\ref{prop:essential}.
\end{proof}

\section{The Flipping Lemma }\label{sec:Flipping}

{The goal of this section is to provide a proof of the Flipping Lemma, stated in the introduction. In fact, we shall give two distinct (and conceptually different) versions of the Flipping Lemma, respectively in Theorems~\ref{thm:Flipping} and~\ref{thm:Flipping:cocpt}, with different proofs and slightly different statements. The first one is shorter and easier; moreover it applies to \cat cube complex that are  possibly not locally compact. But this first proof is not self-contained as it relies on the main theorem from~\cite{CapraceLytchak}. The second is proof is self-contained and uses only the combinatorics of hyperplanes.  However it is technically much more involved; its hypotheses do not require the absence of fixed point at infinity but do require the ambient \cat space to be locally compact with a cocompact automorphism group; its conclusions are also more precise, as it yields a product decomposition of the ambient cube complex with a linelike factor. All in all, this second proof is probably less transparent, and a reader who is willing to rely on \cite{CapraceLytchak} might want to skip it. }

\medskip
Let $X$ be a \cat cube complex. Given a pair of half-spaces $\h$ and $\k$ associated to disjoint hyperplanes, we say that they 
are \textbf{nested} if $\h\subset\k$ or $\k\subset\h$. If $\h$ and $\k$ are not nested and $\h\cap\k\not=\varnothing$, we say that $\h$ and $\k$ are \textbf{facing}.  An isometry $g\in \Aut(X)$ is said to \textbf{flip} $\h$ if $\h$ and $g\h$ are facing. Equivalently $\h^* \subsetneq g\h$.

Given a group $\Gamma \leq \Aut(X)$, we say that $\h$ is \textbf{$\Gamma$-flippable} if there is an element of $\Gamma$ which flips $\h$. 

\subsection{Version I: no fixed point at infinity}
This section is devoted to proving the following theorem.

\begin{thm}[Flipping Lemma, version I]\label{thm:Flipping}
Assume that $X$ is finite-dimensional {and let $\Gamma \leq \Aut(X)$ be any subgroup}. Let $\h$ be a half-space which is not $\Gamma$-flippable. Then  $\Gamma$ has a fixed point in the visual boundary $\bd \h^*$ or $\h$ is $\Gamma$-shallow. 
\end{thm}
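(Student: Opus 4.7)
The plan is to apply Proposition~\ref{prop:CL} to the $\Gamma$-invariant family $\mathcal{F}=\{\gamma.\h^* : \gamma\in\Gamma\}$ of closed convex subsets of $X$. The two alternatives in that proposition will map directly to the two alternatives of Theorem~\ref{thm:Flipping}. Indeed, if $\bigcap_{\gamma\in\Gamma} \gamma.\h^*$ is non-empty, then any $x$ in this intersection satisfies $\Gamma.x\subseteq \h^*$, so no $\Gamma$-orbit point lies in $\h$ at all and $\h$ is (even strongly) $\Gamma$-shallow. If instead the intersection is empty, Proposition~\ref{prop:CL} will produce a $\Gamma$-fixed circumcentre in $\bigcap_{\gamma}\bd(\gamma.\h^*)\subseteq \bd\h^*$ (specialising $\gamma=1$), which is the other conclusion.

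The first routine step is to unpack the non-flippability hypothesis into a pairwise intersection statement. If $\gamma.\h^*\cap \h^*=\varnothing$ for some $\gamma\in\Gamma$, then $\h^*\subseteq \gamma.\h$; the inclusion is either strict, in which case $\gamma$ flips $\h$ (excluded), or an equality $\h^*=\gamma.\h$, in which case $\gamma.\h^*=\h$ and actually $\gamma.\h^*\cap\h^*=\hh\neq \varnothing$ (the hyperplane itself), a contradiction too. Translating by $\gamma_1$, one obtains $\gamma_1.\h^*\cap \gamma_2.\h^*\neq \varnothing$ for all $\gamma_1,\gamma_2\in\Gamma$.

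The main technical obstacle is to promote this pairwise condition to the full finite intersection property required by Proposition~\ref{prop:CL}. For this I plan to invoke (and, if not already available in the paper, record as an auxiliary lemma) the \emph{Helly property for half-spaces} in a finite-dimensional \cat cube complex: any finite collection of half-spaces whose members pairwise meet has non-empty global intersection. This is a standard consequence of the median geometry of $X$; it can be proved by induction, either by iterated application of the median map to points chosen in the various pairwise intersections, or by combining Theorem~4.14 of~\cite{Sageev95} (common cube for pairwise crossing hyperplanes) with the observation that any pair of non-crossing hyperplanes whose half-spaces meet must be nested in a compatible way. Some care is needed because $X$ is not assumed locally finite, so one must work with concrete points of $X$ rather than merely with DCC ultrafilters on $\H(X)$; the median argument, however, produces an actual vertex in each finite intersection.

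With the pairwise statement from the second paragraph and the Helly lemma in hand, the $\Gamma$-invariant family $\mathcal{F}=\{\gamma.\h^*:\gamma\in\Gamma\}$ satisfies the finite intersection property. Proposition~\ref{prop:CL} then yields exactly the dichotomy that, as explained in the first paragraph, is the desired conclusion of Theorem~\ref{thm:Flipping}.
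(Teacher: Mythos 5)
Your proposal is correct and takes essentially the same route as the paper: the paper likewise unpacks non-flippability into pairwise intersection of the orbit $\{\gamma.\h^*\}_{\gamma\in\Gamma}$, upgrades this to the finite intersection property via the Helly lemma for half-spaces (already recorded as Lemma~\ref{lem:Filter}, proved by exactly the sort of induction you sketch), and then applies Proposition~\ref{prop:CL}, concluding $\Gamma$-shallowness in the non-empty-intersection case via Lemma~\ref{lem:Haglund}. No gaps.
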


Notice that no assumption on $\Gamma$ or on its action is made. Moreover, we do not require $X$ to be locally compact.

\medskip
It is a well known basic fact that any finite collection of pairwise crossing hyperplanes in a \cat cube complex has a non-empty intersection, see \cite[Theorem~4.14]{Sageev95}. 
{A more general type Helly property exists for convex subcomplexes of \cat cube complexes and was established by Gerasimov \cite{Gerasimov}, stating that the intersection of convex pairwise intersecting subcomplexes is non-empty. For completeness, we include a proof of a special case of this theorem, suited to our needs.  }

\begin{lem}\label{lem:Filter}
Let $X$ be a \cat cube complex. Let $\h_1, \dots, \h_n$ be half-spaces which have pairwise a non-empty intersection. Then the intersection $\bigcap_{i=1}^n \h_i$ is non-empty.
\end{lem}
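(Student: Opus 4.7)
My plan is an induction on $n$. The base case $n = 1$ is the fact that each half-space is non-empty (being the closure of one of the two non-empty components of $X \setminus \hh_1$); the base case $n = 2$ is the hypothesis itself.

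For the inductive step, I would apply the induction hypothesis to the three subfamilies $\{\h_1, \ldots, \h_n\}$, $\{\h_2, \ldots, \h_{n+1}\}$, and $\{\h_1, \h_{n+1}\}$, and pick vertices
$$v \in \bigcap_{i=1}^n \h_i, \qquad w \in \bigcap_{i=2}^{n+1} \h_i, \qquad u \in \h_1 \cap \h_{n+1}.$$
To obtain actual vertices, and not just points, I would observe that in a cube $\sigma$ containing an interior point of the given intersection, each relevant hyperplane either misses $\sigma$ or cuts it through a distinct coordinate midplane, so a vertex of $\sigma$ satisfying the resulting finitely many independent coordinate constraints exists and lies in the intersection. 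If either $v \in \h_{n+1}$ or $w \in \h_1$, we are done; so it suffices to treat the case $v \in \h_{n+1}^*$ and $w \in \h_1^*$.

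The main step, and the one where the \cat cube complex structure is genuinely used, is the median: the $0$-skeleton of a \cat cube complex is a median algebra, so the three vertices $u, v, w$ admit a unique median vertex $m$, characterised by the property that $m$ lies in a half-space $\k$ if and only if at least two of $u, v, w$ do. Checking the three families of constraints --- $u, v \in \h_1$ gives $m \in \h_1$; $u, w \in \h_{n+1}$ gives $m \in \h_{n+1}$; and $v, w \in \h_i$ gives $m \in \h_i$ for $2 \leq i \leq n$ --- we conclude $m \in \bigcap_{i=1}^{n+1} \h_i$, which closes the induction. The pure pocset axioms would not be enough here: the median, which reflects the geometry of the cubes, is the essential ingredient that promotes pairwise intersections to joint intersections.
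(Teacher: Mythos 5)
Your proof is correct, but it takes a genuinely different route from the paper's. You run a straight induction on the number of half-spaces inside the fixed complex $X$ and close the inductive step with the median of three vertices --- i.e.\ with the Helly property of half-spaces in a median graph --- the imported ingredient being the standard fact that the $0$-skeleton of a \cat cube complex is a median graph whose median obeys the majority rule on half-spaces. The paper instead first discards the non-minimal half-spaces and then splits into two cases: if the associated hyperplanes are pairwise disjoint, pairwise intersection plus minimality forces $\hh_i \subset \h_j$ for all $i \neq j$, so a vertex of $\h_i$ adjacent to $\hh_i$ lies in every $\h_j$; if two hyperplanes cross, the paper views one of them (via the first cubical subdivision) as a \cat cube complex in its own right and applies the inductive hypothesis there, so its induction runs over a shrinking family of half-spaces in a changing ambient complex. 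Your argument is shorter and is the classical proof of the Helly property for median algebras, at the price of invoking the median; the paper's is more pedestrian but stays within the elementary combinatorics of hyperplanes, consistent with its stated aim of giving a self-contained special case of Gerasimov's theorem. Your device for extracting a vertex from an interior point of a cube is sound and is in fact more explicit than the paper, which works with vertices tacitly. One caveat, shared by both proofs: if the family contains a complementary pair $\h, \h^*$, their intersection is the hyperplane $\hh$, which contains no vertex, so both vertex-based arguments implicitly assume this degenerate case does not occur (as is true in every application in the paper).
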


\begin{proof}
The set $H = \{\h_1, \dots, \h_n\}$ is partially ordered by inclusion. Upon renumbering, we may assume that the first $m$ half-spaces $\h_1, \dots, \h_m$ are the minimal elements of $H$. In other words, for each $i >m$, there is some $i' \leq m$ such that $\h_{i'} \subset \h_i$. In particular it suffices to show that $\bigcap_{i=1}^m \h_i$ is non-empty. {We shall show by induction on $m$ that  $\bigcap_{i=1}^m \h_i$ contains a vertex.} 

If the $\hh_i$'s are pairwise disjoint, then we have $\hh_i \subset \h_j$ for all $i \neq j$ and thus any vertex contained in $\h_i$ and adjacent to $\hh_i$ is also contained in every other $\h_j$. 

Otherwise some $\hh_i$ crosses some $\hh_{i'}$. {We view $\hh_{i'}$ as a convex subcomplex of the first cubical subdivision of $X$. In this way, the hyperplane $\hh_{i'}$ becomes a \cat cube complex whose vertices are midpoints of edges of $X$. By induction, the intersection of all $\h_j$'s such that $\hh_i$ crosses $\hh_{i'}$ contains some vertex of $\hh_{i'}$.  This vertex is the midpoint of an edge of $X$ transverse to $\hh_{i'}$. We let $v$ denote the vertex of that edge contained in $\h_{i'}$. By contruction $v$ is contained $\h_j$ for all $j$ such that $\hh_j$ crosses $\hh_{i'}$. Since $\hh_{i'}$ is entirely contained in every $\h_j$ such that $\hh_j$ does not cross $\hh_{i'}$, it follows that $v$ is in fact contained in all the $\h_i$'s.}
\end{proof}

We are now ready for the following. 

\begin{proof}[Proof of Theorem~\ref{thm:Flipping}]
Consider the $\Gamma$-orbit  $\{\gamma.\h^*\}_{\gamma \in \Gamma}$. The fact that $\h$ is $\Gamma$-unflippable precisely means that any two half-spaces in that orbit have a non-empty intersection. In view of Lemma~\ref{lem:Filter}, we deduce that any finite set of half-spaces from that orbit has a non-empty intersection. We are thus in a position to invoke Proposition~\ref{prop:CL}. This implies that either $\bigcap_{\gamma \in \Gamma} \gamma.\bd \h^*$ is a non-empty subset of the visual boundary which contains a $\Gamma$-fixed point, or the set $Y = \bigcap_{\gamma \in \Gamma} \gamma. \h^*$ is a non-empty $\Gamma$-invariant subspace of $X$, {which we assume from now on}. Since $\h$ is disjoint from $Y$, Lemma~\ref{lem:Haglund} implies that   $\h$ is $\Gamma$-shallow, as desired.
\end{proof}

Our next goal is to establish a slightly refined version of the Flipping Lemma in the case of cocompact  actions. We first need to take a detour to some additional basic considerations. 

\subsection{Non-skewering behaviours}

Let $X$ be a \cat cube complex, $\gamma \in \Aut(X)$ be a hyperbolic isometry and $\hh $ be a hyperplane. In Section~\ref{sec:prel}, we defined what it means for $\gamma$ to skewer $\hh$. There are also two types of non-skewering behaviours, which we describe as follows.

We say that $\gamma$ is  \textbf{parallel} to $\hh$ if an axis for $\gamma$ (and hence all axes of $\gamma$) lies in a neighborhood of $\hh$. If $\gamma$ does not skewer $\hh$ and is not parallel to $\hh$, we say that it is \textbf{peripheral} to $\hh$.

\begin{lem}\label{lem:Parallel} A hyperbolic isometry $\gamma$ is parallel to $\hh$ if and only if 
$\hh$ contains a geodesic line which is at bounded Hausdorff distance from some $\gamma$-axis. 

If in addition $X$ is locally compact, then $\gamma$ is parallel to $\hh$ if and only if there exists some $n>0$ such that $\gamma^n\in \Stab(\hh)$
\end{lem}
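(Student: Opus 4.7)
The plan is to treat the two equivalences separately. The first is a soft fact from \cat geometry, whereas the second relies crucially on local compactness to force an orbit of hyperplanes to be finite.

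For the first equivalence, the direction ``$\hh$ contains a line close to an axis $\Rightarrow$ $\gamma$ is parallel to $\hh$'' is formal: if $\mu \se \hh$ lies in the $R$-neighbourhood of a $\gamma$-axis $\ell$, then $\ell$ itself lies in $N_R(\mu) \se N_R(\hh)$. For the converse, let $\ell$ be an axis with $\ell \se N_R(\hh)$. The function $t \mapsto d(\ell(t), \hh)$ is convex on $\RR$ (as the composition of the convex function $d(\cdot, \hh)$ on the \cat space $X$ with the geodesic parametrisation of $\ell$) and is bounded above by $R$, hence constant, say equal to some $r \geq 0$. If $r = 0$ then $\ell \se \hh$ and we are done. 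If $r > 0$, a standard flat-strip argument in \cat geometry (cf.\ \cite{Bridson-Haefliger}) shows that $\pi_\hh \circ \ell$ is an isometric embedding whose image is a geodesic line $\mu \se \hh$ bounding a flat strip of width $r$ with $\ell$; in particular $\mu$ is at Hausdorff distance $r$ from $\ell$. The parenthetical ``and hence all axes of $\gamma$'' in the definition is then immediate, since any two axes of a hyperbolic isometry of a \cat space bound a flat strip and are thus at finite Hausdorff distance from each other.

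For the second equivalence, assume first that $\gamma^n \in \Stab(\hh)$ for some $n > 0$. Then $\gamma^n$ restricts to an automorphism of the induced \cat cube complex structure on $\hh$. Since $\gamma$ is hyperbolic on $X$, no $\la \gamma^n \ra$-orbit on $X$ is bounded, and in particular no such orbit in $\hh \se X$ is bounded; hence $\gamma^n|_\hh$ is hyperbolic on $\hh$ and admits an axis $\mu \se \hh$. Such a line $\mu$ is a fortiori a $\gamma^n$-axis in $X$ and is therefore parallel to any $\gamma$-axis of $X$, which gives the desired parallelism via the first equivalence.

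For the converse, suppose $\gamma$ is parallel to $\hh$ and pick an axis $\ell$ with $\ell \se N_R(\hh)$. Since $\gamma.\ell = \ell$, applying $\gamma^n$ gives $\ell \se N_R(\gamma^n \hh)$ for every $n \in \ZZ$. Fix a point $p \in \ell$; then $d(p, \gamma^n \hh) \leq R$, so every $\gamma^n \hh$ meets the closed ball $B(p, R)$. Invoking local compactness, this ball is compact, hence covered by finitely many cubes, each crossed by at most $\dim(X)$ hyperplanes; consequently only finitely many hyperplanes meet $B(p, R)$. The orbit $\{\gamma^n \hh : n \in \ZZ\}$ is therefore finite, and since $\la \gamma \ra$ permutes it, some positive power $\gamma^n$ stabilises $\hh$, as required. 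I expect this last finiteness argument to be the main conceptual step: in the absence of local compactness, the $\la \gamma \ra$-orbit of $\hh$ can genuinely be infinite even when $\gamma$ is parallel to $\hh$ (for instance in $T \times \RR$ with $T$ an infinite-valent tree and $\gamma$ acting as an infinite-order permutation of the edges at a fixed vertex combined with a translation factor), which is precisely why this direction requires the extra hypothesis.
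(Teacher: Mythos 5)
Your proof is correct and follows essentially the same route as the paper's: the key step, showing that local finiteness forces the $\la\gamma\ra$-orbit of $\hh$ to meet a fixed compact ball and hence to be finite by pigeonhole, is identical, and your closing counterexample in $T\times\RR$ correctly explains why local compactness is needed there. The only (harmless) variations are in the soft CAT(0) steps: for the first equivalence you produce the parallel line in $\hh$ via constancy of the convex function $t\mapsto d(\ell(t),\hh)$ and a flat-strip argument, where the paper sends the endpoints of $\ell$ into $\bd\hh$ and cites \cite[Prop.~3.6]{CapraceMonod1}; and for the implication $\gamma^n\in\Stab(\hh)\Rightarrow$ parallel you invoke semisimplicity of $\gamma^n|_{\hh}$, where the paper simply notes that $\hh$, being a complete convex $\gamma^n$-invariant subset, contains an axis of $\gamma^n$.
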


\begin{proof}
If $\hh$ contains a geodesic line which is at bounded Hausdorff distance from some $\gamma$-axis, then this axis is contained in a bounded neighbourhood of $\hh$. Thus $\gamma$ is parallel to $\hh$ as required. Conversely, if $\gamma$ has some axis $\ell$ in a bounded neighbourhood of $\hh$, then   the two endpoints of any of its axes are contained in the visual boundary $\bd \hh$. Since $\hh$ is {closed and} convex, it easily follows that it must thus contain some geodesic line joining them (see for example \cite[Prop.~3.6]{CapraceMonod1}), and the desired assertion follows.  

Assume now that $X$ is locally compact. If $\gamma\in \Stab(\hh)$ then there exists an axis of $\gamma$ in $\hh$ and we are done. Conversely, suppose that an axis $\ell$ for $\gamma$ {is contained in the $R$-neighbourhood} $\mathcal N_R(\hh)$ for some $R>0$. Consider some $p$ along $\ell$. Then for any $n>0$, we have that $\gamma^n(\hh)\cap B_R(p)\not=\varnothing$. By local finiteness, {only finitely many hyperplanes  meet the ball} $B_R(p)$, so by the pigeonhole principle, $\gamma^n(\hh)=\gamma^m(\hh)$ for some $n>m>0$. It follows that $\gamma^{n-m}(\hh)=\hh$, as required. 
\end{proof}

\begin{lem}\label{lem:Peripheral} 
{Assume that $X$ is finite-dimensional. } 
If a hyperbolic isometry $\gamma$ is peripheral to $\hh$, then there exists some $n>0$ such that $\gamma^n\hh\cap\hh=\varnothing$.
\end{lem}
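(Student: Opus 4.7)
The plan is to argue by contradiction: assume $\gamma^n\hh\cap\hh\neq\varnothing$ for every $n>0$. Since $\gamma$ is an isometry, applying suitable powers gives $\gamma^n\hh\cap\gamma^m\hh\neq\varnothing$ for all distinct integers $n,m$. In a \cat cube complex two distinct hyperplanes that meet must cross transversely, so the family $\{\gamma^n\hh\}_{n\in\ZZ}$ consists of hyperplanes that are pairwise equal or pairwise transverse. I would then split the analysis according to whether some positive power of $\gamma$ stabilises $\hh$.

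In the first branch, no positive power of $\gamma$ stabilises $\hh$, so the translates $\gamma^n\hh$ are pairwise distinct, yielding an infinite collection of pairwise transverse hyperplanes. As recalled in the proof of Lemma~\ref{lem:dim} (relying on Theorem~4.14 of \cite{Sageev95}), the cardinality of such a family is bounded above by $\dim(X)$, contradicting the finite-dimensionality of $X$.

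In the second branch, $\gamma^k\hh=\hh$ for some $k>0$. I would pick an axis $\ell$ of $\gamma$ parametrised by arclength and let $\tau$ denote its translation length. By Lemma~\ref{lem:def:Skewer} together with the non-skewering hypothesis, $\ell$ cannot meet $\hh$ in a single point, and $\ell\not\subset\hh$ since otherwise $\gamma$ would be parallel to $\hh$; hence $\ell$ lies entirely in one open half-space bounded by $\hh$. The function $f(t):=d(\ell(t),\hh)$ is convex, being the distance from a point travelling along a geodesic to a closed convex subset of a \cat space. Since $\gamma^k$ is an isometry preserving both $\ell$ and $\hh$, one has $f(t+k\tau)=f(t)$, so $f$ is periodic; a periodic convex function on $\RR$ is constant, so $\ell$ lies at constant distance from $\hh$. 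Hence $\gamma$ is parallel to $\hh$, contradicting peripherality.

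The main obstacle is the second branch: establishing parallelism needs care because the $\Stab$-characterisation of parallelism in Lemma~\ref{lem:Parallel} requires local compactness, which is not assumed here. The convexity--periodicity argument sidesteps that assumption and handles the general case directly.
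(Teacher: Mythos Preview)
Your argument is correct and follows essentially the same two-step structure as the paper's proof: first rule out the possibility that some positive power of $\gamma$ stabilises $\hh$ (since that forces parallelism), and then use finite-dimensionality to find two disjoint hyperplanes among the infinitely many distinct translates $\gamma^n\hh$. The paper simply asserts the first step in one line (``some positive power of $\gamma$ would stabilise $\hh$ and, hence, every $\gamma$-axis would lie in a bounded neighbourhood of $\hh$''), whereas you supply a self-contained convexity--periodicity argument.

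One small remark on your stated obstacle: although Lemma~\ref{lem:Parallel} only formally records the $\Stab$-characterisation of parallelism under local compactness, the direction you actually need---that $\gamma^k\in\Stab(\hh)$ implies $\gamma$ is parallel to $\hh$---holds without that hypothesis. Indeed, the nearest-point projection of any $\gamma^k$-axis onto the closed convex $\gamma^k$-invariant set $\hh$ is again a $\gamma^k$-axis, hence lies in $\hh$; since $\gamma$ and $\gamma^k$ share the same axes, parallelism follows. Your convexity argument is a perfectly good alternative route to the same conclusion, and your observation that the line about $\ell$ lying in an open half-space is not actually needed (the periodic convex distance function is constant regardless) would streamline it further.
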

\begin{proof}
{The elements of the set $\{\gamma^n\hh\vert n>0\}$ are pairwise distinct, since otherwise some positive power of $\gamma$ would stabilise $\hh$ and, hence, every $\gamma$-axis would lie in a bounded neighbourhood of $\hh$. Since $X$ is finite-dimensional,} we infer that  there exists $n>m>0$ such that $\gamma^n\hh\cap \gamma^m\hh=\varnothing$. It follows that $\gamma^{n-m}\hh\cap\hh=\varnothing$, as required. 
\end{proof}

\subsection{Endometries of proper metric spaces}\label{sec:Endometries}

Given a metric space $X$, an \textbf{endometry} of $X$ is an
injective map $\alpha : X \to X$ which is an isometry onto its image.
The following general fact is independent of the \cat inequality.

\begin{prop}\label{prop:endom}
Let $X$ be a proper metric space with a cocompact isometry group.
Then every endometry $\alpha : X \to X$ is surjective.
\end{prop}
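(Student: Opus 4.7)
The plan is to argue by contradiction. Suppose $\alpha$ is not surjective. Since $X$ is proper, hence complete, and $\alpha$ is an isometric embedding, its image $\alpha(X)$ is closed in $X$. Choose $y \in X \setminus \alpha(X)$ and set $\delta := d(y, \alpha(X)) > 0$. The key separation property is that the orbit $y_n := \alpha^n(y)$ for $n \geq 0$ is pairwise $\delta$-separated: since each $\alpha^n$ is an isometric embedding, for $n < m$ we have
\[
d(y_n, y_m) \;=\; d\bigl(\alpha^n(y),\, \alpha^n(\alpha^{m-n}(y))\bigr) \;=\; d(y, \alpha^{m-n}(y)) \;\geq\; d(y, \alpha(X)) \;=\; \delta.
\]

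Next I would exploit cocompactness to bring this orbit under control. Fix a compact $K \subset X$ with $\Isom(X) \cdot K = X$, and pick $g_n \in \Isom(X)$ so that $g_n y_n \in K$. Passing to a subsequence via compactness of $K$, we may assume $g_n y_n \to p \in K$. The composed maps $\phi_n := g_n \circ \alpha^n : X \to X$ are isometric embeddings with $\phi_n(y) \to p$. For each fixed $z \in X$, the image $\phi_n(z)$ lies within $d(y, z)$ of the bounded sequence $\phi_n(y)$, hence is confined to a bounded subset of $X$, which is relatively compact by properness. A standard diagonal Arzelà-Ascoli argument on a countable dense subset of $X$ then yields a further subsequence along which $\phi_n$ converges pointwise to an isometric embedding $\phi : X \to X$ with $\phi(y) = p$.

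The main obstacle is to extract a genuine contradiction from the limiting embedding $\phi$. My approach would be to perform a second diagonalisation: for each $k \geq 0$ the sequence $g_n y_{n-k}$ is bounded, since $d(g_n y_{n-k}, g_n y_n) = d(y, \alpha^k(y)) \leq k \cdot d(y, \alpha(y))$ depends only on $k$. Extracting a common subsequence, one obtains limit points $q_k := \lim_n g_n y_{n-k} \in X$, with $q_0 = p$. A direct calculation, using that $\alpha^{n-k}$ is an isometric embedding, gives
\[
d(q_k, \phi(z)) \;=\; \lim_n d\bigl(\alpha^{n-k}(y), \alpha^n(z)\bigr) \;=\; d(y, \alpha^k(z)),
\]
and hence $d(q_k, \phi(X)) = d(y, \alpha^k(X)) \geq \delta$ for all $k \geq 1$, while the $q_k$'s are pairwise $\delta$-separated among themselves.

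The contradiction should then come from confronting this infinite $\delta$-separated family with the combination of properness and cocompactness (which provides a uniform bounded-packing estimate). The delicate case is when $d(y, \alpha^k(y))$ tends to infinity with $k$, since then the $q_k$'s could \emph{a priori} escape every compact set and the naive packing argument fails. In that case I would refine the argument by adjusting the cocompactness reduction: rather than anchoring the isometries $g_n$ to $y_n$, one anchors them to $\alpha^{k_n}(y)$ for a carefully chosen subsequence $k_n$ along which the orbit ``returns'' close to itself in the pointed Gromov-Hausdorff sense (guaranteed by the compactness of pointed isometry types coming from cocompactness). The upshot is that infinitely many $\delta$-separated $q_k$'s end up confined to a fixed bounded neighbourhood of $p$, contradicting properness. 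I expect this final step to be the most technically involved, requiring careful bookkeeping of subsequences and of the uniform packing constants furnished by cocompactness.
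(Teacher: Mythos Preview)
Your setup is correct and natural: the orbit $y_n = \alpha^n(y)$ is indeed pairwise $\delta$-separated, and your extraction of a limiting embedding $\phi$ and of the points $q_k$ is fine. The gap is exactly the one you flag yourself, and your proposed remedy does not close it. Since $d(q_0,q_k)=d(y,\alpha^k(y))$ may well tend to infinity, the family $(q_k)$ can be unbounded, and an unbounded $\delta$-separated set in a proper space is no contradiction. Your suggestion to ``anchor at $\alpha^{k_n}(y)$ for a subsequence along which the orbit returns close to itself'' is not a well-defined procedure: cocompactness lets you translate each $\alpha^{k_n}(y)$ into a fixed compact $K$ by \emph{different} isometries $h_n$, but it gives no reason why $\alpha^{k_n}(y)$ and $\alpha^{k_m}(y)$ should be close to each other in $X$ itself, and the $\delta$-separation computation requires comparing points under a \emph{single} isometry. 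So as written the argument does not terminate.

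The paper avoids iteration of $\alpha$ altogether and gives a short direct proof. Fix sequences $\varepsilon_n\to 0$ and let $C_n$ be the maximal cardinality of an $\varepsilon_n$-separated subset of any ball of radius $n$ in $X$; this maximum is finite (properness) and attained in some ball $B(x_n,n)$ (cocompactness). Pick such a maximal set $E_n\subset B(x_n,n)$ and, using cocompactness, arrange $x_n\to x$. Then $\alpha(E_n)$ is an $\varepsilon_n$-separated subset of $B(\alpha(x_n),n)$ of maximal cardinality $C_n$, so \emph{every} point of $B(\alpha(x_n),n)$ lies within $\varepsilon_n$ of $\alpha(E_n)$. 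Given any $y\in X$, for $n$ large we have $y\in B(\alpha(x_n),n)$, hence $d(y,\alpha(X))\leq\varepsilon_n\to 0$, and since $\alpha(X)$ is closed we conclude $y\in\alpha(X)$. The point is that the packing invariant $C_n$ is preserved by $\alpha$, which forces $\alpha(X)$ to be $\varepsilon_n$-dense in arbitrarily large balls; no iteration or Gromov--Hausdorff limit is needed.
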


\begin{proof}
Given $\vareps >0$, a subset $E \subset X $ is called
\textbf{$\vareps$-separated } if any two distinct points of $E$ are at
distance at least $\vareps$.

Let $(\vareps_n)_{n \geq 0}$ be a sequence of positive reals tending
to $0$ as $n$ tends to infinity.  For each $n$, let $C_n$ be the maximal cardinality
of a $\vareps_n$-separated subset contained in a ball of radius $n$
of $X$. {Notice that $C_n$ is finite since $X$ is proper and has a cocompact isometry group.} Let also $x_n \in X$ be such that the ball $B(x_n, n)$ of
radius $n$ centred at $x_n$ contains a $\vareps_n$-separated subset,
say $E_n$, of cardinality $C_n$.

Since $X$ has cocompact isometry group, there is no
loss of generality in assuming that the sequence $(x_n)$ converges
to some $x \in X$. Let now $y \in X$ be any point and $r = d(y,
\alpha(x))$. Then for each $n$ which is sufficiently larger than
$r$, there is some $y_n \in E_n$ such that $d(y, \alpha(y_n)) \leq
\vareps_n$. In particular the sequence $(\alpha(y_n))$ converges to
$y$. Since $X$ is proper, it is complete and so is $\alpha(X)$. In
particular $\alpha(X)$ is closed in $X$ and we deduce that $y \in
\alpha(X)$. Thus $\alpha(X) = X$, as desired.
\end{proof}

{Notice that the conclusion of Proposition~\ref{prop:endom} fails if $\Isom(X)$ does not act cocompactly, as illustrated by the example of $X = \RR^+$. 
}

\medskip
 Given a hyperplane $\hh$ in a cube complex $X$, we denote by $\Ess(\hh)$ the set of all hyperplanes crossing $\hh$ and which are essential in $\hh$. Endometries will naturally occur through the following. 

\begin{lem}\label{lem:Ess:endometry}
Let $X$ be a locally compact \cat cube complex with cocompact automorphism group, let $\hh, \hk$ be hyperplanes and let $X_{\hh}, X_{\hk}$ denote their respective essential cores. Assume that every hyperplane in $\Ess(\hh)$ crosses $\hk$. Then there is an isometric embedding { $X_{\hh} \to X_{\hk}$.}
\end{lem}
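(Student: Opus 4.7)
The plan is to build $\phi \colon X_{\hh} \to X_{\hk}$ as the composition
\[
X_{\hh} \;\stackrel{\iota}{\hookrightarrow}\; \hh \;\stackrel{\pi}{\longrightarrow}\; \hk \;\stackrel{q}{\longrightarrow}\; X_{\hk},
\]
where $\iota$ realises $X_{\hh}$ as a convex subcomplex of $\hh$, $\pi$ is the \cat nearest-point projection onto the convex subcomplex $\hk$, and $q$ is the restriction quotient onto the essential core. The cocompactness of $\Aut(X)$ on $X$ passes to cocompactness of $G_{\hh} := \Stab_{\Aut(X)}(\hh)$ on $\hh$, because the image of $\hh$ in $\Aut(X)\setminus X$ is a closed subset of a compact space. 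Applying the pruning procedure of Proposition~\ref{prop:pruning} to $(\hh, G_{\hh})$, combined with Remark~\ref{rem:HessEmpty} and Proposition~\ref{prop:essential:bis}, produces a $G_{\hh}$-invariant convex subcomplex $\hh_0 \subseteq \hh$ splitting as $\hh_0 \cong X_{\hh} \times C$, with $C$ a bounded cube complex whose hyperplanes are all trivial (and finite in number). Picking any $c \in C$ yields a convex isometric embedding $\iota\colon X_{\hh} \hookrightarrow \hh \hookrightarrow X$ onto the slice $X_{\hh} \times \{c\}$.

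I then claim that $\pi \circ \iota$ isometrically embeds $X_{\hh}$ into $\hk$. For any two vertices $v, w$ of $X_{\hh}$, the hyperplanes of $X$ separating $\iota(v)$ from $\iota(w)$ all lie in $\Ess(\hh)$, since $\iota(v)$ and $\iota(w)$ share the same $C$-coordinate and $\hH(\hh_0) = \Ess(\hh)\cup\hH(C)$. By hypothesis each such hyperplane crosses $\hk$, and a standard property of the nearest-point projection in a \cat space---namely that it preserves the side of any hyperplane meeting the convex target---gives $d_{\hk}(\pi\iota(v), \pi\iota(w)) = d_{X_{\hh}}(v,w)$, with no additional hyperplane of $\hk$ separating the two images.

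The main obstacle is the final quotient map $q$: it collapses hyperplanes in $\hH(\hk)\setminus\Ess(\hk)$, so the isometric property survives only if every separating hyperplane is not merely a hyperplane of $\hk$ but actually essential in $\hk$; equivalently, I need the inclusion $\Ess(\hh)\subseteq\Ess(\hk)$. Given $\alpha \in \Ess(\hh)$, both half-spaces of $\alpha$ in $\hh$ are unbounded, hence so are both in $X$, and therefore $\alpha\in\Ess(X)$; by Proposition~\ref{prop:essential:bis} some $g\in\Aut(X)$ skewers $\alpha$. Combined with the hypothesis that $\alpha$ crosses $\hk$, and with the cocompact action of $G_{\hk}:=\Stab_{\Aut(X)}(\hk)$ on $\hk$ (which again yields $\Ess(\hk) = \Ess(\hk, G_{\hk})$), the plan is to transfer the skewering of $\alpha$ from $X$ into $\hk$ by a pigeonhole/orbit argument on the $\langle g\rangle$-translates of $\hk$ and their intersections with the infinite chain $\{g^n\mathfrak{a}\}$. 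Making this transfer rigorous is the technical heart of the lemma, and is precisely the point where the cocompactness hypothesis and the endometry framework of this subsection are meant to come into play.
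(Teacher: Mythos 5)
Your strategy (realise $X_{\hh}$ as a convex slice of $\hh$, project into $\hk$, then pass to the essential core) is genuinely different from the paper's, which never leaves the combinatorial side: for each vertex $v$ of $X_{\hh}$ it directly builds an ultrafilter on $\Ess(\hk)$, orienting each $\hat{\mathfrak g}\in\Ess(\hk)$ according to its position relative to $\hh$ (towards $\hh$ if $\hat{\mathfrak g}$ is disjoint from $\hh$, towards $v$ if $\hat{\mathfrak g}\in\Ess(\hh)$, towards the deep side if $\hat{\mathfrak g}\in\Hess(\hh)$, towards a fixed generic point of $\hh$ if $\hat{\mathfrak g}\in\Triv(\hh)$), and checks that this gives a vertex of $X_{\hk}$ and that adjacency is preserved. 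However, your proposal stops precisely at the step on which everything hinges, the inclusion $\Ess(\hh)\subseteq\Ess(\hk)$: without it the restriction quotient $q$ may collapse distances and nothing is proved. That is a genuine gap, not a routine verification, and the route you sketch for closing it starts from the wrong element. If you take an arbitrary $g\in\Aut(X)$ skewering $\hat{\mathfrak a}$, the translates $g^n\hat{\mathfrak a}$ need not cross $\hh$, let alone lie in $\Ess(\hh)$, so the hypothesis of the lemma gives you no control over whether they cross $\hk$, and the pigeonhole on $g^n\hk$ has nothing to bite on. The correct move is to produce the skewering element inside $\Stab_{\Aut(X)}(\hh)$: this group acts cocompactly on $\hh$ (as in the proof of Corollary~\ref{cor:HereditarilyEss}), so Proposition~\ref{prop:essential:bis} applied to the cube complex $\hh$ yields $\gamma\in\Stab_{\Aut(X)}(\hh)$ with $\gamma(\mathfrak a\cap\hh)\subsetneq\mathfrak a\cap\hh$. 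Then every $\gamma^k\hat{\mathfrak a}$ lies in $\Ess(\hh)$, hence crosses $\hk$ by hypothesis, and the resulting bi-infinite chain of pairwise disjoint hyperplanes meeting $\hk$ supplies points of $\hk$ arbitrarily far from $\hat{\mathfrak a}$ on both sides, giving $\hat{\mathfrak a}\in\Ess(\hk)$.

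Two further repairs are needed in the middle step. The \cat nearest-point projection onto $\hk$ does not, in general, preserve the side of a hyperplane crossing the target (and does not send vertices to vertices), so the ``standard property'' you invoke is false as stated; what your argument actually needs is the combinatorial gate projection onto the convex subcomplex $\hk$, for which the hyperplanes separating two gate images are exactly the hyperplanes separating the original points that cross $\hk$. Also, in the pruning step the point $c$ should be taken to be a vertex of the bounded factor $C$. With these corrections and the argument above for $\Ess(\hh)\subseteq\Ess(\hk)$, your approach does go through and is a legitimate alternative to the paper's ultrafilter construction.
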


\begin{proof}
We shall define a map $X_{\hh} \to X_{\hk}$ at the level of the $1$-skeletons. For each evertex $v  \in X_{\hh}^{(0)}$, we need to associate a unique vertex $v' \in X_{\hk}^{(0)}$. To this end, we associate to $v$ an \textbf{ultrafilter} $\phi_v$ on $\Ess(\hk)$, \emph{i.e.} a function which chooses one side of each hyperplane in $\Ess(\hk)$. We shall do this in such a way that the intersection of all half-spaces $\bigcap_{\hat \g \in\Ess(\hk)}\phi_v(\hat \g)$ is non-empty. This intersection must therefore contain a single vertex of $X_{\hk}$. This will be our definition of $v'$. 

In order to define the ultrafilter $\phi_v$, we proceed as follows. Let $\hat \g \in \Ess(\hk)$. If $\hat \g$ does not cross $\hh$, we define $\phi_v(\hat\g)$ so that it contains $\hh$. If  $\hat \g \in \Ess(\hh)$,  we define $\phi_v(\hat\g)$ so that it contains $v$. If  $\hat \g \in \Hess(\hh)$,  we define $\phi_v(\hat\g)$ so that it contains the deep side of $\hh$ determined by $\hat \g$. It remains to define  $\phi_v(\hat\g)$ in case  $\hat \g \in \Triv(\hh)$. To do this, we choose arbitrarily a point $p \in \hh$ which does not lie on any hyperplane crossing $\hh$, and we simply define  $\phi_v(\hat\g)$ in such a way that it contains $p$. 

It is now easy to see that $\phi_v$ is an ultrafilter that satisfies the Descending Chain Condition.  {Since $X$ is finite dimensional, it follows that $\phi_v$ determines a vertex of $X_{\hk}$ (see  Section \ref{Pocsets}.)}


This defines a map $X_{\hh}^{(0)} \to X_{\hk}^{(0)}$. This map preserves the relation of adjacency: Indeed, two distinct vertices of $X_{\hh}$ are adjacent if and only if they are separated by a unique hyperplane, and the definition of $\phi_v$ is so that this will then be the case of the corresponding vertices of $X_{\hk}$. Thus we have a simplicial isometric embedding $X_{\hh}^{(1)} \to X_{\hk}^{(1)}$. In view of the way higher dimension cubes of $X_{\hh}$ and $ X_{\hk}$ are defined, the existence of {an} isometric embedding follows.
%
\end{proof}

\subsection{Version II: hereditarily essential actions}\label{sec:R-like}

{The goal of this section is to present a proof of the Flipping Lemma which does rely on Proposition~\ref{prop:CL}. The argument is self-contained but technically more involved; moreover, it works only in the special case when $X$ is locally compact and has a cocompact automorphism group. However, this approach has  the advantage that it also works when the group $\Gamma$ under consideration is allowed to fix points in the visual boundary of the ambient cube complex. This advantage will have some relevance for the applications of Rank Rigidity we shall present later, but the reader who is willing to exclude the existence of $\Gamma$-fixed points at infinity in $X$ from the start should skip this section.}

We need two more definitions. 


We say that a \cat cube complex $X$ is \textbf{$\RR$-like} if there is an $\Aut(X)$-invariant geodesic line $\ell \subset X$; this is a special case of the condition that appeared in Theorem~\ref{thmintro:PseudoEucl}. If in addition $\Aut(X)$ acts cocompactly, then $X$ is quasi-isometric to the real line $\RR$. 

Given a group $\Gamma \leq \Aut(X)$ we say that the $\Gamma$-action on $X$ is \textbf{hereditarily essential} if $\Gamma$ acts essentially and if for any finite collection of pairwise crossing hyperplanes $\hh_1, \dots, \hh_n$, we have 
$$\Ess\big(\hh_1 \cap \dots \cap \hh_n\big) =\Ess\big(\hh_1 \cap \dots \cap \hh_n, \Stab_\Gamma(\hh_1 \cap \dots \cap \hh_n)\big).$$
In view of Proposition~\ref{prop:essential}, this amounts to requiring that for any hyperplane $\hh_0$ 
 such that $\hh_0 \cap \hh_1 \cap \dots \cap \hh_n$ is an essential hyperplane of the cube complex $\hh_1 \cap \dots \cap \hh_n$, there is an element in $\Stab_\Gamma(\hh_1 \cap \dots \cap \hh_n)$ which skewers $\hh_0$. 

Basic examples of such actions are provided by groups acting cocompactly on locally compact \cat cube complex; {this is the main example that the reader should keep in mind at a first reading of Theorem~\ref{thm:Flipping:cocpt} below.} Other relevant examples will be provided by Corollary~\ref{cor:HereditarilyEss} below. 

\begin{thm}[Flipping Lemma, version II]\label{thm:Flipping:cocpt}
Let $X$ be a locally compact unbounded \cat cube complex with a cocompact automorphism group and $G \leq \Aut(X)$ be a (possibly non-closed) subgroup whose action is hereditarily essential. Let also  $\h$ be a half-space which is unflippable by the  action of $G$. 

Then $X$ has a decomposition $X=X_1\times X_2$ into a product of subcomplexes, corresponding to a transverse hyperplane decomposition $\hH(X)=\hH_1 \cup \hH_2$, which satisfies the following properties.
\begin{enumerate}[(i)]
\item $X_1$ is irreducible and all of its hyperplanes are compact. 
\item Some finite index subgroup $G' \leq G$ preserves the decomposition $X=X_1\times X_2$.
\item The $G'$-orbit of $\hh$ is in $\hH_1$. 

\item $G'$ fixes a point in the visual boundary $\bd X_1$. 

\item If in addition $G$ is closed, unimodular and acts cocompactly, then $X_1$ is $\RR$-like.
\end{enumerate} 
\end{thm}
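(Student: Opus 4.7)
First I would exploit the hereditary essentiality of the $G$-action: it gives some $g\in G$ with $g\h \subsetneq \h$, so $\{g^n\hh\}_{n\in\ZZ}$ is an infinite chain of pairwise nested hyperplanes of $X$. The unflippability of $\h$ by $G$ implies that for every $a\in G$ the halfspaces $a\h$ and $\h$ are never facing (they cross or nest), so Lemma~\ref{lem:Filter} ensures every finite subfamily of $\{a\h^*:a\in G\}$ has non-empty intersection. These two facts encode the whole combinatorial picture that the argument must unpack.

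The plan for the product decomposition is to define $\hH_2$ as the set of hyperplanes that cross every element of the orbit $G\cdot\hh$, and to set $\hH_1 := \hH(X)\setminus\hH_2$. To establish that this is a transverse partition (so Lemma~\ref{lem:Prod} applies), I would first use Lemma~\ref{lem:dim} to observe that any $\hk\in\hH_1$ fails to cross only finitely many elements of the chain $\{g^n\hh\}$; unflippability then forces $\hk$ to nest (rather than face) with those it does not cross. A pocset argument, using that elements of $\hH_2$ cross every $g^n\hh$ and exploiting the nesting structure of the chain, would then show that every $\hk\in\hH_1$ crosses every element of $\hH_2$. This yields $X=X_1\times X_2$ with $\hH(X_i)=\hH_i$ and $\hh\in\hH_1$.

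The finite-index subgroup $G'$ of assertion~(ii) is obtained by appealing to Proposition~\ref{prop:deRham}; shrinking further if necessary guarantees $G'\cdot\hh\subseteq \hH_1$, giving~(iii). The nested chain $\{g^n\hh\}$ determines two opposite ends in $\bd X_1$, and unflippability of $\hh$ rules out $G'$ swapping them, so $G'$ fixes one of them, giving~(iv). Irreducibility of $X_1$ follows from the observation that the single nested chain $\{g^n\hh\}$ cannot split across a non-trivial transverse sub-partition of $\hH_1$.

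For compactness of the hyperplanes of $X_1$ claimed in~(i), I would argue by contradiction: if some $\hk\in\hH_1$ is non-compact, then hereditary essentiality produces (via $\Stab_G(\hk)$) another hyperplane $\hat{\mathfrak{m}}$ crossing $\hk$ essentially; by tracking how $\hat{\mathfrak{m}}$ relates to the nested chain $\{g^n\hh\}$, and using Lemma~\ref{lem:Ess:endometry} together with Proposition~\ref{prop:endom} to rule out a strictly compressing non-surjective endometry, $\hat{\mathfrak{m}}$ would end up belonging simultaneously to $\hH_1$ and $\hH_2$, a contradiction. For~(v), with $G$ closed, unimodular and cocompact, the group $G'$ acts cocompactly on $X_1$ with a fixed end; unimodularity rules out a strictly compressing action on any halfspace of $X_1$ and so forces a second fixed end, hence an invariant axis, making $X_1$ $\RR$-like. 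The main obstacle, I expect, is the transversality argument in the product decomposition step together with the compactness-by-contradiction in~(i); both hinge on subtle pocset combinatorics intertwining the orbit chain $\{g^n\hh\}$, hereditary essentiality, and the endometry/surjectivity dichotomy supplied by Lemma~\ref{lem:Ess:endometry} and Proposition~\ref{prop:endom}.
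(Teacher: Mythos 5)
There is a genuine gap at the heart of your argument: the transversality of your partition is asserted rather than proved. You define $\hH_2$ as the hyperplanes crossing every element of $G\hh$ and claim that ``a pocset argument, using that elements of $\hH_2$ cross every $g^n\hh$ and exploiting the nesting structure of the chain'' shows that every $\hk\in\hH_1$ crosses every $\hm\in\hH_2$. But this is precisely the technical core of the theorem, and it does not follow from elementary pocset combinatorics of the single chain $\{g^n\hh\}$. Concretely: take $\hk$ disjoint from $\hh$ (so $\hk\in\hH_1$) and $\hm$ crossing all of $G\hh$, with $\hk\cap\hm=\varnothing$; nothing in the nesting structure alone forbids this configuration. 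Ruling it out requires combining unflippability with \emph{hereditary} essentiality in an essential way: one needs skewering elements inside $\Stab_G(\hk)$ for hyperplanes $\hk$ disjoint from $\hh$, the parallel/peripheral dichotomy (Lemmas~\ref{lem:Parallel} and~\ref{lem:Peripheral}), and --- already at this stage, not only for the compactness claim in (i) --- the endometry rigidity of Lemma~\ref{lem:Ess:endometry} and Proposition~\ref{prop:endom}, which is what forces $\Ess(g\hh)=\Ess(\hh)$ for nested translates and then propagates equality of essential crossing sets to all hyperplanes disjoint from $\hh$. The paper's proof organizes all of this around the invariant $\hk\mapsto\Ess(\hk)$ and partitions $\hH$ by the condition $\Ess(\hk)=\Ess(\hh)$; your partition is (a posteriori) the same set, but you have not supplied the argument that makes it transverse. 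Relatedly, your auxiliary claim that any $\hk\in\hH_1$ ``fails to cross only finitely many elements of the chain'' is false: if $\hk$ is disjoint from one $g^n\hh$ then $\hk$ lies in $g^n\h$ or $g^n\h^*$ and is therefore disjoint from a half-infinite tail of the chain (for instance $\hk=g^5\hh$ itself crosses no element of the chain), so the combinatorial picture you intend to exploit is not available.

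The remaining parts are closer to workable but still under-argued. For (i), ``tracking how $\hat{\mathfrak m}$ relates to the nested chain'' so that it ``ends up in both $\hH_1$ and $\hH_2$'' is not an argument; the paper instead observes that $\Ess(\hk)\subseteq\hH_2$ for every $\hk\in\hH_1$, so every hyperplane of $X_1$ has empty essential crossing set \emph{inside $X_1$}, and then Proposition~\ref{prop:pruning} (applicable since $\Aut(X)$ has finitely many orbits of hyperplanes) forces compactness. For (iv), fixing one of ``two opposite ends'' determined by $\{g^n\hh\}$ only deals with the cyclic group $\la g\ra$; to get a fixed point for all of $G'$ one first needs that $X_1$ is Gromov hyperbolic (a consequence of (i)), then that $\bigcap_{g\in G}\bd g.\h$ is non-empty (Lemma~\ref{lem:Filter} plus properness) and reduced to a single point. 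Your idea for (v) via unimodularity is in the right spirit, but note that the paper only gives a self-contained argument for discrete $G$ and otherwise defers to \cite{CapraceMonod3}. In short: the overall architecture (transverse partition, Lemma~\ref{lem:Prod}, then (i)--(v)) matches the paper, but the proof of transversality --- the step the whole theorem rests on --- is missing.
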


{
We start by proving the following basic special case of Theorem~\ref{thm:Flipping:cocpt}, which will be used in the proof of the general case. 
\begin{lem}\label{lem:CompactHyperplanes}
Let $X$ be a locally compact unbounded \cat cube complex with a cocompact automorphism group and $G \leq \Aut(X)$ be a (possibly non-closed) subgroup acting essentially.  Let also  $\h$ be a half-space which is unflippable by the  action of $G$. 

If all hyperplanes of $X$ are compact, then $G$ fixes a point in the visual boundary  $\bd X$.  If in addition $G$ is closed, unimodular and acts cocompactly on $X$, then $X$ is $\RR$-like.
\end{lem}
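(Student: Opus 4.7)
The plan is to prove (a) and (b) separately.

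\textbf{For (a):} Unflippability of $\h$ yields $g\h^* \cap \h^* \neq \varnothing$ for every $g \in G$, so by Lemma~\ref{lem:Filter} the family $\{g\h^*\}_{g\in G}$ enjoys the finite intersection property in $X$. Essentiality forces $\bigcap_{g \in G} g\h^* = \varnothing$: otherwise one would obtain a non-empty $G$-invariant convex subcomplex entirely contained in $\h^*$, contradicting that $\h$ is $G$-deep. Since $X$ is locally compact and complete, it is proper, so $\bd X$ is compact in the cone topology. Projecting a fixed basepoint onto the decreasing non-empty finite intersections $C_F := \bigcap_{g \in F} g\h^*$ (as $F$ exhausts $G$) gives a sequence of points escaping to infinity, and a cone-subsequential limit thereof lies in $B := \bigcap_{g \in G} \bd(g\h^*) \subseteq \bd X$. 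The compactness of hyperplanes enters here: since $\bd\hh = \varnothing$, we have the clopen partition $\bd X = \bd(g\h) \sqcup \bd(g\h^*)$ for every $g$, so $B$ is a non-empty closed $G$-invariant subset of $\bd\h^*$. Running the proper-case argument underlying Proposition~\ref{prop:CL} (via \cite[Proposition~3.2]{CapraceMonod1} and \cite[Proposition~1.4]{BalserLytchak}), one shows that $B$ has Tits radius at most $\pi/2$ and hence contains a canonical $G$-fixed circumcenter, giving the desired $G$-fixed point in $\bd X$.

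\textbf{For (b):} Now assume $G$ is closed, unimodular, and acts cocompactly on $X$. By (a), $G$ fixes some $\xi \in \bd X$. A classical argument exploiting unimodularity of $G$ against a proper cocompact action (the Busemann cocycle at $\xi$ must integrate to $0$ on a fundamental domain, forcing it to be a coboundary) yields an antipodal $G$-fixed point $\xi' \in \bd X$. Joining $\xi$ to $\xi'$ by a geodesic line produces a $G$-invariant line $\ell \subset X$; cocompactness of $G$ then confines $X$ to a bounded neighbourhood of $\ell$, forcing $\bd X = \{\xi, \xi'\}$ and making $X$ quasi-isometric to $\RR$. Since $\Aut(X)$ permutes these two boundary points, it stabilises the parallel class of $\ell$, inside which a canonical $\Aut(X)$-invariant line can be selected (e.g.\ by a minimal-displacement argument), so that $X$ is $\RR$-like.

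The main obstacle is the Tits-radius estimate in (a): compactness of hyperplanes is used essentially to reduce $B$ to a subset of $\bd\h^*$ with Tits radius $\leq \pi/2$, which makes a canonical $G$-fixed circumcenter available. In (b), the delicate ingredient is the production of the antipodal fixed point from unimodularity; the remainder of (b) then falls out from cocompactness together with the two-point boundary.
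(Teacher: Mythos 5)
Your part~(a) is mathematically sound, but it is essentially Theorem~\ref{thm:Flipping} (equivalently Proposition~\ref{prop:CL}) in disguise: once you invoke the circumcentre machinery of \cite{BalserLytchak}, the radius bound $\leq\pi/2$ comes from that general theory, not from $\bd\hh=\varnothing$, so your claim that compactness of hyperplanes "is used essentially" for the Tits-radius estimate is a misattribution. The paper deliberately avoids Proposition~\ref{prop:CL} here (that is the whole point of this self-contained "Version II"): it uses compactness of the hyperplanes to exclude $2$-flats, deduces from the Flat Plane Theorem that $X$ is Gromov hyperbolic, and then shows that $\bigcap_{g}\bd (g\h)$ is a \emph{single} point (two distinct boundary points would be joined by a family of lines $\ell\times K$ with $K$ compact, forcing $\bigcap_g g\h\neq\varnothing$, a contradiction), which is therefore $G$-fixed. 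So (a) is correct but by a genuinely different, less self-contained route.

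The genuine gap is in (b), at the production of the antipodal fixed point. The asserted mechanism --- "the Busemann cocycle at $\xi$ integrates to $0$ on a fundamental domain, forcing it to be a coboundary" --- cannot be right: if the Busemann character $\beta_\xi\colon G\to\RR$ were a coboundary it would be bounded, hence identically zero, whereas in the situation you are trying to reach ($X$ $\RR$-like with $G$ cocompact and essential) $G$ necessarily contains translations along the invariant line, whose Busemann character is nonzero. Moreover, even vanishing of $\beta_\xi$ would not by itself produce an antipodal fixed point. The paper handles this step by citing \cite{CapraceMonod3} and \cite[Theorem~3.14]{CapraceMonod2} for general closed unimodular $G$, and supplies an elementary argument only when $G$ is discrete: cocompactness and essentiality give some $g\in G$ with $\beta_\xi(g)\neq 0$, hence hyperbolic with $\xi$ as one endpoint of its axis; the other endpoint $\eta$ must be $G$-fixed, since otherwise $\la g, hgh\inv\ra$ would be non-discrete. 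You need either to import such a result or to give an argument of this kind; as written the antipodal fixed point is unjustified. The rest of (b) (the invariant line, $X\subseteq\mathcal N_R(\ell)$ by cocompactness, and the selection of a canonical $\Aut(X)$-invariant line in the parallel set) is fine once $\xi'$ is in hand.
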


Notice that the assumption that $G$ be unimodular is necessary. Indeed, consider the regular trivalent tree $T$ and let $G \leq \Aut(T)$ be the stabiliser of an end $\xi \in \partial T$. Then $G$ is closed, acts cocompactly, and no half-tree containing $\xi$ is $G$-flippable. Clearly $T$ is not $\RR$-like. The point is of course that this group $G$ is not unimodular. }

\begin{proof}[Proof of Lemma~\ref{lem:CompactHyperplanes}]
{Since all hyperplanes are compact, the space $X$ cannot contain any $2$-dimensional flat. Therefore $X$ must be Gromov hyperbolic (see \cite[Theorem~III.H.1.5]{Bridson-Haefliger}). 

Next we consider the orbit $\{g.\h \; | \; g \in G\}$. Since $\h$ is not $G$-flippable, any two half-spaces in this orbit have a non-empty intersection. Therefore any finite subset of $\{g.\h \; | \; g \in G\}$ has a non-empty intersection by Lemma~\ref{lem:Filter}. On the other hand, the intersection $\bigcap \{g.\h \; | \; g \in G\}$ is a proper $G$-invariant convex subcomplex, and must therefore be empty. Since $X$ is proper, we deduce that $\bigcap \{\bd g.\h \; | \; g \in G\}$ is a non-empty $G$-invariant subset of $\bd X$. 

We claim that it is reduced to a single point. Indeed, if it contained two distinct points $\xi, \eta$, then these points would be joined by some geodesic line $\ell$ since $X$ is Gromov hyperbolic and has a cocompact isometry group. In fact the set of all geodesic lines joining $\xi$ to $\eta$ is of the form $\ell \times K$ for some compact set $K$. The condition that $\xi, \eta $ both belong to $\bd g.\h$ implies that $g.\h$ intersects $\ell \times K$ in some subset of the form $\ell \times K'$ for some closed convex subset $K' \subseteq K$. Since $K$ is compact, it follows that the intersection  $\bigcap \{\bd g.\h \; | \; g \in G\}$ is non-empty, a contradiction. The claim stands proven. 

The claim readily implies that $G$ fixes a point  $\xi \in \bd X$. It remains to prove that if $G$ is closed, unimodular and acts cocompactly on $X$, then $X$ is $\RR$-like. We shall content ourselves by providing a direct argument in the special case when $G$ is discrete. In the general case, we refer the interested readed to  \cite{CapraceMonod3} or \cite[Theorem~3.14]{CapraceMonod2}, which provide general statements on fixed points at infinity for unimodular groups acting cocompactly by isometries on \cat spaces. 

Assume thus that $G$ is discrete and acts cocompactly. Since $G$ fixes $\xi$, it preserves the collection of  horoballs centred at $\xi$. The $G$-action being cocompact and essential, there must exist some $g \in G$ which permute these horoballs non-trivial. This element $g$ must therefore be a hyperbolic isometry having $\xi$ as one of its two unique fixed points in $\bd X$. Let $\eta \in \bd X$ be the other $g$-fixed point. We claim that $G$ fixes $\eta$. Otherwise some $h \in G$ would move it, which is impossible because the subgroup generated by $g$ and the conjugate $h g h\inv$ would then be non-discrete. In particular $G$ preserves the collection of geodesic lines joining $\xi$ to $\eta$. As $G$ acts cocompact, this forces $X$ to be $\RR$-like, as desired.
}
\end{proof}

\begin{proof}[Proof of Theorem~\ref{thm:Flipping:cocpt}]
We are given a non-flippable half-space $\h$ and need to show that $X$ decomposes as a product. We divide $\hH = \hH(X)$ into two subsets:

$$\hH_1=\{\hk\in\hH\vert \Ess(\hk)=\Ess(\hh)\}$$
and 
$$\hH_2=\hH-\hH_1.$$

We aim to show that every hyperplane in $\hH_1$ intersects every hyperplane in $\hH_2$. This will give a product decomposition of $X$ by Lemma~\ref{lem:Prod}.
We proceed in several steps. 

\begin{claim}
For all $\hk \in \hH$ such that $\hk \subset \h$, we have $\Ess(\hk)\subset\Ess(\hh)$.
\end{claim}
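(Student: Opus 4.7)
Fix $\hat\g\in\Ess(\hk)$; the plan is to produce a power of a hyperbolic element of $G$ that simultaneously skewers $\hat\g$, stabilises both $\hk$ and $\hh$, and admits an axis inside $\hh$. That axis will transfer the skewering of $\hat\g$ from $X$ down to $\hh$, giving both that $\hat\g$ crosses $\hh$ and that $\hat\g\cap\hh$ is essential in $\hh$. I begin by applying the hereditary essentiality hypothesis to the single hyperplane $\hk$: it furnishes $g\in\Stab_G(\hk)$ skewering $\hat\g\cap\hk$ in the cube complex $\hk$. Since $\hk$ is closed, convex and $g$-invariant, the standard \cat projection argument gives that the translation lengths of $g$ on $\hk$ and on $X$ coincide, so every axis of $g|_{\hk}$ is an axis of $g$ in $X$; by Lemma~\ref{lem:def:Skewer} this upgrades the skewering from $\hk$ to $X$, so $g$ skewers $\hat\g$ in $X$ itself.

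The key step is to promote $g$ to a power preserving $\hh$. For any $n\in\mathbf{Z}$, the half-space $g^n\h$ cannot be disjoint from $\h$ (since $\hk=g^n\hk\subseteq g^n\h\cap\h$), cannot be facing $\h$ (by non-flippability of $\h$), and cannot be strictly nested with $\h$ — for strict nesting would mean $g^n$ skewers $\hh$, forcing every axis of $g^n$ (an axis of $g$, hence inside $\hk\subseteq\h\setminus\hh$) to cross $\hh$ by Lemma~\ref{lem:def:Skewer}. Therefore $g^n\hh$ either equals $\hh$ or crosses $\hh$; applying the same dichotomy to $g^{m-n}$ shows that $\{g^n\hh:n\in\mathbf{Z}\}$ is a pairwise-crossing family of hyperplanes. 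Finite-dimensionality of $X$ bounds the cardinality of any such family, so this family is finite, and some positive power $g':=g^N$ stabilises $\hh$.

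With $g'\in\Stab_G(\hk)\cap\Stab_G(\hh)$ in hand, the proof concludes by producing a $g'$-axis inside $\hh$. Projecting any point of the minimum-displacement set $\mathrm{Min}(g')$ onto the closed convex $g'$-invariant subcomplex $\hh$ lands in $\mathrm{Min}(g')\cap\hh$, and the flat-strip splitting $\mathrm{Min}(g')\cong Y\times\mathbf{R}$ then supplies a genuine $g'$-axis $\alpha\subseteq\hh$. Since $g'$ still skewers $\hat\g$ in $X$, Lemma~\ref{lem:def:Skewer} forces $\alpha$ to meet $\hat\g$ in exactly one point, which automatically lies in $\hh\cap\hat\g$; in particular $\hat\g$ crosses $\hh$. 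A final application of Lemma~\ref{lem:def:Skewer} inside the cube complex $\hh$, to the axis $\alpha$ and the hyperplane $\hat\g\cap\hh$, shows that $g'$ skewers $\hat\g\cap\hh$ in $\hh$; hence $\hat\g\in\Ess(\hh,\Stab_G(\hh))\subseteq\Ess(\hh)$, as desired. The main obstacle is the middle step: ruling out the possibility that $g\hh$ crosses $\hh$. Non-flippability alone is insufficient here; what rescues the argument is the pairwise-crossing analysis above, which combines non-flippability with the fact that the axes of $g$ sit inside $\hk\subseteq\h\setminus\hh$ and the finite-dimensionality of $X$.
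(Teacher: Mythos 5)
Your argument is correct and follows the same strategy as the paper's: hereditary essentiality applied to the single hyperplane $\hk$ produces a skewering element of $\Stab_G(\hk)$ whose axis lies in $\hk\subset\h$, and non-flippability of $\h$ then forces (a power of) this element to preserve $\hh$, admit an axis in $\hh$, and hence skewer $\hat\g\cap\hh$ there. The only difference is cosmetic: the paper routes the middle step through the parallel/peripheral dichotomy of Lemmas~\ref{lem:Parallel} and~\ref{lem:Peripheral}, whereas you obtain the stabilising power directly from the observation that the orbit $\{g^n\hh\}$ is pairwise crossing and therefore finite by finite-dimensionality --- an equivalent and slightly more self-contained route --- and you also write out the transfer of essentiality from $X$ into the subcomplex $\hh$, which the paper leaves implicit.
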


Note that since $\h$ is unflippable, it follows that every element with an axis in $\h$ is parallel to $\hh$, see Lemma~\ref{lem:Peripheral}. Consequently, every element of $\Stab_G(\hk)$ is parallel to $\hh$. This immediately tells us that $\Ess(\hk)\subset\Ess(\hh)$.  For if there were an essential hyperplane in $\hk$ disjoint from $\hh$, we would simply take an element $\alpha\in\Stab_G(\hk)$ such that $\alpha$ skewers  that hyperplane and then $\alpha$ would have an axis staying arbitrarily far away from $\hh$. 

\begin{claim}
For all $g \in G$ such that  $g\hh\subset\h$, we have  $\Ess(g\hh)= \Ess(\hh)$.
\end{claim}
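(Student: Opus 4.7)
The forward inclusion $\Ess(g\hh) \subset \Ess(\hh)$ is immediate from the preceding claim applied with $\hk = g\hh$. Only the reverse containment $\Ess(\hh) \subset \Ess(g\hh)$ requires work, and my strategy is to extract it from the endometry machinery developed in Section~\ref{sec:Endometries}.

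Because every element of $\Ess(g\hh)$ lies in $\Ess(\hh)$, it in particular crosses $\hh$; Lemma~\ref{lem:Ess:endometry} accordingly yields an isometric cubical embedding $\iota \colon X_{g\hh} \to X_{\hh}$ between the essential cores. On the other hand, $g$ itself restricts to a cubical isomorphism $X_{\hh} \to X_{g\hh}$, so the composition $\iota \circ g$ is an endometry of $X_{\hh}$. Since $\Aut(X)$ acts cocompactly on $X$, there are finitely many $\Aut(X)$-orbits of hyperplanes; hence $\Stab_{\Aut(X)}(\hh)$ acts cocompactly on $\hh$ and, via the restriction quotient, on $X_{\hh}$. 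As $X_{\hh}$ is locally compact with cocompact isometry group, Proposition~\ref{prop:endom} forces $\iota \circ g$ to be surjective, and consequently so is $\iota$.

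Suppose now, for contradiction, that some $\hat\g \in \Ess(\hh) \setminus \Ess(g\hh)$ exists. Examining the ultrafilter rule in the proof of Lemma~\ref{lem:Ess:endometry}, the value $\phi_v(\hat\g)$ is then determined by the position of $\hat\g$ with respect to $g\hh$ alone: it is the side containing $g\hh$ when $\hat\g$ and $g\hh$ are disjoint, the deep side of $g\hh$ when $\hat\g \in \Hess(g\hh)$, or the side through the auxiliary fixed point $p$ when $\hat\g \in \Triv(g\hh)$. In particular, $\phi_v(\hat\g)$ is constant in $v \in X_{g\hh}$, so the image of $\iota$ lies entirely in a single half-space of $X_{\hh}$ determined by $\hat\g$. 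This contradicts surjectivity of $\iota$, since $X_{\hh}$ is essential and therefore both half-spaces of $\hat\g$ contain vertices. Hence $\Ess(\hh) \subset \Ess(g\hh)$ and the desired equality follows.

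No genuine obstacle is anticipated. The only points requiring some attention are the verification that $X_{\hh}$ carries a cocompact isometric action (needed for Proposition~\ref{prop:endom}) and the unpacking of the explicit construction of $\iota$, in order to confirm that constancy of $\phi_v(\hat\g)$ indeed pinpoints the elements of $\Ess(\hh) \setminus \Ess(g\hh)$ as the exact obstructions to surjectivity.
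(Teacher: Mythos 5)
Your proof is correct and follows essentially the same route as the paper: the forward inclusion comes from the previous claim, the embedding $X_{g\hh}\to X_{\hh}$ from Lemma~\ref{lem:Ess:endometry}, and surjectivity from Proposition~\ref{prop:endom} applied to the endometry obtained by precomposing with $g$. Your final paragraph merely spells out, via the constancy of $\phi_v(\hat{\mathfrak g})$ on $\Ess(\hh)\setminus\Ess(g\hh)$, the step that the paper compresses into ``Thus $\Ess(g\hh)=\Ess(\hh)$'', which is a welcome clarification rather than a deviation.
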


We already have that $\Ess(g\hh)\subset\Ess(\hh)$ by the first claim. To show equality, consider the essential cores $X_{g\hh}$ of $g\hh$ and $X_{\hh}$ of $\hh$. The fact that there is an inclusion of $\Ess(g\hh)$ into $\Ess(\hh)$ tells us that there is an isometric embedding $X_{g\hh}\to X_{\hh}$, see Lemma~\ref{lem:Ess:endometry}. But these two spaces are isometric and have a cocompact automorphism group since $X$ is locally compact and $\Aut(X)$ acts cocompactly. Therefore, by Proposition~\ref{prop:endom}, we have that $X_{g\hh}$ and $X_{\hh}$ are isomorphic as cube complexes. Thus $\Ess(g\hh)=\Ess(\hh)$.  Note that this means that for any $g_1, g_2$ with $g_1\hh \subset g_2\h$, we have that $\Ess(g_1\hh)=\Ess(g_2\hh)$.

\begin{claim}
For all $\hk \in \hH$ such that $\hk \subset \h$, we have $\Ess(\hk)=\Ess(\hh)$.
\end{claim}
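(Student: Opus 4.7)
The strategy is to combine Claims 1 and 2 by producing an element $g \in G$ whose translate $g\hh$ lies strictly inside $\k$, i.e.\ $g\h \subsetneq \k$. Given such a $g$, Claim 2 applied to the inclusion $g\h \subsetneq \h$ yields $\Ess(g\hh) = \Ess(\hh)$, while Claim 1, re-applied with $\hk$ playing the role of $\hh$, yields $\Ess(g\hh) \subseteq \Ess(\hk)$. Combined with the inclusion $\Ess(\hk) \subseteq \Ess(\hh)$ already furnished by Claim 1, these imply the desired equality $\Ess(\hk) = \Ess(\hh)$.

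For this scheme to work, one must first verify that $\hk$ is itself $G$-unflippable, since only then does Claim 1 apply with $\hk$ in place of $\hh$. This is a short direct argument: if some $f \in G$ flipped $\hk$, meaning $\k^* \subsetneq f\k$, then the nesting $\k \subsetneq \h$ gives $\h^* \subseteq \k^*$, and the equivariance $f\k \subseteq f\h$ then yields $\h^* \subsetneq f\h$, contradicting the $G$-unflippability of $\hh$. With $\hk$ known to be unflippable, Claim 1 delivers $\Ess(\hat{\mathfrak{j}}) \subseteq \Ess(\hk)$ for any hyperplane $\hat{\mathfrak{j}}$ with $\hat{\mathfrak{j}} \subset \k$.

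The production of the element $g$ with $g\h \subsetneq \k$ is where I expect the main difficulty to lie. The plan is to use the essentiality of $G$ to choose a hyperbolic $\beta \in G$ skewering $\hk$, say $\beta\k \subsetneq \k$, so that the attracting fixed point of $\beta$ lies in the closure of $\k$ in $X \cup \bd X$. If $\beta$ also skewers $\hh$ compatibly, or is merely peripheral to $\hh$ so that a power $\beta^m$ skewers $\hh$ via Lemma~\ref{lem:Peripheral}, then the nested chain $(\beta^n\h)$ converges to this attracting point and eventually satisfies $\beta^n\h \subsetneq \k$. The delicate case is when $\beta$ is parallel to $\hh$: by Lemma~\ref{lem:Parallel} a power of $\beta$ lies in $\Stab(\hh)$, so powers of $\beta$ alone cannot push $\h$ strictly inside $\k$. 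To handle this I plan to combine $\beta$ with an auxiliary hyperbolic $\alpha \in G$ that skewers $\hh$ (guaranteed by the essential $G$-action on $X$), and to analyse products of the form $\alpha^p\beta^q$ by a ping-pong argument in order to force the attracting fixed point of the product into $\k$. The hereditarily essential hypothesis is intended to enter here by providing a rich supply of hyperbolic elements in $\Stab_G(\hh)$ that can be mixed with $\alpha$ to steer the resulting attracting fixed point into any prescribed deep region of $\h$.
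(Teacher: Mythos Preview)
Your overall architecture matches the paper's: produce $g\in G$ with $g\h\subsetneq\k$, then deduce $\Ess(\hh)=\Ess(g\hh)\subseteq\Ess(\hk)$ and combine with Claim~1. The paper reaches the last inclusion by a direct sandwich (anything in $\Ess(\hh)=\Ess(g\hh)$ crosses both $\hh$ and $g\hh$, hence crosses the separating $\hk$, and is essential there), so your detour through the unflippability of $\k$ is unnecessary but harmless.

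However, your analysis of how to find $g$ misidentifies the cases. First, Lemma~\ref{lem:Peripheral} does \emph{not} say that a power of a peripheral element skewers $\hh$; it only gives $\beta^m\hh\cap\hh=\varnothing$. Since no power of a non-skewering element skewers, this forces $\beta^m$ or $\beta^{-m}$ to flip $\h$, contradicting unflippability. So the peripheral case is in fact excluded, not ``benign''. Second, the parallel case---which you single out as delicate---cannot occur at all: if $\beta$ were parallel to $\hh$ then by Lemma~\ref{lem:Parallel} (and local compactness) some $\beta^n\in\Stab(\hh)$ would have an axis inside $\hh$, and such an axis cannot meet $\hk$ since $\hk\cap\hh=\varnothing$; this contradicts $\beta$ skewering $\hk$. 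Hence $\beta$ \emph{must} skewer $\hh$, and your proposed ping-pong mechanism is never needed.

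The genuine gap is elsewhere: even once you know $\beta$ skewers both $\hh$ and $\hk$ (say $\beta\h\subsetneq\h$), it is not automatic that $\beta^n\h\subsetneq\k$ for large $n$; the translates $\beta^n\hh$ could in principle keep crossing $\hk$. The paper closes this with a short argument you are missing: if $\beta^n\hh\cap\hk\neq\varnothing$ for all $n>0$, then the nested family $\{\beta^n\hh\}$ cuts $\hk$ in arbitrarily long pencils on both sides of $\beta^m\hh\cap\hk$ for suitable $m$, so by Remark~\ref{rem:RecognizingEssential} we get $\beta^m\hh\in\Ess(\hk)\subseteq\Ess(\hh)$; but $\beta^m\hh\subset\h$ is disjoint from $\hh$, a contradiction. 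That is the step you should supply in place of the ping-pong plan.
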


To see this, consider some element 
$g$ skewering $\hk$. Since $\hk$ is disoint from $\hh$, the element $g$ must also skewer $\hh$, otherwise $g$ would be peripheral to $\hh$ and some power of it would therefore flip $\h$ (see Lemma~\ref{lem:Peripheral}) contradicting our assumption. After possibly replacing $g$ by some power of $g$, we may assume that $g\h\subset\h$. If for every positive power of $g$, we have that $g^n\hh\cap\hk\not=\varnothing$, then {by Remark~\ref{rem:RecognizingEssential}} we would have that for some positive power of $g$, say $g^m$, $g^m\hh\in\Ess(\hk)$, which contradicts our previous claim that $\Ess(\hk)\subset\Ess(\hh)$. Thus, by passing to an appropriate positive power of $g$, we may assume that $g\h\subset\k$.  Thus $\hk$ separates $\hh$ from $g\hh$. But we have already established that $\Ess(g\hh)=\Ess(\hh)$. In particular $\Ess(\hh)\subset\Ess(g\hh)$. Thus 
$\Ess(\hh)\subset\Ess(\hk)$. Note that this means that for any $g\in G$ with $\hk\subset g\h$, we have that $\Ess(\hk)=\Ess(g\hh)$.

\begin{claim}
For all $\hk \in \hH$ such that $\hk \cap \hh = \varnothing$, we have $\Ess(\hk)=\Ess(\hh)$.
\end{claim}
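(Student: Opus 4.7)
The plan is to split according to which side of $\hh$ the hyperplane $\hk$ lies on. If $\hk \subset \h$, the statement follows immediately from the previous claim (Claim~3). The substantive case is $\hk \subset \h^*$, and there the plan is to replace $\hh$ by a suitable translate $g^{-n}\hh$ lying in $\h^*$, in order to put $\hk$ on the unflippable side of that translate and then invoke Claim~3 again.

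By the essentiality of the $G$-action, pick $g \in G$ with $g\h \subsetneq \h$. A short conjugation argument shows that each translate $g^n\h$ is again $G$-unflippable: if $\gamma \in G$ flipped $g^n\h$, then $g^{-n}\gamma g^n$ would flip $\h$, contradicting the hypothesis on $\h$. Consequently Claims~1--3 all apply verbatim to any $g^n\hh$. Moreover the chain $\{g^{-n}\h\}_{n \geq 0}$ is increasing with union $X$ and $\bigcap_{n \geq 0} g^{-n}\h^* = \varnothing$, since $g$ pushes every point into $\h$ in the limit. Using this, together with the fact that the translates $\{g^{-m}\hh\}_{m \in \ZZ}$ form a pairwise disjoint chain that partitions $\h^*$ into consecutive ``slabs'', I would argue that the hyperplane $\hk$, being disjoint from $\hh$, must sit entirely inside one such slab. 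This yields an integer $n \geq 1$ such that $\hk \subset g^{-n}\h$ and $\hk \cap g^{-n}\hh = \varnothing$.

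Given such an $n$, Claim~3 applied to the hyperplane $g^{-n}\hh$ with its unflippable side $g^{-n}\h$ containing $\hk$ gives $\Ess(\hk) = \Ess(g^{-n}\hh)$, and Claim~2 gives $\Ess(g^{-n}\hh) = \Ess(\hh)$; combining these yields the desired equality $\Ess(\hk) = \Ess(\hh)$. The main obstacle I foresee is the combinatorial step of locating $\hk$ inside a single slab: the delicate case is when $\hk$ crosses some translate $g^{-m}\hh$ of $\hh$, which one must rule out (or handle directly) by combining the chain structure, finite-dimensionality, and the hereditary essentiality hypothesis.
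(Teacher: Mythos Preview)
Your reduction is sound in the case you isolate: if $\hk \subset \h^*$ happens to be disjoint from some translate $g^{-n}\hh$ while lying in $g^{-n}\h$, then applying Claim~3 to the unflippable half-space $g^{-n}\h$ and Claim~2 to relate $\Ess(g^{-n}\hh)$ back to $\Ess(\hh)$ finishes the argument, exactly as you say. This is the easy half of the paper's proof as well.

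The gap is real, though, and it is not merely a combinatorial bookkeeping issue. It is \emph{not} true in general that $\hk$ sits in a single slab: the hyperplane $\hk$ may cross $g^{-m}\hh$ for \emph{every} $m \geq 1$. Nothing in the chain structure or finite-dimensionality prevents this, since hyperplanes in a locally compact cube complex can be non-compact and extend to infinity in the direction of the $g$-axis. The paper treats this case as genuinely substantive. Once one knows that $\hk$ crosses all $g^n\hh$, one invokes Remark~\ref{rem:RecognizingEssential} (which uses cocompactness of $\Aut(X)$) to arrange that some $g^m\hh$ is essential in $\hk$, and then --- since $\hk \notin \Ess(\hh) = \Ess(g^m\hh)$ --- one knows $\hk$ is \emph{not} essential in $g^m\hh$. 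This forces $g^m\hh$ into a bounded neighbourhood of one of the half-spaces $\k$ or $\k^*$. The first subcase requires constructing a composite element $c = ba$ (with $a$ skewering $\hk$ and $b \in \Stab_G(\hk)$ skewering $g^m\hh$, the latter supplied by hereditary essentiality) that simultaneously skewers $\hk$ and $g^m\hh$ in compatible directions; the second subcase is dispatched by a contradiction with $\Ess(g^m\hh) = \Ess(\hh)$. None of this is recoverable from the slab picture alone --- you need to engage with the interaction between $\hk$ and the skewering dynamics, and this is where the hereditary essentiality and cocompactness hypotheses actually enter.
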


We have already proved the claim for all hyperplanes $\hk$ contained in $\h$.  We need to now prove the claim for all hyperplanes $\hk\subset\h^*$. Let $\k$ denote the half-space associated to $\hk$ such that $\h\subset\k$. Consider some $g$ skewering $\hh$ so that $\h\subset g\h$. After passing to some positive power of $g$, we may assume that $\hk\not\subset g\h^*$. If $\hk \subset g\h$, then by what we have shown so far, we have that $\Ess(\hk)=\Ess(g\hh)$ and $\Ess(g\hh)=\Ess(\hh)$, so we are done. Thus, we may assume that for all positive powers of $g$, $g^n\hh\cap\hk\not=\varnothing$.  By Remark~\ref{rem:RecognizingEssential},  we may pass to some positive power of $g$ such that $g\hh$ is essential in $\hk$. Since $\Ess(g\hh)=\Ess(\hh)$, we know that $\hk$ is not essential in $g\hh$. So there are two posibilities: either there exists an $R>0$ such that $g\hh\subset \mathcal N_R(\k)$, or there exists some $R>0$ such that 
$g\hh\subset \mathcal N_R(\k^*)$. 

First, suppose that $g\hh\subset \mathcal  N_R(\k)$. Choose some hyperbolic element $a$ skewering $\hk$, so that $\k\subset a\k$. Now by passing to a high enough power of $a$, we may assume that $g\hh\cap a\hk=\varnothing.$ Since $g\hh$ is essential in $\hk$, we may find an element $b\in\Stab_G(\hk)$ such that $b$ skewers $g\hh$ so that $bg\h\subset g\h$. We claim that after passing to some positive power of $b$,  we have that $ba\hk\subset g\h$. To see 
this consider some geodesic edge path $\alpha$ from $a\hk$ to $\hk$. Let $p$ denote the terminal endpoint $\alpha$ adjacent to $\hk$. We may apply {some power of} $b$ so that $bp\in g\h$. 
Now $b\alpha$ is a path from $ba\hk$ to $bp$ which does not cross $g\hh$. Thus $ba\hk\subset g\h$. Setting $c=ba$, we have that $c$ skewers $\hk$ with
{$c\k\subset \k$}.
We claim further that $c$ skewers $g\hh$. For otherwise a positive power of $c$ would flip $g\h$ {by Lemma~\ref{lem:Peripheral}} and hence there would be an element flipping $\h$ as well.  By passing to some power of $c$, we may further assume that $c\h\subset\h$. We thus have that $\Ess(cg\hh)=\Ess(g\hh)=\Ess(\hh)$. We also have that $\Ess(c\hk)=\Ess(g\hh)$. Thus $\Ess(c\hk)=\Ess(cg\hh)$, so that $\Ess(\hk)=\Ess(g\hh)=\Ess(\hh)$. 

It remains to handle the case when $g\hh\subset \mathcal N_R(\k^*)$. We may assume that $g\hh\not\subset \mathcal N_R(\k)$ for otherwise, we are in case 1. Thus there exist points of 
$g\hh\cap \mathcal \k^*$ arbitrarily far away from $\hk$. {In particular, we can find an arbitrarily large pencil of parallel hyperplanes crossing $g\hh$ and lying entirely in $\k^*$ (see Lemma~\ref{lem:dim}). Amongst them, some must belong to $\Ess(g\hh)$ by Remark~\ref{rem:RecognizingEssential}, and none of them crosses $\hh$ since they are contain in $\k^*$. This contradicts the fact that $\Ess(g\hh)=\Ess(\hh)$, thereby  concluding the proof of the claim.}

\begin{claim}
Every hyperplane in $\hH_1$ crosses every hyperplane in $\hH_2$. 
\end{claim}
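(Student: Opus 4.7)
The plan is to argue by contradiction. Assume $\hk_1 \in \hH_1$ and $\hk_2 \in \hH_2$ are disjoint; after choosing half-spaces, write $\hk_1 \subset \k_2$. Two preliminary observations sharply constrain $\hk_2$. First, $\hk_2$ must cross $\hh$: otherwise Claim 4 would give $\Ess(\hk_2) = \Ess(\hh)$, placing $\hk_2$ in $\hH_1$. Second, $\hk_2 \notin \Ess(\hh)$: if $\hk_2$ were essential in $\hh$, then $\Ess(\hk_1) = \Ess(\hh)$ would force $\hk_2 \in \Ess(\hk_1)$, so $\hk_2$ would cross $\hk_1$, contradicting our standing assumption. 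Hence $\hk_2$ crosses $\hh$ but is inessential in it.

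The key idea is to produce an element $g \in G$ such that the translate $g\hh$ is disjoint both from $\hh$ and from $\hk_2$. Note that every $G$-translate of $\hh$ is itself $G$-unflippable (a $G$-flipper of $g\hh$ conjugates to a $G$-flipper of $\hh$), so Claims 1--4 apply verbatim with $g\hh$ replacing $\hh$. The disjointness $g\hh \cap \hh = \varnothing$ together with Claim 4 yields $\Ess(g\hh) = \Ess(\hh)$. Applying Claim 4 once more, now with $g\hh$ in the role of $\hh$ and using $\hk_2 \cap g\hh = \varnothing$, we obtain $\Ess(\hk_2) = \Ess(g\hh) = \Ess(\hh)$, placing $\hk_2 \in \hH_1$, the desired contradiction.

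It remains to construct $g$. The essentiality of the $G$-action (Proposition~\ref{prop:essential}) supplies $g_0 \in G$ skewering $\hh$ with $g_0\h \subsetneq \h$ and $\alpha \in G$ skewering $\hk_2$ with $\alpha\k_2 \subsetneq \k_2$. Merely iterating $g_0$ need not suffice, since $\hk_2 \cap \h$ may be unbounded and cross every translate $g_0^n\hh$. Instead, one combines positive powers of $g_0$ and $\alpha$ via a ping-pong type construction: $\alpha^k$ first pushes $\hk_2$ into a narrow tube inside $\k_2$, then $g_0^m$ sends $\hh$ past it. The hereditary essentiality hypothesis is crucial here, since it guarantees that $\Stab_G(\hh)$ acts essentially on $\hh$, which in turn bounds the $\Stab_G(\hh)$-orbit of the inessential cross-section $\hk_2 \cap \hh$ inside $\hh$ and provides the geometric control needed to select the exponents.

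This last geometric step --- simultaneously balancing the two skewering elements so that the resulting translate $g\hh$ avoids both $\hh$ and $\hk_2$ --- is the main obstacle of the argument; everything else is a formal consequence of the previously established claims.
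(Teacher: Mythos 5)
Your overall strategy is the right one and matches the paper's: produce $g \in G$ with $g\hh$ disjoint from both $\hh$ and $\hk_2$, then apply Claim~4 twice (once for $\hh$, once for its translate $g\hh$, which is legitimate since unflippability of $\h$ passes to $g\h$ by conjugation) to conclude $\Ess(\hk_2)=\Ess(\hh)$ and contradict $\hk_2\in\hH_2$. Your two preliminary observations are also correct. But the construction of $g$ --- which you yourself identify as ``the main obstacle'' --- is not actually carried out, and the ping-pong sketch you offer does not work as described: applying $\alpha^k$ ``pushes $\hk_2$'' to a \emph{translate} $\alpha^k\hk_2$, whereas what must be avoided is the original $\hk_2$; and the appeal to hereditary essentiality to ``bound the orbit of the cross-section $\hk_2\cap\hh$'' is not a step one can complete as stated. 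So there is a genuine gap exactly at the point you flagged.

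The missing idea is that you never use $\hk_1$ in the construction of $g$, only in the preliminary observation. The paper takes $g$ to skewer $\hk_1$ (not $\hh$, and not $\hk_2$), with $g\k_1\subset\k_1$. Since $\Ess(\hk_1)=\Ess(\hh)$, the element $g$ cannot be parallel to $\hh$ (parallelism would put $\hk_1\in\Ess(\hh)=\Ess(\hk_1)$, absurd), so by Lemmas~\ref{lem:def:Skewer} and~\ref{lem:Peripheral} a positive power satisfies $g\hh\cap\hh=\varnothing$ and $g\hh\subset\k_1$. The whole point is that $\k_1\subset\k_2$, so landing inside $\k_1$ gives disjointness from $\hk_2$ \emph{for free}: there is no simultaneous balancing of two skewering elements to perform. (One then finishes by translating: $g\inv\hh$ and $g\inv\hk_2$ are both disjoint from $\hh$, so Claim~4 gives $\Ess(g\inv\hk_2)=\Ess(\hh)=\Ess(g\inv\hh)$, and applying $g$ yields $\Ess(\hk_2)=\Ess(\hh)$ --- the same conclusion you reach, just phrased via equivariance rather than re-proving the claims for $g\hh$.) With this choice of $g$ your argument closes; without it, the hard step remains open.
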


Consider a hyperplane $\hk_1\in \hH_1$ and $\hk_2\in\hH_2$ and suppose that $\hk_1\cap\hk_2=\varnothing$. Let $\k_1$ and $\k_2$ be the respective halfspaces satisfying $\k_1\subset\k_2$. Choose some $g\in G$ such that $g$ skewers $\hk_1$, so that $g\k_1\subset\k_1$. 
We know that $g$ is not parallel to $\hh$, otherwise we would have that $\hk_1$ is essential in $\hh$, contradicting the fact that $\Ess(\hk_1)=\Ess(\hh)$. Consequently, $g$ either skewers $\hh$ or $g$ is peripheral to $\hh$. { In either case, we may replace $g$ by some positive power of itself} such that $g\hh\cap\hh=\varnothing$ and $g\hh\subset \k_1$. Thus $g\hh\cap\hk_2=\varnothing$. Applying $g^{-1}$ we see that $g^{-1}\hk_2$ and $g^{-1}\hh$ are disjoint from $\hh$. Thus $\Ess(g^{-1}\hh)=\Ess(g^{-1}\hk_2)=\Ess(\hh)$ by the preceding claim. Now we apply $g$ to see that $\Ess(\hk_2)=\Ess(\hh)$ contradicting the fact that $\hk_2\in\hH_2$. 

\medskip
At this point, Lemma~\ref{lem:Prod} ensures that $X$ factors as a product $X_1\times X_2$ corresponding to the partition above as $\hH=\hH_1\cup\hH_2$.  

{Notice that $\Ess(\hh) \subseteq \hH_2$. In particular, if $\hH_2=\varnothing$, then $\Ess(\hh) = \varnothing$ and, hence $\Ess(\hk) = \varnothing$ for all $\hk \in \hH$. In view of Proposition~\ref{prop:pruning}, which can be applied since  $\Aut(X)$ has finitely many orbits of hyperplanes, it follows that every hyperplane in $X = X_1$ is compact. In particular $X$ has a unique non-compact  irreducible factor. Since the $G$-action is essential, it follows that $X$ has no non-trivial compact factor, and we deduce that $X$ must thus be irreducible. Thus assertion (i) holds, and assertions (ii), (iii) are trivial in this case. Moreover assertions (iv) and (v) follow from Lemma~\ref{lem:CompactHyperplanes}.}

Assume now that there exists a hyperplane in $\hh_2\in\hH_2$. Notice that by the definition of $\hH_1$ and $\hH_2$, we have $\Ess(\hh) = \Ess(\hk) \subset \hH_2$ for any $\hk \in \hH_1$. Moreover the factor $X_1$ is isomorphic to the restriction quotient $X(\hH_1)$ (see \S\ref{sec:prel}). It follows that for every hyperplane $\hk$ of $X_1$, the set of essential hyperplanes of $\hk$ (in the complex $X_1$) is empty. {As shown in the case $\hH_2 = \varnothing$, this implies that} every hyperplane of $X_1$ is compact. Since $X_1$ is essential (because $X$ is so), it follows that $X_1$ is irreducible. Proposition~\ref{prop:deRham} thus ensures the existence of a finite index subgroup of $\Aut(X)$ which preserves the decomposition $X = X_1 \times X_2$. 

It only remains to prove (iv) and (v) for the subcomplex $X_1$. To do this there is no loss of generality in assuming $X=X_1$. In this way, we are reduced to the case $\hH_2 = \varnothing$ which has already been treated.
\end{proof}

\begin{cor}\label{cor:Flipping:cocpt}
Let $X$ be a locally compact unbounded \cat cube complex with cocompact automorphism group and $G \leq \Aut(X)$ be a group whose action is hereditarily essential. 

Then $X$ decomposes as a product $X = X_1 \times \dots \times X_p \times Y$ of subcomplexes {(possibly $p=0$ or $Y$ is reduced to a single vertex)}. This decomposition is preserved by some finite index subgroup $G' \leq G$. Moreover $G'$ has no fixed point in $\bd Y$, every half-space of $Y$ is $G'$-flippable, and  for all $i \in\{1, \dots, p\}$, every  hyperplane of $X_i$ is compact. 

If in addition $G$ is closed, unimodular and acts cocompactly, then $X_i$ is $\RR$-like for all $i$. 
\end{cor}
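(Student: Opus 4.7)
The plan is to iterate Theorem~\ref{thm:Flipping:cocpt} along the canonical decomposition of $X$ into irreducible factors. Since $X$ is locally compact with cocompact automorphism group, it is finite-dimensional, and hence by Proposition~\ref{prop:deRham} admits a decomposition $X = Z_1 \times \dots \times Z_q$ into finitely many irreducible subcomplexes, preserved by some finite-index subgroup $\Aut(X)_0 \leq \Aut(X)$. Setting $G' = G \cap \Aut(X)_0$ yields a finite-index subgroup of $G$ preserving the decomposition.

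Next, I would argue that the projection $G'_j$ of $G'$ to $\Aut(Z_j)$ acts hereditarily essentially on each factor $Z_j$: any pairwise crossing hyperplanes $\hh_1,\dots,\hh_n$ of $Z_j$ pull back to pairwise crossing hyperplanes of $X$ whose intersection has the form $\prod_{k\neq j} Z_k \times \bigcap_i \hh_i$, and the stabilizer-and-skewering data transfer through the projection. Applying Theorem~\ref{thm:Flipping:cocpt} to $(Z_j, G'_j)$, since $Z_j$ is irreducible the output product decomposition must be trivial: either every half-space of $Z_j$ is $G'_j$-flippable, or $Z_j$ itself is irreducible with all hyperplanes compact. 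Grouping the latter factors as $X_1, \dots, X_p$ and the former as the factors of $Y$ gives the required product decomposition preserved by $G'$.

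Every half-space of $Y$ is pulled back from one of its irreducible factors and is therefore $G'$-flippable by construction. To show that $G'$ has no fixed point in $\bd Y$, I would argue contrapositively: suppose $\xi \in \bd Y$ is $G'$-fixed. Reducing to a $G'$-fixed point at infinity in one of the irreducible factors of $Y$ (using the spherical join structure of $\bd Y$ and the essentiality of each factor's action), I may assume $\xi \in \bd Z_j$ for a single $Y$-factor $Z_j$. Pick a geodesic ray $\rho$ from a basepoint to $\xi$; by essentiality and Lemma~\ref{lem:dim}, $\rho$ crosses infinitely many hyperplanes, and one can choose one such hyperplane $\hh$ with $\xi \notin \bd \hh$, so that $d(\rho(t), \hh) \to \infty$. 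Flipping the corresponding half-space $\h$ by some $g \in G'_j$ gives $g.\h^* \subsetneq \h$; since $g^{-1}$ fixes $\xi$, the ray $g^{-1}.\rho$ also converges to $\xi$ and hence stays within bounded Hausdorff distance of $\rho$. But $g^{-1}.\rho(t)$ eventually lies in $g^{-1}.\h^* \subset \h$ while $\rho(t)$ goes arbitrarily deep into $\h^*$, contradicting the bounded distance between them.

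The final assertion follows by invoking Theorem~\ref{thm:Flipping:cocpt}(v) on each factor $Z_j$ for which $Z_j$ becomes some $X_i$: the hypotheses that $G$ be closed, unimodular, and cocompact descend to the projection $G'_j$ by standard arguments on cocompact closed subgroups of products of locally compact groups. The main obstacle is the fixed-point-at-infinity argument on $Y$, specifically selecting a hyperplane $\hh$ crossed by $\rho$ with $\xi \notin \bd \hh$, and the reduction from a $G'$-fixed point in $\bd Y$ to a fixed point in the boundary of one irreducible factor; both steps rely on essentiality and finite-dimensionality but require careful handling of the spherical join geometry of $\bd Y$.
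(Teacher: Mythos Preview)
Your approach is correct but takes a different route from the paper. The paper's proof is a three-line induction on dimension: if every half-space of $X$ is $G$-flippable, take $Y = X$ and $p=0$; otherwise Theorem~\ref{thm:Flipping:cocpt} splits $X = X_1 \times X'$ and one recurses on $X'$. You instead invoke Proposition~\ref{prop:deRham} upfront to decompose $X$ into irreducibles, then apply Theorem~\ref{thm:Flipping:cocpt} to each factor to sort it into the $X_i$ list or into $Y$. Both arguments require checking that hereditary essentiality passes to factors, which the paper hides in the phrase ``induction on dimension'' and you sketch explicitly. Your contrapositive argument for ``no fixed point in $\bd Y$'' is actually more careful than the paper's bare assertion; the worry you flag about selecting $\hh$ with $\xi \notin \bd\hh$ is resolved by noting that if $\xi \in \bd\hh$ for every hyperplane crossed by a ray $\rho$ to $\xi$, then $\rho$ stays within bounded distance of each such $\hh$, which contradicts the existence of arbitrarily long pencils among them (Lemma~\ref{lem:dim}).

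There is one genuine gap in your final paragraph. The projection $G'_j$ of a closed unimodular cocompact subgroup $G' \leq \prod_k \Aut(Z_k)$ to a single factor need not be closed (think of an irreducible lattice in a product), so you cannot invoke Theorem~\ref{thm:Flipping:cocpt}(v) directly for $(Z_j, G'_j)$. The fix is to lift the $G'_j$-unflippable half-space of $Z_j$ to a $G'$-unflippable half-space of $X$ and apply Theorem~\ref{thm:Flipping:cocpt} to $(X, G')$ instead: since $G'$ has finite index in $G$ it remains closed, unimodular and cocompact, and irreducibility of $Z_j$ forces the resulting $\RR$-like factor to coincide with $Z_j$. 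This is essentially how the paper's inductive approach avoids the issue, by always working on $X$ rather than on a projected factor.
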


\begin{proof}
If every half-space of $X$ is $G$-flippable, then $G$ cannot fix any point at infinity. We can take thus take $Y =X$ and we are done in this case. 

If there is some $G$-unflippable half-space $\h$, then Theorem~\ref{thm:Flipping:cocpt} yields a splitting $X = X_1 \times X'$. The desired result then follows by considering the cube complex $X'$ and using induction on dimension. 
\end{proof}

\section{An irreducibility criterion}\label{sec:StronglySeparated}

{
\subsection{Strongly seperated hyperplanes}
The goal of this section is to establish the final ingredient needed for the proof of the Rank Rigidity Theorem. Recall that two hyperplanes $\hh_1$ and $\hh_2$ in a \cat cube complex  are called \textbf{strongly seperated} if no hyperplane $\hk$ has a non-empty intersection with both $\hh_1$ and $\hh_2$. In particular  $\hh_1$ and $\hh_2$ must be disjoint. 

\begin{prop}\label{prop:IrredCriterion}
Let $X$ be a finite-dimensional unbounded  \cat cube complex  {such that  $\Aut(X)$ acts essentially without a fixed point at infinity. } Then the following conditions are equivalent. 
\begin{enumerate}[(i)]
\item $X$ is irreducible.

\item There is a pair of strongly separated hyperplanes. 

\item For each half-space $\h$ there is a pair of half-spaces $\h_1, \h_2$ such that $\h_1 \subset \h \subset \h_2$ and the hyperplanes $\hh_1 $ and $\hh_2$ are  strongly separated. 
\end{enumerate}
\end{prop}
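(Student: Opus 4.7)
The implication (iii)$\Rightarrow$(ii) is immediate: since $X$ is unbounded it has at least one half-space, to which (iii) applies. For (ii)$\Rightarrow$(i), I argue by contraposition. If $X=X_1\times X_2$ is a non-trivial product, Lemma~\ref{lem:Prod} partitions $\hH(X)=\hH_1\sqcup\hH_2$ with every element of $\hH_1$ crossing every element of $\hH_2$. Then any two hyperplanes either cross (different classes) or share a common transverse hyperplane from the other class (same class), so no strongly separated pair exists.

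The substantive direction is (i)$\Rightarrow$(iii), which I would split in two steps. \textbf{Step A} establishes the apparently weaker conclusion (ii). The plan is contrapositive: assuming no strongly separated pair exists, I aim to produce a non-trivial transverse partition $\hH(X)=\hH_1\sqcup\hH_2$, contradicting irreducibility via Lemma~\ref{lem:Prod}. By essentiality and Proposition~\ref{prop:essential}, pick $g\in\Aut(X)$ skewering some hyperplane $\hh$ with $g\h\subsetneq\h$; the translates $\{g^i\hh\}_{i\in\ZZ}$ form a bi-infinite chain of pairwise disjoint hyperplanes. Applying the hypothesis to each pair $(g^n\hh,g^{-n}\hh)$ yields a hyperplane $\hk_n$ crossing both; by the intermediate nesting, $\hk_n$ in fact crosses every $g^i\hh$ with $|i|\le n$. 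The target is then to show that the set $\hH_2$ of hyperplanes crossing every $g^i\hh$ is non-empty and that every hyperplane outside $\hH_2$ is disjoint from every member of $\hH_2$, yielding the required partition.

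\textbf{Step B} promotes a single strongly separated pair to such a pair around any prescribed half-space. I would first establish a \emph{widening property}: if $(\hh_1,\hh_2)$ is strongly separated with half-spaces $\h_1\subsetneq\h_2$, and $\h_1',\h_2'$ are half-spaces satisfying $\h_1'\subsetneq\h_1$ and $\h_2'\supsetneq\h_2$, then $(\hh_1',\hh_2')$ is also strongly separated. This follows because any hyperplane $\hk$ crossing both $\hh_1'$ and $\hh_2'$ must, by the chain $\h_1'\subsetneq\h_1\subsetneq\h_2\subsetneq\h_2'$, have points on both sides of each of $\hh_1$ and $\hh_2$, contradicting the strong separation of the original pair. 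Given an arbitrary half-space $\h$, the desired pair around $\h$ is then obtained by finding $\gamma\in\Aut(X)$ with $\gamma\h_1\subsetneq\h\subsetneq\gamma\h_2$; such a $\gamma$ is produced by combining the Double Skewering Lemma applied to $(\h_1,\h_2)$ with essentiality applied to $\hh$, so as to position a conjugate of $(\hh_1,\hh_2)$ straddling $\h$.

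The principal obstacle is the structural analysis in Step A, specifically the control of hyperplanes that cross only a proper finite portion of the chain $\{g^i\hh\}$. I would handle this by iterating the absence of strongly separated pairs on translated sub-chains under the action of $\langle g\rangle$, using Lemma~\ref{lem:dim} to convert crossing data into pencils of nested hyperplanes, and invoking the no-fixed-point-at-infinity hypothesis to exclude asymptotic configurations in which a hyperplane would accumulate toward an $\Aut(X)$-invariant boundary point.
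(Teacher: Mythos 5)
The implications (iii)$\Rightarrow$(ii) and (ii)$\Rightarrow$(i) are fine, and your widening observation in Step~B is correct and useful (it is implicit in the paper's use of strong separation). But the proposal does not prove the hard direction. The core of Step~A is precisely the part you defer as ``the principal obstacle,'' and moreover the target you set for it is mis-stated in a way that cannot be repaired by merely filling in details. Lemma~\ref{lem:Prod} requires every hyperplane of one class of the partition to \emph{cross} every hyperplane of the other; you ask instead that every hyperplane outside $\hH_2$ be \emph{disjoint} from every member of $\hH_2$, which is the opposite condition and is in fact self-contradictory: $\hh \notin \hH_2$ (no hyperplane crosses itself), yet by definition every member of $\hH_2$ crosses $\hh$. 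Even after replacing ``disjoint'' by ``crosses,'' the partition $\hH_2$ versus its complement is not the right candidate, since two hyperplanes lying in the same factor of a product may perfectly well cross each other, so hyperplanes destined for the ``$\hh$-factor'' can also cross every $g^i\hh$. Getting a genuine transverse partition out of the absence of strongly separated pairs requires the kind of saturation the paper performs inside Lemma~\ref{lem:HypsInSector} (the sets $\hH'$, $\hat{\mathcal K}'$ and the remainder $\mathcal R$), and that argument in turn leans on Lemma~\ref{lem:HypsInSector:aux}, i.e.\ on producing hyperplanes inside opposite sectors of a crossing pair --- an idea absent from your sketch.

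The paper also takes a structurally different and, in the end, simpler route for (i)$\Rightarrow$(iii): it does not first prove (ii) and then upgrade. Assuming (iii) fails for a given $\h$, it uses the Double Skewering Lemma together with Lemmas~\ref{lem:HypsInSector}, \ref{lem:HypsInSector:aux} and~\ref{lem:HypsInSector:aux:bis} to build triples $(\h_n,\h'_n,\k_n)$ with $\h_n \subsetneq \h_{n-1} \subsetneq \h \subsetneq \h'_{n-1} \subsetneq \h'_n$, where $\hk_n$ crosses $\hh_{n-1}$ and $\hh'_{n-1}$ but $\hk_m$ separates $\hh_m$ from $\hh'_m$; this forces the $\hk_n$ to be pairwise distinct and pairwise crossing, contradicting finite-dimensionality via Lemma~\ref{lem:dim}. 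If you insist on your two-step route, note also that Step~B's positioning claim --- finding $\gamma$ with $\gamma\h_1 \subsetneq \h \subsetneq \gamma\h_2$ --- is not automatic: you must analyse where $\hh$ sits relative to the $\la\gamma\ra$-orbit of the strongly separated chain (it can cross at most one translate, since two translates are themselves strongly separated, and one must rule out $\hh$ remaining in a bounded slab of the chain by bringing in the skewering element of $\hh$ itself). As it stands, both halves of the substantive direction are asserted rather than proved.
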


Notice that a general  irreducibility criterion in terms of hyperplanes is already available from Lemma~\ref{lem:Prod}. The power of Proposition~\ref{prop:IrredCriterion} is that irreducibility can be detected on a single pair of hyperplanes, rather than on a global property of the collection of all hyperplanes. 

The proof of Proposition~\ref{prop:IrredCriterion} is postponed to Section~\ref{sec:ProofIrred} below. We first need to assemble a few subsidiary lemmas. 
}


\subsection{Finding hyperplanes in cubical sectors}

Let $\hh_1$ and $\hh_2$ be a pair of intersecting hyperplanes. The complementary components of $\hh_1 \cup\hh_2$ will be called \textbf{sectors}. Note that $\hh_2$ meets $\hh_1$ in a hyperplane of $\hh_1$, so that $\hh_1$ is divided into two \textbf{half-hyperplanes}, $\hh_1\cap\h_2$ and $\hh_1\cap\h_2^*$. Similarly, $\hh_1$ subdivides $\hh_2$ into two half-hyperplanes. Thus a sector of $\hh_1$ and $\hh_2$ is bounded by two half-hyperplanes. Two sectors determined by $\hh_1$ and $\hh_2$ are called \textbf{opposite} if they do not share a common half-hyperplane.

%
%
Our main technical lemma is the following criterion ensuring the existence of hyperplanes contained in opposite sectors determined by a pair of intersecting hyperplanes.

\begin{lem}\label{lem:HypsInSector}
Let $X$ be a finite-dimensional \cat cube complex  {such that  $\Aut(X)$ acts essentially without  a fixed point at infinity. } Suppose that $X$ is irreducible.  

Then for any pair of crossing hyperplanes $\hh$ and $\hk$, there exist two disjoint hyperplanes that are respectively contained in two opposite sectors determined by  $\hh$ and $\hk$.
\end{lem}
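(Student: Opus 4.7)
The plan is to argue by contradiction, combining the Flipping Lemma (Theorem~\ref{thm:Flipping}) with the irreducibility of $X$ via Lemma~\ref{lem:Prod}.

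Suppose for contradiction that no two disjoint hyperplanes of $X$ lie in opposite sectors of the crossing pair $(\hh,\hk)$. The four sectors $S_{++}=\h\cap\k$, $S_{+-}=\h\cap\k^*$, $S_{-+}=\h^*\cap\k$ and $S_{--}=\h^*\cap\k^*$ group into the two opposite pairs $\{S_{++},S_{--}\}$ and $\{S_{+-},S_{-+}\}$. Hyperplanes of $X$ sitting in opposite sectors are automatically disjoint, so the failure of the lemma amounts to: in each of the two opposite pairs, at least one sector contains no hyperplane of $X$ wholly inside. Up to the $\ZZ/2\times\ZZ/2$ symmetry obtained by swapping $\h\leftrightarrow\h^*$ and/or $\k\leftrightarrow\k^*$, one may reduce to the case where $S_{++}$ and $S_{+-}$ are both empty; equivalently, every hyperplane of $X$ wholly contained in $\h$ must cross $\hk$.

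Next I would extract hyperplanes in prescribed half-spaces from the Flipping Lemma. Since $\Aut(X)$ acts essentially without a fixed point at infinity on $X$, Theorem~\ref{thm:Flipping} produces: an element $g\in\Aut(X)$ with $\h^*\subsetneq g\h$, hence $g\hh\subset\h$, which by the reduction must cross $\hk$; an element $g_\k$ with $g_\k \hk\subset\k$, which cannot lie in the empty sector $S_{++}$, so $g_\k\hk$ either lies in $S_{-+}$ or crosses $\hh$; and symmetrically an element $g_{\k^*}$ with $g_{\k^*}\hk\subset\k^*$, lying in $S_{--}$ or crossing $\hh$. Note also that by iterating Flipping, or by applying $g^{-1}$, one obtains further translates of $\hh$ wholly inside $\h$, each of which must cross $\hk$.

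The main obstacle will be turning this local structural information into a global contradiction with irreducibility. The plan is to leverage the constraint ``every hyperplane of $X$ wholly contained in $\h$ crosses $\hk$'' together with its consequences under the Flipping applications above, to exhibit a non-trivial partition $\hH(X)=\hH_1\sqcup\hH_2$ with every hyperplane of $\hH_1$ crossing every hyperplane of $\hH_2$; then Lemma~\ref{lem:Prod} produces a non-trivial product decomposition of $X$, contradicting irreducibility. The natural candidate places $\hk$ together with its translates wholly in $\h$ (all crossing $\hk$ by our reduction) in one class, and $\hh$ together with the remaining hyperplanes in the other; the transversality between the two classes is forced by iterating Flipping, using $g$ to translate hyperplanes into $\h$ where the emptiness of $S_{++}$ and $S_{+-}$ constrains them to cross the chosen representatives. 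Verifying that this partition is exhaustive and genuinely transverse is the combinatorial heart of the argument and is the step I expect to be the most delicate.
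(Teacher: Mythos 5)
Your setup is fine: the negation of the conclusion does reduce, up to the symmetries you describe, to ``no hyperplane of $X$ is wholly contained in $\h\cap\k$ or in $\h\cap\k^*$'', and aiming at a contradiction with irreducibility via Lemma~\ref{lem:Prod} is indeed one ingredient of the actual proof. But the proposal stops exactly where the proof has to begin. The partition you propose (``$\hk$ together with its translates wholly in $\h$'' versus ``$\hh$ and the rest'') is not transverse and cannot be repaired as stated: by essentiality there is a translate $g'\hk$ properly contained in $\k$, which is disjoint from $\hk$ and hence must lie in the same class as $\hk$, yet nothing forces it to cross $\hh$ or the other members of its class. More fundamentally, the hypothesis you have reduced to is one-sided --- it says nothing about hyperplanes contained in $\h^*$, and $S_{--}$ or $S_{-+}$ may well contain hyperplanes --- so no partition built only from that hypothesis will be seen to be transverse. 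You acknowledge this yourself by deferring ``the combinatorial heart of the argument''; that deferred step is the entire content of the lemma.

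The paper's route shows what is missing. It first proves an auxiliary statement (Lemma~\ref{lem:HypsInSector:aux}): if even \emph{one} sector of $\hh,\hk$ contains a hyperplane, then two opposite sectors contain disjoint hyperplanes. Its proof is not a partition argument at all but a dynamical one: two applications of the Double Skewering Lemma produce an element $\gamma$ skewering both $\hh$ and the given hyperplane while pushing $\hk$ deep inside a sector, and examining where $\gamma^{-1}$ sends that hyperplane forces a hyperplane into an opposite sector. Only after this tool is available does the paper split into cases on the sets $\hH$ (hyperplanes equal to or disjoint from $\hh$) and $\hat{\mathcal K}$ (likewise for $\hk$); the product decomposition via Lemma~\ref{lem:Prod} is reached only in the extreme sub-case where \emph{every} element of $\hH$ crosses \emph{every} element of $\hat{\mathcal K}$, and even there one needs interpolation claims (each hyperplane disjoint from some member of $\hH$ is sandwiched between two members of $\hH$) to verify transversality. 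Your sketch contains no analogue of the auxiliary lemma's mechanism for moving hyperplanes between sectors, and without it the intermediate cases cannot be funneled into a reducibility contradiction. So the proposal has a genuine gap rather than being a complete alternative proof.
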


Before undertaking the proof of Lemma~\ref{lem:HypsInSector}, we first need to establish a number of  auxiliary facts.


\begin{lem}\label{lem:HypsInSector:aux}
Let $X$ be a finite-dimensional \cat cube complex  {such that  $\Aut(X)$ acts essentially without a fixed point at infinity. }

Let $\hh, \hk$ be a pair of crossing hyperplanes. If one of the four sectors determined by  $\hh, \hk$ contains a hyperplane, then there exist two disjoint hyperplanes respectively contained in opposite sectors determined by  $\hh$ and $ \hk$.
\end{lem}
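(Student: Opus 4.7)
The plan is to establish the result by producing a hyperplane in the sector opposite to the one containing $\hh_0$. Assume without loss of generality that $\hh_0$ lies in $\h \cap \k$, so the half-space $\h_0$ bounded by $\hh_0$ satisfies $\h_0 \subset \h \cap \k$. The key reduction is to find a single element $\gamma \in \Aut(X)$ satisfying both $\gamma \h \subsetneq \h_0$ and $\gamma \k \subsetneq \h_0$. Taking inverses, this yields $\gamma^{-1}\h_0 \supsetneq \h \cup \k$ and hence $\gamma^{-1}\h_0^* \subsetneq \h^* \cap \k^*$, which places the hyperplane $\gamma^{-1}\hh_0$ inside the opposite sector $\h^* \cap \k^*$. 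The pair $\{\hh_0, \gamma^{-1}\hh_0\}$ is then the required pair of disjoint hyperplanes in opposite sectors, since they lie on opposite sides of $\hh$.

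For the construction of $\gamma$, I would invoke the Double Skewering Lemma, which applies because $\Aut(X)$ acts essentially without a fixed point at infinity. Applied to the nested pairs $\h_0 \subsetneq \h$ and $\h_0 \subsetneq \k$, it produces $\alpha, \beta \in \Aut(X)$ with $\alpha \h \subsetneq \h_0$ and $\beta \k \subsetneq \h_0$. Applying $\alpha$ to $\h_0 \subsetneq \h$ gives $\alpha \h_0 \subsetneq \alpha \h \subsetneq \h_0$, and likewise $\beta \h_0 \subsetneq \h_0$, so in particular $\alpha^N \h_0 \subsetneq \h_0$ for all $N \geq 1$. The natural candidate is $\gamma := \alpha^N \beta$ for $N$ sufficiently large: the inclusion $\gamma \k = \alpha^N(\beta \k) \subsetneq \alpha^N \h_0 \subsetneq \h_0$ is automatic, and the inclusion $\gamma \h \subsetneq \h_0$ should follow once $\alpha^N$ pushes the hyperplane $\beta \hh$ into $\h_0$, which is expected because $\alpha$ is hyperbolic with attracting direction inside $\h_0$.

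The hard part will be justifying that $\alpha^N$ genuinely sends $\beta \hh$ into $\h_0$ for large $N$. A direct appeal to North--South dynamics on $\bd X$ is not yet available at this stage of the paper, so the argument should proceed combinatorially from the hyperplane structure, using finite-dimensionality (via Lemma~\ref{lem:dim}) to force chains of nested half-spaces. The delicate case is when $\beta \hh$ happens to be $\alpha$-periodic; here one would fall back on a case analysis based on the positions of $\alpha \hk$ and $\beta \hh$ relative to $\hh_0$. In such scenarios, either a suitable conjugation of $\beta$ restores the desired configuration, or the obstruction itself directly produces disjoint hyperplanes in the other pair of opposite sectors $\{\h \cap \k^*, \h^* \cap \k\}$ through a symmetric argument interchanging the roles of $\alpha$ and $\beta$.
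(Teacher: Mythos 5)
Your overall strategy -- use the Double Skewering Lemma to drag the given hyperplane $\hh_0\subset\h\cap\k$ into an opposite sector -- is the same as the paper's, but the step you yourself flag as ``the hard part'' is exactly where the real content lies, and your sketch of it does not work as stated. The claim that $\alpha^N$ eventually sends $\beta\hh$ into $\h_0$ is not a consequence of finite-dimensionality: the half-spaces $\alpha^n\h_0$ form a nested chain, so the hyperplanes $\alpha^n\hh_0$ are pairwise \emph{disjoint}, and nothing prevents a single hyperplane from crossing infinitely many pairwise disjoint hyperplanes (Lemma~\ref{lem:dim} only bounds pairwise \emph{crossing} families). The relevant dichotomy is the one from Lemmas~\ref{lem:Parallel} and~\ref{lem:Peripheral}: a hyperbolic isometry either skewers a hyperplane, is peripheral to it (in which case some power moves it off itself), or is \emph{parallel} to it, and in the parallel case no power of $\alpha$ will ever push $\beta\hh$ inside $\h_0$. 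This obstruction is genuine and cannot always be conjugated away, which is why the lemma's conclusion only promises disjoint hyperplanes in \emph{some} pair of opposite sectors: your stronger goal of always producing a hyperplane in $\h^*\cap\k^*$ (the sector opposite the one containing $\hh_0$) is more than the paper proves, and the ``symmetric argument'' you invoke as a fallback is left entirely unspecified, with no indication of what configuration the obstruction actually yields.

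For comparison, the paper circumvents the parallelism problem by applying the trichotomy to $\hk$ rather than to an image of $\hh$: it first finds $g_1$ with $g_1\h\subset\k'$ and observes that $g_1$ cannot be parallel to $\hk$ because its axis penetrates arbitrarily deep into $\k'\subset\k$; hence some power satisfies $g_1\hk\subset\k$. A second application of Double Skewering gives $g_2$ with $g_2\k\subset g_1\k'$, and the composition $\gamma=g_2g_1$ then satisfies $\gamma\h\subset\k'$ and $\gamma\hk\subset\k'$ -- but only the \emph{hyperplane} $\hk$, not a prescribed side of it, is controlled. The two resulting cases ($\gamma\k\subset\k'$ versus $\gamma\k^*\subset\k'$) are what produce either a hyperplane in $\h^*\cap\k^*$ or, after rerunning the construction with $\h$ and $\k$ interchanged, a pair of hyperplanes in the sectors $\h^*\cap\k$ and $\h\cap\k^*$. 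To repair your proof you would need to replace the unproved dynamical claim about $\alpha^N\beta\hh$ with an argument of this kind and spell out the fallback case concretely.
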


\begin{center}
\begin{figure}[h]
\includegraphics[height=8cm]{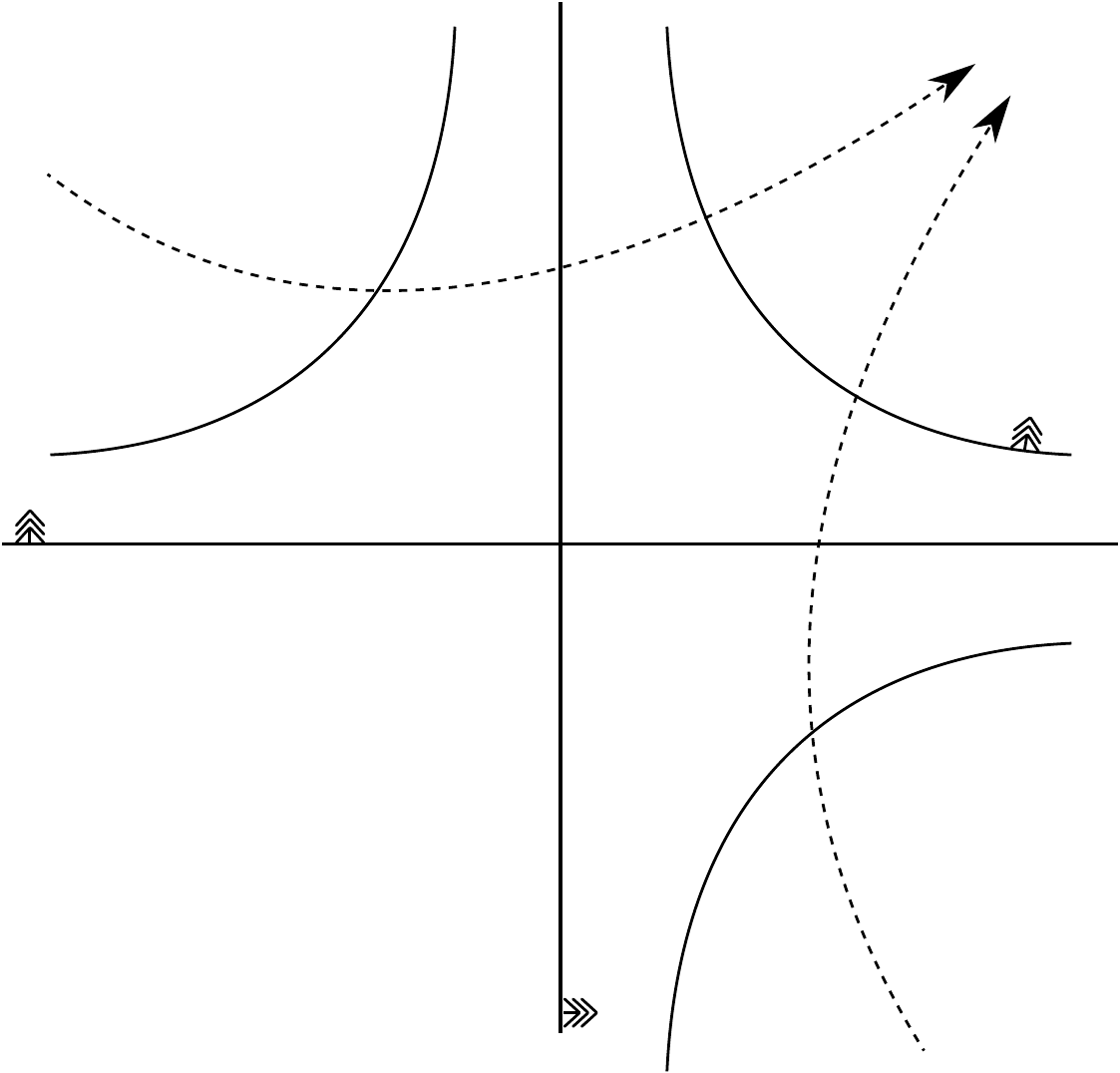}
\put(-233, 123){$\h$}
\put(-107, 10){$\k$}
\put(-20, 143){$\k'$}
\put(-30, 75){$\gamma\inv.\hk'$}
\put(-185, 212){$(\gamma')\inv.\hk'$}
\put(-23, 195){$\gamma$}
\put(-45, 215){$\gamma'$}
\caption{Proof of Lemma~\ref{lem:HypsInSector:aux}}
\label{fig1}
\end{figure}
\end{center}

\begin{proof}
Let $\hk'$ be a hyperplane contained in one of the four sectors  determined by  $\hh, \hk$. We may assume without loss of generality that $\k' \subset \h\cap \k$. Applying the Double Skewering Lemma (see \S\ref{sec:outline}), we find $g_1\in \Aut(X)$ such that $g_1.\h \subset \k'$. In particular $g_1$ skewers both $\hh$ and $\hk'$. Since $\k' \subset \k$, it follows that no $g_1$-axis is contained in a bounded neighbourhood of $\hk$. Thus $g_1$ either skewers $\hk$ or is peripheral to $\hk$.  We deduce from Lemmas~\ref{lem:def:Skewer} and~\ref{lem:Peripheral} that there exists some $n >0$ such that $g_1^n.\hk \subset \k$. Upon replacing $g_1$ by $g_1^n$, we may and shall assume that $g_1.\hk \subset \k$.

Invoking again the Double Skewering Lemma, we now find an element $g_2 \in \Aut(X)$ such that $ g_2.\k \subset g_1.\k'$. Setting $\gamma = g_2 g_1$, we notice that 
$$\gamma.\h = g_2 g_1.\h \subset g_2.\k' \subset g_2.\k \subset g_1.\k' \subset g_1.\h \subset \k',$$
from which it follows that $\gamma$ skewers both $\hh$ and $\hk'$. Moreover we also have 
$$\gamma.\hk = g_2 g_1.\hk \subset g_2.\k \subset g_1.\k' \subset \k',$$
from which it follows that either $\gamma.\k \subset \k'$, or $\gamma.\k^* \subset \k'$.

\medskip
Assume first that  $\gamma.\k \subset \k'$. Thus we have $\h \cup \k \subset \gamma\inv.\k'$. Passing to the complementary half-spaces, this means that $  \h^* \cap \k^* \supset \gamma\inv.(\k')^*$. In particular the hyperplane $\gamma\inv.\hk'$ is contained in the sector $\h^* \cap \k^*$, which is opposite to the sector $\h \cap \k$ containing the hyperplane $\hk$. Thus we are done in this case.

\medskip
Assume now that $\gamma.\k^* \subset \k'$ (see Figure~\ref{fig1}). In that case the hyperplane $\gamma\inv.\hk'$ is contained in the sector $\h^* \cap \k$. We then repeat the above construction with the roles of $\h$ and $\k$ interchanged. This yields an element $\gamma' \in \Aut(X)$ such that $\gamma'.\k \subset \k'$ and either $\gamma.\h \subset \k'$, or $\gamma.\h^* \subset \k'$. In the former case, we deduce as above that $(\gamma')\inv.\hk'$ is contained in the sector $\h^* \cap \k^*$ and we are done. Otherwise $(\gamma')\inv.\hk'$ is contained in the sector $\h \cap \k^*$. This means that we have found two hyperplanes, namely $\gamma\inv.\hk'$ and $(\gamma')\inv.\hk'$, which are respectively contained in the opposite sectors  $\h^* \cap \k$ and $\h \cap \k^*$. 
\end{proof}

We are now ready for the following.

\begin{proof}[Proof of Lemma~\ref{lem:HypsInSector}]
We denote by $\hH$ the set of all hyperplanes which are equal to or disjoint from $\hh$, and by $\hat{\mathcal K}$ the set of hyperplanes which are equal to or disjoint from $\hk$. Thus $\hh \in \hH$ and $\hk \in \hat{\mathcal K}$. Moreover, if $\hH \cap \hat{\mathcal K} \neq \varnothing$, then some hyperplane is disjoint from both $\hh$ and $\hk$, and Lemma~\ref{lem:HypsInSector:aux} then yields the desired conclusion. We assume henceforth that $\hH$ is disjoint from $\hat{\mathcal K}$. We shall consider two cases. 

\medskip
\begin{center}
\begin{figure}[h]
\includegraphics[height=8cm]{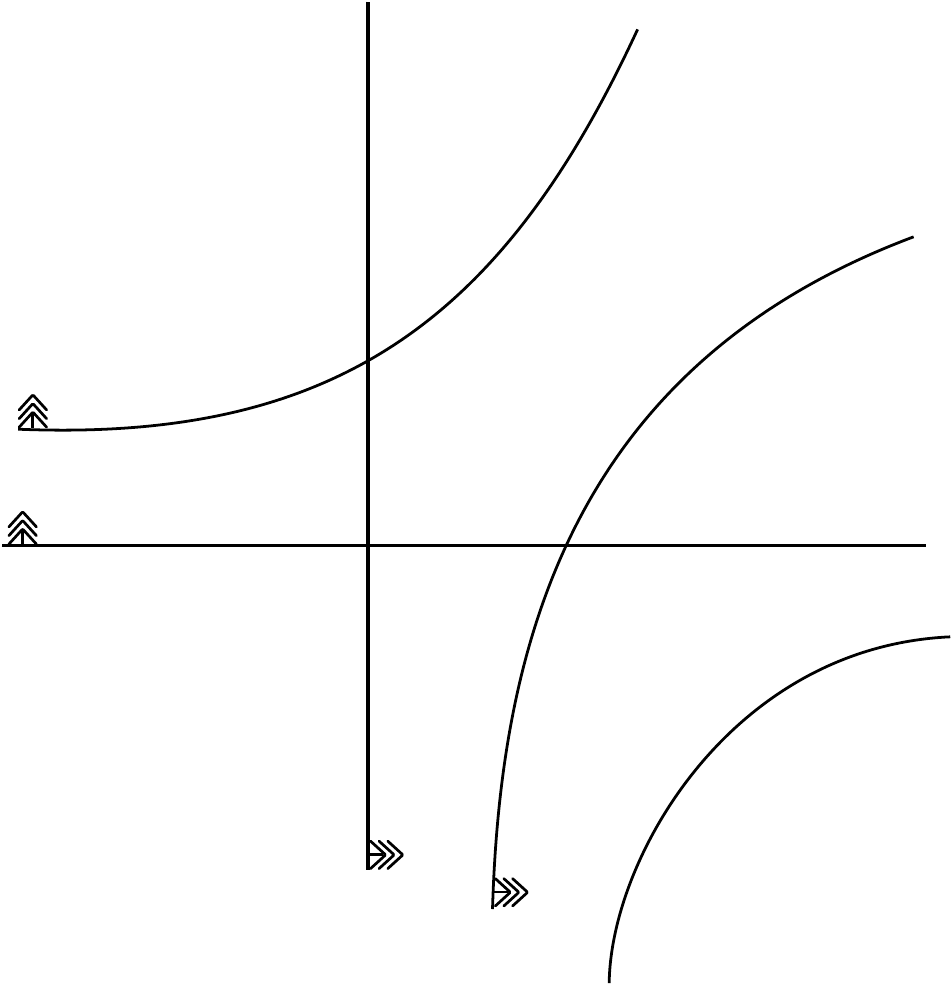}
\put(-218, 113){$\h$}
\put(-215, 140){$\h'$}
\put(-125, 28){$\k$}
\put(-97, 20){$\k'$}
\put(-20, 64){$\hh''$}
\caption{Proof of Lemma~\ref{lem:HypsInSector}}
\label{fig2}
\end{figure}
\end{center}
Assume first that there exist hyperplanes $\hh' \in \hH$ and $\hk' \in \hat{\mathcal K}$ such that $\hh'$ does not cross $\hk'$. There is no loss of generality in assuming $\h' \subset \h$ and $\k' \subset \k$ (see Figure~\ref{fig2}). 
We then apply Lemma~\ref{lem:HypsInSector:aux} to the pair of crossing hyperplanes $\hh, \hk'$ and the hyperplane $\hh'$. This implies in particular that one of the two sectors $\h \cap \k'$ or $\h^* \cap \k'$ contains some hyperplane $\hh''$. Notice that in particular we have $\hh \subset \k$. We then invoke Lemma~\ref{lem:HypsInSector:aux} once more, this time to the crossing pair $\hh, \hk$ and the hyperplane $\hh''$. This provides the required pair of hyperplanes.

\medskip
Assume next that every hyperplane in $ \hH$ meets every hyperplane in $ \hat{\mathcal K}$. We shall show that this implies that $X$ admits a non-trivial product decomposition, which contradicts the hypothesis that $X$ is irreducible. 
To this end, we define $\hH'$ (resp. $\hat{\mathcal K}'$) to be the collection of all those hyperplanes $\hat{\mathfrak a}$ which are disjoint from some hyperplane belonging to $\hH$ (resp. $\hat{\mathcal K}$). Thus we have $\hH \subseteq \hH'$ and $\hat{\mathcal K} \subseteq \hat{\mathcal K}'$. 

We claim that for each half-space $\mathfrak a$ such that $\hat{\mathfrak a} \in \hH'$, there exist two half-spaces $\mathfrak b, \mathfrak b'$ such that $\mathfrak b \subset \mathfrak a \subset \mathfrak b'$ and $\hat{\mathfrak b}, \hat{\mathfrak b}' $ both belong to $\hH$. 

Indeed, since $\hat{\mathfrak a} \in \hH'$, there is some $\hat{\mathfrak c} \in \hH$ which is disjoint from $\hat{\mathfrak a}$. Thus $\hat{\mathfrak c}$ is disjoint from $\hh$, or $\hat{\mathfrak c}= \hh$. If $\hat{\mathfrak a}$ is disjoint from $\hh$, then $\hat{\mathfrak a}$ belongs to $\hH$ and the claim follows using the fact that all hyperplanes are essential. Otherwise $\hat{\mathfrak a}$ crosses $\hh$, and we can apply Lemma~\ref{lem:HypsInSector:aux} to the crossing pair $\hat{\mathfrak a}, \hh$ and the hyperplane $\hat{\mathfrak c}$. This yields a pair of disjoint hyperplanes $\hat{\mathfrak b}$, $\hat{\mathfrak b'}$ which are separated by both $\hat{\mathfrak a}$ and $\hh$. Therefore $\hat{\mathfrak b}$, $\hat{\mathfrak b'}$ both belong to $\hH$ and the claim follows. 

Similar arguments show the corresponding claim for $\hat{\mathcal K}'$, namely: for each half-space $\mathfrak a$ such that $\hat{\mathfrak a} \in \hat{\mathcal K}'$, there exist two half-spaces $\mathfrak b, \mathfrak b'$ such that $\mathfrak b \subset \mathfrak a \subset \mathfrak b'$ and $\hat{\mathfrak b}, \hat{\mathfrak b}' $ both belong to $\hat{\mathcal K}$.

These two claims  imply that every hyperplane in $\hH'$ crosses every hyperplane in $\hat{\mathcal K}'$. Let $\mathcal R = \hH(X) \setminus (\hH' \cup \hat{\mathcal K}')$. Thus every hyperplane in $\mathcal R$ crosses every hyperplane in $\hH \cup  \hat{\mathcal K}$ and, hence, every hyperplane in $\hH' \cup \hat{\mathcal K}'$ by the above two claims. Finally, notice that the partition 
$$ \hH(X) = \hH' \cup \hat{\mathcal K}' \cup \mathcal R$$
is non-trivial since $\hh'' \in \hH'$ and $\hk \in \hat{\mathcal K}' $. Therefore, we can invoke Lemma~\ref{lem:Prod}, which yields the absurd conclusion that $X$ is a product. 
\end{proof}

We shall also need the following  variant of Lemma~\ref{lem:HypsInSector:aux}.

\begin{lem}\label{lem:HypsInSector:aux:bis}
Let $X$ be a finite-dimensional \cat cube complex  {such that  $\Aut(X)$ acts essentially without a fixed point at infinity. }

Let $\h_1 \supsetneq \h_2 \supsetneq \h_3$ be a chain of half-spaces and   $\k$ be a half-space such that $\hk$  crosses $\hh_1$,  $\hh_2$ and $\hh_3$, and the sector $\h_3 \cap \k$ contains some half-space $\k'$. 

Then there exists a pair of disjoint hyperplanes $\hh'$ and $\hh''$ that are separated by $\hk$ and at least two of the three hyperplanes $\hh_1$, $\hh_2$ and $\hh_3$.
\end{lem}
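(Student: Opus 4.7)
My plan is to imitate the proof of Lemma~\ref{lem:HypsInSector:aux}, applied to the crossing pair $(\hh_1, \hk)$ with $\k'$ as the given hyperplane; the new twist will be a second, analogous construction using the middle hyperplane $\hh_2$ in a swapped role, and then to combine the outputs of the two constructions. The key feature I would exploit is that since $\k'$ sits in the deepest sector $\h_3 \cap \k$ of the full chain, a hyperplane produced inside $\h_1^*$ automatically lies in $\h_2^*$ and $\h_3^*$ as well (by the nesting $\h_1^* \subset \h_2^* \subset \h_3^*$), yielding several separations at once; while a hyperplane produced inside $\h_2$ automatically lies in $\h_1$, which will be useful for pairing. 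The main difficulty will be the bad case of the first construction, in which the produced hyperplane ends up on the wrong side of $\hk$.

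First, I would apply the Double Skewering Lemma to $\k' \subsetneq \h_1$ to produce $g \in \Aut(X)$ with $g.\h_1 \subsetneq \k' \subset \h_3 \cap \k$. Since $g.\h_1 \subset \k$ strictly, $g$ is not parallel to $\hk$ (by Lemma~\ref{lem:Parallel}), and after passing to a positive power via Lemmas~\ref{lem:def:Skewer} and~\ref{lem:Peripheral} we may assume $g.\k \subsetneq \k$. A second Double Skewering applied to $g.\k' \subsetneq \k$ yields $h$, and setting $\alpha = hg$, the same computation as in Lemma~\ref{lem:HypsInSector:aux} gives $\alpha.\h_1 \subsetneq \k'$ and $\alpha.\hk \subset \k'$. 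This bifurcates into either $\alpha.\k \subset \k'$ (Case H1), yielding $\alpha\inv.\hk' \subset \h_1^* \cap \k^*$, or $\alpha.\k^* \subset \k'$ (Case H2), yielding $\alpha\inv.\hk' \subset \h_1^* \cap \k$. In Case H1, the disjoint pair $(\hk', \alpha\inv.\hk')$ is separated by all four of $\hk$, $\hh_1$, $\hh_2$, and $\hh_3$ (using $\h_1^* \subset \h_2^* \subset \h_3^*$ together with $\hk' \subset \h_3$), and the lemma is proved.

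In Case H2, the produced hyperplane lies on the wrong side of $\hk$, so I would run a second construction based on $\hh_2$ with the roles of the crossing hyperplane and the transverse hyperplane swapped. Apply Double Skewering to $\k' \subsetneq \k$ to obtain $g'$ with $g'.\k \subsetneq \k' \subset \h_2$; after passing to a power so that $g'.\h_2 \subsetneq \h_2$, apply Double Skewering to $g'.\k' \subsetneq \h_2$ and set $\gamma' = h'g'$. Analogous computations give $\gamma'.\k \subset \k'$ and $\gamma'.\hh_2 \subset \k'$, leading to either $\gamma'.\h_2 \subset \k'$ (Case SA), which gives $(\gamma')\inv.\hk' \subset \h_2^* \cap \k^*$, or $\gamma'.\h_2^* \subset \k'$ (Case SB), which gives $(\gamma')\inv.\hk' \subset \h_2 \cap \k^*$. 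In Case SA, the disjoint pair $(\hk', (\gamma')\inv.\hk')$ is separated by $\hk$, $\hh_2$, and $\hh_3$ (using $\h_2^* \subset \h_3^*$), and we are done. In the remaining combined case H2$+$SB, I would pair $\alpha\inv.\hk'$ with $(\gamma')\inv.\hk'$: they lie on opposite sides of $\hk$, so they are disjoint and separated by $\hk$; moreover, since $\alpha\inv.\hk' \subset \h_1^* \subset \h_2^*$ while $(\gamma')\inv.\hk' \subset \h_2 \subset \h_1$, they are also separated by $\hh_1$ and $\hh_2$. The most delicate technical point will be ensuring that the powers of $g$ and $g'$ can be chosen so that each second Double Skewering application has the required strict inclusion, which is handled exactly as in the proof of Lemma~\ref{lem:HypsInSector:aux}.
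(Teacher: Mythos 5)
Your proof is correct and takes essentially the same route as the paper, which simply invokes Lemma~\ref{lem:HypsInSector:aux} as a black box for the two crossing pairs $(\hh_1,\hk)$ and $(\hk,\hh_2)$ and then performs the identical three-way case analysis (your cases H1, SA, and H2$+$SB); your version merely inlines the double-skewering construction from that lemma's proof. One small imprecision: after passing to powers one can only guarantee that the \emph{hyperplane} satisfies $g\hk\subset\k$ (respectively $g'\hh_2\subset\h_2$), not that $g\k\subsetneq\k$ --- when $g$ is merely peripheral to $\hk$ the translate $g^n\k^*$ may land inside $\k$ instead --- but your subsequent computations only use the weaker containment of the hyperplane (and indeed your retention of Case H2 shows this), so nothing breaks.
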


\begin{center}
\begin{figure}[h]
\includegraphics[height=8cm]{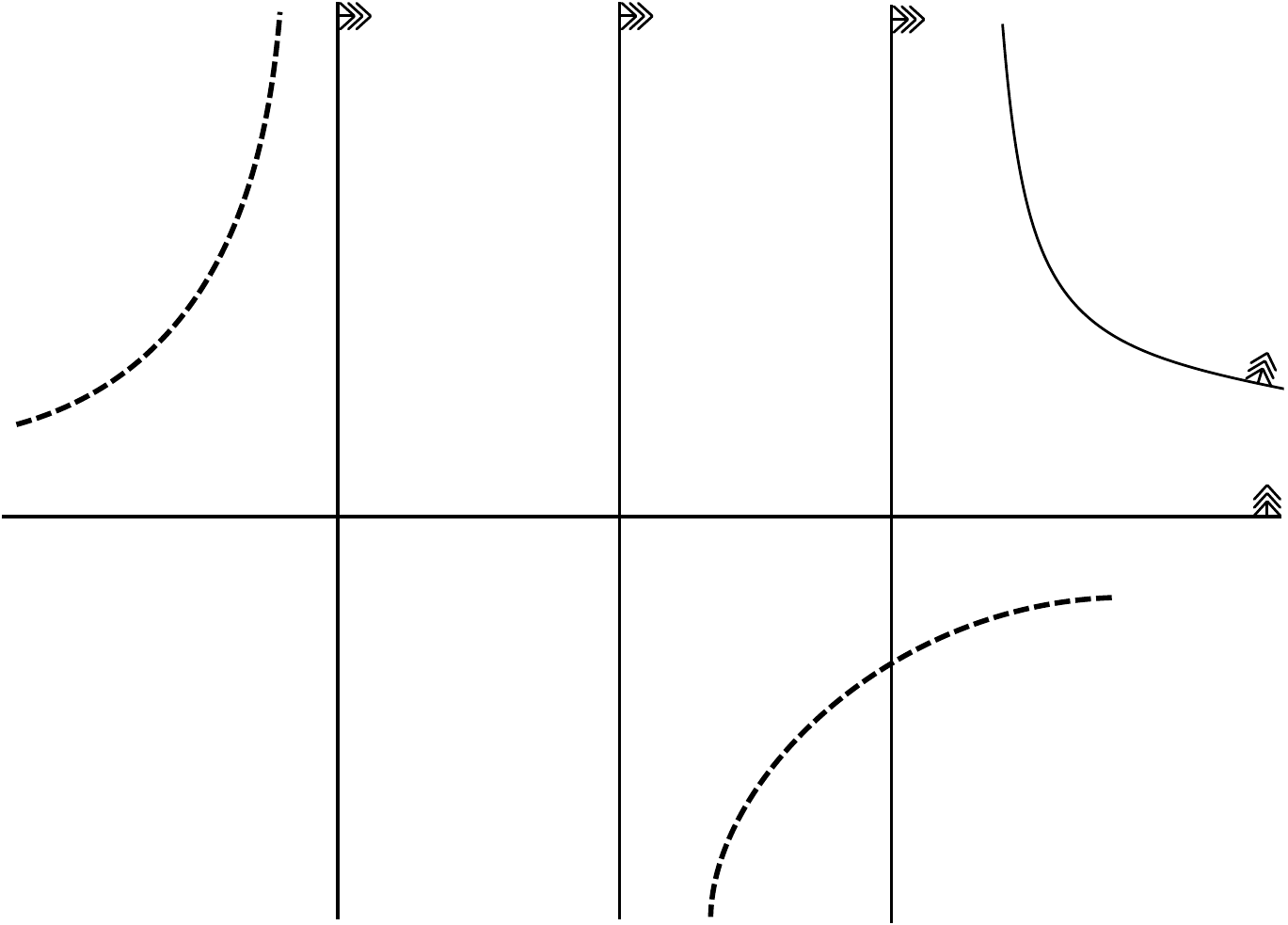}
\put(-300, 134){$\hh$}
\put(-222, 218){$\h_1$}
\put(-153, 218){$\h_2$}
\put(-88, 218){$\h_3$}
\put(-7, 144){$\k'$}
\put(-7, 112){$\k$}
\put(-130, 47){$\hh''$}
\caption{Proof of Lemma~\ref{lem:HypsInSector:aux:bis}}
\label{fig3}
\end{figure}
\end{center}
\begin{proof}
%
We first apply Lemma~\ref{lem:HypsInSector:aux} to the crossing pair $\hh_1, \hk$ and the hyperplane $\hk'$. This provides in particular a hyperplane $\hh$ disjoint from $\hh_1$ and $\hk$, which is either contained in $\h_1^* \cap \k^*$ or in $\h_1^* \cap \k$. 

\medskip
In the former case, we have two disjoint hyperplanes $\hh'= \hk'$ and $\hh' = \hh$ which are separated by each of the elements of $\{\hk, \hh_1, \hh_2, \hh_3\}$. Thus we are done in this case. 

\medskip 
Assume now that $\hh \subset \hh_1^* \cap \k$. We now invoke Lemma~\ref{lem:HypsInSector:aux} again, this time applied to the crossing pair $\hk, \hh_2$. This provides in particular a hyperplane $\hh''$ which is either contained in $\h_2 \cap \k^*$ or in $\h_2^* \cap \k^*$.

In the former case, we have two disjoint hyperplanes $\hh' = \hh$ and $\hh''$ which are separated by each of the elements of $\{\hk, \hh_1, \hh_2\}$, and we are done. 

In the latter case, we have two disjoint hyperplanes $\hh' = \hk'$ and $\hh''$ which are separated by each of the elements of $\{\hk, \hh_2, \hh_3\}$, and we are equally done. 
\end{proof}

{
\subsection{Proof of the Irreducibility Criterion}\label{sec:ProofIrred}

\begin{proof}[Proof of Proposition~\ref{prop:IrredCriterion}]
The implication (ii)$\Rightarrow$(i) follows easily from Lemma~\ref{lem:Prod}, while (iii)$\Rightarrow$(ii) is obvious. We need to show that (i)$\Rightarrow$(iii). 

Assume thus that $X$ is irreducible and let $\h$ be any half-space. Suppose for a contradiction that (iii) fails. Thus for each pair of half-spaces $\h', \h''$ such that $\h' \subsetneq \h \subsetneq \h''$, there is some hyperplane $\hk$ crossing both $\hh'$ and $\hh''$. 

Since $\Gamma$ acts essentially, there is some $\gamma \in \Gamma$ which skewers $\hh$ (see Proposition~\ref{prop:essential}). Upon replacing $\gamma$ by an appropriate power, we have $\gamma\inv\h \subsetneq \h \subsetneq \gamma\h$. Set $\h_0 = \gamma\inv\h $ and $\h'_0 = \gamma\h$. 

We shall now construct inductively an infinite sequence $(\h_n, \h'_n, \k_n)_{n>0}$ of triples of half-spaces which, together with $ \h_0 $ and $\h'_0$, satisfies the following conditions for all $n > 0$:  
\begin{enumerate}[(a)]
\item $\hk_{n}$ crosses $\hh_{n-1}$ and $  \hh'_{n-1}$.

\item $ \hk_n$ separates $\hh_n$ from $ \hh'_{n}$.

\item $\h_n \subsetneq \h_{n-1}  \subsetneq \h \subsetneq  \h'_{n-1} \subsetneq \h'_{n}$.
\end{enumerate}

For all $n>0$, we now describe an  inductive construction of the  triple $(\h_n, \h'_n, \k_n)$.  We apply the Double Skewering Lemma to the pair $\h_{n-1}  \subsetneq   \h'_{n-1}$. This yields an element $\gamma \in \Gamma$ such that $\h'_{n-1} \subsetneq   \gamma \h_{n-1} \subsetneq \gamma \h'_{n-1}$. Since the pair $\h_{n-1}, \gamma \h'_{n-1}$ cannot be strongly separated by assumption, there is some hyperplane $\hk$ crossing both  $\h_{n-1}$ and $ \gamma \h'_{n-1}$. By Lemma~\ref{lem:HypsInSector}, one of the sectors $\k \cap  \h_{n-1}$ or $\k^* \cap \h_{n-1} $ must contain properly some half-space. Upon replacing $\k$ by $\k^*$, we are thus in a position to invoke  Lemma~\ref{lem:HypsInSector:aux:bis} to the chain $\h_{n-1}  \subsetneq   \h'_{n-1}\subsetneq \gamma \h'_{n-1}$ and the half-space $\k$. This yields two hyperplanes $\hh'$ and $\hh''$ which are separated by $\hk$ and at least two of the three hyperplanes $\hh_{n-1}$, $    \hh'_{n-1}$ and $\gamma\hh'_{n-1}$. 

If $\hh'$ and $\hh''$ are separated by $\hh_{n-1}$ and $    \hh'_{n-1}$, then we define $\h_n$ as the half-space bounded by $\hh'$ and contained in $\h_{n-1}$, and we define $\h'_{n}$ as the half-space bounded by $\hh''$ and containing $\h'_{n-1}$. We also set $\k_n = \k$ in this case. 

If  $\hh'$ and $\hh''$ are separated by $\hh'_{n-1}$ and $ \gamma \hh'_{n-1}$, then the hyperplanes $\gamma\inv \hh'$ and $\gamma\inv \hh''$ are separated by $\gamma\inv \hh'_{n-1}$, $\hh_{n-1}$ and $\hh'_{n-1}$. We then define $\h_n$ as the half-space bounded by $\gamma\inv\hh'$ and contained in $\h_{n-1}$, and we define $\h'_{n}$ as the half-space bounded by $\gamma\inv \hh''$ and containing $\h'_{n-1}$. We also set $\k_n = \gamma\inv \k$ in this case. 

In either case, the hyperplane $\hk_n$ crosses both  $\h_{n-1}$ and $  \h'_{n-1}$. Moreover $\hk_n$ separates $\hh_n$ from $\hh'_n$ and we have $\h_n \subsetneq \h_{n-1}  \subsetneq \h \subsetneq  \h'_{n-1} \subsetneq \h'_{n}$, as desired. 

This inductive construction yields an infinite sequence of  triples $(\h_n, \h'_n, \k_n)$ satisfying the conditions (a), (b), (c). 

Notice that $\hk_n$ crosses both $\hh_{n-1}$ and $\hh'_{n-1}$, it must in fact cross  $\hh_{m}$ and $\hh'_{m}$ for all $m<n$ by (c). In particular we have $\hk_n \neq \hk_m$ for all $m<n$ by (b). 
Moreover (b) also implies that $\hk_n$ must cross $\hk_m$ for all $m<n$. It follows that the hyperplanes $\hk_1, \hk_2, \dots$ are pairwise distinct and pairwise crossing. 
This contradicts the fact that $X$ is finite-dimensional (see Lemma~\ref{lem:dim}). 
\end{proof}
}

\section{Rank rigidity}\label{sec:RkOne}

The goal of this section is to provide a proof of Theorem~\ref{thmintro:RankRigidity} from the introduction. 

{
\subsection{Strongly separated hyperplanes and contracting isometries}

As mentioned in the introduction, the last missing piece in the proof of Rank Rigidity for \cat cube complexes is that a hyperbolic isometry which double skewers a pair of strongly separated hyperplanes must necessarily be contracting. This contracting behaviour will be deduced from the following key lemma, which readily implies that such an isometry must be rank one.

\begin{lem}\label{lem:KeyContracting}
Let $X$ be a finite-dimensional \cat cube complex, let $\h$ be a half-space and $\gamma \in \Aut(X)$ be a hyperbolic isometry with axis $\ell$ such that $\gamma \h \subsetneq \h$. Assume that the hyperplanes $\hh$ and $\gamma \hh$ are strongly separated. 

Then there is a constant $C$, depending only on $\gamma$, such that each geodesic segment crossing at least three walls in the orbit $\la \gamma \ra \hh$ has a non-empty intersection with the $C$-neighbourhood of $\ell$. 
\end{lem}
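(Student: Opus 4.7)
The plan is to exploit the ``bottleneck'' structure that strong separation imposes on geodesics traversing the $\gamma$-orbit of the hyperplane $\hh$.

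First, I would establish that every pair of distinct hyperplanes in the orbit $\hh_n := \gamma^n \hh$ ($n \in \ZZ$) is strongly separated. For consecutive indices this follows immediately from $\gamma$-equivariance applied to the hypothesis. For non-consecutive pairs $\hh_i,\hh_j$ with $i<j$, a hyperplane $\hk$ crossing both would, by convexity of hyperplanes, also have to cross each intermediate $\hh_k$ --- since the chain $\h_i \supsetneq \h_{i+1} \supsetneq \cdots \supsetneq \h_j$ separates $\hh_i$ from $\hh_j$ --- contradicting strong separation of the consecutive pair $\hh_k,\hh_{k+1}$. Thus the orbit forms a chain of pairwise strongly separated disjoint hyperplanes.

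Next, I would invoke the bridge structure for disjoint strongly separated hyperplanes in a \cat cube complex. The standard fact (following from the product decomposition of the convex hull of a pair of disjoint convex subcomplexes) is that strong separation of $\hh_n,\hh_{n+1}$ forces the nearest-point projection $\pi_{\hh_{n+1}}(\hh_n)$ to be a single vertex $c_n \in \hh_{n+1}$, and symmetrically $\pi_{\hh_n}(\hh_{n+1}) = \{b_n\} \subset \hh_n$, with $[b_n, c_n]$ the unique realiser of $d(\hh_n,\hh_{n+1})$. By $\gamma$-equivariance, $\gamma b_n = b_{n+1}$ and $\gamma c_n = c_{n+1}$, so the sequences $(b_n)$ and $(c_n)$ are single $\gamma$-orbits, hence lie at constant \cat distance $r_0 := d(b_0,\ell)$ from the axis $\ell$; the constant $r_0$ depends only on $\gamma$ and on the initial half-space $\h$.

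Finally, let $\alpha$ be a geodesic segment crossing at least three walls in $\langle\gamma\rangle \hh$. Because the $\hh_n$ form a chain, $\alpha$ in fact crosses an entire consecutive block; after applying a suitable power of $\gamma$, we may assume $\alpha$ meets $\hh_0,\hh_1,\hh_2$ at points $q_0,q_1,q_2$ in that order. The goal is to show that $q_1$ lies in the $C$-neighbourhood of $\ell$ for some $C$ depending only on $\gamma$. Strong separation, used on both sides of $\hh_1$, implies that the \cat projections satisfy $\pi_{\hh_1}(q_0) = c_0$ and $\pi_{\hh_1}(q_2) = b_1$, while $\pi_{\hh_1}(q_1) = q_1$. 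Combined with the 1-Lipschitz property of $\pi_{\hh_1}$ applied to the subgeodesic $[q_0,q_2] \subset \alpha$, together with a further bridge-lemma argument confined to the two slabs $\overline{\h_0 \cap \h_1^*}$ and $\overline{\h_1 \cap \h_2^*}$, one obtains that $q_1$ lies within a uniformly bounded distance $D$ of $b_1$, with $D$ depending only on $\gamma$. Setting $C := r_0 + D$ then gives $d(q_1,\ell) \leq d(q_1,b_1) + d(b_1,\ell) \leq C$, so that $q_1$ is a point of $\alpha$ in the $C$-neighbourhood of $\ell$, as required. I expect the principal obstacle to be precisely the last localisation step: while the projection argument instantly funnels the endpoints of $[q_0,q_2]$ onto the bridge vertices $c_0$ and $b_1$, pinning down the actual crossing point $q_1$ near them requires more than 1-Lipschitzness and calls for a careful analysis of the slab geometry specific to strongly separated hyperplanes in \cat cube complexes.
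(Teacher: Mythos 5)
Your first two steps are sound: the pairwise strong separation of the whole orbit $\{\gamma^n\hh\}$ follows as you say (and is what Lemma~\ref{lem:Contracting} needs), and the fact that strong separation collapses the \cat projection of one hyperplane onto the other to a single point is correct (it is a lemma of Behrstock--Charney, though the point need not be a vertex). The problem is that your final step is not a proof. Knowing $\pi_{\hh_1}(q_0)=c_0$, $\pi_{\hh_1}(q_2)=b_1$ and that $\pi_{\hh_1}$ is $1$-Lipschitz on $[q_0,q_2]$ gives only $d(q_1,b_1)\le d(q_1,q_2)$ and $d(q_1,c_0)\le d(q_1,q_0)$, both of which are unbounded; the image of $[q_0,q_2]$ under the projection is a path from $c_0$ to $b_1$ that may wander arbitrarily far before returning. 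The ``further bridge-lemma argument confined to the two slabs'' that you invoke to pin $q_1$ near $b_1$ is precisely the content of the lemma, and you explicitly leave it unproven. As stated, the argument has a genuine gap at its only nontrivial point.

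The gap can be closed without any \cat projections by a wall-counting argument, which is how the paper proceeds. Put $p_n=\gamma^n p_0$ with $p_0\in\ell\cap\hh$, and let $N$ be the number of hyperplanes crossed by $[p_0,p_1]$. Given $x\in[a,b]\cap\hh_i$, consider any hyperplane $\hk$ separating $x$ from $p_i$: since both points lie on the convex set $\hh_i$, $\hk$ crosses $\hh_i$, so by strong separation it crosses neither $\hh_{i-1}$ nor $\hh_{i+1}$, and each of these therefore lies entirely on one side of $\hk$. If both lay on the $p_i$-side, the geodesic $[a,b]$, which meets $\hh_{i-1}$ and $\hh_{i+1}$ on opposite sides of $x$, would cross $\hk$ twice --- impossible. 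Hence $\hk$ separates $p_i$ from $p_{i-1}$ or from $p_{i+1}$, so at most $2N$ hyperplanes separate $x$ from $p_i$, and Lemma~\ref{lem:cat_versus_combinatorial} converts this into a bound $d(x,\ell)\le d(x,p_i)\le C$ depending only on $N$. I recommend you replace your projection step by this counting argument (or supply a complete proof of the slab estimate you allude to).
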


\begin{proof}
Let $p_0 \in \ell \cap \hh$ and set $\h_n = \gamma^n \h$ and $p_n = \gamma^n p_0$ for all $n \in \ZZ$. Let also  $N$ be the number of hyperplanes crossed by $[p_0, p_1]$. 

Let $a, b \in X$ be two points such that the geodesic segment $[a, b]$ crosses $\hh_{i-1}, \hh_i$ and $\hh_{i+1}$ for some $i \in \ZZ$. Let $x \in [a, b] \cap \hh_i$. We shall show that $x$ is at distance at most $N'$ away from $\ell$, where $N'$ is some constant depending only on $N$. 

By hypothesis, the two pairs of hyperplanes  $\hh_{i-1}, \hh_i$  and $ \hh_{i}, \hh_{i+1}$ are both strongly separated.  Therefore, none of the hyperplanes separating $p_i$ from $x$ can cross $\hh_{i-1}$ or $\hh_{i+1}$. Since $[a, b]$ is geodesic, it crosses each hyperplane at most once. It follows that each of the hyperplanes separating $p_i$ from $x$ must cross either $[p_{i-1}, p_i]$ or $[p_i, p_{i+1}]$. We conclude that the number of hyperplanes separating $p_i$ from $x$ is at most~$2N$. The desired conclusion follows since the \cat metric is quasi-isometric to the hyperplane distance (see Lemma~\ref{lem:cat_versus_combinatorial}). 
\end{proof}

\begin{lem}\label{lem:Contracting}
Let $X$ be a finite-dimensional \cat cube complex, let $\h' \subsetneq \h''$ be a nested pair of half-spaces and $\gamma \in \Aut(X)$ be such that $\h'' \subsetneq \gamma \h'$. 

If the hyperplanes $\hh', \hh''$ are strongly separated, then $\gamma$ is a contracting isometry. 
\end{lem}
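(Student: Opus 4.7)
The plan is to apply Lemma~\ref{lem:KeyContracting} and convert its fellow-travelling conclusion into the contracting property.

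\emph{Setting up Lemma~\ref{lem:KeyContracting}.} I first observe that the pair $\hh', \gamma\hh'$ is itself strongly separated: any hyperplane $\hk$ crossing both would contain a point in $\h' \subseteq \h''$ and a point in $(\gamma\h')^* \subseteq (\h'')^*$, so by connectedness $\hk$ would also cross $\hh''$, contradicting the strong separation of $\hh'$ and $\hh''$. The chain $\h' \subsetneq \h'' \subsetneq \gamma\h'$ also yields $\gamma^{-1}\h' \subsetneq \h'$. Setting $g := \gamma^{-1}$ and $\mathfrak{h} := \h'$ therefore satisfies both hypotheses of Lemma~\ref{lem:KeyContracting}: $g\mathfrak{h} \subsetneq \mathfrak{h}$ and the strong separation of $\mathfrak{h}$ and $g\mathfrak{h}$. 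Since the axes of $\gamma$ and $\gamma^{-1}$ coincide and $\la g \ra = \la \gamma \ra$, the lemma provides a CAT(0) axis $\ell$ of $\gamma$ together with a constant $C = C(\gamma)$ such that every geodesic segment crossing at least three hyperplanes of the orbit $\la \gamma \ra \hh'$ meets the $C$-neighbourhood $\mathcal{N}_C(\ell)$.

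\emph{Deriving contracting.} Let $B$ be a ball disjoint from $\ell$, let $x, y \in B$ with projections $p_x = \pi_\ell(x), p_y = \pi_\ell(y)$, and let $\tau$ denote the translation length of $\gamma$, so that consecutive orbit hyperplanes cross $\ell$ at points $\tau$ apart. If $d(p_x, p_y) > 3\tau$, then $[p_x, p_y] \subseteq \ell$ crosses at least three orbit hyperplanes. A key subclaim, which I will establish, is that $\pi_\ell$ preserves the sides of orbit hyperplanes, i.e.\ that $z$ and $\pi_\ell(z)$ lie on the same side of every hyperplane in $\la\gamma\ra\hh'$. Granting the subclaim, these three orbit hyperplanes separate $x$ from $y$, the geodesic $[x, y] \subseteq B$ crosses them, and Lemma~\ref{lem:KeyContracting} forces $[x, y] \cap \mathcal{N}_C(\ell) \ne \varnothing$. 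Hence, for every ball $B$ disjoint from $\mathcal{N}_C(\ell)$, we obtain the uniform bound $\diam(\pi_\ell(B)) \leq 3\tau$. Balls disjoint from $\ell$ but meeting $\mathcal{N}_C(\ell)$ require a short separate analysis: either the radius is controlled by $C$ (giving a trivial Lipschitz bound through the $1$-Lipschitz projection), or the ball contains pairs of points both at distance $>C$ from $\ell$, to which the same argument applies after invoking the convexity of $z \mapsto d(z, \ell)$ along $[x,y]$.

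\emph{Main obstacle.} The principal difficulty is the side-preservation subclaim for $\pi_\ell$. This fails in general CAT(0) spaces, as a hyperplane meeting $\ell$ obliquely in $\RR^2$ shows, so the proof must genuinely exploit the cubical structure. The natural route is to work instead with the combinatorial projection onto a combinatorial axis of $\gamma$---where side-preservation is immediate from the definition of the combinatorial projection onto a convex subcomplex---and then to transfer the combinatorial bound to the CAT(0) setting via Lemma~\ref{lem:cat_versus_combinatorial}. Carefully bridging the two metrics without worsening the constants, and relating a combinatorial axis to the CAT(0) axis $\ell$ supplied by Lemma~\ref{lem:KeyContracting}, is the most delicate aspect of the argument.
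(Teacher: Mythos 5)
Your setup is fine: the verification that $\hh'$ and $\gamma\hh'$ are strongly separated is correct (and is indeed the first step of the paper's argument), and Lemma~\ref{lem:KeyContracting} applies as you say. The genuine gap is the side-preservation subclaim, on which your entire derivation rests and which you leave unproven. As you yourself observe, the \cat orthogonal projection $\pi_\ell$ need not respect the sides of a hyperplane, and your proposed repair via combinatorial projections does not obviously work: the gate map preserves half-spaces only for projections onto \emph{convex subcomplexes}, and a combinatorial axis of $\gamma$ is not in general a convex subcomplex (nor is its existence established in this paper); moreover the contracting property is a statement about $\pi_\ell$ in the \cat metric, so transferring back would reintroduce exactly the problem you are trying to avoid. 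A smaller defect is the treatment of balls meeting $\mathcal N_C(\ell)$: convexity of $z\mapsto d(z,\ell)$ does not keep $[x,y]$ outside $\mathcal N_C(\ell)$ when both endpoints are farther than $C$ from $\ell$, since a convex function exceeding $C$ at both endpoints can be small in between. The routine fix is to shrink the ball: if $B(z_0,r)$ misses $\ell$ then $B(z_0,r-C-1)$ misses $\mathcal N_C(\ell)$, and $\pi_\ell$ is $1$-Lipschitz, so the diameter bound only degrades by $2(C+1)$.

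The missing idea is that exact side-preservation is not needed; an approximate version suffices and follows from Lemma~\ref{lem:KeyContracting} itself. For any $z$, every point $w$ of $[z,\pi_\ell(z)]$ satisfies $d(w,\ell)=d(w,\pi_\ell(z))$, so the portion of $[z,\pi_\ell(z)]$ at distance $>C$ from $\pi_\ell(z)$ avoids $\mathcal N_C(\ell)$ and hence crosses at most two orbit hyperplanes by Lemma~\ref{lem:KeyContracting}, while the terminal portion has length $\leq C$ and crosses boundedly many of the pairwise disjoint orbit hyperplanes. Thus $z$ and $\pi_\ell(z)$ differ on at most about $C+3$ orbit hyperplanes. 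Consequently, of the orbit hyperplanes separating $p_x$ from $p_y$, all but roughly $2(C+3)$ must also separate $x$ from $y$ and therefore cross $[x,y]$; once $d(p_x,p_y)$ is large enough this yields three such crossings and forces $[x,y]$ to enter $\mathcal N_C(\ell)$, which is the contradiction you want. This is precisely the paper's route, run in contrapositive: assuming $\gamma$ is not contracting, it considers the geodesic quadrilateral on $x_n,\pi_\ell(x_n),\pi_\ell(y_n),y_n$, notes that every hyperplane separating the two projections must cross one of the other three sides, and bounds the contribution of the two projection sides by $C+3$ each via the computation above.
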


\begin{proof}
Let $\ell$ be some $\gamma$-axis (notice that the hypotheses imply that  $\gamma$ is hyperbolic). Let also $p_0 \in \ell \cap \hh'$ and for all $n \in \ZZ$, set $p_n = \gamma^n p_0$ and $\h_n = \gamma^n \h'$. The hypotheses imply that for all $i \neq j \in \ZZ$, the hyperplanes $\hh_i, \hh_j$ are strongly separated. 

Suppose for a contradiction that $\gamma$ is not contracting. Then there are two sequences $(x_n)$ and $(y_n)$ in $X$ such that $d(x_n, y_n) \leq d(x_n, \ell)$ and that $\lim_n d(x'_n, y'_n) = \infty$, where $x'_n$ and $y'_n$ respectively denote the orthogonal projection of $x_n$ and $y_n$ on $\ell$. 

Since $[p_0, p_1]$ is a fundamental domain for the $\la \gamma \ra$-action on $\ell$, there is no loss of generality in assume that $x'_n \in [p_0, p_1]$ for all $n$. Upon extracting and reversing the orientation on $\ell$, we may further assume that  the $n$ hyperplanes $\hh_1, \dots, \hh_n$ separate $x'_n$ from $y'_n$ for all $n>0$.

Let $C$ be the constant from Lemma~\ref{lem:KeyContracting}. We claim that the number of hyperplanes in $\{ \hh_1, \dots, \hh_n\}$ that can be crossed by $[x_n, x'_n]$ or $[y_n, y'_n]$ is at most $C+3$. Indeed, since $x'_n$ is the orthogonal projection of $x_n$ on $\ell$, it follows that $d(z, \ell) = d(z, x'_n)$ for all $z \in [x_n, x'_n]$. Therefore, for $z \in [x_n, x'_n]$ with $d(z, x'_n)>C$, we deduce from Lemma~\ref{lem:KeyContracting} that $[z, x_n]$ cannot cross more than two walls in $\{ \hh_1, \dots, \hh_n\}$. This proves the claim for $[x_n, x'_n]$; the argument for  $[y_n, y'_n]$ is similar.

We next consider the geodesic quadrilateral with vertices $x_n, x'_n, y'_n, y_n$.  Each hyperplane separating $x'_n$ from $y'_n$ must cross one of the three  geodesic segments $[x_n, y_n]$, $[x_n, x'_n]$ or $[y_n, y'_n]$.  From the previous claim, we deduce that at least $n -2C -6$ of the hyperplanes $\hh_1, \dots, \hh_n$ must cross $[x_n, y_n]$. Let $i $ be the minimal index such that $[x_n, y_n]$ crosses $\hh_i$. Let $q_i \in [x_n, y_n ] \cap \hh_i$ and $q_{i+2} \in [x_n, y_n] \cap \hh_{i+2}$. By Lemma~\ref{lem:KeyContracting}, there is some $q \in [q_i, q_{i+2}]$ which is $C$-close to $\ell$. Thus we have
$$
\begin{array}{rcl}
d(x_n, y_n)  &\leq & d(x_n, x'_n)  \\
& =&   d(x_n, \ell)\\
& \leq & d(x_n, q) + d(q, \ell) \\
& \leq  &d(x_n, q) + C,
\end{array}
$$
whence $d(q, y_n) = d(x_n, y_n) - d(x_n, q) \leq C$. In particular, the number of hyperplanes crosses by $[q, y_n]$ is at most $C$. 

Recalling that the number of hyperplanes in  $\hh_1, \dots, \hh_n$  crossed by $[x_n, y_n]$ is at least $n-2C - 6$, we deduce that the number of hyperplanes crossed by $[q, y_n]$ must be least least  $n -2C - 8$. Thus we get a contradiction as soon as $n > 3C + 8$. 
\end{proof}
}

\subsection{Proof of Rank Rigidity}

In view of Proposition~\ref{prop:pruning}, Theorem~\ref{thmintro:RankRigidity} from the introduction is an immediate consequence of the following. 

\begin{thm}\label{thm:RankOne}
Let $X$ be a finite-dimensional \cat cube complex and $\Gamma \leq \Aut(X)$ be a group acting essentially without fixed point at infinity. 

Then $X$ is a product of two cube subcomplexes or every hyperplane of $X$ is skewered by a {contracting isometry} in $\Gamma$. 

If in addition $X$ is locally compact and $\Gamma$ acts cocompactly, then the same conclusion holds even if $\Gamma$ fixes a point at infinity. 
\end{thm}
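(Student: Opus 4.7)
My plan is to handle the two parts of the theorem separately, reducing both to the irreducible case.

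For the first part, where $\Gamma$ has no fixed point at infinity, I will begin by invoking the canonical de Rham decomposition of Proposition~\ref{prop:deRham} to write $X = X_1 \times \cdots \times X_p$ as a product of irreducible factors. If $p \geq 2$, then $X$ is already a product of two unbounded cube subcomplexes and the first alternative of the conclusion holds. Assuming henceforth that $X$ is irreducible, the core step is to apply the Irreducibility Criterion (Proposition~\ref{prop:IrredCriterion}): for any hyperplane $\hh$ with associated half-space $\h$, there exist strongly separated hyperplanes $\hh_1, \hh_2$ with $\h_1 \subsetneq \h \subsetneq \h_2$. (Note that $\Aut(X)$ inherits from $\Gamma$ the properties of acting essentially and without fixed point at infinity, so the criterion applies.) The Double Skewering Lemma applied to the pair $\h_1 \subsetneq \h_2$ will supply an element $\gamma \in \Gamma$ with $\gamma \h_2 \subsetneq \h_1 \subsetneq \h_2$. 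Since $\h \subseteq \h_2$, this inclusion forces $\gamma \h \subset \gamma \h_2 \subsetneq \h_1 \subsetneq \h$, so $\gamma$ skewers $\hh$. Rewriting the relation as $\h_2 \subsetneq \gamma^{-1} \h_1$ and applying Lemma~\ref{lem:Contracting} (to the strongly separated nested pair $\h_1 \subsetneq \h_2$ and the isometry $\gamma^{-1}$) shows that $\gamma^{-1}$, hence $\gamma$, is a contracting isometry.

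For the second part, where $\Gamma$ acts cocompactly on a locally compact $X$ but may fix a point at infinity, I plan to use the refined Flipping Lemma. Cocompact actions on locally compact essential cube complexes are hereditarily essential, so Corollary~\ref{cor:Flipping:cocpt} produces a decomposition $X = X_1 \times \cdots \times X_p \times Y$ preserved by a finite-index subgroup $\Gamma' \leq \Gamma$, such that every $X_i$ has only compact hyperplanes and every half-space of $Y$ is $\Gamma'$-flippable, from which it follows that $\Gamma'$ acts essentially on $Y$ without fixed point at infinity. If this decomposition has at least two non-trivial factors (i.e.\ $p \geq 2$, or $p \geq 1$ with $Y$ not reduced to a single vertex), then $X$ is already a product of two unbounded cube subcomplexes. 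If $p = 0$, then $X = Y$ and the first part of the theorem, applied to $\Gamma'$, yields the conclusion directly, since any contracting isometry in $\Gamma'$ lies in $\Gamma$.

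The remaining corner case is $p = 1$ with $Y$ reduced to a vertex, so that $X = X_1$ is irreducible with all hyperplanes compact. In this situation $X$ contains no $2$-flat (any such flat would meet some hyperplane in a line, forcing the hyperplane to be non-compact), and cocompactness combined with \cite[Theorem~III.H.1.5]{Bridson-Haefliger} upgrades this to Gromov hyperbolicity; then any hyperbolic isometry is automatically contracting. Since $\Gamma$ acts essentially, Proposition~\ref{prop:essential} supplies for each hyperplane an element of $\Gamma$ skewering it, and such an element is necessarily hyperbolic, hence contracting. The main obstacle I anticipate is correctly isolating this last degenerate case and recognising that the hereditary essentiality machinery of Corollary~\ref{cor:Flipping:cocpt} is the right tool to quotient out the $\RR$-like pieces carrying the fixed points at infinity, leaving either a complex on which the group has no fixed point at infinity or an irreducible Gromov hyperbolic remainder that one handles by hand.
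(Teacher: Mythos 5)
Your proposal is correct and follows essentially the same route as the paper: the Irreducibility Criterion plus the Double Skewering Lemma produce an element double-skewering a strongly separated pair around the given hyperplane, Lemma~\ref{lem:Contracting} shows it is contracting, and the cocompact case is reduced via Corollary~\ref{cor:Flipping:cocpt} to either the no-fixed-point case or the all-hyperplanes-compact (hence Gromov hyperbolic) case. The only cosmetic differences are the direction in which you apply the Double Skewering Lemma (passing to $\gamma^{-1}$ before invoking Lemma~\ref{lem:Contracting}, which is harmless since an isometry is contracting if and only if its inverse is) and your slightly more explicit case analysis of the product decomposition, which the paper shortcuts by assuming $X$ irreducible from the start.
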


\begin{proof}
{
Suppose that $X$ is irreducible and let $\h$ be a half-space. By Proposition~\ref{prop:IrredCriterion}, there is a pair of half-spaces $\h', \h''$ such that $\h' \subsetneq \h \subsetneq \h''$ and the hyperplanes $\hh'$ and $\hh''$ are strongly separated. From the Double Skewering Lemma, we deduce that there is some  $\gamma \in \Gamma$ such that $\h'' \subsetneq \gamma \h'$. In particular $\gamma $ skewers $\hh$. We conclude by invoking Lemma~\ref{lem:Contracting}, which ensures that $\gamma$ is a contracting isometry. }

\medskip
Assume now that $X$ is locally compact and that $\Gamma$ acts cocompactly (but not necessarily properly), and assume that $X$ is irreducible. We then invoke Corollary~\ref{cor:Flipping:cocpt}. Two situations can occur. The first is that all hyperplanes are compact; in that case {the Flat Plane Theorem (see \cite[Theorem~III.H.1.5]{Bridson-Haefliger})  implies that  $X$ is Gromov hyperbolic, and  it is then clear that all hyperbolic isometries are contracting}. The second is that $\Gamma$ has no fixed point at infinity, and we are then reduced to a situation which has already been dealt with. 
\end{proof}

\begin{cor}\label{cor:RkRigid}
Let $X$ be an unbounded locally compact \cat cube complex  such that $\Aut(X)$ acts cocompactly and let $\Gamma \leq \Aut(X)$ be a {possibly non-uniform} lattice. We have the following.
\begin{enumerate}[(i)]
\item $\Gamma$ acts essentially on the essential core $Y$ of $X$. 

\item $Y$ embeds as an $\Aut(X)$-invariant convex subcomplex in $X$.

\item $Y$ decomposes as a product $Y \cong Y_1 \times \dots \times Y_m$ of $m\geq 1$ irreducible unbounded convex subcomplexes, and for each $i$, there is some  $\gamma_i \in \Gamma$ preserving that decomposition and acting on $Y_i$ as  a rank one isometry. 
\end{enumerate}
\end{cor}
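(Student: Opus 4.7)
The plan is to deduce (ii) and (i) from the general machinery of \S\ref{sec:essential}, and then combine Proposition~\ref{prop:deRham} with Theorem~\ref{thm:Regular} to obtain (iii).

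For (ii): since $\Aut(X)$ acts cocompactly on $X$, it has only finitely many orbits of hyperplanes. By Proposition~\ref{prop:essential:bis} this gives $\Ess(X) = \Ess(X, \Aut(X))$, so the essential core of $X$ coincides with the $\Aut(X)$-essential core. Proposition~\ref{prop:pruning} (applied under condition (a)) then embeds this core as an $\Aut(X)$-invariant convex subcomplex $Y \subseteq X$.

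For (i): we must show every hyperplane $\hh$ of $Y$ is $\Gamma$-essential. Each $\hh$ is $\Aut(X)$-essential by (ii), so it is skewered by some element of $\Aut(X)$, and the task is to upgrade this to an element of $\Gamma$. Suppose for contradiction that $\hh$ is not $\Gamma$-essential. By Proposition~\ref{prop:essential} and Lemma~\ref{lem:HalfEssential}, either $\Gamma$ stabilises a common intersection of hyperplanes in the orbit $\Gamma.\hh$, or one complementary half-space, say $\h^*$, is $\Gamma$-shallow. In both cases $\Gamma$ preserves a proper closed convex subcomplex $C \subsetneq X$ that fails to meet $\h$ beyond a bounded neighbourhood of $\hh$. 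However, since $\hh$ is $\Aut(X)$-essential and $\Aut(X)$ acts cocompactly, the $\Aut(X)$-orbit of any basepoint penetrates $\h$ arbitrarily deeply. A measure-theoretic argument exploiting the finite covolume of $\Gamma$ in $\Aut(X)$ then shows that $\Gamma$-orbits must do the same, contradicting the invariance of $C$. This last step, which is specific to the lattice hypothesis, is the main obstacle in the proof.

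For (iii): apply Proposition~\ref{prop:deRham} to decompose $Y = Y_1 \times \cdots \times Y_m$ into irreducible factors, and pass to a finite-index subgroup $\Gamma' \leq \Gamma$ preserving this decomposition. Each $Y_i$ is locally compact; it is also unbounded, since an irreducible bounded factor would reduce to a point and carry no essential hyperplanes, contradicting (i). Cocompactness of $\Aut(X)$ on $Y$ descends to cocompactness of $\Aut(Y_i)$ on $Y_i$, and essentiality of $\Aut(Y_i)$ on $Y_i$ follows from Proposition~\ref{prop:essential:bis} applied to each factor. Since the restriction map $\Aut(X) \to \Aut(Y)$ has compact kernel (the trivial part of $X$ is bounded by Proposition~\ref{prop:essential:bis}), $\Gamma'$ projects to a lattice in $\Aut(Y_1) \times \cdots \times \Aut(Y_m)$, a finite-index subgroup of $\Aut(Y)$. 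Theorem~\ref{thm:Regular} then yields an element $\gamma \in \Gamma'$ acting as a rank one isometry on each $Y_i$ simultaneously; setting $\gamma_i := \gamma$ completes the proof.
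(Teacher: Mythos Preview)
Your argument for (iii) is circular: in the paper, Theorem~\ref{thm:Regular} is proved via Proposition~\ref{prop:recap:lattices}, which invokes Corollary~\ref{cor:HereditarilyEss}, whose proof explicitly appeals to Corollary~\ref{cor:RkRigid}. So you may not cite Theorem~\ref{thm:Regular} here.

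Your treatment of (i) also has a genuine gap. The phrase ``a measure-theoretic argument exploiting the finite covolume of $\Gamma$'' is precisely the hard step, and you have not supplied it. The paper does not argue directly with half-spaces as you suggest; instead it first passes to the irreducible factors $Y_i$ of $Y$, chooses a minimal $\Aut(Y_i)$-invariant closed convex subspace $Z_i$, and then invokes the \emph{geometric Borel density theorem} \cite[Theorem~2.4]{CapraceMonod2} to conclude that $\Gamma$ acts minimally on $Z_i$ without fixed point in $\bd Z_i$. Essentiality of the $\Gamma$-action on $Y_i$ is deduced from this minimality. Along the way one must also rule out fixed points at infinity for $\Aut(Y_i)$, which requires either \cite{CapraceMonod3} or Corollary~\ref{cor:Flipping:cocpt}. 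Thus (i) and (iii) are established together, factor by factor, and the rank one element in each $Y_i$ comes from a direct application of Theorem~\ref{thm:RankOne} (which is already available), not from Theorem~\ref{thm:Regular}.
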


\begin{proof}
{We remark that, since $\Gamma$ is possibly non-uniform, its action on $X$ is not necessarily cocompact. The goal is thus to reduce to a situation where $\Gamma$ acts essentially without a fixed point at infinity, so that the first part of Theorem~\ref{thm:RankOne} will give the desired conclusion.}

We start by applying Proposition~\ref{prop:pruning} to the whole group $\Aut(X)$ (see also Proposition~\ref{prop:essential:bis}). This shows that there is thus no loss of generality in assuming that $\Aut(X)$ acts essentially on $X$. By Proposition~\ref{prop:deRham} we have a canonical product decomposition $X \cong X_1 \times \dots \times X_m$, where $X_i$ is an irreducible subcomplex. Moreover, upon replacing $\Gamma$ by a finite index subgroup, we can assume that $\Gamma$ preserves this decomposition. 

For all $i = 1, \dots, m$, we need to show that $\Gamma$ acts essentially on $X_i$ and that it contains an element $\gamma_i$ acting as a rank one isometry on $X_i$. We already know from Theorem~\ref{thm:RankOne} that $\Aut(X_i)$ contains rank one isometries. 

Since $\Aut(X_i)$ acts cocompactly, there is a non-empty minimal closed convex \cat subspace $Z_i \subseteq X_i$, which need not be a subcomplex. Since $\Aut(X_i)$ contains rank one elements, it follows that $Z_i$ is an irreducible \cat space. If $Z_i$ is flat, then it is isometric to the real line and $X$ is $\RR$-like. The desired result is then clear. We assume henceforth that $Z_i$ is not flat. 

We claim  that $\Aut(X_i)$ has no fixed point in $\bd X_i$. This follows from \cite{CapraceMonod3} (for a special case, see also Theorem~3.14 from \cite{CapraceMonod2}); a direct argument in the current specific setting could also be obtained using Corollary~\ref{cor:Flipping:cocpt}. 

Now, from the `geometric Borel density theorem' proved in  \cite[Theorem~2.4]{CapraceMonod2}, we now infer that $\Gamma$ has no fixed point at infinity of $Z_i$ and that it acts minimally on $Z_i$, \emph{i.e.} $Z_i$ contains no non-empty proper $\Gamma$-invariant closed \cat subspace. Since every hyperplane of $X_i$ separates $Z_i$, it follows in particular that for any $z \in Z_i$, the orbit $\Gamma.z$ contains points on both sides of every hyperplane. Recall that each hyperplane is $\Aut(X_i)$-essential. In particular, given a half-space $\h$, there is a half-space $\k \subset \h$ such that the distance from any point of $\k$ to $\hh$ is arbitrarily large. Since $\k$ intersects non-trivially  the orbit $\Gamma.z$, we deduce that $\h$ contains points of the orbit $\Gamma.z$ which are arbitrarily far away from the hyperplane $\hh$. In other words, this means that $\h$ is $\Gamma$-deep. Since $\h$ was arbitrary, this proves that every hyperplane of $X$ is $\Gamma$-essential or, equivalently, that the $\Gamma$-action on $X_i$ is essential. 

We have seen that  $\Gamma$ has no fixed point at infinity of $Z_i$. Since $Z_i \subseteq X_i$ is $\Aut(X_i)$-invariant and since $\Aut(X_i)$ acts cocompactly on $X_i$, it follows that $\Gamma$ has no fixed point at infinity of $X_i$. At this point, we are able to invoke Theorem~\ref{thm:RankOne}, which provides an element $\gamma_i$ in $\Gamma$ acting as a rank one isometry on $X_i$. 
\end{proof}

\begin{cor}\label{cor:HereditarilyEss}
Let $X$ be a locally compact \cat cube complex such that $\Aut(X)$ acts cocompactly and essentially. Then the action of any {(possibly non-uniform)} lattice $\Gamma \leq \Aut(X)$ is hereditarily essential. 
\end{cor}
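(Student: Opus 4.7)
The plan is to apply Corollary~\ref{cor:RkRigid} to the intersection $F := \hh_1 \cap \dots \cap \hh_n$, viewed as a locally compact \cat cube complex in its own right, with $\Gamma_F := \Stab_\Gamma(F)$ playing the role of the lattice. That $\Gamma$ acts essentially on $X$ is immediate from Corollary~\ref{cor:RkRigid}(i): since $\Aut(X)$ acts essentially on $X$, any proper $\Aut(X)$-invariant convex subcomplex would be disjoint from some half-space, which would then be $\Aut(X)$-shallow by Lemma~\ref{lem:Haglund}; thus the essential core of $X$ equals $X$ itself. If $F$ is bounded, then $\Ess(F)$ is empty and the condition at $\hh_1,\dots,\hh_n$ is vacuous, so I may assume $F$ is unbounded.

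The first technical step is to show that $G_F := \Stab_{\Aut(X)}(F)$ acts cocompactly on $F$. This is done by induction on $n$, using the sublemma: \emph{if $H$ acts cocompactly on a locally compact \cat cube complex $Y$ and $\hh$ is a hyperplane of $Y$, then $\Stab_H(\hh)$ acts cocompactly on $\hh$.} To see this, fix a compact fundamental domain $K \subset Y$ for $H$; by local finiteness only finitely many hyperplanes of $Y$ meet $K$, so in particular only finitely many hyperplanes $\hh'_1,\dots,\hh'_r$ in the $H$-orbit of $\hh$ meet $K$, and for each we fix $a_j \in H$ with $a_j\hh = \hh'_j$. Given $y \in \hh$, choose $h \in H$ with $hy \in K$; then $h\hh = \hh'_j$ for some $j$, whence $a_j^{-1}h \in \Stab_H(\hh)$ carries $y$ into the compact set $(a_j^{-1}K) \cap \hh$. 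Hence $\bigcup_j (a_j^{-1}K) \cap \hh$ is a compact fundamental domain for $\Stab_H(\hh)$ on $\hh$. Iterating the sublemma --- and noting that $\hh_k \cap (\hh_1 \cap \dots \cap \hh_{k-1})$ is indeed a hyperplane of $\hh_1 \cap \dots \cap \hh_{k-1}$, by pairwise crossing and the Helly-type property for hyperplanes --- yields cocompactness of $G_F$ on $F$.

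The main obstacle is to verify that $\Gamma_F$ is a lattice in $G_F$. Granted this, Corollary~\ref{cor:RkRigid} applied to $F$ with the cocompactly acting group $G_F$ and its lattice $\Gamma_F$ shows that $\Gamma_F$ acts essentially on the essential core of $F$. Since $G_F$ is cocompact on $F$, Proposition~\ref{prop:essential:bis} identifies the essential hyperplanes of $F$ with the $G_F$-essential ones, so that \emph{every} hyperplane of $F$ essential in $F$ is skewered by some element of $\Gamma_F = \Stab_\Gamma(F)$, which is precisely the hereditary essential condition. For the lattice property itself, the key input is that, by cocompactness of $\Aut(X)$ on $X$ and finite-dimensionality, there are only finitely many $\Aut(X)$-orbits of pairwise-crossing $n$-tuples of hyperplanes (such a tuple lies inside a single cube and there are finitely many $\Aut(X)$-orbits of cubes); consequently the orbit $\Aut(X) \cdot F$ carries a natural discrete structure. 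Combining this with a standard disintegration of the $\Aut(X)$-invariant measure on $\Aut(X)/\Gamma$ over the coset space $\Aut(X)/G_F$ should yield that $\Gamma_F$ has finite Haar covolume in $G_F$, hence is a lattice.
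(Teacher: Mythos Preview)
Your overall strategy matches the paper's: reduce to applying Corollary~\ref{cor:RkRigid} to the intersection $F$ with $G_F = \Stab_{\Aut(X)}(F)$ acting cocompactly and $\Gamma_F$ as a lattice. Your cocompactness sublemma is correct and its iteration is fine.

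The gap is in the lattice step. You write that a ``standard disintegration \dots should yield'' finite covolume, but this is not a proof as stated, and the disintegration you describe is not standard. The observation you are circling around --- and it is precisely what the paper uses --- is that $G_F$ is \emph{open} in $\Aut(X)$. Indeed, since the $\hh_i$ pairwise cross, they all pass through a common cube $C$; the pointwise stabiliser of $C$ in $\Aut(X)$ is open (compact-open topology on the automorphism group of a locally finite complex) and fixes each $\hh_i$, hence lies in $G_F$. Once $G_F$ is open, $G_F/\Gamma_F \cong G_F\Gamma/\Gamma$ embeds as an open subset of $\Aut(X)/\Gamma$ and therefore has finite invariant measure; no disintegration is needed. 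Your finitely-many-orbits remark is a consequence of openness rather than a substitute for it.

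The paper streamlines the argument by inducting on dimension instead of treating $F$ in one shot: it handles a single hyperplane $\hk$, notes that $\Stab_{\Aut(X)}(\hk)$ is open and cocompact on $\hk$, concludes that $\Stab_\Gamma(\hk)$ is a lattice there, applies Corollary~\ref{cor:RkRigid} to $\hk$, and then inducts. Once you supply the openness observation, your direct approach to $F$ works as well.
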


\begin{proof}
For each hyperplane $\hk$, the stabilizer $\Stab_{\Aut(X)}(\hk)$ is an open subgroup of $\Aut(X)$ which acts cocompactly on $\hk$. It follows that $\Stab_\Gamma(\hk) = \Gamma \cap \Stab_{\Aut(X)}(\hk)$ is a lattice in $ \Stab_{\Aut(X)}(\hk)$ to which Corollary~\ref{cor:RkRigid} applies. This shows in particular that $\Stab_\Gamma(\hk)$ acts essentially on the essential core of $\hk$, and the desired result follows by induction on dimension. 
\end{proof}

We can now complete the proof of Corollary~\ref{cor:GeodComplete} from the introduction, which is concerned with the geodesically complete case. 

\begin{proof}[Proof of Corollary~\ref{cor:GeodComplete}]
A group acting cocompactly on a geodesically complete \cat space necessarily acts minimally, see \cite[Lemma~3.13]{CapraceMonod1}. In view of Lemma~\ref{lem:HalfEssential} and Remark~\ref{rem:HessEmpty}, this implies that every hyperplane is $\Aut(X)$-essential. The result then follows from Corollary~\ref{cor:RkRigid}.
\end{proof}

\section{Applications}

\subsection{Cube complexes with invariant Euclidean flats}

We start with an elementary observation (see \S\ref{sec:R-like} for the definition of an $\RR$-like complex).

\begin{lem}\label{lem:pseudoEucl}
Let $X$ be a finite-dimensional \cat cube complex such that $\Aut(X)$ acts essentially. Then $\Aut(X)$ stabilizes some $n$-dimensional flat $\RR^n \subseteq X$ if and only if $X$ decomposes as a product $X = X_1 \times \dots \times X_n$ of subcomplexes, each of which is essential and $\RR$-like. 
\end{lem}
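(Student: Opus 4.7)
The plan is to handle the two implications separately, with the forward direction $(\Rightarrow)$ carrying the substantive content.

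For the converse $(\Leftarrow)$: Given $X = X_1 \times \cdots \times X_n$ with each $X_i$ essential and $\RR$-like, I would pick an $\Aut(X_i)$-invariant geodesic line $\ell_i \subseteq X_i$ for each $i$, making the choices consistent with the permutation action of $\Aut(X)$ on isomorphic factors (pick an invariant line on orbit representatives and transport; the $\Aut(X_i)$-invariance of $\ell_i$ absorbs any ambiguity in the transport). Then $F := \ell_1 \times \cdots \times \ell_n$ is an $n$-flat preserved by the finite-index subgroup $\Aut(X_1) \times \cdots \times \Aut(X_n) \leq \Aut(X)$ (finite-index by Proposition~\ref{prop:deRham}), and the coherent choice of lines extends invariance to all of $\Aut(X)$.

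For the main direction $(\Rightarrow)$, assume $\Aut(X)$ preserves an $n$-flat $F \subseteq X$. The crucial first step is to show every hyperplane $\hh$ of $X$ crosses $F$ transversely, i.e.\ that $H_\hh := \hh \cap F$ is a codimension-$1$ affine hyperplane of $F$. Otherwise convexity would place $F$ in a closed half-space $\overline{\h}$, and essentiality together with Proposition~\ref{prop:essential} would supply $\gamma \in \Aut(X)$ with $\gamma \h \subsetneq \h$; since $\gamma F = F$, iterating would yield $F \subseteq \bigcap_{m \geq 0} \gamma^m \overline{\h}$, but the hyperplanes $\gamma^m \hh$ march off to infinity along a $\gamma$-axis, so the intersection is empty, a contradiction.

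I would then partition $\hH(X)$ into classes $\hH_1 \sqcup \cdots \sqcup \hH_k$ according to the direction (up to sign) of the normal to $H_\hh$ in $F$. Hyperplanes in different classes have non-parallel affine traces in $F$, which therefore intersect, so the hyperplanes themselves cross in $X$; Lemma~\ref{lem:Prod} then produces a product decomposition $X = X_1 \times \cdots \times X_k$, with $X_j$ the restriction quotient associated to $\hH_j$. A finite-index subgroup $H \leq \Aut(X)$ preserves this decomposition and contains $\Aut(X_1) \times \cdots \times \Aut(X_k)$, so $\pi_j(H) = \Aut(X_j)$. Essentiality of each $X_j$ follows by pulling back skewerings from $X$ (passing to a power that lies in $H$); the projection $F_j := \pi_j(F) \subseteq X_j$ is a geodesic line, since the $\hH_j$-hyperplanes have parallel traces on $F$ and so $\pi_j|_F$ collapses $F$ onto the direction transverse to those traces, and $F_j$ is $\Aut(X_j)$-invariant, making $X_j$ an essential $\RR$-like complex.

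To finish with $k = n$, I would embed $F$ inside $\prod_j F_j \cong \RR^k$. Essentiality plus Proposition~\ref{prop:essential} guarantees each $\Aut(X_j)$ contains a hyperbolic element, which preserves the invariant line $F_j$ and has no fixed point in $X_j$, hence acts as a nontrivial translation on $F_j$. Consequently $\prod_j \Aut(X_j) \leq \Aut(X)$ realises independent nontrivial translations in each coordinate of $\prod_j F_j$; if $F$ were a proper affine subspace, a translation in a direction with nonzero component normal to $F$ would move $F$, contradicting its $\Aut(X)$-invariance. Therefore $F = \prod_j F_j$ and $k = n$. The main obstacle is the first step; once every hyperplane is known to cross $F$ transversely, the decomposition is a combinatorial consequence of Lemma~\ref{lem:Prod} and the dimension count is a direct consequence of essentiality-driven translations on $F$.
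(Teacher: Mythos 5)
Your proof is correct and follows essentially the same route as the paper: in the forward direction, essentiality forces every hyperplane to trace an affine hyperplane on the invariant flat $F$, one partitions $\hH(X)$ by parallelism of these traces and applies Lemma~\ref{lem:Prod}, while the converse takes the product of equivariantly chosen invariant lines. The only (minor) difference is organizational: the paper splits off one parallelism class at a time and inducts on dimension, whereas you form all $k$ factors at once and then use the independent translations along the lines $F_j$ to force $F=\prod_j F_j$ and hence $k=n$.
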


\begin{proof}
Clearly, an essential \cat cube complex $X_i$ that is $\RR$-like is irreducible. Therefore, a product $X= X_1 \times \dots \times X_n$ of $n$ complexes of this form has the property that $\Aut(X)$ preserves some Euclidean flat, since this product decomposition is preserved by some finite index subgroup of $\Aut(X)$ (see Proposition~\ref{prop:deRham}). 

Assume conversely that   $\Aut(X)$ stabilizes some $n$-dimensional flat $F \subseteq X$. Since $X$ is essential, every hyperplane must separate $F$ into two non-empty disjoint pieces. More precisely, for each $\hh \in \hH(X)$, the intersection $\hh \cap F$ is a Euclidean hyperplane of $F$. Let $\hH_1$ be the collection of all hyperplanes $\hk \in \hH(X)$ such that $\hk \cap F$ is parallel to $\hh \cap F$ in the sense of Euclidean geometry. Let also $\hH_2 = \hH(X) - \hH_1$. If $X$ is not $\RR$-like, that is to say if $n>1$, then $\hh \cap F$ is non-compact and it follows that $\hH_2$ is not empty. Lemma~\ref{lem:Prod} then yields a product decomposition of $X$ into two subcomplexes and the result follows by induction on dimension. 
\end{proof}

\begin{thm}\label{thm:PseudoEucl}
Let $X$ be a finite-dimensional  \cat cube complex such that $\Aut(X)$ acts essentially and satisfies at least one of the following conditions:
\begin{enumerate}[(a)]
\item $\Aut(X)$ has no fixed point at infinity. 

\item $\Aut(X)$ acts cocompactly and $X$ is locally compact.
\end{enumerate}
Then {$\Aut(X)$ stabilises some Euclidean flat  if and only if there is no facing triple of hyperplanes. }
\end{thm}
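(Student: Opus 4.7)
For the forward direction, I would apply Lemma~\ref{lem:pseudoEucl} to decompose $X = X_1 \times \cdots \times X_n$ into essential $\RR$-like factors with invariant lines $\ell_i$. A facing triple must live in a single factor $X_j$, since hyperplanes from distinct factors always cross. In $X_j$, every essential hyperplane is skewered (Proposition~\ref{prop:essential}) by a hyperbolic element whose axis is forced to coincide with the $\Aut(X_j)$-invariant line $\ell_j$, and by Lemma~\ref{lem:def:Skewer} meets $\ell_j$ in exactly one point. Three pairwise disjoint half-spaces would then restrict to three pairwise disjoint non-trivial half-lines of $\RR$, which is impossible.

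For the backward direction, the plan is to reduce to the irreducible case and then produce an $\Aut(X)$-invariant line. Via the de Rham decomposition (Proposition~\ref{prop:deRham}), essentiality and the absence of facing triples both descend to each irreducible factor $X_i$; in case (a), any $\Aut(X_i)$-fixed point at infinity would yield a finite $\Aut(X)$-orbit in $\bd X_i \subset \bd X$ of Tits radius $\leq \pi/2$ (from the spherical-join structure of the boundary), whose circumcentre would contradict the no-fixed-point assumption on $\Aut(X)$. Assume then that $X$ is irreducible. Under (a), Proposition~\ref{prop:IrredCriterion} furnishes a strongly separated pair $\hh' \subsetneq \hh''$, the Double Skewering Lemma gives $\gamma \in \Aut(X)$ with $\gamma\h'' \subsetneq \h'$, and Lemma~\ref{lem:Contracting} shows that $\gamma$ is contracting; let $\xi_\pm$ denote the endpoints of a $\gamma$-axis $\ell$.

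The heart of the argument is that $\Aut(X)$ must stabilise $\{\xi_+, \xi_-\}$. If not, some $h \in \Aut(X)$ moves this pair, so $h\gamma h^{-1}$ is another contracting isometry whose fixed points together with $\{\xi_+, \xi_-\}$ comprise at least three distinct boundary points. For $N$ large, the North--South dynamics of $\gamma$ and $h\gamma h^{-1}$ force the half-spaces $\gamma^{N}\h'$, $\gamma^{-N}(\h')^*$, $h\gamma^{N}h^{-1}\cdot h\h'$, $h\gamma^{-N}h^{-1}\cdot h(\h')^*$ to concentrate into arbitrarily small cone-neighbourhoods of the respective endpoints $\xi_\pm, h\xi_\pm$, so that whenever at least three of those endpoints are distinct, the corresponding three half-spaces become pairwise disjoint---yielding a facing triple and the desired contradiction. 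Once $\{\xi_+, \xi_-\}$ is $\Aut(X)$-invariant, the parallel set of $\ell$ is an $\Aut(X)$-invariant convex subspace of the form $\ell \times K$ with $K$ bounded (contracting isometries admit no flat strips of arbitrary width parallel to an axis, by the very definition of contracting); the circumcentre of $K$ yields an $\Aut(X)$-invariant line and exhibits $X$ as $\RR$-like, whence Lemma~\ref{lem:pseudoEucl} concludes. Under hypothesis (b), Corollary~\ref{cor:HereditarilyEss} applied with $\Gamma = \Aut(X)$ shows the action is hereditarily essential, so Corollary~\ref{cor:Flipping:cocpt} yields a decomposition $X = X_1 \times \cdots \times X_p \times Y$ with each $X_i$ already $\RR$-like and $Y$ having no fixed point at infinity, to which case (a) applies. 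The main obstacle is this Schottky/Ping--Pong step, where converting the topological contracting-dynamics picture into the combinatorial fact that large powers $\gamma^N$ map half-spaces close to $\xi_-$ into arbitrarily tight cone-neighbourhoods of $\xi_+$ will require some combinatorial care.
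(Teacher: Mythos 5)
Your forward direction and your reduction to an irreducible factor are fine (the circumcentre argument you sketch for pushing hypothesis (a) down to the factors is a legitimate way to fill a step the paper itself glosses over). The problem is the backward direction, where your route diverges from the paper's and the step you yourself flag as "the main obstacle" is a genuine gap, not a routine verification. You want to show that if $\Aut(X)$ does not stabilise the endpoint pair $\{\xi_+,\xi_-\}$ of a contracting axis, then suitable translates $\gamma^{N}(\h')^*$, $\gamma^{-N}\h'$, $h\gamma^{N}(\h')^{*}$, \dots{} become pairwise disjoint. Under hypothesis (a) the complex need not be locally compact, so $\bd X$ need not be compact and the standard North--South dynamics of contracting isometries (which is usually proved for \emph{proper} \cat spaces) is not available off the shelf; one would have to prove, purely from Lemma~\ref{lem:KeyContracting}, a quantitative divergence statement for pairs of contracting axes with distinct endpoints, uniformly in $N$. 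Nothing in the paper supplies this, and the paper's own logical architecture runs in the opposite direction: facing triples (and quadruples) are the \emph{input} used to build Schottky pairs in the Tits alternative, not an output of ping-pong. So as written the crux of your argument is missing.

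For comparison, the paper's proof of "no invariant flat $\Rightarrow$ facing triple" avoids boundary dynamics entirely. After reducing to an irreducible, non-$\RR$-like factor with no fixed point at infinity, it splits into two cases. If every hyperplane is compact the claim is elementary (the space is Gromov hyperbolic and not line-like). Otherwise some hyperplane $\hh$ is non-compact; Proposition~\ref{prop:IrredCriterion} and the Double Skewering Lemma produce $g$ such that the hyperplanes in the orbit $\la g\ra\hh$ are pairwise strongly separated. Since infinitely many hyperplanes cross the non-compact $\hh$, none of them crosses $g\hh$ or $g\inv\hh$ (strong separation) and only finitely many separate $g\inv\hh$ from $g\hh$; hence some $\hk$ crossing $\hh$ has a half-space containing both, and $\{\hk,\, g\inv\hh,\, g\hh\}$ is the desired facing triple. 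This is a short, purely combinatorial construction. If you want to keep your dynamical route, you would either need to restrict to the locally compact case (b), where contracting isometries do have North--South dynamics on the compact visual boundary, or supply the missing combinatorial divergence lemma; otherwise I would recommend replacing the ping-pong step by the explicit construction above.
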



\begin{proof}
Assume first that $X$ contains  an $\Aut(X)$-invariant flat $F $ (possibly reduced to a singleton). Since every hyperplane is $\Aut(X)$-essential, every hyperplane separates $F$ into two non-empty  disjoint convex pieces. Therefore, it follows from the definition of a hyperplane that for each $\hh \in \hH(X)$, the intersection $\hh \cap F$ is a Euclidean hyperplane of $F$. The desired assertion follows since there is no facing triple of hyperplanes in Euclidean geometry. 

Assume now that $X$ does not contain any $\Aut(X)$-invariant flat $F $; in particular it is unbounded. Let $X= X_1 \times \dots \times X_n$ be the canonical decomposition provided by Proposition~\ref{prop:deRham}. It suffices to prove that one of the factors $X_i$ contains a facing triple of hyperplanes. By Lemma~\ref{lem:pseudoEucl},  one of the irreducible factors, say $X_i$, is not $\RR$-like. Thus there is no loss of generality in assuming that $X = X_i$ or, equivalently, that $X$ is irreducible but not $\RR$-like. 

If condition (b) holds, then Corollary~\ref{cor:Flipping:cocpt} (applied with $G = \Aut(X)$) implies that $\Aut(X)$ has no fixed point at infinity since $X=X_i$ is irreducible. Thus it suffices to prove the existence of a facing triple under the assumption that condition (a) holds. 

Assume first that each hyperplane of $X$ is compact. In that case, the existence of a facing  triple of hyperplanes is easy to establish; details are left to the reader. We assume henceforth  that some  hyperplane, say $\hh$, is non-compact.

By Proposition~\ref{prop:IrredCriterion}, there is a pair of strongly separated hyperplanes $\hh', \hh''$ such that $\h' \subsetneq \h \subsetneq \h''$. By the Double Skewering Lemma, there is some $g \in \Aut(X)$ such that $\h'' \subsetneq g\h'$. In particular, each pair of hyperplanes  in the $\la g \ra$-orbit of $\hh$ is strongly separated. 

Since $\hh$ is not compact, infinitely many hyperplanes cross it. Amongst them, none can cross 
$g\inv \hh$ or $g \hh$, and only finitely many separate $g\inv \hh$ from $g \hh$. It follows that there is some halfspace $\k$ containing both $g\inv \hh$ and $g \hh$, and such that $\hk$ crosses $\hh$. It follows that $\hk$, $g\inv \hh$ and $g \hh$ forms a facing triple of hyperplanes.
%
\end{proof}

\subsection{Tits alternative, second version}

\begin{proof}[Proof of Corollary~\ref{cor:TitsAlt}]
It follows from \cite[Theorem~1.7]{CapraceLytchak} that $\Gamma$ is amenable if and only if it is \{locally finite\}-by-\{virtually Abelian\}. Thus it suffices to show that if $\Gamma$ is not amenable, then it contains a non-Abelian free subgroup. We assume henceforth that $\Gamma$ is non-amenable. From \cite[Theorem~A.5]{CapraceLecureux}, we deduce that $\Gamma$ does not fix any point in the \emph{ultrafilter bordification} of $X$ (see the Appendix of \cite{CapraceLecureux} for details). In particular, this means that there is some $\Gamma$-invariant restriction quotient $Y$ of $X$ such that the $\Gamma$-action on $Y$ has no fixed point in $Y \cup \bd Y$. Theorem~\ref{thm:TitsAlt} then ensures that $\Gamma$ contains a free subgroup. 
\end{proof}

\subsection{Regular elements}

We first need to recapitulate the information we have obtained so far on lattices of locally compact \cat cube complexes.

\begin{prop}\label{prop:recap:lattices}
Let $X$ be a locally compact \cat cube complex such that $\Aut(X)$ acts essentially and cocompactly and let $\Gamma \leq \Aut(X)$ be a {(possibly non-uniform)} lattice. Then:
\begin{enumerate}[(i)]
\item $X$ decomposes as a product $X = X_1 \times \dots \times X_p \times Y$ of subcomplexes such that $X_i$ is $\RR$-like for each $i$ and $Y$ has no $\RR$-like factor.

\item Every automorphism preserves the decomposition upon permuting some possibly isomorphic factors $X_i$. 

\item The $\Gamma$-action {on $X$ (and hence on $Y$)} is hereditarily essential.

\item  $\Gamma$ does not fix any point in $\bd Y$. 

\item Every pair of disjoint hyperplanes is double-skewered by some element of $\Gamma$. 

\end{enumerate}
\end{prop}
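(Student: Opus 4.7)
The plan is to assemble the five conclusions from results established earlier in the paper, since this proposition is, as its name suggests, a recap. Each assertion follows from a previous statement once one verifies that the hypotheses are satisfied in the present setting.

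For (i) and (ii), I would invoke Proposition~\ref{prop:deRham} to decompose $X$ canonically as a product of irreducible subcomplexes, then group together all the $\RR$-like irreducible factors into $X_1,\dots,X_p$ and the remaining ones into $Y$. Because any automorphism must permute irreducible factors within each isomorphism class, and because being $\RR$-like is an isomorphism-invariant of an irreducible factor, automorphisms preserve this coarser decomposition up to permutations of isomorphic $X_i$'s. This immediately gives both (i) and (ii).

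Assertion (iii) is an immediate consequence of Corollary~\ref{cor:HereditarilyEss}: the hypotheses on $X$ and $\Aut(X)$ are exactly the ones needed there, so every lattice in $\Aut(X)$ acts hereditarily essentially, and in particular $\Gamma$ does. Assertion (iv) reduces, by the join decomposition of the visual boundary of a product, to showing that $\Gamma$ fixes no point at infinity of each irreducible factor of $Y$. This is precisely what is proved inside Corollary~\ref{cor:RkRigid}: if an irreducible factor of $Y$ were to admit a $\Gamma$-fixed point at infinity, then since that factor is not $\RR$-like, its minimal closed convex $\Aut$-invariant subspace would be irreducible and non-flat, contradicting the combination of the main result of \cite{CapraceMonod3} with the geometric Borel density theorem \cite[Theorem~2.4]{CapraceMonod2}.

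Finally, for (v), given disjoint half-spaces $\k\subsetneq\h$, the hyperplanes $\hk,\hh$ must lie in a single factor of the product decomposition of (i), since hyperplanes from distinct factors always cross. If they lie in $Y$, then after passing to a finite-index subgroup $\Gamma_0\le\Gamma$ preserving the decomposition, the projection of $\Gamma_0$ to $\Aut(Y)$ acts essentially (by (iii) applied to $\Gamma_0$) and without fixed points at infinity (by (iv)), so the Double Skewering Lemma produces the required element; this element, viewed in $\Gamma$, double-skewers $\k\subsetneq\h$ in $X$. If instead they lie in some $\RR$-like factor $X_i$, then since (iii) implies that every hyperplane of $X_i$ is skewered by some element of $\Gamma$ (or rather its projection to $\Aut(X_i)$), we pick a hyperbolic $\gamma\in\Gamma$ skewering $\hh$; translation along the $\Aut(X_i)$-invariant line shows that a sufficiently high power of $\gamma$ satisfies $\gamma^n\h\subsetneq\k\subsetneq\h$. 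The main subtlety, and the step requiring the most care, is the $\RR$-like case in (v): one needs to verify that essentiality of the $\Gamma$-action on $X$ descends to provide a genuinely hyperbolic element of $\Gamma$ whose axis lies along the invariant line, which is where (iii) together with Proposition~\ref{prop:essential} does the work.
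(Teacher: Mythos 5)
Your proposal is correct and follows the paper's architecture for (i), (ii), (iii) and (v): de Rham decomposition plus Proposition~\ref{prop:deRham} for the splitting, Corollary~\ref{cor:HereditarilyEss} for hereditary essentiality, and for (v) the case split between $\hH(Y)$ (Double Skewering) and $\hH(X_i)$ (a hyperbolic element translating along the invariant line of the $\RR$-like factor eventually pushes $\h$ inside $\k$, since all hyperplanes there are compact). The one place you genuinely diverge is (iv). The paper fixes a point $x$ in the $\RR$-like part $X_1\times\dots\times X_p$, observes that $\Gamma_x=\Gamma\cap\Aut(X)_x$ is a lattice in the open subgroup $\Aut(X)_x$ and hence acts as a hereditarily essential lattice on $Y$, and then applies Corollary~\ref{cor:Flipping:cocpt} to $\Gamma_x$: since $Y$ has no $\RR$-like factor, that corollary leaves no room for an unflippable half-space and yields directly that $\Gamma_x$ (hence $\Gamma$) fixes no point of $\bd Y$. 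You instead decompose $\bd Y$ as a spherical join of the boundaries of the irreducible factors of $Y$ and rerun the argument internal to Corollary~\ref{cor:RkRigid} (minimal closed convex subspace, irreducibility via rank one elements, \cite{CapraceMonod3} plus the geometric Borel density theorem) on each non-$\RR$-like factor. Both routes are valid; the paper's is more self-contained on the cube-complex side (it only uses the combinatorial version~II machinery), while yours reuses the heavier external inputs already invoked in Corollary~\ref{cor:RkRigid}. One bookkeeping point to make explicit in your write-up of (v): the Double Skewering Lemma is applied to the finite-index subgroup $\Gamma_0$ preserving the splitting, so you need the absence of fixed points in $\bd Y$ for $\Gamma_0$ rather than for $\Gamma$; this is exactly what your (iv)-argument (and the paper's, via $\Gamma_x\le\Gamma_0$) actually establishes, so no real gap arises, but citing "(iv)" verbatim there is slightly imprecise.
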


\begin{proof}
First notice that, by Proposition~\ref{prop:deRham}, there is a canonical decomposition $X = X_1 \times \dots \times X_k$ into irreducible subcomplexes, which is virtually invariant under $\Aut(X)$. Upon reordering, we can assume that the $\RR$-like factors are precisely the first $p$ ones. In particular every automorphism permutes the first  $p$ factors among themselves. 

Next consider any point $x = (x_1, \dots, x_p) \in X_1 \times \dots \times X_p$. Since {the stabiliser $\Aut(X)_x$ of $x$ in $\Aut(X)$} is open, it follows that $\Gamma_x$ is a lattice in $\Aut(X)_x$ and therefore acts as a lattice on $Y = X_{p+1} \times \dots \times X_k$. We then observe that the $\Gamma$-action on $X$, as well as the $\Gamma_x$-action on $Y$, is hereditarily essential by Corollary~\ref{cor:HereditarilyEss}. We can thus invoke Corollary~\ref{cor:Flipping:cocpt}. Since $Y$ does not have any $\RR$-like factor, this implies in particular that $\Gamma_x$ does not fix any point in $\bd Y$. Thus (iv) holds. 

It only remains to prove (v). To this end, remark that the splitting $X = X_1 \times \dots \times X_p \times Y$ comes with a decomposition of the set of hyperplanes $\hH(X) = \hH(X_1) \cup \dots \cup \hH(X_p) \cup \hH(Y)$. Clearly any pair of disjoint hyperplanes must belong to the same component. If they belong to $\hH(Y)$, then we conclude by using any of the two versions of the Double Skewering Lemma. If they belong to $\hH(X_i)$, we just remark that any hyperbolic isometry of $X_i$ will double-skewer the given pair of hyperplanes as desired, since $X_i$ is $\RR$-like and all its hyperplanes are compact. 
\end{proof}

\begin{proof}[Proof of Theorem~\ref{thm:Regular}]
Let $\hH_i = \hH(X_i)$ for all $i$, so that the given splitting $X = X_1 \times \dots \times X_n$ yields a decomposition $\hH(X) = \hH_1 \cup \dots \cup \hH_n$. 

For each $i$, pick two hyperplanes $\hh_i, \hk_i \in \hH_i$ such that $\h_i \subset \k_i$ and that any element $g$ of $\Aut(X_i)$ with $g.\k_i \subset \h_i$ is rank one. The existence of such a pair of hyperplanes is guaranteed by Proposition~\ref{prop:IrredCriterion} and Lemma~\ref{lem:Contracting}. In particular, it suffices to exhibit an element $g \in \Gamma$ such that $g.\k_i \subset \h_i$ simultaneously for all $i$.  We shall do this by induction on $n$, the case $n=1$ being covered by Proposition~\ref{prop:recap:lattices}.


Let $x$ be any point of $X_n$ contained in the projection of $\hk_n$ and consider its stabilisers $\Gamma_x$ and $\Aut(X)_x$. Since $X_n$ is locally compact, there are finitely many hyperplanes in $X_n$ containing the point $x$. Therefore $\Gamma_x$ has a finite index subgroup $\Gamma'_x$ which stabilizes $\hk_n$. Upon replacing $\Gamma'_x$ by a subgroup of index two, we may further assume that $\Gamma'_x$ stabilizes both $\k_n$ and $\k_n^*$.  Moreover, since {the stabiliser $\Aut(X)_x$ of $x$ in $\Aut(X)$} is an open subgroup of $\Aut(X)$, it follows that $\Gamma_x = \Gamma \cap \Aut(X)_x$ is a lattice in $\Aut(X)_x$, and so is thus $\Gamma'_x$. The group $\Aut(X_n)_x$ being compact, it follows that the image of the projection of $\Gamma'_x$ to $\Aut(X_1) \times \dots \times \Aut(X_{n-1})$ is a lattice.  For all $i$, since $\hk_i$ and $\hh_i$ are essential hyperplanes of $X$, it follows that they are also essential as hyperplanes of $X_i$. In particular, they are $\Gamma'_v$-essential for all $i < n$ (see Corollary~\ref{cor:HereditarilyEss}), and the induction hypothesis then yields an element $a \in \Gamma'_x$ such that $a.\k_i \subset \h_i$  for all $i<n$ and $a.\k_n = \k_n$. 

We now pick a point $y \in X_1 \times \dots \times X_{n-1}$ contained in the projection of $a.(\hk_1 \cap \dots \cap \hk_{n-1})$. For the same reason as before, we can use the induction hypothesis to find an element $b \in \Gamma_y$ which stabilizes $a.\k_i$ for all $i<n$  and such that $b.\k_n \subset \h_n$. 

It remains to set $g = ba$ and observe that $g.\k_i \subset \h_i$ for all $i$, as required.
\end{proof}

\begin{proof}[Proof of Corollary~\ref{cor:Zn}]
There is no loss of generality in assuming that $\Gamma$ acts essentially (see Corollary~\ref{cor:RkRigid}) and that $X$ decomposes as a product $X = X_1 \times \dots \times X_n$ of $n$ irreducible subcomplexes. In particular $\Aut(X)$ is virtually isomorphic to $\Aut(X_1) \times \dots \times \Aut(X_n)$ by Proposition~\ref{prop:deRham}. 

Let $\gamma \in \Gamma$ be a regular element as provided by Theorem~\ref{thm:Regular}. For each irreducible factor $X_i$ of $X$, the centraliser in $\Aut(X_i)$  of the projection of $\gamma$ stabilises the pair consisting of its attracting and repelling fixed points at infinity. In particular this centraliser stabilises some geodesic line in $X_i$. It follows that the centraliser of $\gamma$ in $\Aut(X)$ stabilises some $n$-flat embedded in $X$ on which it acts cocompactly. 

By a Lemma of Selberg \cite{Selberg}, the centraliser $\centra_\Gamma(\gamma)$ is a cocompact lattice in the centraliser $\centra_{\Aut(X)}(\gamma)$. We have just seen that, up to some compact normal subgroup, the latter is isomorphic to closed subgroup of $\Isom(\RR^n)$. Thus $\centra_\Gamma(\gamma)$ is a Bieberbach group of rank $n$, and thus contains $\ZZ^n$. 
\end{proof}

\subsection{Quasi-morphisms}

Theorem~\ref{thm:ProductTrees} from the introduction will be deduced from the following.

\begin{thm}\label{thm:QH}
Let $X = X_1 \times \dots \times X_n$ be a product of $n$ geodesically complete locally compact \cat spaces. Assume that $\Isom(X_i)$ acts cocompactly on $X_i$ and contains some rank one isometry. For any lattice $\Gamma \leq \Isom(X_1) \times \dots \times \Isom(X_n)$, the following conditions are equivalent.
\begin{enumerate}[(i)]
\item $\QH(\Gamma) $ is finite-dimensional. 

\item $\QH(\Gamma) = 0$. 

\item For all $i$, the space $X_i$ is either a tree or a rank one symmetric space, and if $X_i$ is not isometric to the real line $\RR$, then the closure $G_i$ of the projection of $\Gamma$ on $\Isom(X_i)$ acts doubly transitively on $\bd X_i$.
\end{enumerate}
\end{thm}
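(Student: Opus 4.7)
The implication (ii)\,$\Rightarrow$\,(i) is trivial, so the content lies in (i)\,$\Rightarrow$\,(iii) and (iii)\,$\Rightarrow$\,(ii).

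For (i)\,$\Rightarrow$\,(iii), my plan is to invoke the Bestvina--Fujiwara construction. After replacing $\Gamma$ by a finite-index subgroup preserving the product decomposition, let $G_i$ denote the closure in $\Isom(X_i)$ of the projection of $\Gamma$. A geometric Borel density argument (see \cite[Theorem~2.4]{CapraceMonod2}) shows that $G_i$ acts minimally and without fixed point at infinity on $X_i$. Since $X_i$ is geodesically complete, locally compact and cocompact, and since $\Isom(X_i)$ already contains a rank one isometry, $G_i$ itself must contain a rank one isometry. If $G_i$ did not act doubly transitively on $\bd X_i$, one could find a second rank one element of $G_i$ whose attracting/repelling pair at infinity is transverse to that of the first; a ping-pong argument then produces a Schottky free subgroup of $G_i$ with infinitely many ``independent'' rank one elements, and the Bestvina--Fujiwara theorem \cite{BestvinaFujiwara} yields an infinite-dimensional space of homogeneous quasi-morphisms on $G_i$. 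These pull back to linearly independent elements of $\QH(\Gamma)$, contradicting (i). Having established that $G_i$ is doubly transitive on $\bd X_i$, I would invoke the classification of geodesically complete locally compact \cat spaces whose isometry group acts cocompactly and doubly transitively on the visual boundary: such a space must be a semi-regular tree or a rank one Riemannian symmetric space.

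For (iii)\,$\Rightarrow$\,(ii), the plan is to apply the Burger--Monod vanishing theorem for second bounded cohomology of irreducible lattices in products of boundary doubly transitive locally compact groups. First I would note that if any $X_i$ is not isometric to $\RR$, then $\bd X_i$ is uncountable and perfect, so no countable discrete group of isometries can act doubly transitively on it; hence the closure $G_i$ of the projection of $\Gamma$ is automatically non-discrete. It follows that $\Gamma$ is irreducible as a lattice in the product of the non-$\RR$ factors of $\Isom(X_1)\times\dots\times\Isom(X_n)$. The $\RR$-factors contribute an amenable direct summand, which does not affect $\QH$ (quasi-morphisms modulo the trivial ones are invariant under extension by amenable normal subgroups). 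Burger--Monod's theorem then delivers $\QH(\Gamma)=0$.

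The step I expect to be the main obstacle is the classification ingredient at the end of (i)\,$\Rightarrow$\,(iii): showing that a geodesically complete locally compact \cat space with cocompact, boundary doubly transitive isometry group must be a semi-regular tree or a rank one symmetric space. This draws on machinery well outside the scope of the present paper (topological Tits buildings, classification of locally compact $2$-transitive groups, work in the spirit of Burger--Mozes and Kleiner--Leeb). A secondary technical subtlety is ensuring that the Bestvina--Fujiwara quasi-morphisms on $G_i$ genuinely pull back to linearly independent elements of $\QH(\Gamma)$: this relies on the kernel of the projection $\Gamma\to G_i$ being (virtually) a lattice in an amenable direct factor, and on the standard fact that $\QH$ is invariant under extensions by amenable normal subgroups.
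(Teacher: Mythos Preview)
Your plan matches the paper's proof: the trichotomy you sketch for (i)\,$\Rightarrow$\,(iii) is exactly the content of Proposition~\ref{prop:QH:trichotomy} (quoted from \cite{CapraceFujiwara}), and (iii)\,$\Rightarrow$\,(ii) is likewise deduced from Burger--Monod \cite{BurgerMonod}. The one point worth flagging is that the classification step you identify as the main obstacle is in fact handled by a short self-contained argument (Proposition~\ref{prop:DoublyTrans}): double transitivity on $\bd X_i$ forces $X_i$ to be a visibility space, hence Gromov hyperbolic, and then the stabiliser of a boundary point acts cocompactly, so \cite[Theorem~1.3]{CapraceMonod1} applies; no heavy Tits-building machinery is needed. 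Your secondary concern about pulling back quasi-morphisms is sidestepped by applying the Bestvina--Fujiwara construction directly to the $\Gamma$-action on $X_i$ (via the projection), rather than to $G_i$.
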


The following basic facts shows that the set of rank one elements is `large' as soon as it is non-empty. 

\begin{lem}\label{lem:Ballmann:bis}
Let $X$ be a proper \cat space such that $\Isom(X)$ contains a rank one isometry. Then any group $\Gamma \leq \Isom(X)$ acting without fixed point at infinity and with full limit set also contains a rank one isometry. 
\end{lem}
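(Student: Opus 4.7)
Let $h \in \Isom(X)$ be a rank one isometry; since $X$ is proper, $h$ is contracting (by \cite[Theorem~5.4]{BestvinaFujiwara}), and so it acts on $\bd X$ with North--South dynamics, with attracting fixed point $h^+$ and repelling fixed point $h^-$. The key idea of the plan is that if $\Gamma$ contains sequences which accumulate at contracting boundary points in a sufficiently transverse fashion, then some element of the approximating sequence must itself be rank one.

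I would begin by using the hypothesis that $\Gamma$ has full limit set to deduce $h^+ \in L(\Gamma)$, so there is a sequence $(\gamma_n) \subset \Gamma$ with $\gamma_n x_0 \to h^+$ in the cone topology. By compactness of the bordification $X \cup \bd X$, after extraction we may assume $\gamma_n^{-1} x_0 \to \xi$ for some $\xi \in \bd X$. The first case to analyze is $\xi \ne h^+$: I would argue that $\gamma_n$ itself is rank one for all large $n$. The contracting character of $h^+$ forces each such $\gamma_n$ to map a small neighbourhood $U$ of $h^+$ in $\bd X$ strictly into itself, while $\gamma_n^{-1}$ maps a small neighbourhood of $\xi$ strictly into itself; a Brouwer-type fixed-point argument then produces an attracting fixed point $\xi_n^+ \in U$ and a repelling fixed point $\xi_n^-$ near $\xi$. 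Their proximity to the contracting point $h^+$ transfers the contracting property to $\gamma_n$ itself, so $\gamma_n$ is contracting (equivalently, rank one) and lies in $\Gamma$.

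The degenerate case $\xi = h^+$ is handled by invoking the hypothesis that $\Gamma$ has no fixed point at infinity: choose $\phi \in \Gamma$ with $\phi h^+ \ne h^+$, and replace $(\gamma_n)$ by $(\phi \gamma_n)$. Then $\phi \gamma_n x_0 \to \phi h^+$, whereas $(\phi \gamma_n)^{-1} x_0 = \gamma_n^{-1} \phi^{-1} x_0$ still converges to $h^+$ because the contracting dynamics of $\gamma_n^{-1}$ drag every point of $X$ towards $h^+$ once $\gamma_n^{-1} x_0$ does. Since $\phi h^+$ is a contracting point (being the attracting fixed point of the conjugate rank one isometry $\phi h \phi^{-1}$) and $\phi h^+ \ne h^+$, we are back in the previous case, and the argument of the second paragraph produces the required rank one element of $\Gamma$.

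The main obstacle I expect is the rigorous justification of the dynamical claim in the non-degenerate case: that transverse asymptotic attracting/repelling behaviour of a sequence $(\gamma_n)$ at two distinct contracting boundary points forces each $\gamma_n$ to be rank one for $n$ large. This requires a quantitative form of the North--South dynamics around a contracting point, used both to exhibit small $\gamma_n$-invariant neighbourhoods of $h^+$ and $\xi$ in $\bd X$ and to show that the resulting fixed points inherit the contracting property. Once this is in place, the two-case reduction above finishes the proof.
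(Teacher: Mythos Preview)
Your overall strategy coincides with the paper's: pick a sequence $(\gamma_n)$ in $\Gamma$ with $\gamma_n x_0 \to h^+$, extract so that $\gamma_n^{-1} x_0 \to \xi$, and in the degenerate case $\xi = h^+$ multiply by an element of $\Gamma$ not fixing $h^+$ to separate the two limits. The paper uses right-multiplication ($\gamma_n \mapsto \gamma_n\lambda$, sending the repelling limit to $\lambda^{-1}\xi_+$) where you use left-multiplication ($\gamma_n \mapsto \phi\gamma_n$, sending the attracting limit to $\phi h^+$); both variants are valid, and your observation that $\phi h^+$ is again the endpoint of a rank one axis (that of $\phi h \phi^{-1}$) is exactly what is needed.

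The one place where your sketch is shakier than you perhaps realise is the non-degenerate case. The claim ``the contracting character of $h^+$ forces each $\gamma_n$ to map a small neighbourhood $U$ of $h^+$ strictly into itself'' is not literally true: the North--South dynamics belongs to $h$, not to $\gamma_n$, and there is no direct Brouwer argument of that shape. What is true, and what the paper invokes, is the content of Lemmas~III.3.1 and~III.3.2 of \cite{Ballmann}: if $\xi_+$ is the endpoint of a rank one geodesic and $(\gamma_n)$ satisfies $\gamma_n p \to \xi_+$, $\gamma_n^{-1} p \to \eta \neq \xi_+$, then $\gamma_n$ is rank one for all large $n$. This is a black box that replaces the ``quantitative North--South'' step you flagged as the main obstacle; you should simply cite it rather than attempt an ad hoc fixed-point argument on $\bd X$.
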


\begin{proof}
Let $g \in \Isom(X)$ be a rank one isometry. Let also $\xi_+, \xi_- \in \bd X$ be the attracting and repelling fixed points of $g$ respectively.  Since $\Gamma$ has full limit set, there is a sequence $\gamma_n \in \Gamma$ such that for some (and hence for all) point $p \in X$, we have $\lim_{n \to \infty} \gamma_n.p = \xi_+$. Upon extracting a subsequence, we can assume moreover that the sequence $ \gamma\inv_n.p $ also converges as $n $ tends to infinity, say, to some point $\eta \in \bd X$. (Clearly $(\gamma\inv_n.p)$ is unbounded, otherwise $(\gamma_n.p)$ would be bounded.)

If $\eta = \xi_+$, then we pick any element $\lambda \in \Gamma$ such that $\lambda.\xi_+ \neq \xi_+$ and we set $\gamma'_n = \gamma_n\lambda$ for all $n$. In that case we still have  $\lim_{n \to \infty} \gamma'_n.p = \xi_+$, and we have also 
$$\lim_{n \to \infty} (\gamma'_n)\inv.p = \lambda\inv.(\lim_{n \to \infty} \gamma_n\inv.p) = \lambda\inv.\xi_+ \neq \xi_+.$$
It then follows from Lemmas~III.3.1 and III.3.2 from \cite{Ballmann} that
 $\gamma'_n$ is rank one for all sufficiently large $n>0$. 
\end{proof}

We shall also use the following trichotomy, which is obtained in \cite{CapraceFujiwara} as a consequence of the construction of quasi-morphisms by M.~Bestvina and K.~Fujiwara in \cite{BestvinaFujiwara}.

\begin{prop}[Theorem~1.8 from \cite{CapraceFujiwara}]\label{prop:QH:trichotomy}
Let $X$ be a proper \cat space with cocompact isometry group, and let $G \leq \Isom(X)$ be a group containing some rank one isometry. Then one of the following assertions hold. 
\begin{enumerate}[(i)]
\item $G$ fixes a point in $\bd X$ or stabilises some geodesic line, and the closure $\overline G \leq \Isom(X)$ is amenable. 

\item $\overline G$ acts doubly transitively on $\bd X$ and the space $\QH_c(\overline G)$ of continuous non-trivial quasi-morphisms vanishes. 

\item $\overline G$ is not doubly transitive on $\bd X$, and the spaces $\QH(G)$ and $\QH_c(\overline G)$ are both infinite-dimensional.
\end{enumerate}
\end{prop}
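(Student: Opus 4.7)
The plan is to separate three regimes based on the richness of the $\overline{G}$-action on $\bd X$: first the degenerate regime in which $G$ has invariant data at infinity (leading to (i)), then, within the non-degenerate regime, a dichotomy according to whether $\overline{G}$ acts doubly transitively on $\bd X$ (cases (ii) and (iii)). The engine will be the Bestvina--Fujiwara construction of quasi-morphisms from rank one isometries, which is sensitive exactly to this dichotomy.

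I would first analyse $G$-fixed points on $\bd X$. Since any rank one element $g \in G$ has North--South dynamics with exactly two boundary fixed points $g^{+}, g^{-} \in \bd X$, any $G$-fixed point at infinity must lie in $\{g^{+}, g^{-}\}$. Hence either $G$ fixes one of these points, or $G$ swaps them (and then stabilises the parallel set of geodesics through $g^{\pm}$, which by the rank one property is a single line), or $G$ fixes neither a point nor a pair at infinity. In the first two subcases, cocompactness of $\Isom(X)$ on $X$ combined with the structure of stabilisers of points or lines at infinity (Busemann-parabolic type analysis; embedding into $\Isom(\RR)$ modulo compact kernel) forces $\overline{G}$ to be amenable, placing us in alternative (i).

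Assume henceforth that $G$ fixes no point and no pair at infinity. A minimality argument in the spirit of the proof of Lemma~\ref{lem:Ballmann:bis}, applied iteratively, shows that the limit set of $G$ is all of $\bd X$ and that endpoint pairs of rank one elements of $G$ are dense in $(\bd X \times \bd X) \setminus \Delta$. Suppose now that $\overline{G}$ does not act doubly transitively on $\bd X$. Then there exist two rank one elements $g_1, g_2 \in G$ whose unordered endpoint pairs $\{g_1^{\pm}\}$ and $\{g_2^{\pm}\}$ lie in distinct $\overline{G}$-orbits on the space of unordered pairs of boundary points. I would invoke the Bestvina--Fujiwara axis-counting construction, which produces homogeneous quasi-morphisms by counting signed occurrences of long initial segments of axes in translates under $G$; the fact that $g_1$ and $g_2$ have genuinely inequivalent axes yields infinitely many linearly independent homogeneous quasi-morphisms, and running the construction equivariantly yields continuous ones on $\overline{G}$. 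This is alternative (iii).

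Finally, suppose $\overline{G}$ is doubly transitive on $\bd X$. Then any two rank one axes are equivalent under $\overline{G}$, so the Bestvina--Fujiwara mechanism above collapses and produces only trivial output. To upgrade this to the genuine vanishing $\QH_c(\overline{G}) = 0$, I would compute $H^2_{cb}(\overline{G}; \RR)$ using a double ergodic resolution on $\bd X$ in the style of Burger--Monod: double transitivity forces the spaces of $\overline{G}$-invariant bounded measurable functions on $(\bd X)^2$ and $(\bd X)^3$ to be essentially constant, which annihilates the image of the comparison map in degree two, and hence $\QH_c(\overline{G})$. This is alternative (ii). The main obstacle I anticipate is running the Bestvina--Fujiwara machinery cleanly for rank one isometries in the full generality of proper \cat spaces with cocompact isometry group, verifying that the requisite weak proper discontinuity along axes holds automatically from the contracting behaviour of rank one isometries, and checking that linear independence of the resulting quasi-morphisms is controlled precisely by the orbit structure of endpoint pairs --- so that the failure of double transitivity really does pump out infinitely many independent quasi-morphisms.
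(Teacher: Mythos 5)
The paper does not actually prove this proposition: it is imported verbatim as Theorem~1.8 of \cite{CapraceFujiwara}, so there is no internal argument to compare yours against. Your outline does reconstruct the broad strategy of that reference (rank one dynamics for the degenerate case, the Bestvina--Fujiwara construction for (iii), boundary resolutions \`a la Burger--Monod for (ii)), but as a proof it has several genuine gaps.

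First, in the non-degenerate regime you assert that ``the limit set of $G$ is all of $\bd X$'' and that endpoint pairs of rank one elements of $G$ are dense in $(\bd X\times \bd X)\setminus\Delta$. This is false in general: a convex-cocompact free subgroup of a hyperbolic group acting on its Cayley graph fixes no point or pair at infinity yet has a proper limit set. (Lemma~\ref{lem:Ballmann:bis} takes full limit set as a \emph{hypothesis}; it does not produce it.) The dichotomy between (ii) and (iii) must therefore be run on the limit set, or one must argue separately that a proper limit set already forces case (iii). Second, and more seriously, your argument for (ii) is broken as stated: double transitivity of $\overline G$ on $\bd X$ does not force $\overline G$-invariant bounded measurable functions on $(\bd X)^3$ to be essentially constant --- $\mathrm{PSL}_2(\RR)$ acting on the circle is doubly transitive, yet the orientation cocycle is a nonconstant invariant alternating function on triples and $H^2_{cb}\neq 0$ there. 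The statement only claims vanishing of the kernel $\QH_c$ of the comparison map, and proving that requires identifying which boundary cocycles come from quasi-morphisms, not annihilating all of $H^2_{cb}$. Third, the amenability of $\overline G$ in case (i) is asserted via a ``Busemann-parabolic'' gloss; the Busemann character only controls the quotient by the horospherical kernel, and the amenability of that kernel (the full stabiliser in $\Isom(X)$ of the attracting endpoint of a rank one axis) is itself a nontrivial theorem that your sketch does not supply. Finally, the implication from failure of double transitivity to the existence of two rank one elements with inequivalent axes satisfying the Bestvina--Fujiwara independence criterion, together with the continuity of the resulting quasi-morphisms on the locally compact group $\overline G$, is precisely the technical heart of \cite{CapraceFujiwara} and is left here at the level of an announcement.
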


A final ingredient need for the proof of Theorem~\ref{thm:QH} is the following result, which is a consequence of \cite[Theorem~1.3]{CapraceMonod1}. 

\begin{prop}\label{prop:DoublyTrans}
Let $X$ be a locally compact geodesically complete \cat space with cocompact isometry group and assume that $X$ is not reduced to a single point. 

Then $\Isom(X)$ is doubly transitive on $\bd X$ if and only if $X$ is isometric either to a rank one symmetric space or to a locally finite semi-regular tree. 
\end{prop}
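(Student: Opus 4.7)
The ``if'' direction is classical and well-known: the full isometry group of a rank one symmetric space $\mathbf{H}^n_{\mathbf{K}}$ (with $\mathbf{K} \in \{\mathbf{R}, \mathbf{C}, \mathbf{H}, \mathbf{O}\}$) acts doubly transitively on its visual boundary, and for a semi-regular tree one obtains double transitivity on ends from transitivity on directed edges combined with local homogeneity on half-trees rooted at an edge.

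For the converse, suppose that $\Isom(X)$ acts doubly transitively on $\bd X$. The plan is first to show that $X$ must be irreducible as a \cat space. Indeed, if $X = X_1 \times X_2$ with both $X_i$ unbounded, then a finite-index subgroup of $\Isom(X)$ preserves this decomposition (by a de Rham-type argument, analogous to Proposition~\ref{prop:deRham}), and the Tits boundary decomposes as a spherical join $\bd X_1 * \bd X_2$. But then for $\xi \in \bd X_1$ and $\eta \in \bd X_2$ the Tits distance is exactly $\pi/2$, whereas pairs within a single factor can realise other Tits distances. Since isometries preserve Tits distances, no finite-index subgroup of $\Isom(X)$ is doubly transitive on $\bd X$, and double transitivity is inherited by finite-index overgroups since transitivity on ordered pairs is preserved under passing to finite extensions---contradiction. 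Hence one may reduce to the case where $X$ is irreducible.

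The second step is to invoke the structure theorem \cite[Theorem~1.3]{CapraceMonod1}, which classifies locally compact geodesically complete \cat spaces with cocompact isometry group into four canonical types of factors: a Euclidean factor, a symmetric space of noncompact type, a Euclidean building, and certain irreducible indecomposable ``minimal'' factors. Since $X$ is irreducible, exactly one type appears. In the Euclidean case $X = \mathbf{R}^n$, the induced action on $\bd X = S^{n-1}$ is by the orthogonal group $O(n)$, which preserves angular (Tits) distance; double transitivity thus forces $n = 1$, and $X = \mathbf{R}$, which is the degenerate semi-regular tree. For a symmetric space of noncompact type, the rank determines the number of Weyl orbits on pairs of boundary points, so double transitivity forces rank one. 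Similarly, for a Euclidean building of dimension $\geq 2$, the Tits building structure on $\bd X$ provides $\Isom(X)$-invariant combinatorial data on pairs, ruling out double transitivity; hence the building has dimension $1$, i.e., $X$ is a tree, and local compactness together with cocompactness of $\Isom(X)$ imply it is semi-regular.

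The main obstacle is excluding the remaining ``minimal'' irreducible factors from \cite[Theorem~1.3]{CapraceMonod1}: one needs to rule out that an irreducible indecomposable \cat space of minimal type (which is neither a symmetric space nor a building nor Euclidean) can admit a doubly transitive action on its visual boundary. This is the technically delicate part, and I would address it by exploiting the fine structure of $\Isom(X)$ obtained in \cite{CapraceMonod1}---in particular the existence of proper $\Isom(X)$-invariant structures (such as codimension-one subspaces coming from the cell structure of $X$, or discrete $\Isom(X)$-invariant partitions of $\bd X$ arising from the Furstenberg-type boundary analysis) that obstruct double transitivity outside the symmetric space / building / tree setting. This observation, together with the three preceding cases, completes the classification.
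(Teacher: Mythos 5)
Your reduction to the irreducible case and your handling of the Euclidean, higher-rank symmetric space and Euclidean building cases are fine, but the proposal has a genuine gap exactly where you flag ``the technically delicate part''. The decomposition theory of \cite{CapraceMonod1} leaves, besides Euclidean and symmetric factors, irreducible geodesically complete factors whose isometry group is totally disconnected and cocompact; these are \emph{not} classified, carry no cell structure in general, and come with no known $\Isom(X)$-invariant ``codimension-one subspaces'' or ``discrete partitions of $\bd X$''. So the crucial case cannot be disposed of by the invariant structures you invoke, and the argument does not close. (You have also misread the cited result: Theorem~1.3 of \cite{CapraceMonod1} is not the general four-factor decomposition theorem.)

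The paper's route avoids this difficulty entirely by extracting much more from double transitivity before any classification is invoked. Double transitivity implies that \emph{every} pair of boundary points is joined by a geodesic line (some pair is, since $X$ is unbounded and geodesically complete), so $X$ is a visibility space and hence Gromov hyperbolic. Fixing $\xi \in \bd X$ and a line $\ell$ ending at $\xi$, hyperbolicity forces the parallel set $P(\ell)$ to lie in a bounded neighbourhood of $\ell$, while geodesic completeness and the transitivity of $\Isom(X)_\xi$ on $\bd X \setminus \{\xi\}$ give $X = \Isom(X)_\xi.P(\ell)$; together with a hyperbolic isometry translating along $\ell$ this shows that the stabiliser of a \emph{single} point at infinity already acts cocompactly on $X$. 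At that point one applies Theorem~1.3 of \cite{CapraceMonod1} --- the classification of geodesically complete locally compact \cat spaces admitting a cocompact isometry group fixing a point at infinity --- which directly yields that $X$ is a symmetric space or a tree, and hyperbolicity plus cocompactness give rank one, respectively local finiteness and semi-regularity. I recommend rebuilding your converse around this ``cocompact horospherical stabiliser'' criterion rather than the general decomposition theorem.
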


\begin{proof}
The `if' part is clear. We assume henceforth that  $\Isom(X)$ is doubly transitive on $\bd X$. This hypothesis implies in particular that any two points of the boundary can be joined by a geodesic line. In particular any two points of the boundary are at distance $\pi$ in the angular metric. Hence $X$ is a visibility space, and it follows from Theorems~II.9.33 and~III.1.5 in \cite{Bridson-Haefliger} that $X$ is Gromov hyperbolic.

Fix a point $\xi \in \bd X$ and pick any base point $p \in X$. The geodesic ray $[p, \xi)$ can be extended to some geodesic line  $\ell$. Notice that the set $P(\ell)$ consisting the union of all geodesic lines at bounded Hausdorff distance from $\ell$ is contained in bounded neighbourhood of $\ell$, since otherwise $\bd X$ would contain some point at distance $\pi/2$ from $\xi$. 

For any other point $x \in X$, there is a geodesic line $\ell'$ containing $x$ and having $\xi$ as one of its endpoints. Therefore there is an element of $\Isom(X)_\xi$ which maps $\ell'$ to a line parallel to $\ell$. This shows that  $X = \Isom(X)_\xi.P(\ell)$. Notice that $\Isom(X)$ contains some hyperbolic isometry; this follows \emph{e.g.} from the fact that $X$ is Gromov hyperbolic and unbounded. Since $\Isom(X)$ is doubly transitive on the boundary, it follows in particular that $\Isom(X)_\xi$ contains some element acting cocompactly on $P(\ell)$. Combining this with the fact that   $X = \Isom(X)_\xi.P(\ell)$, we infer that $\Isom(X)_\xi$ acts cocompactly on $X$. The desired result then follows from  \cite[Theorem~1.3]{CapraceMonod1}. 
\end{proof}

\begin{proof}[Proof of Theorem~\ref{thm:QH}]
Since $\Isom(X_i)$ acts cocompactly on $X_i$, it has full limit set. From Proposition~2.9 in \cite{CapraceMonod2}, we deduce that the projection $\Gamma_i$ of $\Gamma$ to $\Isom(X_i)$ also has full limit set. Lemma~\ref{lem:Ballmann:bis} thus ensures that $\Gamma_i$ contains some rank one isometry. We are now ready to show the desired equivalences. 

\begin{compactenum}
\item[(ii) $\Rightarrow$ (i)] Obvious. 

\item[(iii) $\Rightarrow$ (ii)] In case each $X_i$ is a tree, the vanishing of $\QH(\Gamma)$ is explicitly stated as Corollary~26 in \cite{BurgerMonod}, modulo the fact that $\QH(\Gamma)$ coincides with the kernel of the canonical map from the bounded cohomology of $\Gamma$ with trivial coefficients to the usual cohomology in degree~$2$ (see \emph{e.g.} \cite[Cor.~13.3.2]{Monod}). In case some $X_i$ is a rank one symmetric space, the same proof applies \emph{verbatim}. Thus  $\QH(\Gamma) = 0$ as desired. 

\item[(i) $\Rightarrow$ (iii)] Assume that (iii) fails. In view of Proposition~\ref{prop:DoublyTrans}, this amounts to saying that for some $i$, the space $X_i$ is not isometric to the real line and that the closure $G_i$ of $\Gamma_i$ does not act doubly transitively on $\bd X_i$. By the trichotomy of Proposition~\ref{prop:QH:trichotomy}, it follows that either $G_i$ stabilises some geodesic line in $X_i$ or that $\QH(\Gamma)$ is infinite-dimensional. But the former possibility implies that $X_i$ is isometric to the real line, since any group acting cocompactly on a geodesically complete \cat space does not preserve any non-empty proper subspace (see \cite[Lemma~3.13]{CapraceMonod1}), and is thus precluded. Hence (i) fails as well, as was to be shown. \qedhere
\end{compactenum}
\end{proof}

\begin{remark}\label{rem:QH}
A similar result holds without the hypothesis that $X_i$ is geodesically complete. In that case, the condition (iii) must be weakened and replaced by the fact that $X_i$ is quasi-isometric to a rank one symmetric space or a semi-regular tree, and the $\Gamma$-action on $X_i$ is \textbf{quasi-distance-transitive} in the sense that there is some $\delta >0$ such that for all $x, y,x', y' \in X_i$, there is some $\gamma \in \Gamma$ such that $d_{X_i}(x, \gamma.x') < \delta$ and $d_{X_i}(y, \gamma.y')< \delta$.  This can be established by following the same proof; the main change is that Proposition~\ref{prop:DoublyTrans} must be replaced by the `quasi-isometric' version we have just described. 
\end{remark}

\begin{proof}[Proof of Theorem~\ref{thm:ProductTrees}]
As in the proof of \ref{thm:Regular}, we remark that $\Aut(X_i)$ contains a lattice for each $i$. By Corollary~\ref{cor:GeodComplete}, it follows that $\Aut(X_i)$ contains some rank one isometry. The desired result then follows from Theorem~\ref{thm:QH}.
\end{proof}

\subsection{Asymptotic cones}

The existence of cut-points in asymptotic cones of a space or a group has been studied notably in \cite{DMS}. A function $\mathrm{Div} : \NN \to \RR \cup \{\infty\}$, called the \textbf{divergence},  is associated to any path-connected metric $X$. In case $X$ is proper and has a cocompactly isometry group, it is shown in \cite[Prop.~1.1]{DMS} that $\mathrm{Div}$ grows linearly with $n$ if and only if no asymptotic cone of $X$ has a cut-point. 

\begin{proof}[Proof of Corollary~\ref{cor:AsymCones}]
{We do not provide all the details; the reader is assumed to have some familiarity with \cite{DMS}.}

Since $X$ is essential (see Proposition~\ref{prop:essential:bis}), it follows that any irreducible factor of $X$ is non-compact. In particular, if $X$ is not irreducible, then it follows that it has linear divergence and, hence, no asymptotic cone has a cut-point. On the other hand, if $X$ is irreducible, then Theorem~\ref{thmintro:RankRigidity} ensures that $\Aut(X)$ contains a rank one element. In particular $X$ contains some rank one geodesic. By Proposition~3.24 from \cite{DMS}, this implies that every asymptotic cone of $X$ has a cut-point. 

We now turn to the subgroup $\Gamma$.  Theorem~\ref{thmintro:RankRigidity} again implies that $\Gamma$ contains some rank one element. By Proposition~3.26 from \cite{DMS}, this implies that $\Gamma$ contains Morse elements (as defined in \cite{DMS}) relatively to its own word metric (even if $\Gamma$ is distorted in $X$), and this in turn implies the existence of asymptotic cut-points by Proposition~3.24 from \emph{loc.~cit.} 
\end{proof}

\providecommand{\bysame}{\leavevmode\hbox to3em{\hrulefill}\thinspace}
\providecommand{\MR}{\relax\ifhmode\unskip\space\fi MR }
\providecommand{\MRhref}[2]{%
  \href{http://www.ams.org/mathscinet-getitem?mr=#1}{#2}
}
\providecommand{\href}[2]{#2}

\end{document}